\newtheorem{theorem}{Theorem}[section]
\newtheorem*{theorem*}{Theorem}
\newtheorem{lemma}[theorem]{Lemma}
\newtheorem{proposition}[theorem]{Proposition}
\newtheorem*{proposition*}{Proposition}
\newtheorem{corollary}[theorem]{Corollary}
\newtheorem{definition}[theorem]{Definition}
\theoremstyle{definition}
\newtheorem{remark}[theorem]{Remark}
\newtheorem{example}[theorem]{Example}
\numberwithin{equation}{section}
\def\1{\hbox{1\kern-.35em\hbox{1}}}
\newcommand{\N}{{\mathbb N}}
\newcommand{\Z}{{\mathbb Z}}
\newcommand{\C}{{\mathbb C}}
\newcommand{\tto}{\twoheadrightarrow}
\newcommand{\Soc}{{\rm Soc}}
\newcommand{\Rad}{{\rm Rad}}
\newcommand{\fa}{{\mathfrak a}}
\newcommand{\fb}{{\mathfrak b}}
\newcommand{\fc}{{\mathfrak c}}
\newcommand{\fg}{{\mathfrak g}}
\newcommand{\fh}{{\mathfrak h}}
\newcommand{\fl}{{\mathfrak l}}
\newcommand{\fn}{{\mathfrak n}}
\newcommand{\ofn}{\overline{\mathfrak n}}
\newcommand{\fp}{{\mathfrak p}}
\newcommand{\fu}{{\mathfrak u}}
\newcommand{\ofu}{\overline{{\mathfrak u}}}
\newcommand{\Hom}{{\rm Hom}}
\newcommand{\ind}{{\rm Ind}}
\newcommand{\coind}{{\rm Coind}}
\newcommand{\Ind}{{\rm Ind}^{\fg}_{\fg_{\oa}}}
\newcommand{\Ext}{{\rm Ext}}
\newcommand{\res}{{\rm Res}}
\newcommand{\ch}{{\rm ch}}
\newcommand{\Res}{{\rm Res}^{\fg}_{\fg_{\oa}}}
\newcommand{\U}{{\rm U}}
\newcommand{\Top}{{\rm Top}}
\newcommand{\cD}{{\mathcal D}}
\newcommand{\cL}{{\mathcal L}}
\newcommand{\cV}{{\mathcal V}}
\newcommand{\cC}{{\mathcal C}}
\newcommand{\cE}{{\mathcal E}}
\newcommand{\cF}{{\mathcal F}}
\newcommand{\cP}{{\mathcal P}}
\newcommand{\cO}{{\mathcal O}}
\newcommand{\cR}{{\mathcal R}}
\newcommand{\cU}{{\mathcal U}}
\newcommand{\oa}{\bar{0}}
\newcommand{\ob}{\bar{1}}
\begin{document}

\title[BBW theory for Lie superalgebras]{Bott-Borel-Weil theory and Bernstein-Gel'fand-Gel'fand reciprocity for Lie superalgebras}
\author{Kevin Coulembier}
\maketitle\vspace{-5mm}
{\center{\small{Department of Mathematical Analysis\\ Ghent University\\
Krijgslaan 281, Gent, Belgium}
}
\vspace{3mm}

\small{Department of Mathematics\\ University of California-Berkeley\\  
Evans Hall, Berkeley, USA

}}

\begin{abstract}
The main focus of this paper is Bott-Borel-Weil (BBW) theory for basic classical Lie superalgebras. We take a purely algebraic self-contained approach to the problem. A new element in this study is twisting functors, which we use in particular to prove that the top of the cohomology groups of BBW theory for generic weights is described by the recently introduced star action. We also study the algebra of regular functions, related to BBW theory. Then we introduce a weaker form of genericness, relative to the Borel subalgebra and show that the virtual BGG reciprocity of Gruson and Serganova becomes an actual reciprocity in the relatively generic region. We also obtain a complete solution of BBW theory for $\mathfrak{osp}(m|2)$, $D(2,1;\alpha)$, $F(4)$ and $G(3)$ with distinguished Borel subalgebra. Furthermore, we derive information about the category of finite dimensional $\mathfrak{osp}(m|2)$-modules, such as BGG-type resolutions and Kostant homology of Kac modules and the structure of projective modules.
\end{abstract}

\textbf{MSC 2010 :} 17B55, 18G10, 17B10\\
\noindent
\textbf{Keywords : }  Bott-Borel-Weil theory, algebra of regular functions, Zuckerman functor, $\mathfrak{osp}(m|2)$, projective module, generic weight, twisting functor, BGG reciprocity



\section{Introduction}

The first results on BBW theory for Lie supergroups were obtained by Penkov in \cite{MR0957752}. Up to now, only the case of basic classical Lie superalgebras of type I with distinguished Borel subalgebra is fully understood, see \cite{MR1680015, MR0957752, MR2059616}. The further study of BBW theory was mainly motivated by the quest for character formulae for finite dimensional representations of classical Lie superalgebras, see e.g. \cite{MR1309652, MR1036335}. Therefore, the character of the cohomology groups was of importance, rather than the $\fg$-module structure, and only BBW theory for dominant weights was relevant. The character problem was settled, with the aid of BBW theory, by Serganova in \cite{MR1443186} for $\mathfrak{gl}(m|n)$ and by Gruson and Serganova in \cite{MR2734963} for $\mathfrak{osp}(m|2n)$. BBW theory for the distinguished Borel subalgebra and {\em dominant} weights has been calculated for the algebras $\mathfrak{osp}(3|2)$ and $D(2,1;\alpha)$ by Germoni in \cite{Germoni} and for $G(3)$ and $F(4)$ by Martirosyan in \cite{Lilit}, all these are of type II. \\

In this paper, we are interested in the full $\fg$-module structure of the cohomology groups of BBW theory for arbitrary weights and Borel subalgebras. We also study the Zuckerman functor, the algebra of regular functions and twisting functors, which are ingredients for several BBW-type theories, including actual BBW theory. We take a purely algebraic and categorical approach to BBW theory, rather than the geometric approach in \cite{MR0957752}, but show the equivalence of both. This approach is closely related to the one of Santos in \cite{MR1680015} or Zhang in \cite{MR2059616}. This leads to a more direct derivation of results of algebraic nature in e.g. \cite{MR1680015, MR2734963, Gruson2} and a unifying treatment of main results on BBW theory for basic classical Lie superalgebras in \cite{MR1680015, MR2734963, Gruson2, MR0957752, MR1036335, MR2059616}.

One of the main {\em new} conclusions on BBW theory is that, for non-dominant weights, the cohomology groups are in general {\em{only}} highest weight modules {\em{if}} $\fg$ is of type I and {\em{if}} the distinguished Borel subalgebra is considered. This follows in particular from our restriction to the generic region, i.e. weights far away from the walls of the Weyl chamber. It is known from \cite{MR1036335} that, in that region, the cohomology is contained in one degree. We show that, even though the character of the corresponding module is given by the character of a highest weight module, the top of the module does not correspond to the simple subquotient with highest weight. Using twisting functors, introduced in \cite{MR2032059, MR2074588} and generalised to Lie superalgebras in \cite{MR3117249, CouMaz}, we prove that, while the character of the module is described by the dot action of the Weyl group, the top of the module is described by the star action. The star action is a deformation of the dot action, introduced in \cite{CouMaz}. In the generic region, the star action leads to an action of the Weyl group, which describes e.g. the primitive spectrum, see \cite{CouMaz}. Only for algebras of type I with distinguished Borel subalgebra, does the star action coincide with the dot action, in the generic region. 

We also obtain a full solution of BBW theory for $\mathfrak{osp}(m|2)$ for arbitrary $m$, $D(2,1;\alpha)$, $F(4)$ and $G(3)$ with distinguished Borel subalgebra, but for arbitrary weights. This confirms in particular the general results in the generic region.

Furthermore, we obtain several other homological results on the category of finite dimensional representations for $\mathfrak{osp}(m|2)$, relying on results of Su and Zhang in \cite{SuZha}. We calculate Kostant cohomology for Kac modules and discuss the existence of BGG type resolutions for these modules, revealing important differences with basic classical Lie superalgebras of type I.\\ 

For basic classical Lie superalgebras of type I with distinguished Borel subalgebra, the category of finite dimensional weight modules has the structure of a highest weight category, where the Kac modules are the standard modules. This resembles a parabolic category $\cO$ and in particular the BGG reciprocity holds, see e.g. \cite{MR2100468, MR1378540}. This was used by Brundan in \cite{Brundan} to provide an alternative solution to the character problem for $\mathfrak{gl}(m|n)$. For basic classical Lie superalgebras of type II, or those of type I regarded from the point of view of another Borel subalgebra, there is no analogue of the standard module and the category of finite dimensional modules is not of highest weight type.

The Kac module for basic classical Lie superalgebras of type I can be identified with the zero cohomology of BBW theory for integral dominant weights and the distinguished Borel subalgebra, and the higher cohomology groups are trivial. For arbitrary basic classical Lie superalgebras and Borel subalgebras, Gruson and Serganova associated a virtual module in the Grothendieck group to the cohomology groups of BBW theory for integral dominant weights in \cite{Gruson2}. This setup was used to prove a virtual BGG reciprocity. This was applied to find a solution for the character problem for $\mathfrak{osp}(m|2n)$, alternative to \cite{MR2734963}, and closer to the approach for $\mathfrak{gl}(m|n)$ in \cite{Brundan}.

In the current paper, we introduce a weaker version of the concept of generic weights, depending on the Borel subalgebra and called {\em relative genericness}. For the particular case of basic classical Lie superalgebras of type I with distinguished Borel subalgebra, the condition becomes trivial. We prove that for relatively generic weights, the cohomology groups of BBW theory are contained in one degree. This connects the corresponding result for type I with the one for generic weights. Then we show that in the relatively generic region, the zero cohomology of BBW theory for integral dominant weights, called generalised Kac modules, behave as standard modules. So, in particular, projective modules have a filtration by the standard modules, satisfying a BGG reciprocity relation, which strengthens the virtual BGG reciprocity of \cite{Gruson2} to a real one in the relatively generic region. For algebras of type I with distinguished Borel subalgebra, this recovers the full BGG reciprocity in \cite{MR1378540}. Also for algebras of type II with distinguished Borel subalgebra, the condition of relative genericness is very weak.\\

The paper is organised as follows. In Section \ref{secprel} we recall some preliminary notions. In Section \ref{secBBW} we obtain categorical reformulations of BBW theory in terms of the Zuckerman functor and Lie superalgebra cohomology. In Section \ref{algfun} we study the algebra of matrix elements of finite dimensional Lie supergroup representations. This is motivated by the role in BBW theory and the appearance of this algebra in physical theories, see e.g. \cite{MR2813506}. We also describe certain analogues of BBW theory. In Section \ref{secPenkov} we study the cohomology groups of BBW theory, restricted as $\fg_{\oa}$-modules. One particular motivation to do this originates from the subsequent results on generic weights and the special cases of $\mathfrak{osp}(m|2)$, $D(2,1;\alpha)$, $F(4)$ and $G(3)$. These results imply that, often, the modules appearing in higher cohomologies of BBW theory are the same ones as in the zero degree, when restricted to $\fg_{\oa}$-modules, but not as $\fg$-modules. In particular, we obtain an explicit finite complex which computes Kostant cohomology of projective modules in Theorem \ref{infocohomC}. In Section \ref{specseq} we briefly review the super analogues of the technique of Demazure in \cite{MR0229257} and show how it leads to a solution for BBW theory for typical weights and for basic classical Lie superalgebras of type I with distinguished Borel subalgebra. The results on BBW theory for generic weights are discussed in Section \ref{secgen}. In Section \ref{secrelgen} we define a version of genericness related to a particular parabolic subalgebra and show its relevance for BBW theory. In Section \ref{secBGGrec} we prove the generalised notion for BGG reciprocity in the relative generic region of the categories of finite dimensional representations. In Section \ref{seccomplres} we obtain the solution to BBW theory for $\mathfrak{osp}(m|2)$, $D(2,1;\alpha)$, $F(4)$ and $G(3)$ with distinguished Borel subalgebra. In Section \ref{sec32} we study homological properties of Kac modules for $\mathfrak{osp}(m|2)$. In Section \ref{HknP} we present a unifying formula for Kostant cohomology of projective modules for typical and generic weights. This formula also holds for arbitrary weights for the distinguished Borel subalgebra for either Lie superalgebras of type II with defect one, or Lie superalgebras of type I. We prove that this formula does not hold in general for Lie superalgebras of type II with defect higher than one. In Appendix \ref{aptwist} we derive some results on the twisting functors which are applied in other parts of the paper.

\section{Preliminaries}
\label{secprel}

\subsection{Basic classical Lie superalgebras}
\label{secprel1}
\begin{definition}
\label{catCab}
For any Lie superalgebra $\fc$, let $\cC(\fc)$ denote the category of $\fc$-modules. For a Lie algebra $\fa$, which is a subalgebra of $\fc$, let $\cC(\fc,\fa)$ denote the category of all $\fc$-modules which are locally $\U(\fa)$-finite and $\fa$-semisimple.
\end{definition}

The category $\cC(\fc,\fa)$ is sometimes referred to as the category of Harish-Chandra modules and denoted by $\mathcal{HC}(\fc,\fa)$.

We will always use the notation $\fg$ for a basic classical Lie superalgebra, see \cite{MR3012224, MR0519631, MR2906817}. The underlying Lie algebra is denoted by $\fg_{\oa}$ and the odd part by $\fg_{\ob}$, $\fg=\fg_{\overline{0}}\oplus\fg_{\ob}$.

There exist two types of basic classical Lie superalgebras (excluding Lie algebras), see Chapters 2 and 4 in \cite{MR2906817} for the explicit definition of the Lie superalgebras we introduce. For type I, the adjoint representation of $\fg_{\oa}$ in $\fg_{\ob}$ decomposes into two irreducible representations. Such Lie superalgebras have a $\Z$-grading of the form
\[\fg=\fg_{-1}\oplus\fg_0\oplus\fg_1\,,\quad\mbox{with}\quad \fg_{\oa}=\fg_0\quad\mbox{and}\quad \fg_{\ob}=\fg_{-1}\oplus\fg_1.\]
The list of basic classical Lie superalgebras of type I consists of $\mathfrak{osp}(2|2n)$, $\mathfrak{sl}(m|n)$ for $m\not=n$ and $\mathfrak{psl}(n|n)$.

For the basic classical Lie superalgebras of type II, the adjoint representation of $\fg_{\oa}$ in $\fg_{\ob}$ is irreducible. Such Lie superalgebras have a $\Z$-grading of the form
\[\fg=\fg_{-2}\oplus\fg_{-1}\oplus\fg_0\oplus\fg_1\oplus\fg_2\,,\quad\mbox{with}\quad \fg_{\oa}=\fg_{-2}\oplus\fg_0\oplus\fg_2\quad\mbox{and}\quad \fg_{\ob}=\fg_{-1}\oplus\fg_1.\]
The list of basic classical Lie superalgebras of type II consists of $\mathfrak{osp}(m|2n)$ for $m\not=2$, $D(2,1;\alpha)$, $F(4)$ and $G(3)$.

An arbitrary Borel subalgebra of $\fg$ (see Chapter 3 of \cite{MR2906817}) will be denoted by~$\fb$. Since these Borel subalgebras are not always conjugate under the action of the Weyl group, the BBW problem is not equivalent for different Borel subalgebras, so we can not restrict to one choice. However, we can consider the even part of the Borel subalgebra $(\fb_{\oa}=\fb\cap\fg_{\oa})$ to be fixed throughout the paper, without loss of generality. Tthe distinguished system Borel subalgebra (see \cite{MR0519631}) is denoted by $\fb^d$. The set of positive roots corresponding to the Borel subalgebra $\fb$ is denoted by $\Delta^+\subset\fh^\ast$. The relations $\Delta^+=\Delta^+_{\oa}\cup\Delta_{\ob}^+$ and $\Delta^+_{\oa}\cap\Delta_{\ob}^+=0$ hold, with $\Delta^+_{\oa}$ the set of even positive roots and $\Delta_{\ob}^+$ the set of odd positive roots. We define $\rho_{\oa}=\frac{1}{2}\sum_{\alpha\in\Delta^+_{\oa}}\alpha$, $\rho_{\ob}=\frac{1}{2}\sum_{\gamma\in\Delta^+_{\ob}}\gamma$ and $\rho=\rho_{\oa}-\rho_{\ob}$. For any non-isotropic root $\alpha$, we introduce $\alpha^\vee=2\alpha/\langle\alpha,\alpha\rangle$. If $\alpha$ is simple in $\Delta^+$, we have $\langle \alpha^\vee,\rho\rangle=1$.

The set of integral weights is denoted by $\cP\subset\fh^\ast$. The set of $\fg$-integral dominant weights $\cP^+\subset\fh^\ast$ is the set of weights such that there is a corresponding finite dimensional highest weight representation, this set depends on the choice of Borel algebra. Similarly, $\cP^+_{\oa}\subset\fh^\ast$ denotes the set of $\fg_{\oa}$-integrable dominant weights. Only for $\mathfrak{osp}(1|2n)$ and basic classical Lie superalgebras of type I with distinguished system of positive roots, we have the equality $\cP^+=\cP^+_{\oa}$. Otherwise, $\cP^+$ is a non-trivial subset of $\cP^+_{\overline{0}}$.

We denote by $\fp$ a parabolic subalgebra of the Lie superalgebra $\fg$, i.e. a subalgebra containing a Borel subalgebra $\fb$, see e.g. \cite{MR3012224, MR2906817}. We will always assume that the corresponding Levi subalgebra $\fl$ has the property that all its finite dimensional modules, which are semisimple for the Cartan subalgebra, are semisimple for the full algebra, then we say that $\fl$ is of {\em typical type}. This implies that $\fl$ is isomorphic to the direct sum of reductive Lie algebras and Lie superalgebras of the form $\mathfrak{osp}(1|2n)$. Unless $\fg\in\{\mathfrak{osp}(2d+1|2n), G(3)\}$, this condition is equivalent to $\fl\subset\fg_{\oa}$. The nilradical of the parabolic subalgebra $\fp$ is denoted by $\fu$, $\fp=\fl\oplus\fu$. The dual of the nilradical is denoted by $\overline{\fu}$, so $\fg=\overline{\fu}\oplus\fp$. The symbol $\fh$ is used for the Cartan subalgebra of $\fg$ contained in $\fb$, which is also a Cartan subalgebra of $\fg_{\oa}$. In case $\fp=\fb$, we have $\fl=\fh$ and then we use the notation $\fn$ for $\fu$. The dual of the Borel subalgebra is denoted by $\overline{\fb}=\fh\oplus\overline{\fn}$.

For any Lie superalgebra $\fc$ and a representation on a super vector space $M$, the dual representation is defined on the vector space $M^\ast=\Hom_\C(M,\C)$, as $(X\alpha)(v)=-(-1)^{|A||v|}\alpha(Xv)$ for $\alpha\in M^\ast$, $v\in M$ and $X\in \fc$. This module is also denoted by $M^\ast$. For $\fg$ a basic classical Lie superalgebra and $\dagger$ a Lie superalgebra automorphism mapping $\fg_{\alpha}$ to $\fg_{-\alpha}$, the twisted dual of a finite dimensional module $M$ is also described on the space $\Hom_\C(M,\C)$, but as $(X\alpha)(v)=\alpha(X^\dagger v)$. This module is denoted by $M^\vee$.

For $\fc$ a (parabolic subalgebra of a) basic classical Lie superalgebra or reductive Lie algebra, we denote the irreducible highest weight representation with highest weight $\mu$ by $L_{\mu}(\fc)$. We use the short-hand notation $L_\lambda=L_\lambda(\fg)$, $L_\lambda^{\oa}=L_\lambda(\fg_{\oa})$ and $L_\lambda^0=L_\lambda(\fg_0)$. We denote the Verma module for any $\mu\in\fh^\ast$ by $M_\mu=\U(\fg)\otimes_{\U(\fb)}L_\mu(\fb)$. If we want to mention the Borel subalgebra which is used in the definition of the Verma module or the simple module we use the notation $M^{(\fb)}_\mu$ and $L^{(\fb)}_\lambda$. 

We denote the indecomposable projective cover of $L_\lambda$ in the BGG category $\cO$ (see \cite{MR2100468, MR2906817}) by $P^\cO_\lambda $. For $\Lambda$ integral dominant we denote the indecomposable projective cover of $L_\Lambda$ in the category $\cF$ of finite dimensional weight modules (see \cite{Gruson2, MR1443186, MR1378540}) by $P^\cF_\Lambda $. Category $\cO$ is naturally isomorphic to a subcategory of $\cC(\fg,\fh)$ and will sometimes silently be identified with this subcategory. To make a distinction we denote the BGG category for $\fg_{\oa}$ by $\cO_{\oa}$.

\subsection{Actions of the Weyl group}
\label{secprelWeyl}
For Basic classical Lie superalgebras, the Weyl group $W=W(\fg:\fh)$ corresponds to the Weyl group $W(\fg_{\oa}:\fh)$. For any $\alpha$, simple in $\Delta_{\oa}^+$, we denote the simple reflection by $s_\alpha$. The length of an element $w$ of the Weyl group is denoted by $l(w)$. The set of all elements with length $p$ is denoted by $W(p)$.

For each system of positive roots, the $\rho$-shifted action of the Weyl group $W$ of $\fg$ is denoted by $w\cdot\lambda=w(\lambda+\rho)-\rho$ for $w\in W$ and $\lambda\in\fh^\ast$. As in \cite{MR2906817}, we will denote the $\rho_{\oa}$-shifted action of $W$ on $\fh^\ast$ by $w\circ\lambda=w(\lambda+\rho_{\oa})-\rho_{\oa}$.

We will need the following two sets:
\begin{equation}
\label{gammaset}
\Gamma^+=\{\sum_{\alpha\in I}\alpha\,|I\,\subset \Delta^+_{\ob}\}\quad\mbox{and}\quad \widetilde{\Gamma}=\{\sum_{\alpha\in I}\alpha\,|\,I\subset \Delta_{\ob}\}.
\end{equation}
Note that we interpret these sets with {\em multiplicities}, so even if $\sum_{\alpha\in I}\alpha=\sum_{\alpha\in I'}\alpha$, the left-hand and right-hand are regarded as two distinct elements if $I\not= I'$. We have the equality of sets \begin{equation}\label{MusGS}w\circ(\lambda-{\Gamma}^+)=w\cdot \lambda -{\Gamma^+} \quad \mbox{for any }\lambda\in\fh^\ast,\end{equation}
see Section 0.5 in \cite{MR1479886} or the proof of Lemma 3 in \cite{MR2734963}.

At certain points we will derive results specific to weights far away from the walls of the Weyl chambers. Such weights are often called generic and the corresponding highest weight modules have been studied in e.g. \cite{CouMaz, MR1309652, MR1036335}. In the following, the notion of Weyl chambers refers to the Weyl chambers of the $\rho_{\oa}$-shifted action.

\begin{definition}
\label{defgeneric}
(i) A weight $\lambda\in\fh^\ast$ is {\em $\Gamma^+$-generic} if all weights in the set $\lambda-\Gamma^+$ are inside the same Weyl chamber.

(ii) A weight $\lambda\in\fh^\ast$ is {\em $\widetilde{\Gamma}$-generic} if all weights in the set $\lambda-\widetilde{\Gamma}$ are inside the same Weyl chamber.

(iii) A weight $\lambda\in\fh^\ast$ is called {\em generic} if every weight in the set $\lambda-\Gamma^+$ is $\widetilde{\Gamma}$-generic. 
\end{definition}
The set $\widetilde{\Gamma}$ is invariant under the Weyl group, which is a consequence of the fact that $\Lambda \fg_{\ob}$ is a finite dimensional $\fg_{\oa}$-module. Thus, a weight $\lambda$ is $\widetilde{\Gamma}$-generic if and only if $w\circ\lambda$ is $\widetilde{\Gamma}$-generic for an arbitrary $w\in W$. Furthermore, equation \eqref{MusGS} implies that a weight $\lambda$ is $\Gamma^+$-generic if and only if $w\cdot\lambda$ is $\Gamma^+$-generic for an arbitrary $w\in W$. By the same reason, $\lambda$ is generic if and only if $w\cdot\lambda$ is generic for $w\in W$.

We note that the notion of $\widetilde{\Gamma}$-generic weight in Definition \ref{defgeneric} is identical to the notion of weakly generic weights of Definition 7.1 in \cite{CouMaz}. The notion of genericness of Definition \ref{defgeneric} coincides with the one in Definition 7.1 in \cite{CouMaz}.

Since we assume that two different Borel subalgebras have the same underlying even Borel subalgebra $\fb_{\oa}=\fb\cap\fg_{\oa}$, the notion of a highest weight module does not depend on the choice of $\fb$. Subsequently the BGG category $\cO$ coincides for both Borel subalgebras, even though the structure as a highest weight category differs. How the highest weight of a highest weight representations in different systems of positive roots are related is described by the technique of odd reflections, see e.g. \cite{MR2906817, MR2743764}.

The $\rho$-shifted action of the Weyl group depends {\em essentially} on the choice of Borel subalgebra in the atypical region. More precisely, the {\em sets of} simple {\em modules}, of highest weight type, linked together by the condition that the highest weights, for the system of positive roots $\Delta^+$, are in the same $\rho$-shifted orbit, are different for each choice of $\Delta^+$. This is possible since atypical central characters correspond to an infinite amount of Weyl group orbits (for a fixed Borel subalgebra), contrary to the situation for simple Lie algebras. For $\widetilde{\Gamma}$-generic weights this can be solved by considering star actions as in Section 8.1 in \cite{CouMaz}, as explained underneath. If we want to mention the Borel subalgebra which is used explicitly, we denote the star action by $\ast^{\fb}$. The principle of this action can be described as follows. For a weight $\lambda$ and a simple reflection $s_{\alpha}$, denote $\widetilde{\lambda}$ the highest weight of $L^{(\fb)}_\lambda$ in a system of positive roots $\widetilde{\Delta}^+$ (with $\widetilde{\Delta}^+_{\oa}=\Delta^+_{\oa}$ and with corresponding Borel subalgebra $\widetilde{\fb}$) in which $\alpha$ or $\alpha/2$ is simple, so $L^{(\fb)}_\lambda\cong L^{(\widetilde{\fb})}_{\widetilde{\lambda}}$. The simple star reflection $s_\alpha\ast^{\fb} \lambda$ is then defined as the highest weight of the module $L^{(\widetilde{\fb})}(s_{\alpha}(\widetilde{\lambda}+\widetilde{\rho})-\widetilde{\rho})$ in the system of positive roots $\Delta^+$. The results of \cite{CouMaz} then imply that for $\widetilde{\Gamma}$-generic weights this (i) leads to an action of the Weyl group and (ii) is independent of the choice of the specific $\widetilde{\Delta}^+$. By definition, we then have that for a $\widetilde{\Gamma}$-generic $\Lambda\in\cP^+$ and two Borel subalgebras $\fb$ and $\hat{\fb}$ with $\fb_{\oa}=\widetilde{\fb}_{\oa}$
$$L^{(\fb)}_{w\ast^{\fb}\Lambda}\,\,\cong\,\, L^{(\widetilde{\fb})}_{w\ast^{\tilde{\fb}}\widetilde{\Lambda}}\quad\mbox{for every}\quad w\in W,$$
with $\widetilde{\Lambda}\in\fh^\ast$ defined by the relation $L^{(\fb)}_\Lambda\cong L^{(\widetilde\fb)}_{\widetilde\Lambda}$. Therefore, in the generic region the star action of the Weyl group does not depend {\em essentially} on the choice of Borel subalgebra.

Only in case $\fg$ is of type I and $\fb=\fb^d$, we have the equality $w\ast^{\fb^d}\lambda=w(\lambda+\rho^d)-\rho^d$.

Consider a Levi subalgebra $\fl_{\oa}$ of $\fg_{\oa}$. Every $w\in W$ decomposes as $w=w_1 w^1$ with $w_1\in W(\fl_{\oa}:\fh)$ and where $w^1$ maps $\fl_{\oa}$-dominant weights to $\fl_{\oa}$-dominant weights, see Propositions 3.4 and 3.5 in \cite{lepowsky}. We denote the set of all such $w^1$ by $W^1(\fl_{\oa})$.

\subsection{Zuckerman functor, induction functor and generalised Kac modules}
\label{subsecprelZuck}

Now we introduce the Zuckerman functor, see \cite{MR0894570, MR0563362, MR2331754, MR2059616}.

\begin{definition}
\label{defZuck}
Consider a Lie superalgebra $\fl$ of typical type, which is a subalgebra of a basic classical Lie superalgebra $\fg$. The Zuckerman functor
\[S:\cC(\fg,\fl)\to\cC(\fg,\fg_{\overline{0}})\]
sends a module $M$ in the category $\cC(\fg,\fl)$ to $M[\fg_{\oa}]$, the maximal $\fg$-submodule which is locally $\U(\fg_{\oa})$-finite and $\fg_{\oa}$-semisimple.
\end{definition}
This is a left exact functor (see e.g. Lemma 4.1 in \cite{MR2059616}) and its right derived functors are denoted by $\cR_k S:\cC(\fg,\fl)\to\cC(\fg,\fg_{\overline{0}})$.

We define the Bernstein functor as the adjoint of the Zuckerman functor. 
\begin{definition}
The Bernstein functor $\Gamma:\cC(\fg,\fl)\to\cC(\fg,\fg_{\oa})$ maps $M$ to its maximal locally finite quotient of a module $M\in\cC(\fg,\fl)$. This functor is right exact and its left derived functors are denoted by $\cL_k\Gamma$.
\end{definition}
From definition, it follows that if $M\in\cO$, then $\cR_kS(M)=\left(\cL_k\Gamma(M^\vee)\right)^\vee$, with $\vee$ the duality on $\cO$, see Section 3.2 in \cite{MR2428237}.

Consider two Lie superalgebras $\fc_1$ and $\fc_2$ such that $\fc_2$ is a subalgebra of $\fc_1$. We denote the forgetful functor $\cC(\fc_1)\to\cC(\fc_2)$ by $\res^{\fc_1}_{\fc_2}$. The same notation will be used for the forgetful functor $\cC(\fc_1,\fa_1)\to\cC(\fc_2,\fa_2)$ if also $\fa_2\subset\fa_1$. The induction and coinduction functors are denoted respectively by $\ind^{\fc_1}_{\fc_2}:\cC(\fc_2)\to\cC(\fc_1)$ and $\coind^{\fc_1}_{\fc_2}:\cC(\fc_2)\to\cC(\fc_1)$. Their action on a $\fc_2$-module $V$ is given by
$$\ind^{\fc_1}_{\fc_2}V=\U(\fc_1)\otimes_{U(\fc_2)}V\qquad\mbox{and}\qquad \coind^{\fc_1}_{\fc_2}V=\Hom_{\U(\fc_2)}(\U(\fc_1),V).$$

We summarise a few facts about these functors, which will be useful in later sections.
\begin{lemma}
\label{lemind}
(i)For any basic classical Lie superalgebra $\fg$ with parabolic subalgebra $\fp=\fl\oplus\fu$ such that $\fl$ is of typical type, the functor $\coind^{\fg}_{\fp}$ restricts to a functor
$$\coind^{\fg}_{\fp}:\cC(\fp,\fl)\to\cC(\fg,\fl),$$
which is exact. Moreover, this functor maps injective modules in $\cC(\fp,\fl)$ to injective modules in $\cC(\fg,\fl)$.

(ii) The functors $\Ind$ and $\coind^{\fg}_{\fg_{\oa}}$ are isomorphic.
\end{lemma}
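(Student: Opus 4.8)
The plan is to prove the two parts separately, both by reducing to standard facts about the Poincaré–Birkhoff–Witt theorem and adjunctions, after first nailing down the decomposition $\fg=\ofu\oplus\fl\oplus\fu$ as super vector spaces (equivalently $\fg=\ofu\oplus\fp$), which holds because $\fl$ is of typical type and hence reductive-like, so the adjoint action of $\fh$ gives the claimed weight-space decomposition into nilradical, Levi, and opposite nilradical. Throughout I will use that $\U(\fg)$ is free as a right $\U(\fp)$-module (and as a left one), with basis the PBW monomials in $\ofu$; this is the only structural input needed, and it holds for Lie superalgebras just as for Lie algebras since $\fg$ is finite-dimensional.

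For part (i), first I would recall the standard identification $\coind^{\fg}_{\fp}V = \Hom_{\U(\fp)}(\U(\fg),V)$ and, using the PBW freeness of $\U(\fg)$ over $\U(\fp)$, identify this as a vector space with $\Hom_\C(\U(\ofu),V)\cong \Lambda(\ofu)^\ast\otimes V$ (completed or ordinary depending on conventions; since we restrict to $\fl$-finite modules this is harmless as the weight spaces are finite-dimensional). Exactness of $\coind^{\fg}_{\fp}$ as a functor $\cC(\fp)\to\cC(\fg)$ is then immediate because $\U(\fg)$ is free — hence projective — as a right $\U(\fp)$-module, so $\Hom_{\U(\fp)}(\U(\fg),-)$ is exact. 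The two things that require a small argument are: (a) that $\coind^{\fg}_{\fp}$ sends $\cC(\fp,\fl)$ into $\cC(\fg,\fl)$, i.e. that local $\U(\fl)$-finiteness and $\fl$-semisimplicity are preserved. For $\fl$-semisimplicity I would note that the $\fh$-weight grading is preserved (the $\ofu$-factor only shifts weights by a fixed finite set of roots) and that $\fl$ being of typical type means $\fl$-semisimplicity for modules with semisimple Cartan action is automatic, so it suffices to check each weight space is finite-dimensional and that the module is locally $\U(\fl)$-finite; local finiteness follows because a finite set of vectors lies in finitely many weight spaces, each finite-dimensional, and $\U(\fl)$ preserves the (finite) set of weights reachable by bracketing with a fixed vector — more carefully, one uses that $V$ itself is locally $\U(\fl)$-finite and that the $\Lambda(\ofu)^\ast$ factor is finite-dimensional as an $\fl$-module. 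And (b) that injectives go to injectives: this is the exactness of the left adjoint. The left adjoint to $\coind^{\fg}_{\fp}:\cC(\fp,\fl)\to\cC(\fg,\fl)$ is the composite of the forgetful functor $\res^{\fg}_{\fp}$ with the (exact on the relevant subcategory) "take the $\fl$-finite part" — but actually the cleaner route is: $\coind^{\fg}_{\fp}$ on all modules is right adjoint to $\res^{\fg}_{\fp}$, which is exact; an exact functor's right adjoint preserves injectives; and one checks the restriction to the Harish-Chandra subcategories still has an exact left adjoint (the forgetful functor composed with nothing, since $\cC(\fg,\fl)\subset\cC(\fp,\fl)$ after restriction lands in the right place because $\fg_{\oa}\supset\fl$... one must be slightly careful that restricting a $\cC(\fg,\fl)$-module to $\fp$ stays in $\cC(\fp,\fl)$, which is clear).

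For part (ii), the claim $\Ind=\ind^{\fg}_{\fg_{\oa}}\cong\coind^{\fg}_{\fg_{\oa}}$ is the statement that induction and coinduction from the even part agree. This is a Frobenius-type phenomenon: $\U(\fg)$ is free of finite rank as a left and as a right $\U(\fg_{\oa})$-module (PBW in $\fg_{\ob}$, which is finite-dimensional), so $\coind^{\fg}_{\fg_{\oa}}V=\Hom_{\U(\fg_{\oa})}(\U(\fg),V)\cong \U(\fg)^\ast_{\U(\fg_{\oa})}\otimes_{\U(\fg_{\oa})}V$ where $\U(\fg)^\ast_{\U(\fg_{\oa})}$ denotes the $\U(\fg_{\oa})$-dual of the free module $\U(\fg)$. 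The key point is that this dual is isomorphic to $\U(\fg)$ itself as a $(\U(\fg),\U(\fg_{\oa}))$-bimodule, up to a twist by the one-dimensional character given by the action on $\Lambda^{\mathrm{top}}\fg_{\ob}$. For basic classical $\fg$ this top exterior power of the odd part is the trivial $\fg_{\oa}$-module (equivalently, the relevant character is trivial), since $\fg$ carries an even non-degenerate invariant form pairing $\fg_\alpha$ with $\fg_{-\alpha}$, so $\Delta_{\ob}^+$ and $-\Delta_{\ob}^+$ are in bijection and $\sum_{\gamma\in\Delta_{\ob}}\gamma=0$; hence the twist is trivial and $\coind^{\fg}_{\fg_{\oa}}V\cong \U(\fg)\otimes_{\U(\fg_{\oa})}V=\ind^{\fg}_{\fg_{\oa}}V$ naturally in $V$. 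I would make the bimodule isomorphism explicit using a PBW basis $\{e_S=e_{i_1}\cdots e_{i_k}: S\subset\{1,\dots,N\}\}$ of $\Lambda\fg_{\ob}$-monomials and its dual basis, checking that the pairing $\langle e_S,e_T\rangle$ is, up to sign, $\delta_{S,\bar T}$ (complementary sets), which is precisely the statement that the dual of the free module is again free with the "flip" identification; the vanishing-character observation ensures this flip is $\fg_{\oa}$-equivariant on the nose.

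The main obstacle I anticipate is part (ii): specifically, verifying that the natural candidate map $\ind^{\fg}_{\fg_{\oa}}V\to\coind^{\fg}_{\fg_{\oa}}V$ is $\fg$-linear, not merely $\fg_{\oa}$-linear, and identifying the correct sign conventions in the super setting so that the $\Lambda^{\mathrm{top}}\fg_{\ob}$ twist really is trivial (in the $\Z/2$-graded world one must track the parity of $N=\dim\fg_{\ob}$ carefully — the relevant fact is that for basic classical superalgebras $\dim\fg_{\ob}$ is even and the character is trivial, but the sign bookkeeping in the super-dual and super-tensor is where errors creep in). The part (i) claims, by contrast, are essentially formal consequences of PBW freeness plus the definition of "typical type," and I expect them to go through with only routine checks; I would cite Lemma 4.1 of \cite{MR2059616} and the analogous statements in \cite{MR2331754} rather than re-proving adjointness from scratch.
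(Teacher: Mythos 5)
Your overall strategy matches the paper's proof closely: part (i) is argued via the PBW decomposition $\fg=\ofu\oplus\fp$ giving $\coind^\fg_\fp V\cong \Hom_\C(\U(\ofu),V)$ as an $\fl$-module, with exactness coming from freeness of $\U(\fg)$ over $\U(\fp)$ and preservation of injectives from adjointness with the exact restriction functor; and part (ii) hinges on $\U(\fg)$ being a free module of finite rank over $\U(\fg_{\oa})$ together with the triviality, as a $\fg_{\oa}$-module, of the top exterior power of $\fg_{\ob}$ (the paper phrases this as self-duality of $\Lambda\fg_{\ob}$, which is equivalent to your ``$\sum_{\gamma\in\Delta_{\ob}}\gamma=0$'' observation and follows from the nondegenerate even invariant form pairing $\fg_\gamma$ with $\fg_{-\gamma}$). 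One genuine error in your write-up of part (i): you repeatedly write $\Lambda(\ofu)^\ast$ for the factor $\Hom_\C(\U(\ofu),\C)$ and then assert it is a finite dimensional $\fl$-module; this is false unless $\ofu$ is purely odd, which in general it is not --- already for $\fp=\fb$ the opposite nilradical $\ofn$ has a large even part, so $\U(\ofu)$ is the super-symmetric algebra $S(\ofu)\cong S(\ofu_{\oa})\otimes\Lambda(\ofu_{\ob})$, which is infinite dimensional. The correct statement, and the one the paper uses, is that $\U(\ofu)\cong S(\ofu)=\bigoplus_{k\ge 0}S^k(\ofu)$ is a direct sum of finite dimensional $\fl$-modules; combined with local $\U(\fl)$-finiteness of $V$ this still gives local $\U(\fl)$-finiteness of the (suitably restricted) coinduced module, so your conclusion survives, but the stated mechanism does not. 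The rest of the argument, including the sign- and parity-bookkeeping you were worried about in part (ii), is sound and mirrors the paper.
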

\begin{proof}
Consider $V\in\cC(\fp,\fl)$. As an $\fl$-module we have
$$\Hom_{\U(\fp)}(\U(\fg),V)\cong \left(\U(\overline{\fu})\right)^\ast\otimes V.$$
Since the $\fl$-module $\U(\overline{\fu})\cong S(\overline{\fu})=\bigoplus_{k=0}^\infty S^k(\overline{\fu})$ is the direct sum of finite dimensional $\fl$-modules, it follows that $\Hom_{\U(\fp)}(\U(\fg),V)\in\cC(\fg,\fl)$. Its exactness is proved in Section 2 of \cite{MR2100468}. The fact that it maps injective modules to injective modules is proved in Corollary 4.1 of \cite{MR2059616}. This concludes $(i)$.

Part $(ii)$ follows from the fact that $U(\fg_{\oa})\hookrightarrow U(\fg)$ is a finite ring extension and the fact that the $\fg_{\oa}$-module $\Lambda\fg_{\ob}$ is self-dual. \end{proof}

Contrary to the classical case, the maximal finite dimensional quotient of an integral dominant Verma module is not the corresponding simple finite dimensional module. We introduce the following notation for the corresponding module,
$$K^{(\fb)}_\Lambda:=\Gamma(M^{(\fb)}_\Lambda).$$
So $K^{(\fb)}_{\Lambda}$ is the maximal finite dimensional highest weight module wight highest weight $\Lambda$ and we have $\Top(K^{(\fb)}_\Lambda )\cong L^{(\fb)}_\Lambda$.

For each basic classical Lie superalgebra $\fg$ and $\Lambda\in\cP^+$ we define the Kac module $K_\Lambda$ as in \cite{MR0519631}. For $\fg$ of type I this module is defined as the parabolically induced module
\[K_\Lambda=\U(\fg)\otimes_{\U(\fg_0\oplus \fg_1)}L_{\Lambda}^0.\]
For $\fg$ of type II the Kac module is defined as
\begin{equation}
\label{KacII}
K_{\Lambda}=\overline{K}_\Lambda/N_{\Lambda}\qquad\mbox{with}\quad \overline{K}_\Lambda=\U(\fg)\otimes_{\U(\fg_0\oplus \fg_1\oplus \fg_2)}L_\Lambda^0\end{equation}
and $N_\Lambda=\U(\fg)Y_{-\phi}^{b+1}\otimes L_\Lambda^0$ with $\phi$ the longest simple positive root of $\fg_{\oa}$ hidden behind the odd simple root and $b=\langle \Lambda,\phi^\vee\rangle$. For both cases we have $K_\Lambda=K_\Lambda^{(\fb^d)}$ with $\fb^d$ the distinguished Borel subalgebra, see Lemma \ref{zerocohomKac}. Because of this property we will call the module $K_{\Lambda}^{(\fb)}$, for an arbitrary Borel subalgebra $\fb$, a generalised Kac module.

\subsection{Lie superalgebra cohomology and twisting functors}
\label{prelsec4}
We will make extensive use of the algebra (co)homology of the nilradical $\fu$ of the parabolic subalgebra $\fp$. We denote by $H^k(\fu,M)$ the $k$-th cohomology group of $\fu$-cohomology with values in the $\fu$-module $M$ and by $H_k(\fu,M)$ the $k$-th homology group. When $M$ is considered to be a (finite dimensional or unitarisable) $\fg$-module, this is usually referred to as Kostant cohomology and was studied in the Lie algebra setting in \cite{MR0142696}. For Lie superalgebras, an overview of the definitions, some basic properties and connection with Ext functors is presented in Section 6.4 in \cite{MR3012224}, Section 4 of \cite{BGG} or Chapter 16 in \cite{MR2906817}. If $M$ is a $\fg$-module, the $\fu$-(co)homology groups are naturally $\fl$-modules.

For $V\in \cC(\fp,\fl)$, and $\mu\in\fh^\ast$ an integral dominant $\fl$-weight, the equality
\begin{equation}
\label{ExtHk}
\Hom_{\fl}(L_{\mu}(\fl),H^k(\fu,V))\,=\,\Ext^k_{\cC(\fp,\fl)}(L_\mu(\fp),V)
\end{equation}
follows from the equalities $\Hom_{\fl}(L_{\mu}(\fl),H^k(\fu,V))=\Hom_{\fl}(\C,H^k(\fu,L_{\mu}(\fl)^\ast\otimes V))$ and $\Ext^k_{\cC(\fp,\fl)}(L_\mu(\fp),V)=\Ext^k_{\cC(\fp,\fl)}(\C,L_\mu(\fp)^\ast\otimes V)$ and the fact that the standard projective resolution of $\C$ in $\cC(\fu)$, see e.g. Section 6.5.2 in \cite{MR3012224}, Lemma 4.7 in \cite{BGG} or Section 7 in \cite{MR1269324}, can be interpreted as a projective resolution in $\cC(\fp,\fl)$.

In Section 5 of \cite{CouMaz} the twisting functor $T_\alpha$ on category $\cO$ was introduced for every $\alpha$ simple in $\Delta^+_{\oa}$. This is a generalisation of the Arkhipov twisting functor on category $\cO$ for semisimple Lie algebras, studied in e.g. \cite{MR2032059, MR2074588, MR2366357, MR2331754}. The twisting functors are right exact and we denote the left derived functors by $\cL_i T_\alpha$. If we denote by $T_{\alpha}^{\oa}$ the twisting functor on $\cO_{\oa}$, Lemma 5.1 and equation (5.1) in \cite{CouMaz} states the following useful properties:
\begin{equation}
\label{eqlemma51}
\cL_iT_\alpha \circ \Ind\cong \Ind\circ \cL_iT^{\oa}_\alpha\quad\mbox{and}\quad \Res\circ \cL_iT_\alpha\cong \cL_iT_\alpha^{\oa}\circ \Res\quad \mbox{for }i\in\N.
\end{equation}
The twisting functors satisfy braid relations, so in particular we can define the functor $T_w$ for $w\in W$ as the the composition $T_{\alpha_1}\circ T_{\alpha_2}\cdots T_{\alpha_p}$ for $s_{\alpha_1}s_{\alpha_2}\cdots s_{\alpha_p}$ an arbitrary reduced expression for $w$, see Lemma 5.3 in \cite{CouMaz}. The right adjoint functor of $T_\alpha$ on $\cO$ is denoted by $G_\alpha$. By definition, this functor inherits the intertwining properties in equation \eqref{eqlemma51} and the braid relations from $T_\alpha$.

The twisting functors have an interesting relation with Verma modules.
\begin{lemma}[Lemma 5.7 in \cite{CouMaz}]
\label{TVerma}
Consider $\alpha$ simple in $\Delta_{\oa}^+$ and $\lambda\in\fh^\ast$. Assume that either\begin{itemize}
\item $\alpha$ or $\alpha/2$ is simple in $\Delta^+$, or 
\item $\lambda$ is typical.
\end{itemize}
Then $T_\alpha M_\lambda=M_{s_\alpha\cdot \lambda}$ unless $\langle \lambda,\alpha^\vee\rangle\in\Z$ with $\langle \lambda+\rho,\alpha^\vee\rangle<0$.
\end{lemma}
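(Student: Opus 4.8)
The plan is to reduce the statement to the semisimple Lie algebra case, where the corresponding fact about Arkhipov's twisting functor is known, and then transfer it along the intertwining isomorphisms in \eqref{eqlemma51}. First I would treat the typical case together with the case in which $\alpha$ or $\alpha/2$ is simple in $\Delta^+$ by a common argument: in both situations one knows the behaviour of $M_\lambda$ under the ``odd-reflection-free'' passage to a Borel in which $\alpha$ (or $\alpha/2$) is simple, so after possibly changing $\fb$ within the fixed $\fb_{\oa}$ we may assume $\alpha$ itself is simple in $\Delta^+$. Then $M_\lambda = \Ind M^{\oa}_\lambda$ where $M^{\oa}_\lambda = \U(\fg_{\oa})\otimes_{\U(\fb_{\oa})}L_\lambda(\fb_{\oa})$ is the $\fg_{\oa}$-Verma module, because parabolic induction from $\fb$ factors as induction from $\fg_{\oa}$ composed with the exact induction along $\fg_{\oa}\hookrightarrow\fg$. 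Applying the first isomorphism of \eqref{eqlemma51} gives $T_\alpha M_\lambda = T_\alpha(\Ind M^{\oa}_\lambda)\cong \Ind(T^{\oa}_\alpha M^{\oa}_\lambda)$.

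The second step is the classical input: for a semisimple (reductive) Lie algebra and $\alpha$ a simple root, Arkhipov's twisting functor satisfies $T^{\oa}_\alpha M^{\oa}_\lambda \cong M^{\oa}_{s_\alpha\circ\lambda}$ provided we are not in the exceptional situation $\langle\lambda,\alpha^\vee\rangle\in\Z_{\ge 0}$, i.e. the statement holds unless $\langle\lambda+\rho_{\oa},\alpha^\vee\rangle$ is a positive integer; in the excluded case $T^{\oa}_\alpha M^{\oa}_\lambda$ is instead a nonsplit self-extension-type object and is not a Verma module. (This is the content of the results cited as \cite{MR2032059, MR2074588, MR2366357}.) Note that with $\alpha$ simple in $\Delta^+$ one has $\langle\alpha^\vee,\rho\rangle=1$ and $\langle\alpha^\vee,\rho_{\ob}\rangle=0$ — since $\alpha$ being simple in $\Delta^+_{\ob}$-adjacent sense forces the odd roots to contribute nothing to $\langle\alpha^\vee,\cdot\rangle$ here — so $\rho$ and $\rho_{\oa}$ give the same shifted reflection along $\alpha$, and $s_\alpha\circ\lambda = s_\alpha\cdot\lambda$; the numerical hypothesis $\langle\lambda+\rho,\alpha^\vee\rangle<0$ in the statement matches exactly the negation of the classical exceptional condition $\langle\lambda+\rho_{\oa},\alpha^\vee\rangle\in\Z_{>0}$ once one checks $\langle\rho,\alpha^\vee\rangle=\langle\rho_{\oa},\alpha^\vee\rangle$. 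Combining with Step 1, $T_\alpha M_\lambda \cong \Ind M^{\oa}_{s_\alpha\cdot\lambda} = M_{s_\alpha\cdot\lambda}$, as desired.

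For the typical case, if one does not wish to change the Borel, the alternative is to use that a typical central character makes the relevant block of $\cO$ behave homologically like a block of $\cO_{\oa}$ (via Demazure-type arguments of Section~\ref{specseq}, or directly via the equivalence induced by complete reducibility over the relevant Levi), so that the computation of $\cL_i T_\alpha$ on $M_\lambda$ is literally the classical one; the combinatorial condition is unchanged because typicality makes $\rho$ and $\rho_{\oa}$ interchangeable in the orbit computation. The main obstacle I anticipate is precisely the bookkeeping at the ``boundary'': verifying that the single excluded family — $\langle\lambda,\alpha^\vee\rangle\in\Z$ with $\langle\lambda+\rho,\alpha^\vee\rangle<0$ — is exactly the preimage of the classical bad set under whichever reduction (change of Borel, or typical-block equivalence) one uses, and confirming that $\Ind$ sends the classical non-Verma output to the non-Verma object on the super side rather than accidentally collapsing it (this uses exactness and faithfulness of $\Ind$ from Lemma~\ref{lemind}(ii) and the fact that $\Ind$ of a nonzero module is nonzero). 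Everything else is a direct application of the already-quoted intertwining identities.
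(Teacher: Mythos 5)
The central step of your argument is false: for a Lie superalgebra one does \emph{not} have $M_\lambda = \Ind\,M^{\oa}_\lambda$. Induction from $\fb$ to $\fg$ does not factor through $\fg_{\oa}$. Indeed $\Ind\,M^{\oa}_\lambda = \U(\fg)\otimes_{\U(\fb_{\oa})}\C_\lambda$, which by PBW is isomorphic as a vector space to $\U(\overline{\fn}_{\oa})\otimes\Lambda\fg_{\ob}\otimes\C_\lambda$, whereas $M_\lambda = \U(\fg)\otimes_{\U(\fb)}\C_\lambda\cong \U(\overline{\fn}_{\oa})\otimes\Lambda\overline{\fn}_{\ob}\otimes\C_\lambda$. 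The extra factor $\Lambda\fn_{\ob}$ in the former shows the two objects differ already at the level of characters (the induced module is $2^{\dim\fn_{\ob}}$ times bigger). Equivalently, the true relationship points the other way: $\Res\,M_\lambda$ carries a standard filtration by the $\fg_{\oa}$-Vermas $M^{\oa}_{\lambda-\gamma}$ for $\gamma\in\Gamma^+$ (this is exactly equation \eqref{resMM} in the paper), which is a filtration on the \emph{restriction}, not an induction presentation. Since you then feed this identification into the intertwiner $T_\alpha\circ\Ind\cong\Ind\circ T^{\oa}_\alpha$ of \eqref{eqlemma51}, everything downstream -- including the final step ``$T_\alpha M_\lambda\cong\Ind M^{\oa}_{s_\alpha\cdot\lambda}=M_{s_\alpha\cdot\lambda}$'' -- collapses; the right-hand side there is again not a Verma module of $\fg$.

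A second, smaller, inaccuracy: you assert $\langle\alpha^\vee,\rho_{\ob}\rangle=0$ after reducing to the case that $\alpha$ or $\alpha/2$ is simple in $\Delta^+$. This holds when $\alpha$ itself is simple (then $s_\alpha$ permutes $\Delta^+_{\ob}$), but when $\alpha/2$ is a simple non-isotropic odd root (as occurs for $\mathfrak{osp}(1|2n)$, $G(3)$, $\ldots$) one has $\langle\rho_{\ob},\alpha^\vee\rangle=\tfrac12$ and hence $\langle\rho,\alpha^\vee\rangle=\tfrac12\neq\langle\rho_{\oa},\alpha^\vee\rangle$; your claim that you can always change $\fb$ within fixed $\fb_{\oa}$ to make $\alpha$ itself simple is not available in that situation, since the odd non-isotropic simple root is present in every compatible Borel. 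Note also that this lemma is imported verbatim from \cite{CouMaz} (it is stated here as Lemma 5.7 of that reference), so the present paper offers no proof to compare against; but the actual proof cannot go through the identification $M_\lambda=\Ind M^{\oa}_\lambda$. A correct route is either to compute directly with the localization defining $T_\alpha$, or to use the second intertwiner $\Res\circ T_\alpha\cong T^{\oa}_\alpha\circ\Res$ together with the standard filtration of $\Res\,M_\lambda$ to pin down $\Res\,T_\alpha M_\lambda$ and then supply a separate argument identifying the $\fg$-module structure, e.g.\ by exhibiting a surjection $M_{s_\alpha\cdot\lambda}\twoheadrightarrow T_\alpha M_\lambda$ from the universal property and comparing characters.
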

In the current paper we will derive some further properties of these twisting functors in Appendix \ref{aptwist}.


\section{Reformulations of Bott-Borel-Weil theory}
\label{secBBW}

We use the notation $\fg$ for a basic classical Lie superalgebra with parabolic subalgebra $\fp=\fl\oplus\fu$ as in the prelimiaries (so in particular $\fl$ is of typical type). BBW theory is defined through a connected Lie supergroup $G$ (with Lie superalgebra $\fg$) with a subsupergroup $P$ with Lie superalgebra $\fp$, see e.g. \cite{MR2734963, Gruson2, MR0957752, MR1036335}. Consider a $P$-module $V$ and the corresponding vector bundle $\cV=G\times_P V$. In \cite{MR0957752} the sheaf cohomology or \v{C}ech cohomology on such vector bundles was introduced. Since the sheaf of sections on $\cV$ is a $\fg$-sheaf, the space of holomorphic sections $H^0(G/P,\cV)$ and the higher cohomology groups $H^k(G/P,\cV)$ are $\fg$-modules. As in \cite{MR2734963, Gruson2} we define $\Gamma_k(G/P,V)=H^k(G/P,G\times_P V^\ast)^\ast$. We are interested in calculating $\Gamma_k(G/P,L_{\mu}(\fp))$ for an $\fl$-dominant $\mu\in \cP$. The main results of this section are summarised in the following proposition and theorem.
\begin{proposition}
\label{propBBW}
The cohomology groups of the $\fg$-sheaf of sections on the vector bundle $G\times_P V$ with $V\in\cC(\fp,\fl)$ satisfy
\begin{eqnarray*}
(i)&& H^k(G/P,G\times_P V))=\cR_k S(\coind^{\fg}_{\fp}(V))\\
(ii)&& \Gamma_k(G/P,V)=\cL_k\Gamma(\ind^{\fg}_{\fp}V)\\
(iii)&& H^k(G/P,G\times_P V)=\Hom_{\U(\fl)}(\C, H^k(\fu,V\otimes\cR)),
\end{eqnarray*}
with $\cR\cong\C[G]$ the $\fg$-bimodule corresponding to the algebra of matrix elements of the finite dimensional weight modules of $\fg$ (finite dimensional $G$-modules).
\end{proposition}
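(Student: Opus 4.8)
The plan is to prove the three identities in turn, deriving $(i)$ from Penkov's geometric description of the cohomology of $\fg$-sheaves on $G/P$, then $(ii)$ by a duality argument, and finally $(iii)$ by combining $(i)$ with a Frobenius-type reciprocity and a model for $\cR\cong\C[G]$.

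First I would establish $(i)$. The starting point is Penkov's construction in \cite{MR0957752}: the \v{C}ech complex computing $H^\bullet(G/P,G\times_P V)$ can be built from the restriction of the sheaf of sections to the big cell $\overline{\fu}$-orbit, which identifies the space of sections over $G/P$ (as a $\fg$-module) with a coinduced object. Concretely, the global sections $H^0(G/P,G\times_P V)$ should be identified with the maximal $(\fg,\fg_{\oa})$-subobject of $\coind^{\fg}_{\fp}(V)$, i.e.\ with $S(\coind^{\fg}_{\fp}(V))$, since holomorphic sections are exactly those locally finite vectors — this is the super-analogue of the classical statement that $H^0(G/P,G\times_P V)$ is the locally $P$-finite (here locally $\fg_{\oa}$-finite, $\fg_{\oa}$-semisimple) part of the induced module. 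By Lemma \ref{lemind}(i), $\coind^{\fg}_{\fp}$ is exact and sends injectives in $\cC(\fp,\fl)$ to injectives in $\cC(\fg,\fl)$, while $H^k(G/P,-)$ vanishes in positive degree on the sections of $G\times_P(\text{injective})$ (these are acyclic because injective $\fp$-modules induce up to objects with no higher sheaf cohomology, again following Penkov / Santos \cite{MR1680015}, \cite{MR2059616}). Both sides of $(i)$ are therefore the right derived functors of the same left exact functor $\cC(\fp,\fl)\to\cC(\fg,\fg_{\oa})$ applied to $V$, whence the equality. Here the delicate point — and the one I expect to be the main obstacle — is the acyclicity statement: one must check carefully that $\coind^{\fg}_{\fp}$ of an injective object in $\cC(\fp,\fl)$ has vanishing higher sheaf cohomology, which is where the explicit big-cell description of the \v{C}ech complex, and the comparison with Penkov's setting, really has to be used.

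For $(ii)$, I would use the definition $\Gamma_k(G/P,V)=H^k(G/P,G\times_P V^\ast)^\ast$ together with $(i)$. This gives $\Gamma_k(G/P,V)=\left(\cR_kS(\coind^{\fg}_{\fp}(V^\ast))\right)^\ast$. On finite dimensional (or, more carefully, suitably restricted) modules the functors $\coind^{\fg}_{\fp}(-^\ast)$ and $\left(\ind^{\fg}_{\fp}(-)\right)^\ast$ agree — using that $\coind^{\fg}_{\fp}V^\ast=\Hom_{\U(\fp)}(\U(\fg),V^\ast)\cong(\U(\fg)\otimes_{\U(\fp)}V)^\ast=(\ind^{\fg}_{\fp}V)^\ast$ as $\fg$-modules, the standard tensor–hom adjunction for the finite ring extension. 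Combining this with the relation $\cR_kS(M)=\left(\cL_k\Gamma(M^\vee)\right)^\vee$ stated in the excerpt (and the compatibility of $\ast$ with $\vee$ on the relevant category), the outer dual cancels the $\vee$'s and one is left with $\cL_k\Gamma(\ind^{\fg}_{\fp}V)$. The only care needed is bookkeeping of which duality ($\ast$ versus $\vee$) is used where, and ensuring the objects involved lie in a category on which these dualities are exact and mutually inverse.

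Finally, for $(iii)$ I would combine $(i)$ with two ingredients. First, the Peter–Weyl-type model $\cR\cong\C[G]$ as a $\fg$-bimodule decomposing as $\bigoplus_{\Lambda}L_\Lambda^\ast\otimes L_\Lambda$ over the finite dimensional weight modules; using the right $\fg$-action, $\cR$ is an injective cogenerator of $\cC(\fg,\fg_{\oa})$ and one has the Frobenius reciprocity $\coind^{\fg}_{\fp}(V)\cong\Hom_{\U(\fp)}(\cR, V)$-type statement, or more precisely that for the left action $S(M)\cong(M\otimes\cR)^{\fg}$ in a suitable completed sense. Feeding $M=\coind^{\fg}_{\fp}(V)$ into this and using that $\cR$ is, as an $\fl$-module via $\fp$, a direct sum of finite dimensional modules, one rewrites $\cR_kS(\coind^{\fg}_{\fp}(V))$ as the $k$-th derived functor of $\Hom_{\U(\fl)}(\C,-)$ applied to $V\otimes\cR$; since $\cR$ is injective relative to $\fu$-cohomology (it is $\fu$-acyclic as a $\fg$-module), the derived functors are computed by $\Hom_{\U(\fl)}(\C,H^k(\fu,V\otimes\cR))$, which is exactly the claimed expression. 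The main technical point in $(iii)$ is justifying the interchange of the Zuckerman derived functor with $\fu$-cohomology through the bimodule $\cR$ — i.e.\ that $V\otimes\cR$ is an acyclic resolving object for both functors at once — together with convergence of the relevant sums; this should follow from the injectivity/acyclicity properties of $\cR$ recorded in Section \ref{algfun}, but it is where one must be most careful.
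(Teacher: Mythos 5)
Your proofs of $(i)$ and $(ii)$ essentially reproduce the paper's argument: identify $H^0(G/P,G\times_P-)$ with $S\circ\coind^{\fg}_{\fp}$ as left exact functors $\cC(\fp,\fl)\to\cC(\fg,\fg_{\oa})$, use that $\coind^{\fg}_{\fp}$ is exact and sends injectives to injectives (Lemma \ref{lemind}) to commute derived functors, and deduce $(ii)$ as a dual reformulation. You correctly flag the one genuinely nontrivial point, namely that $H^k(G/P,G\times_P-)$ really are the right derived functors of $H^0$; the paper treats this as known from the sheaf-theoretic setup of \cite{MR0957752}.

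For $(iii)$ your route diverges from the paper and contains a concrete error. You assert the Peter--Weyl decomposition $\cR\cong\bigoplus_\Lambda L_\Lambda^\ast\otimes L_\Lambda$ as $\fg$-bimodule, but this is \emph{false} for basic classical Lie superalgebras: $\cF$ is not semisimple, and the correct decompositions — which are one of the main points of Section \ref{algfun} — are $\cR\cong\bigoplus_{\lambda\in\cP^+_{\oa}}C_\lambda\times(L^{\oa}_\lambda)^\ast$ (Theorem \ref{UC}) and $\cR\cong\bigoplus_{\Lambda\in\cP^+}P^\cF_\Lambda\times L_\Lambda^\ast$ (Corollary \ref{projcover}), both only as $\fg\times\fg_{\oa}$-modules. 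The paper's actual proof of $(iii)$ does not go through $(i)$ or the Zuckerman functor at all: it uses equation \eqref{RBBW} for $k=0$ to write $H^0(G/P,G\times_P V)\cong\Hom_{\cC(\fp,\fl)}(\C,\cR\otimes V)$ directly, then passes to derived functors and applies the identity $\Ext^k_{\cC(\fp,\fl)}(\C,-)\cong\Hom_\fl(\C,H^k(\fu,-))$ from equation \eqref{ExtHk}. That argument only requires $\cR$ to be, as a left $\fp$-module, a direct sum of finite dimensional modules (so $\cR\otimes-$ is exact and sends injectives to acyclics), which is much weaker than what you invoke. Your route can likely be repaired, but as written the decomposition you lean on is wrong and the step "interchange Zuckerman derived functor with $\fu$-cohomology" is left unjustified, whereas the paper closes the argument in one line with \eqref{ExtHk}.
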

This $\fg$-bimodule $\cR$ will be studied in full extend in Section \ref{algfun}.
\begin{theorem}
\label{thmBBW}
For any integral dominant $\fl$-weight $\mu\in\fh^\ast$, we have
\begin{eqnarray*}
(i)&& \Gamma_k(G/P,L_{\mu}(\fp))= \Gamma_k(G/B,L_{\mu}(\fb))\\
(ii)&& \Gamma_k(G/B,L_\mu(\fb))=\cL_k\Gamma(M(\mu))=\left(\Ext_{\cO}^k(M(\mu),\cR)\right)^\ast\\
(iii)&& H^k(G/B,G\times_B L_{-\mu}(\fb))=\Ext_{\cO}^k(M(\mu),\cR).
\end{eqnarray*}
\end{theorem}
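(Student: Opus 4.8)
The plan is to deduce Theorem \ref{thmBBW} from Proposition \ref{propBBW} together with standard adjunction and duality arguments. First I would establish part (i): for an integral dominant $\fl$-weight $\mu$, I claim $\ind^{\fg}_{\fp}L_\mu(\fp)$ and $\ind^{\fg}_{\fb}L_\mu(\fb)=M(\mu)$ have the same maximal locally finite quotient and, more strongly, the same images under all $\cL_k\Gamma$. Since $\fp\supseteq\fb$ and $L_\mu(\fp)$ is the $\fl$-integrable quotient of $M_\mu(\fl)=\ind^{\fl}_{\fh}L_\mu(\fb\cap\fl)$, transitivity of induction gives a surjection $M(\mu)\tto \ind^{\fg}_{\fp}L_\mu(\fp)$ whose kernel is generated by the $\fl$-non-integrable part, i.e. by submodules of $M(\mu)$ that are already $\fl$-integrable-killed; the point is that $\cL_k\Gamma$ sees only the locally $\fg_{\oa}$-finite behaviour, and since $\fl$ is of typical type ($\fl$-finite dimensional semisimple $=$ $\fl$-Cartan-semisimple), the kernel contributes nothing in the derived functors $\cL_k\Gamma$. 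I would make this precise by noting the kernel has a filtration by modules induced from $M_{\nu}(\fl)$ with $\nu$ not $\fl$-dominant integral, for which $\cL_k\Gamma\circ\ind^{\fg}_{\fp}$ vanishes (no locally finite composition factors in the relevant central characters), and then apply the long exact sequence of $\cL_k\Gamma$. Comparing with Proposition \ref{propBBW}(ii) applied to both $\fp$ and $\fb$ yields (i).

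Next, part (ii). The identity $\Gamma_k(G/B,L_\mu(\fb))=\cL_k\Gamma(M(\mu))$ is immediate from Proposition \ref{propBBW}(ii) with $\fp=\fb$, since $\ind^{\fg}_{\fb}L_\mu(\fb)=M(\mu)$ by definition. For the second equality I would use the relation $\cR_kS(M)=(\cL_k\Gamma(M^\vee))^\vee$ on category $\cO$ recorded just before Lemma \ref{lemind}, combined with $M(\mu)^\vee\cong M(\mu)$ being false in general — so instead I would pass through Proposition \ref{propBBW}(iii). Concretely, $\cL_k\Gamma(M(\mu))$ is computed by the homological version of (iii): dualising, $\cL_k\Gamma(M(\mu))=\left(\cR_kS(M(\mu)^\vee)\right)^\vee$, and then $\cR_kS$ applied to a module in $\cO$ is, by Proposition \ref{propBBW}(iii) (with $\fp=\fb$, $\fu=\fn$), $\Hom_{\U(\fh)}(\C,H^k(\fn,M(\mu)^\vee\otimes\cR))$. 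Using the Ext-interpretation of Kostant cohomology \eqref{ExtHk} — valid in $\cC(\fb,\fh)=\cO$ — this rewrites as $\Ext^k_{\cO}(\C,M(\mu)^\vee\otimes\cR)=\Ext^k_{\cO}(M(\mu),\cR)$ after untwisting the $\vee$ via the self-duality of $\cR$ as a $\fg$-bimodule. Dualising once more and tracking that the outer $\vee$ and the $\ast$ in the definition of $\Gamma_k$ cancel appropriately gives $\Gamma_k(G/B,L_\mu(\fb))=\left(\Ext^k_{\cO}(M(\mu),\cR)\right)^\ast$. Part (iii) is then just Proposition \ref{propBBW}(i) with $V=L_{-\mu}(\fb)$ rewritten via the same \eqref{ExtHk}-argument, observing that $\coind^{\fg}_{\fb}L_{-\mu}(\fb)$ and the Verma module $M(\mu)$ are exchanged under the duality, so that $\cR_kS(\coind^{\fg}_{\fb}L_{-\mu}(\fb))=\Hom_{\U(\fh)}(\C,H^k(\fn,L_{-\mu}(\fb)\otimes\cR))=\Ext^k_{\cO}(M(\mu),\cR)$.

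The main obstacle I anticipate is bookkeeping the various dualities ($M\mapsto M^\ast$, the twisted dual $\vee$, and the automorphism $\dagger$) so that the three statements come out with the precise placement of $-\mu$ versus $\mu$ and with the stars and $\vee$'s cancelling correctly; in particular one must check that $\cR$ is genuinely self-dual as a $\fg$-bimodule (which should follow from $\cR\cong\bigoplus_{\Lambda\in\cP^+}L_\Lambda\otimes L_\Lambda^\ast$ as a bimodule, a Peter–Weyl decomposition to be established in Section \ref{algfun}), and that $\Hom_{\U(\fl)}(\C,H^k(\fu,V\otimes\cR))$ really does land in $\cC(\fg,\fg_{\oa})$ with the right $\fg$-action coming from the second $\cR$-factor. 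A secondary technical point is the vanishing claim in part (i): that $\cL_k\Gamma$ kills parabolically induced modules from non-$\fl$-dominant weights. I would handle this by a central-character/blocks argument — such induced modules have no finite dimensional composition factors — rather than by direct computation. Once these duality identifications are pinned down, everything else is a formal consequence of Proposition \ref{propBBW}, equation \eqref{ExtHk}, and the adjunction $(\Gamma,$ forgetful$)$ together with $(\text{forgetful},S)$.
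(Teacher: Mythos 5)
Your approach to part (i) has a genuine gap. You propose to use the short exact sequence $K\hookrightarrow M(\mu)\tto\ind^{\fg}_{\fp}L_\mu(\fp)$ and claim the kernel $K$ contributes nothing to $\cL_\bullet\Gamma$ because its filtration pieces ``have no locally finite composition factors in the relevant central characters.'' This is not correct: by transitivity of induction the filtration pieces are ordinary $\fg$-Verma modules $M_\nu$ with $\nu$ in the same dot-orbit (hence the same central character) as $\mu$, and such $M_\nu$ do have non-vanishing derived Bernstein functors — that is precisely what BBW theory is about, just in shifted degrees. More fundamentally, your comparison conflates derived functors taken in two different categories: the Bernstein functor in Proposition \ref{propBBW}(ii) is a functor $\cC(\fg,\fl)\to\cC(\fg,\fg_{\oa})$ whose derived functors are computed from resolutions in $\cC(\fg,\fl)$, whereas $M(\mu)$ and the kernel pieces $M_\nu$ live in $\cC(\fg,\fh)$ and are not even objects of $\cC(\fg,\fl)$, so the long exact sequence you invoke is for a different derived functor with a different answer. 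The paper instead proves Lemma \ref{KostantBott}: it shows directly that the functors $\Ext^k_{\cC(\fp,\fl)}(L_\mu(\fp),-)$ and $\Ext^k_{\cC(\fb,\fh)}(L_\mu(\fb),-)\circ\res^\fp_\fb$ agree, by checking that injective modules in $\cC(\fp,\fl)$ restrict to acyclic objects — this is exactly where the hypothesis that $\fl$ is of typical type (so $\fl$-Kostant cohomology vanishes in positive degree on simples) enters. Your proposal has nothing playing the role of this lemma.

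For parts (ii) and (iii) your route through dualities and equation \eqref{ExtHk} is closer to the paper's, but you omit a non-trivial ingredient: the paper first passes from $\Ext^k_{\cC(\fp,\fl)}(L_\mu(\fp),\cR)$ to $\Ext^k_{\cC(\fg,\fh)}(M_\mu,\cR)$ by Frobenius reciprocity, and then invokes the extension fullness of $\cO$ in $\cC(\fg,\fh)$ (Theorem 24 in \cite{CouMaz2}) to replace $\Ext^k_{\cC(\fg,\fh)}$ by $\Ext^k_{\cO}$. Without this last step, the identification $\Gamma_k(G/B,L_\mu(\fb))=\bigl(\Ext^k_{\cO}(M(\mu),\cR)\bigr)^\ast$ is not justified, since a priori the Ext groups in the two categories could differ. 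You should also fix the placement of the dualities: the paper never needs to dualize $M(\mu)$ itself; instead it works with $G\times_B L_\mu(\fb)^\ast$ and applies $(\ )^\ast$ only at the very end to translate $H^k$ into $\Gamma_k$, which is cleaner than your back-and-forth through $\vee$.
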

In particular Theorem \ref{thmBBW}(i) implies that the solution of BBW theory for the Borel subalgebra is sufficient for our range of parabolic subalgebras. The remainder of this section is mainly devoted to proving Proposition \ref{propBBW} and Theorem \ref{thmBBW}. We note that Theorem \ref{thmBBW}(i) can also be obtained as a special case of the Leray spectral sequence in Theorem 1 of \cite{MR2734963}, but we provide an alternative proof.

\begin{proof}[Proof of Proposition \ref{propBBW}]
The $\fg$-module $H^0(G/P,G\times_P V)$ is isomorphic to 
\begin{equation}\label{RBBW}\left(\cR\otimes V\right)^P=S(\Hom_{\U(\fp)}(\U(\fg),V))=S(\coind^{\fg}_{\fp}(V))\end{equation} see e.g. the proof of Lemma 2 in \cite{MR2734963} and the subsequent Lemma \ref{findual2}. Since these identities are functorial we obtain that the functors $H^0(G/P,G\times_P -)$ and $S\circ \coind^{\fg}_{\fp}$ acting between
\[\cC(\fp,\fl)\to\cC(\fg,\fg_{\oa})\]
are identical. Therefore their derived functors are also identical. Since $\coind^{\fg}_{\fp}:\cC(\fp,\fl)\to\cC(\fg,\fl)$ is exact and maps injective modules to injective modules, see Corollary 4.1 in \cite{MR2059616}, the right derived functors of the left exact functor $S\circ \coind^{\fg}_{\fp}$ are given by $\cR_k(S\circ \coind^{\fg}_{\fp})= \cR_k S\circ \coind^{\fg}_{\fp}$.

Since the functors $H^k(G/P,G\times_P-)$ are the right derived functors of the left exact functor $H^0(G/P,G\times_P-):\cC(\fp,\fl)\to\cC(\fg,\fg_{\oa})$, Proposition \ref{propBBW}(i) follows.

Proposition \ref{propBBW}(ii) is just a reformulation of this result.

Proposition \ref{propBBW}(iii) can be proved similarly to Lemma 5.1 in \cite{MR0563362}, but here we take a more direct approach. For $k=0$, equation \eqref{RBBW} implies 
$$H^0(G/P,G\times_P V)\cong\Hom_{\fp}(\C,\cR\otimes V)\cong\Hom_{\cC(\fp,\fl)}(\C,\cR\otimes V).$$ The equality of the higher cohomologies then follows from taking derived functors and equation \eqref{ExtHk}.
\end{proof}

Before proving Theorem \ref{thmBBW} we focus on the following lemma.
\begin{lemma}
\label{KostantBott}
Let $\mu$ be an integral dominant $\fl$-weight and $V\in\cC(\fp,\fl)$, then
\[\Ext^k_{\cC(\fp,\fl)}(L_{\mu}(\fp),V)\quad\cong\quad\Ext^k_{\cC(\fb,\fh)}(L_{\mu}(\fb),V)\qquad\mbox{and}\]
\[L_\mu(\fl)\subset H^k(\fu,V)\qquad \Leftrightarrow\qquad \C_\mu\subset H^k(\fn,V).\]
\end{lemma}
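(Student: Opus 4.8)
The plan is to reduce both statements to the analogous but far simpler relation between $\fu$-cohomology over the parabolic and $\fn$-cohomology over the Borel, exploiting the decomposition $\fu = \fn_\fl \oplus \fn$, where $\fn_\fl = \fn\cap\fl$ is the nilradical of the Borel subalgebra $\fb\cap\fl$ of $\fl$. First I would invoke equation \eqref{ExtHk}, which already expresses $\Ext^k_{\cC(\fp,\fl)}(L_\mu(\fp),V)$ as $\Hom_\fl(L_\mu(\fl),H^k(\fu,V))$, and the analogous statement over $\fb$ (with $\fl$ replaced by $\fh$, $\fu$ by $\fn$, and $L_\mu(\fl)$ by $\C_\mu$); so the first isomorphism in the Lemma is equivalent to
\[
\Hom_\fl(L_\mu(\fl),H^k(\fu,V))\;\cong\;\Hom_\fh(\C_\mu,H^k(\fn,V)),
\]
and the displayed equivalence is simply the statement that these $\Hom$-spaces are nonzero simultaneously. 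Thus it suffices to prove the single $\fl$-module identity: for $\mu$ integral dominant for $\fl$,
\[
\Hom_\fl\bigl(L_\mu(\fl),H^k(\fu,V)\bigr)\;\cong\;\bigl(H^k(\fn,V)\bigr)_\mu^{\fn_\fl},
\]
i.e.\ the $\mu$-weight space of the $\fn_\fl$-invariants.

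The key step is a Hochschild--Serre type spectral sequence for the ideal structure $\fn = \fn_\fl \ltimes \fu$ inside $\fn$ — more precisely, since $\fu$ is an ideal of $\fn$ (it is an ideal of $\fp$, hence of $\fn\subset\fp$) with quotient $\fn/\fu \cong \fn_\fl$, there is a spectral sequence
\[
E_2^{p,q} = H^p\bigl(\fn_\fl, H^q(\fu,V)\bigr)\;\Longrightarrow\; H^{p+q}(\fn,V).
\]
Now I would use the crucial input, guaranteed by the hypothesis that $\fl$ is of typical type, that $\fl$ (hence $\fn_\fl$, sitting inside the reductive-or-$\mathfrak{osp}(1|2n)$ algebra $\fl$) behaves like a reductive Lie algebra for finite-dimensional representation theory: the $\fl$-module $H^q(\fu,V)$ is semisimple as an $\fl$-module, so by Kostant's theorem (the version for $\fl$ of typical type, which holds precisely because of typicality — see the references in the preliminaries) $H^p(\fn_\fl, H^q(\fu,V))$ is concentrated in the appropriate degrees and $H^0(\fn_\fl,-)$ picks out exactly the highest-weight vectors. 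More to the point, one checks that $H^k(\fn,V)$ inherits an $\fh$-action and that its $\fn_\fl$-invariant part in the $\mu$-weight space, for $\mu$ integral dominant for $\fl$, is computed by the $p=0$ row: since $H^q(\fu,V)$ is a semisimple $\fl$-module, $H^0(\fn_\fl, H^q(\fu,V))$ is the direct sum of the $\fl$-highest-weight spaces, and the edge map identifies $H^k(\fn,V)^{\fn_\fl}$ with $H^0(\fn_\fl,H^k(\fu,V))$ after a dimension/degeneration argument. Then $\Hom_\fl(L_\mu(\fl), H^k(\fu,V))$ is exactly the multiplicity space, which equals the $\mu$-weight space of $H^0(\fn_\fl,H^k(\fu,V)) = H^k(\fn,V)^{\fn_\fl}$. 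This gives the identity, and the second assertion follows by noting that $L_\mu(\fl)\subset H^k(\fu,V)$ iff this multiplicity space is nonzero iff $\bigl(H^k(\fn,V)^{\fn_\fl}\bigr)_\mu \ne 0$ iff $\C_\mu\subset H^k(\fn,V)$ (the last step using that $\mu$ is $\fl$-dominant so any $\mu$-weight vector in $H^k(\fn,V)$ killed by $\fn$ is automatically $\fn_\fl$-invariant, and conversely).

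The main obstacle I anticipate is the degeneration of the spectral sequence, or rather making precise the claim that only the $p=0$ column contributes to the computation of $H^k(\fn,V)^{\fn_\fl}$ in $\fl$-dominant weights: one must rule out contributions from $E_2^{p,q}$ with $p>0$ to weight spaces $\mu$ that are $\fl$-dominant. This is where typicality of $\fl$ is essential — it ensures $H^q(\fu,V)$ is $\fl$-semisimple, so that $H^p(\fn_\fl,-)$ applied to it, for $p > 0$, produces only weights that are strictly dominated (in the appropriate ordering for $\fn_\fl$) by a non-$\fl$-dominant weight, hence cannot be $\fl$-integral-dominant; combined with the fact that $\fn_\fl$-invariants and weight spaces are exact functors, the contribution of the higher columns to the $\fl$-dominant part of $H^k(\fn,V)$ vanishes. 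An alternative, cleaner route that sidesteps the spectral sequence altogether: observe that $L_\mu(\fb) \cong L_\mu(\fl)$ as $\fl$-modules (restricting the $\fb$-structure), so that one can directly compare the two Ext-groups by an adjunction/restriction argument using that $\cC(\fp,\fl)$ and $\cC(\fb,\fh)$ share the projective resolution of $\C$ coming from $\cC(\fu)$ versus $\cC(\fn)$, with the comparison governed precisely by $\fn_\fl$. I would likely present the proof via this second route if the referenced projective resolutions make the bookkeeping transparent, falling back on the Hochschild--Serre argument otherwise.
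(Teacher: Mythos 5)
Your proof is correct, but it follows a genuinely different route from the paper's. You invoke the Hochschild--Serre spectral sequence for the ideal $\fu\trianglelefteq\fn$ with quotient $\fn/\fu\cong\fn_\fl$, i.e.\ $E_2^{p,q}=H^p(\fn_\fl,H^q(\fu,V))\Rightarrow H^{p+q}(\fn,V)$, and then use Kostant's theorem for $\fl$ (valid because $\fl$ is of typical type, so $H^q(\fu,V)$ is $\fl$-semisimple) to show that for $p>0$ the $E_2^{p,q}$ carry no $\fl$-dominant weights, so all differentials into and out of the $p=0$ edge term in weight $\mu$ vanish, giving $\bigl(H^k(\fn,V)\bigr)_\mu\cong\bigl(E_2^{0,k}\bigr)_\mu=\Hom_\fl(L_\mu(\fl),H^k(\fu,V))$. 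The paper instead proves the equivalent stronger statement that the two functors $\Ext^k_{\cC(\fp,\fl)}(L_\mu(\fp),-)$ and $\Ext^k_{\cC(\fb,\fh)}(L_\mu(\fb),-)\circ\res^\fp_\fb$ are isomorphic by a derived-functor comparison: one checks agreement at $k=0$ and then shows that $\res^\fp_\fb$ sends injectives of $\cC(\fp,\fl)$ (direct summands of $\Hom_{\U(\fl)}(\U(\fp),L_\kappa(\fl))$) to $\Hom_{\cC(\fb,\fh)}(L_\mu(\fb),-)$-acyclic objects, again by Kostant vanishing for $\fl$, and concludes via a Grothendieck-type spectral sequence. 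Both proofs hinge on the same Kostant input for $\fl$ of typical type; yours is more explicit about what happens on cohomology classes and weight spaces, while the paper's is more structural and yields the functor isomorphism at the outset. One small imprecision to fix in your write-up: you write ``$\bigl(H^k(\fn,V)\bigr)_\mu^{\fn_\fl}$'', but $\fn_\fl$ does not act on $H^k(\fn,V)$; what you mean is the $\mu$-weight space of $\bigl(H^k(\fu,V)\bigr)^{\fn_\fl}=E_2^{0,k}$, which your degeneration argument then identifies with the $\mu$-weight space of $H^k(\fn,V)$. Your closing alternative (comparing the standard $\fu$- and $\fn$-resolutions directly) is closer in spirit to the paper's route but would need to be fleshed out; as presented, the Hochschild--Serre argument is the one that actually works.
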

\begin{proof}
We prove, more generally, that the functors
\begin{eqnarray}\label{ExtExt}\Ext^k_{\cC(\fp,\fl)}(L_{\mu}(\fp),-)\qquad\mbox{and}\qquad \Ext^k_{\cC(\fb,\fh)}(L_{\mu}(\fb),-)\circ \res^\fp_{\fb}, \end{eqnarray}
acting from $\cC(\fp,\fl)$ to \textbf{Set}, are isomorphic. This property clearly holds for $k=0$. 

Now we prove that injective modules in $\cC(\fp,\fl)$ are mapped by $\res^{\fp}_{\fb}$ to acyclic modules of the functor $\Hom_{\cC(\fb,\fl)}(L_{\mu}(\fb),-)$. These injective modules are direct summands of modules of the form $I=\Hom_{\U(\fl)}(\U(\fp),L_\kappa(\fl))$, see \cite{Hochschild}. Since we have
$$\res^{\fp}_{\fb}I\cong \Hom_{\U(\fb_{\fl})}(\U(\fb),\res^{\fl}_{\fb_{\fl}}L_\kappa(\fl)),$$
with $\fb_{\fl}:=\fb\cap \fl$, we can apply equation \eqref{ExtHk} and the Frobenius reciprocity to obtain
\begin{eqnarray*}
\Ext_{(\fb,\fh)}^k(L_{\mu}(\fb),I)&=& \Ext_{(\fb_{\fl},\fh)}^k(L_{\mu}(\fb_{\fl}),L_{\kappa}(\fl))\\
&=&\Hom_{\fh}(\C_\mu,H^k(\fn_{\fl}, L_{\kappa}(\fl))).
\end{eqnarray*}
Since $\mu$ is $\fl$-integral dominant and $\fl$ is of typical type, Kostant cohomology for $\fl=\fn_{\fl}^-\oplus\fh\oplus \fn_{\fl}$ implies that the expression above can only be non-zero if $k=0$, see \cite{MR0563362, MR0142696, osp12n}. This proves that $\res^{\fp}_{\fl}$ maps injective modules in $\cC(\fp,\fl)$ to acyclic modules for $\Hom_{\cC(\fb,\fl)}(L_{\mu}(\fb),-)$ if $\mu$ is $\fl$-integral dominant.

The Grothendieck spectral sequence of Section 5.8 in \cite{MR1269324} then implies that the functor on the right-hand side of equation \eqref{ExtExt} is the derived functor of the functor for $k=0$, from which the equality follows.
\end{proof}

\begin{proof}[Proof of Theorem \ref{thmBBW}]
Proposition \ref{propBBW}(iii) and equation \eqref{ExtHk} imply that
\begin{eqnarray}
\nonumber
H^k(G/P,G\times_P L_{\mu}(\fp)^\ast)&=&\Hom_{\U(\fl)}(L_{\mu}(\fp), H^k(\fu,\otimes\cR))\\
\label{thmBBWhelp}
&=& \Ext^k_{\cC(\fp,\fl)}(L_{\mu}(\fp),\cR).
\end{eqnarray}
Theorem \ref{thmBBW}(i) therefore follows from Lemma \ref{KostantBott}.

Equation \eqref{thmBBWhelp} implies, through the Frobenius reciprocity, that we have
$$H^k(G/P,G\times_B L_{\mu}(\fb)^\ast)= \Ext^k_{\cC(\fg,\fh)}(M_{\mu},\cR).$$
Since $\cO$ is extension full in $\cC(\fg,\fh)$, see Theorem 24 in \cite{CouMaz2} or Theorem 6.15 in \cite{MR2428237}, this implies Theorem \ref{thmBBW}(iii). The first equality in Theorem \ref{thmBBW}(ii) is a special case of Proposition \ref{propBBW}(ii), the second equality is an immediate reformulation of Theorem~\ref{thmBBW}(iii).
\end{proof}

\begin{corollary}
\label{corcentr}
The $\fg$-module $\Gamma_k(G/P,L_{\mu}(\fp))$ admits the central character $\chi_\mu$.
\end{corollary}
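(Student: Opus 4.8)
The plan is to read off the central character from the description of $\Gamma_k(G/P,L_\mu(\fp))$ as an $\Ext$-group in category $\cO$ supplied by Theorem~\ref{thmBBW}. By Theorem~\ref{thmBBW}(i) we may replace $\fp$ by $\fb$, so it suffices to treat $\Gamma_k(G/B,L_\mu(\fb))$, and by Theorem~\ref{thmBBW}(ii) this module is isomorphic (as a $\fg$-module) to $\left(\Ext^k_{\cO}(M_\mu,\cR)\right)^\ast$. The $\fg$-bimodule $\cR\cong\C[G]$ decomposes, as a module for the left (or right) regular action, into a direct sum of finite dimensional $\fg$-modules; hence, after restricting attention to any single block of $\cO$, only finitely many isotypic components of $\cR$ contribute, and the relevant $\Ext$ may be computed block by block.

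First I would recall that the action of the centre $Z(\fg)$ on $\Ext^k_{\cO}(M_\mu,N)$ is induced functorially: for $z\in Z(\fg)$, the two endomorphisms of $\Ext^k_{\cO}(M_\mu,N)$ given by "act by $z$ on $M_\mu$" and "act by $z$ on $N$" coincide, since both arise from the natural transformation of the identity functor on $\cO$ determined by $z$. Now $M_\mu$ has central character $\chi_\mu$, so the first of these endomorphisms is scalar multiplication by $\chi_\mu(z)$. Consequently $\Ext^k_{\cO}(M_\mu,N)$ is annihilated by $\ker\chi_\mu$ acting through the $N$-variable, i.e.\ it is supported (as a $Z(\fg)$-module, via the $\fg$-action inherited from $N=\cR$) on the single central character $\chi_\mu$. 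Dualising, $\Gamma_k(G/B,L_\mu(\fb))$ also admits the central character $\chi_\mu$, and pulling back along Theorem~\ref{thmBBW}(i) gives the statement for $\fp$.

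The one point that needs a little care is that $\cR$ itself is infinite dimensional, so that "admits a central character" must be understood in the locally finite sense: every element lies in a finite dimensional $Z(\fg)$-stable subspace, and on $\Ext^k_{\cO}(M_\mu,\cR)$ the centre acts by the single eigenvalue $\chi_\mu$. This is legitimate because $\Ext^k_{\cO}(M_\mu,-)$ commutes with the direct sum decomposition of $\cR$ into finite dimensional $\fg$-modules (the Verma module $M_\mu$ is, up to the relevant truncation, a compact object in the appropriate sense, and in any case $\Ext$ in $\cO$ commutes with direct sums of modules lying in finitely many blocks). I expect this bookkeeping with the infinite direct sum to be the only mild obstacle; the core of the argument — functoriality of the $Z(\fg)$-action on $\Ext$ together with the known central character of $M_\mu$ — is routine.
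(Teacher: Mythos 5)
Your proposal reaches the right conclusion, but the step that moves from the $\Ext$ computation to the central character of $\Gamma_k$ has a subtle gap that traces back to the two commuting $\fg$-actions on $\cR$. The functoriality argument you invoke — ``act by $z$ on $M_\mu$'' equals ``act by $z$ on $N$'' on $\Ext^k_{\cO}(M_\mu,N)$ — gives information about the $Z(\fg)$-action on $\cR$ used to view $\cR$ as an object of $\cO$ (the one along which $\Ext$ is computed). But the $\fg$-module structure on $\Ext^k_{\cO}(M_\mu,\cR)$, and hence on its dual $\Gamma_k(G/B,L_\mu(\fb))$, comes from the \emph{other} $\fg$-action on the bimodule $\cR$. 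These two restrictions of $Z(\fg)$ to $\cR$ do not coincide: they differ by the antipode, so on a block where $z$ acts on the $\cO$-side by $\chi_\Lambda(z)$ it acts on the other side by $\chi_\Lambda(S(z))$. Thus ``$\Ext^k$ is supported on $\chi_\mu$ via the action inherited from $N=\cR$'' is either true but about the wrong action, or false about the right one; your final ``dualising'' step then silently relies on the fact that the dual applies the antipode once more and restores $\chi_\mu$. The conclusion is correct, but this cancellation is exactly what needs to be checked and is not addressed.

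The cleaner route, and the one implicit in the paper (the corollary is stated without proof immediately after Theorem~\ref{thmBBW}), is to use the first equality in Theorem~\ref{thmBBW}(ii) rather than the second: $\Gamma_k(G/B,L_\mu(\fb))=\cL_k\Gamma(M_\mu)$. Since $\Gamma(M)$ is by definition a quotient of $M$ via a $\fg$-module map, $\Gamma$ commutes with the $Z(\fg)$-action, i.e.\ $\Gamma(z_M)=z_{\Gamma(M)}$; taking a projective resolution of $M_\mu$ inside the block $\cO_{\chi_\mu}$ shows the same for all $\cL_k\Gamma$. As $z-\chi_\mu(z)$ annihilates $M_\mu$, it annihilates $\cL_k\Gamma(M_\mu)$, giving the result without any appeal to the bimodule $\cR$, to infinite direct sums, or to compactness of $M_\mu$ — all of which your route has to deal with and the last of which you correctly flag as needing care.
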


Now we show that the Kac modules for both types of basic classical Lie superalgebras are a special case of the generalised Kac modules and thus correspond to the zero cohomology of BBW theory for the distinguished Borel subalgebra.
\begin{lemma}
\label{zerocohomKac}
\label{lemKacquo}
Consider a basic classical Lie superalgebra $\fg$ with distinguished Borel subalgebra $\fb^d$ and $\Lambda\in\cP^+$ an integral dominant weight. The maximal finite dimensional quotient $K^{(\fb^d)}_\Lambda=\Gamma(M^{(\fb^d)}_{\Lambda})$ of the Verma module $M^{(\fb^d)}_\Lambda$ is equal to the Kac module $K_\Lambda$.
\end{lemma}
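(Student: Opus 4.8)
The plan is to prove the equality $K^{(\fb^d)}_\Lambda=\Gamma(M^{(\fb^d)}_\Lambda)=K_\Lambda$ by identifying both modules as the maximal finite-dimensional quotient of the Verma module $M^{(\fb^d)}_\Lambda$, and to treat type I and type II separately because the definitions of $K_\Lambda$ differ. The key observation driving everything is that $\Gamma(M^{(\fb^d)}_\Lambda)$ is, by the definition of the Bernstein functor, the \emph{largest} finite-dimensional quotient of $M^{(\fb^d)}_\Lambda$, so it suffices in each case to (a) show $K_\Lambda$ is a quotient of $M^{(\fb^d)}_\Lambda$, hence $K_\Lambda$ is a quotient of $\Gamma(M^{(\fb^d)}_\Lambda)$, and (b) show $\Gamma(M^{(\fb^d)}_\Lambda)$ is a quotient of $K_\Lambda$, i.e. that the defining relations of $K_\Lambda$ already hold in every finite-dimensional highest-weight quotient of highest weight $\Lambda$.

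For \textbf{type I}, recall $\fg=\fg_{-1}\oplus\fg_0\oplus\fg_1$ and $\fb^d=\fh\oplus\fg_1\oplus\fn_0^+$ (so that $\fg_0\oplus\fg_1$ is the standard parabolic). Here $K_\Lambda=\U(\fg)\otimes_{\U(\fg_0\oplus\fg_1)}L^0_\Lambda$. First I would note that $M^{(\fb^d)}_\Lambda=\U(\fg)\otimes_{\U(\fb^d)}L_\Lambda(\fb^d)$ surjects onto $K_\Lambda$: by transitivity of induction $M^{(\fb^d)}_\Lambda\cong\U(\fg)\otimes_{\U(\fg_0\oplus\fg_1)}M^{0}_\Lambda$ where $M^0_\Lambda$ is the $\fg_0$-Verma module, and $M^0_\Lambda\tto L^0_\Lambda$ induces the surjection. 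Since $K_\Lambda$ is finite-dimensional (as $\fg_{-1}$ is nilpotent and $L^0_\Lambda$ finite-dimensional, $K_\Lambda\cong\Lambda\fg_{-1}\otimes L^0_\Lambda$ as $\fg_0$-modules), $K_\Lambda$ is one of the finite-dimensional quotients of $M^{(\fb^d)}_\Lambda$, hence a quotient of $\Gamma(M^{(\fb^d)}_\Lambda)=K^{(\fb^d)}_\Lambda$. Conversely, $K^{(\fb^d)}_\Lambda$ is a finite-dimensional highest-weight module of highest weight $\Lambda$; since $\fg_1\subset\fb^d$ acts by the highest-weight relations and on the highest-weight vector $\fg_1$ acts as zero, the highest-weight vector generates a quotient of $L^0_\Lambda$ under $\fg_0\oplus\fg_1$ — but finite-dimensionality forces the $\fg_0$-submodule generated to be all of $L^0_\Lambda$ (it is the finite-dimensional simple $\fg_0$-module of that highest weight, as $\Lambda\in\cP^+$ means $\Lambda$ is $\fg_0$-dominant integral here), so by Frobenius reciprocity $K^{(\fb^d)}_\Lambda$ is a quotient of $K_\Lambda$. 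The two surjections between modules of equal (finite) dimension give the isomorphism.

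For \textbf{type II}, the argument is the same in spirit but one must additionally account for the extra relation $N_\Lambda=\U(\fg)Y_{-\phi}^{b+1}\otimes L^0_\Lambda$ defining $K_\Lambda=\overline{K}_\Lambda/N_\Lambda$. As above, $M^{(\fb^d)}_\Lambda\tto\overline{K}_\Lambda\tto K_\Lambda$ and $K_\Lambda$ is finite-dimensional, so $K_\Lambda$ is a quotient of $K^{(\fb^d)}_\Lambda$. For the reverse, I would argue that in \emph{any} finite-dimensional highest-weight module of highest weight $\Lambda$ the vector $Y_{-\phi}^{b+1}v_\Lambda$ must vanish: here $\phi$ is the simple even root ``hidden behind'' the odd simple root with $b=\langle\Lambda,\phi^\vee\rangle$, and integrability of the finite-dimensional module under the $\mathfrak{sl}(2)$ or $\mathfrak{osp}(1|2)$ associated to $\phi$ (combined with the fact that $v_\Lambda$ is a highest weight vector for the relevant subalgebra) forces $Y_{-\phi}^{b+1}v_\Lambda=0$ by $\mathfrak{sl}(2)$-representation theory. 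Hence every such module, in particular $K^{(\fb^d)}_\Lambda$, factors through $\overline{K}_\Lambda/N_\Lambda=K_\Lambda$, giving the second surjection and therefore equality.

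The main obstacle I anticipate is the type II direction showing $N_\Lambda$ maps to zero in $K^{(\fb^d)}_\Lambda$: one has to verify carefully that $v_\Lambda$ really is a highest-weight vector for the $\mathfrak{sl}(2)_\phi$-subalgebra (i.e. that $\phi$ behaves appropriately relative to $\fb^d_{\oa}$) and that the $PBW$-type reordering needed to see $Y_{-\phi}^{b+1}\otimes L^0_\Lambda$ sitting inside $\overline{K}_\Lambda$ as claimed is legitimate; the relevant computation is exactly the one underlying the classical definition of the Kac module in \cite{MR0519631}, so I would cite that and the $\mathfrak{sl}(2)$-integrability argument rather than reproduce the root-by-root bookkeeping. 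The dimension-count step is harmless once both surjections are in hand.
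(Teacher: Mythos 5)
Your proof is correct and takes essentially the same approach as the paper's very terse one: both use the universal property of $\Gamma(M^{(\fb^d)}_\Lambda)$ as the largest finite-dimensional quotient together with $\mathfrak{sl}(2)$-integrability to identify it with $K_\Lambda$, with your version making explicit the two surjections and the type I/type II case split that the paper compresses into a single sentence about factorising out non-dominant $\fg_{\oa}$-highest weight vectors.
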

\begin{proof}
The lemma follows from the fact that all the $\fg_{\oa}$-highest weight vectors in $M_\Lambda$ which do not have an integral dominant highest weight need to be inside the submodule that will be factorised out, the fact that $K_\Lambda$ is finite dimensional and the definition of the Kac modules in Subsection \ref{subsecprelZuck}.
\end{proof}


\section{The algebra of regular functions and the Zuckerman functor}
\label{algfun}

\subsection{The algebra of regular functions}
\label{subsecalgreg}

In this subsection we study the $\fg$-bimodule $\cR$ that appeared in Proposition \ref{propBBW} and Theorem \ref{thmBBW}, given by the algebra of regular functions on the supergroup $G$. The universal enveloping algebra $\U(\fg)$ is a $\fg\times\fg$-module for left and right multiplication. The dual space $\U(\fg)^\ast=\Hom_\C(\U(\fg),\C)$ inherits a $\fg\times\fg$-representation structure from $\U(\fg)$. The universal enveloping algebra also possesses the structure of a super cocommutative Hopf superalgebra with comultiplication $\Delta(X)=X\otimes 1+1\otimes X$ for $X\in \fg$, see e.g. \cite{MR1243637, MR2059616}. This gives $\U(\fg)^\ast$ the structure of a super commutative Hopf superalgebra.

\begin{lemma}
\label{findual}
Taking the maximal locally finite submodule from the left or right of the $\fg$-bimodule $\U(\fg)^\ast$, gives the same $\fg\times\fg$ submodule $\U(\fg)^{\circ}$. This module is isomorphic to the finite dual of the Hopf superalgebra $\U(\fg)$. In particular, this gives $\cU(\fg)^0$ the structure of a super commutative algebra. 

As an algebra and as a $\fg$-bimodule, $\U(\fg)^\circ$ is isomorphic to the algebra of matrix elements of finite dimensional $\fg$-representations.
\end{lemma}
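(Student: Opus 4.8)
The statement to prove is Lemma~\ref{findual}, characterizing $\U(\fg)^\circ$ as the maximal locally finite submodule of $\U(\fg)^\ast$ from either side, identifying it with the finite dual Hopf superalgebra, and with the algebra of matrix elements. Let me sketch a proof plan.

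\textbf{Plan of proof.}
The plan is to first recall the general construction of the finite dual $\U(\fg)^\circ$ of any Hopf (super)algebra $A=\U(\fg)$: it is defined as the set of $f\in A^\ast$ whose kernel contains a two-sided ideal of finite codimension in $A$, equivalently those $f$ for which the $A$-bimodule generated by $f$ (under left and right translation, i.e.\ the coregular actions) is finite dimensional. It is a standard fact (e.g.\ \cite{MR1243637}) that $A^\circ$ is a sub-Hopf-algebra of $A^\ast$, and since $\U(\fg)$ is cocommutative, $\U(\fg)^\circ$ is commutative; in the super setting one checks the signs go through unchanged, giving a super commutative Hopf superalgebra. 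This takes care of the ``finite dual'' and ``super commutative algebra'' assertions once the first identification is made.

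\textbf{Step 1: the two maximal locally finite submodules coincide and equal $\U(\fg)^\circ$.}
First I would take $M_L$ (resp.\ $M_R$) to be the maximal submodule of $\U(\fg)^\ast$ on which the left (resp.\ right) $\fg$-action is locally finite; concretely $f\in M_L$ iff $\U(\fg)\cdot f$ (left coregular action) is finite dimensional. The key point is that a subspace of $\U(\fg)^\ast$ is a finite dimensional $\fg$-submodule for the left action if and only if its annihilator in $\U(\fg)$ is a \emph{right} ideal of finite codimension; and symmetrically on the other side. Then $f\in M_L$ iff $\ker f$ contains a right ideal of finite codimension. I would then invoke the standard argument: if $\ker f\supset I$ with $I$ a right ideal of finite codimension, then $J:=\{x\in\U(\fg)\mid \U(\fg)x\subseteq I\}$ is a two-sided ideal still of finite codimension (since $\U(\fg)/I$ is a finite dimensional right module, $J$ is the annihilator of this module, of finite codimension), and $\ker f\supset I\supseteq J$. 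Hence $M_L=\U(\fg)^\circ$, and by the symmetric argument $M_R=\U(\fg)^\circ$ as well. This simultaneously shows $M_L=M_R$ and that both equal the finite dual, and that the full $\fg\times\fg$-action on $\U(\fg)^\circ$ is locally finite. The cocommutativity of $\U(\fg)$ then gives the super commutativity of the product on $\U(\fg)^\circ$ dual to the (co)multiplication, establishing the first paragraph of the lemma.

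\textbf{Step 2: identification with matrix elements.}
For the last assertion, given a finite dimensional $\fg$-module $V$ (i.e.\ a finite dimensional $G$-module, where weight-integrality is automatic on the finite dimensional level for the reductive even part and lifts to $\fg$), and $\phi\in V^\ast$, $v\in V$, the matrix coefficient $c_{\phi,v}\colon X\mapsto \phi(Xv)$ is an element of $\U(\fg)^\ast$ whose kernel contains $\mathrm{Ann}_{\U(\fg)}(V)$, a two-sided ideal of finite codimension; so $c_{\phi,v}\in\U(\fg)^\circ$, and the span of all such forms a $\fg$-sub-bimodule $\cM$ of $\U(\fg)^\circ$. Conversely, any $f\in\U(\fg)^\circ$ lies in a finite dimensional sub-bimodule $W\subseteq\U(\fg)^\circ$; regarding $W$ as a finite dimensional left $\fg$-module and using the evaluation pairing one recovers $f$ as a matrix coefficient of $W$ paired with the vector $1\in\U(\fg)\twoheadrightarrow W^\ast$ — more precisely $f=c_{\mathrm{ev}_f,\, \bar 1}$ where $\mathrm{ev}_f$ is evaluation at $f$ on $W$ and $\bar 1$ is the image of $1\in\U(\fg)$ in $W^\ast$ under $x\mapsto (g\mapsto g(x))$. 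Hence $\cM=\U(\fg)^\circ$, compatibly with both the bimodule structure and the algebra structure (the product of matrix coefficients of $V$ and $V'$ is a matrix coefficient of $V\otimes V'$). The fact that finite dimensional $\fg$-modules and finite dimensional $G$-modules coincide for $G$ connected with $\fg$ basic classical (weights automatically integral) closes the gap between ``$\fg$-representations'' and ``$G$-representations''.

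\textbf{Main obstacle.}
The routine Hopf-algebra facts (finite dual is a sub-bialgebra, cocommutative $\Rightarrow$ commutative dual) are standard but must be checked to survive the $\Z_2$-graded sign conventions; I expect the only genuinely delicate point is bookkeeping the Koszul signs in $(X\alpha)(v)=-(-1)^{|X||v|}\alpha(Xv)$ when comparing the left and right coregular actions and when showing the multiplication on $\U(\fg)^\circ$ (dual to $\Delta$) is super commutative. None of this is hard, but it is where an error could hide; the structural argument of Step~1 (two-sided vs.\ one-sided ideals of finite codimension) is the conceptual heart and is sign-free.
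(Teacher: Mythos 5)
Your proposal is correct and follows essentially the same route as the paper: the paper simply delegates your Step~1 (maximal locally finite submodule from either side equals the finite dual, hence super commutative) to Lemma~9.1.1 of Montgomery and Lemma~3.1 of Zhang, whereas you unfold the standard one-sided/two-sided ideal argument; your Step~2 is the same brief matrix-element identification the paper gives. One small caution that does not affect this lemma but is worth flagging: your closing aside that ``finite dimensional $\fg$-modules and finite dimensional $G$-modules coincide'' is not used here and should not be relied on in this form elsewhere — the paper itself distinguishes $\U(\fg)^\circ$ (matrix elements of all finite dimensional $\fg$-modules) from $\cR$ (matrix elements of finite dimensional \emph{weight} modules, i.e.\ $G$-modules) precisely because these differ; that distinction is the content of Lemma~\ref{findual2}, not of the present lemma.
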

\begin{proof}
The first paragraph follows from Lemma 9.1.1 in \cite{MR1243637} or Lemma 3.1 in \cite{MR2059616}. 

A matrix element of a $\fg$-module $V$ is in particular an element of $\U(\fg)^\ast$. The left and right $\fg$-action acting on that matrix element generate a subquotient of $V^\ast\otimes V$, so in particular, if $V$ is finite dimensional, the matrix elements constitute a locally finite $\fg\times \fg$-submodule of $\U(\fg)^\ast$, so a submodule of $\U(\fg)^\circ$. Similarly, every element of $\U(\fg)^\circ$ can be interpreted as the matrix element of a finite dimensional representation.
\end{proof}

\begin{lemma}
\label{findual2}
The algebra $\cR$, of matrix elements of finite dimensional weight modules of $\fg$, is isomorphic to the module obtained by to taking the maximal $\fh$-semisimple submodule of $\U(\fg)^{0}$ on the left or right hand side. Consequently we have $\cR=S(U(\fg)^\ast),$ with the Zuckerman functor acting from the right or the left.
\end{lemma}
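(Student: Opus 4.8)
The plan is to run the argument of Lemma~\ref{findual} while keeping track of the Cartan action. Throughout, $\fh$-semisimplicity is understood with respect to one of the two (left or right) $\fh$-actions on $\U(\fg)^\ast$, and the goal is to show that the maximal $\fh$-semisimple submodule of $\U(\fg)^{0}$ is the same for either choice, namely $\cR$. The one preliminary fact I would isolate is that a finite dimensional $\fg_{\oa}$-module is $\fg_{\oa}$-semisimple if and only if it is $\fh$-semisimple: the forward direction holds because finite dimensional simple $\fg_{\oa}$-modules are $\fh$-semisimple, and the converse because the centre $\mathfrak{z}$ of the reductive Lie algebra $\fg_{\oa}$ lies in $\fh$, so $\fh$-semisimplicity makes the module split into $\mathfrak{z}$-eigenspaces, each of which is semisimple over the semisimple part of $\fg_{\oa}$ by Weyl's theorem, hence over $\fg_{\oa}$. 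As $\U(\fg)^{0}$ is, by Lemma~\ref{findual}, a union of finite dimensional $\fg$-modules, its maximal $\fh$-semisimple submodule (for the left or for the right action) therefore coincides with its maximal $\fg_{\oa}$-semisimple submodule.

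Next I would compare $\cR$ with this maximal $\fh$-semisimple submodule. For one inclusion: if $V$ is a finite dimensional weight module, then $V^\ast\otimes V$ is $\fh$-semisimple for both tensor factors, and, as in the proof of Lemma~\ref{findual}, the matrix elements of $V$ form the image of $V^\ast\otimes V$ under a $\fg\times\fg$-equivariant map into $\U(\fg)^{0}$; hence every element of $\cR$ is $\fh$-semisimple from the left and from the right. For the reverse inclusion I argue on the left (the right being symmetric): for any $\fg$-module the sum of its $\fh$-weight spaces is a $\fg$-submodule, since a root vector in $\fg_\alpha$ shifts the $\fh$-weight by $\alpha$; hence if $\phi\in\U(\fg)^{0}$ is left $\fh$-semisimple then $V:=\U(\fg)\cdot\phi$ is a finite dimensional weight module (finite dimensional because $\phi\in\U(\fg)^{0}$), and $\phi$, being an element of the finite dimensional $\fg$-submodule $V$ of $\U(\fg)^\ast$, is a matrix element of $V$, exactly as in the proof of Lemma~\ref{findual}; so $\phi\in\cR$. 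Together with the closure of $\cR$ under multiplication (a product of matrix elements of $V$ and $W$ is a matrix element of the weight module $V\otimes W$), this identifies $\cR$ with the maximal $\fh$-semisimple submodule of $\U(\fg)^{0}$ for either action.

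Finally, to identify this with $S(\U(\fg)^\ast)$: because $\U(\fg_{\oa})\hookrightarrow\U(\fg)$ is a finite ring extension (as used in the proof of Lemma~\ref{lemind}(ii)), every $\fg$-submodule of $\U(\fg)^\ast$ that is locally $\U(\fg_{\oa})$-finite is automatically locally $\U(\fg)$-finite, hence is contained in $\U(\fg)^{0}$ by Lemma~\ref{findual}. Combined with the first paragraph, this shows that $S(\U(\fg)^\ast)$ — the maximal $\fg$-submodule of $\U(\fg)^\ast$ that is locally $\U(\fg_{\oa})$-finite and $\fg_{\oa}$-semisimple — is exactly the maximal $\fh$-semisimple submodule of $\U(\fg)^{0}$, i.e. $\cR$ by the second paragraph, and once more this does not depend on whether the left or the right action is used. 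I do not expect a genuine obstacle; the only slightly delicate point is organisational: the left- and right-$\fh$-semisimple parts of $\U(\fg)^{0}$ are matched only indirectly, through their common value $\cR$, and $S$ must be applied to $\U(\fg)^\ast$ rather than to $\U(\fg)^{0}$ — which is precisely what the finite-extension remark takes care of.
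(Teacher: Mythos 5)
Your proof is correct and takes essentially the same route as the paper: it identifies $\cR$ with the maximal $\fh$-semisimple submodule of $\U(\fg)^\circ$ via the matrix-element interpretation of both, and then uses the finite-ring-extension observation (together with the equivalence, for locally finite modules, of $\fh$-semisimplicity and $\fg_{\oa}$-semisimplicity) to conclude that this equals $S(\U(\fg)^\ast)$. The paper's proof is a terse one-liner pointing only at the matrix-element interpretation, so your version fills in exactly the details it leaves implicit.
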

\begin{proof}
This follows from the interpretation of $\cR$ and $\U(\fg)^\circ$, respectively as matrix elements of finite dimensional modules and finite dimensional weight modules.
\end{proof}

Based on Proposition \ref{propBBW}(iii) and Theorem \ref{thmBBW}(ii) for $k=0$  and Lemma \ref{lemKacquo}, we obtain the following property.
\begin{corollary}
\label{zerocohomR}
The $\fg$-bimodule $\cR$ satisfies the property
\[H^0(\fu,\cR)\cong\bigoplus_{\Lambda\in\cP^+}L_{\Lambda}(\fl)\times (K^{(\fb)}_\Lambda)^\ast\qquad\mbox{as $\fl\times\fg$-modules.}\]
\end{corollary}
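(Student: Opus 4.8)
The plan is to compute $H^0(\fu,\cR)$ as an $\fl\times\fg$-module by decomposing $\cR$ itself and then extracting $\fu$-invariants. First I would recall from Lemma \ref{findual2} that $\cR$ is the algebra of matrix elements of finite dimensional weight $\fg$-modules, hence as a $\fg$-bimodule $\cR\cong\bigoplus_{\Lambda\in\cP^+}L_\Lambda^\ast\otimes L_\Lambda$ in the semisimple case — but since $\fg$ is a Lie superalgebra, full reducibility fails and instead one gets a filtration rather than a direct sum. The correct substitute is the Peter--Weyl-type statement that $\cR$, as a left $\fg$-module, is the injective hull structure: more precisely, the socle filtration gives $\cR\cong\bigoplus_{\Lambda\in\cP^+}I_\Lambda^{\oplus\dim L_\Lambda}$ is too crude. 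Rather, I would argue directly using the adjunction already established: by Theorem \ref{thmBBW}(ii) and Proposition \ref{propBBW}(iii) for $k=0$, $H^0(\fu,\cR)$ fits into $\Hom_{\U(\fl)}(L_\mu(\fl),H^0(\fu,\cR))=\Ext^0_{\cC(\fp,\fl)}(L_\mu(\fp),\cR)$, and this $\Ext^0$-group equals $\Hom_{\fg}(\coind^{\fg}_{\fp}L_\mu(\fp)^{?},\cR)$-type expressions. The cleanest route: as an $\fl\times\fg$-module, $H^0(\fu,\cR)=\bigoplus_{\mu\ \fl\text{-dom}}L_\mu(\fl)\otimes\Hom_{\fl}(L_\mu(\fl),H^0(\fu,\cR))$, and the multiplicity space is $\Ext^0_{\cC(\fp,\fl)}(L_\mu(\fp),\cR)$, which by the Frobenius-reciprocity step in the proof of Theorem \ref{thmBBW} equals $\Ext^0_{\cC(\fg,\fh)}(M_\mu,\cR)=\Hom_{\fg}(M_\mu,\cR)$ when $\mu\in\cP^+$ (and vanishes otherwise, since a $\fg$-module map from a Verma module into $\cR$, which is a union of finite dimensional modules, forces $\mu$ to be $\fg$-integral dominant).

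The key computation is therefore $\Hom_{\fg}(M_\Lambda,\cR)$ for $\Lambda\in\cP^+$. Here I would use the bimodule structure: $\Hom_{\fg}(M_\Lambda,\cR)$, where $\fg$ acts on $\cR$ from, say, the left, retains the right $\fg$-action, so it is a right $\fg$-module, equivalently a left $\fg$-module via $\dagger$ or simply recorded as a $\fg$-module $N_\Lambda$. By the defining property of $\cR$ as matrix elements and the tensor-identity/co-Frobenius reciprocity for the Hopf superalgebra structure from Lemma \ref{findual}, $\Hom_{\fg}(M_\Lambda,\cR)$ is canonically the space of highest-weight-$\Lambda$ vectors in finite dimensional modules, packaged as a $\fg$-module. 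Concretely, $\Hom_{\fg}(M_\Lambda,\cR)\cong(\text{maximal fin.\ dim.\ h.w.\ quotient of }M_\Lambda)^\ast$ as right $\fg$-modules: a nonzero map $M_\Lambda\to\cR$ is the same as a highest weight vector of weight $\Lambda$ in the finite dimensional $\fg$-module $\cR$ regarded on the left; the span of the right $\fg$-orbit of such a vector is precisely a quotient of $M_\Lambda$ that is finite dimensional, and the universal such object is $K^{(\fb)}_\Lambda=\Gamma(M^{(\fb)}_\Lambda)$ by definition of the generalised Kac module. Dualising to pass from the right action to the displayed form gives the factor $(K^{(\fb)}_\Lambda)^\ast$. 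Assembling, $H^0(\fu,\cR)\cong\bigoplus_{\Lambda\in\cP^+}L_\Lambda(\fl)\times(K^{(\fb)}_\Lambda)^\ast$ as $\fl\times\fg$-modules, which is exactly the claim.

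I would present this in three steps: (1) reduce, via $H^0(\fu,-)$-to-$\fl$-isotypic-decomposition and \eqref{ExtHk}, to identifying the multiplicity space $\Ext^0_{\cC(\fp,\fl)}(L_\Lambda(\fp),\cR)$; (2) via the Frobenius reciprocity already used in the proof of Theorem \ref{thmBBW} and Lemma \ref{KostantBott}, identify this with $\Hom_{\fg}(M_\Lambda,\cR)$, noting it vanishes unless $\Lambda\in\cP^+$; (3) identify $\Hom_{\fg}(M_\Lambda,\cR)$ as a $\fg$-module with $(K^{(\fb)}_\Lambda)^\ast$ using the matrix-element interpretation of $\cR$ from Lemmas \ref{findual}--\ref{findual2} together with $\Top(K^{(\fb)}_\Lambda)\cong L^{(\fb)}_\Lambda$ and the universal property $K^{(\fb)}_\Lambda=\Gamma(M^{(\fb)}_\Lambda)$.

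The main obstacle I anticipate is Step (3): carefully tracking which of the two commuting $\fg$-actions on $\cR$ survives after applying $\Hom_{\fg}(M_\Lambda,-)$ and checking that the resulting $\fg$-module is genuinely $(K^{(\fb)}_\Lambda)^\ast$ rather than $K^{(\fb)}_\Lambda$ or a twisted dual — this requires being precise about the duality conventions fixed in Section \ref{secprel1} (the distinction between $M^\ast$ and $M^\vee$) and about the isomorphism in \eqref{RBBW}. A secondary subtlety is justifying that the sum over $\Lambda$ is exactly over $\cP^+$ with multiplicity one in the $\fl$-direction, i.e.\ that no $\fl$-dominant but non-$\fg$-dominant weight contributes to $H^0(\fu,\cR)$; this follows because any $\fu$-invariant $\fl$-highest weight vector in $\cR$ is a $\fg$-highest weight vector lying in a finite dimensional $\fg$-submodule, forcing $\fg$-dominance, but it should be spelled out.
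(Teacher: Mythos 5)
Your proposal is correct and takes essentially the same route as the paper: the paper's one-line proof cites Proposition \ref{propBBW}(iii) and Theorem \ref{thmBBW}(ii) at $k=0$, which unwound is exactly your chain $\Hom_{\fl}(L_\mu(\fl),H^0(\fu,\cR))\cong\Ext^0_{\cC(\fp,\fl)}(L_\mu(\fp),\cR)\cong\Hom_{\fg}(M_\mu,\cR)\cong(K^{(\fb)}_\mu)^\ast$ together with vanishing for $\mu\notin\cP^+$. Your step (3) is in effect a direct re-derivation of the $k=0$ case of the formula $\cL_k\Gamma=\Ext^k_{\cC(\fg,\fl)}(-,\cR)^\ast$ from Lemma \ref{lemZuckR} combined with the definition $K^{(\fb)}_\Lambda=\Gamma(M^{(\fb)}_\Lambda)$, and the abandoned Peter--Weyl / injective-hull musings in your opening paragraph do no harm since you correctly pivot away from them.
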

As in \cite{MR0237590}, we define the $\fh\times\fg$-module $F(\fg)=H^0(\fn,\cR)$ and the $\fg$-module
\[{}^\mu F(\fg)=\Hom_{\fh}(\C_\mu,F(\fg))=\Hom_{\fb}(\C_\mu,\cR),\]
for $\mu\in\fh^\ast$. Since the elements of $\fn$ act as super derivatives on $\cR$ (satisfying a graded Leibniz rule), the subspace $F(\fg)$ of $\cR$ is actually a subalgebra.

The following theorem extends the result of Wallach in Theorem 5.1 of \cite{MR0237590}.
\begin{theorem}
\label{genWallach}
The $\fg$-module $F(\fg)$ contains every module $K_\Lambda^{(\fb)}$ exactly once, $$F(\fg)\cong \bigoplus_{\Lambda\in\cP^+}\left(K_\Lambda^{(\fb)}\right)^\ast.$$ Furthermore, ${}^\mu F(\fg)\cong \left(K^{(\fb)}_\mu\right)^\ast$ if $\mu$ is integral dominant and ${}^\mu F(\fg)=0$ otherwise. Within the algebra structure of $F(\fg)\subset \cR$, the relation \[{}^\Lambda F(\fg)\cdot {}^{\Lambda'} F(\fg)={}^{\Lambda+\Lambda'}F(\fg)\]
holds for $\Lambda$ and $\Lambda'$ integral dominant.
\end{theorem}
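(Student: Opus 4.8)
The plan is to deduce everything from Corollary \ref{zerocohomR} together with the fact that $F(\fg)=H^0(\fn,\cR)$ is a subalgebra of $\cR$ on which $\fh$ acts semisimply. First I would take $\fn$-invariants of the $\fl\times\fg$-isomorphism in Corollary \ref{zerocohomR}. Since $\fn=\fn_{\fl}\oplus\fu$ (as vector spaces, with $\fn_{\fl}=\fn\cap\fl$), we have $H^0(\fn,\cR)=H^0(\fn_{\fl},H^0(\fu,\cR))$, and applying this to the right-hand side of Corollary \ref{zerocohomR} the $\fl$-module $L_\Lambda(\fl)$ contributes its own $\fn_{\fl}$-highest weight line $\C_\Lambda$ exactly once, while the $\fg$-factor $(K^{(\fb)}_\Lambda)^\ast$ is untouched. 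Hence
\[
F(\fg)=H^0(\fn,\cR)\cong\bigoplus_{\Lambda\in\cP^+}\C_\Lambda\times\left(K^{(\fb)}_\Lambda\right)^\ast
\]
as $\fh\times\fg$-modules, which is precisely the first claim once we read off that the $\fh$-weight $\Lambda$ labels the summand. The description of ${}^\mu F(\fg)=\Hom_{\fh}(\C_\mu,F(\fg))$ is then immediate: projecting onto the $\fh$-weight-$\mu$ part of the above decomposition gives $(K^{(\fb)}_\mu)^\ast$ if $\mu\in\cP^+$ and $0$ otherwise.

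The remaining, and genuinely nontrivial, assertion is the multiplicativity ${}^\Lambda F(\fg)\cdot{}^{\Lambda'}F(\fg)={}^{\Lambda+\Lambda'}F(\fg)$ inside the algebra $F(\fg)\subset\cR$. The inclusion ``$\subseteq$'' is automatic: multiplication in $\cR$ is $\fg\times\fg$-equivariant, so multiplying two matrix elements of $\fh$-weight $\Lambda$ and $\Lambda'$ on the left (and which are $\fn$-highest, i.e. killed by left $\fn$) yields an element of $\fh$-weight $\Lambda+\Lambda'$ killed by left $\fn$, i.e. an element of ${}^{\Lambda+\Lambda'}F(\fg)$; since ${}^{\Lambda+\Lambda'}F(\fg)$ is one-dimensional over... no, it is isomorphic to $(K^{(\fb)}_{\Lambda+\Lambda'})^\ast$ as a right $\fg$-module, so I must be slightly more careful. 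The point is that $\fh$ acts by the single weight $\Lambda$ on the left of ${}^\Lambda F(\fg)$, so the product lands in the $(\Lambda+\Lambda')$-left-weight space of $F(\fg)$, which by the displayed decomposition is exactly ${}^{\Lambda+\Lambda'}F(\fg)$. For the reverse inclusion I would show the product is nonzero: fix a left-$\fn$-highest matrix element $f_\Lambda\in{}^\Lambda F(\fg)$ of $L_\Lambda$ and $f_{\Lambda'}\in{}^{\Lambda'}F(\fg)$ of $L_{\Lambda'}$, realised via vectors in $L_\Lambda^\ast\otimes L_\Lambda$ and $L_{\Lambda'}^\ast\otimes L_{\Lambda'}$ respectively; the product $f_\Lambda f_{\Lambda'}$ is the matrix element of $L_\Lambda\otimes L_{\Lambda'}$ associated to the corresponding tensor vector, and this is nonzero because the highest weight vector of $L_{\Lambda+\Lambda'}\subset L_\Lambda\otimes L_{\Lambda'}$ is $v_\Lambda\otimes v_{\Lambda'}\neq 0$ and pairs nontrivially with $v_\Lambda^\ast\otimes v_{\Lambda'}^\ast$. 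Then $f_\Lambda f_{\Lambda'}$ is a nonzero vector in the right-$\fg$-module ${}^{\Lambda+\Lambda'}F(\fg)\cong(K^{(\fb)}_{\Lambda+\Lambda'})^\ast$ which is annihilated on the left by $\fn$ and has left $\fh$-weight $\Lambda+\Lambda'$; since ${}^{\Lambda+\Lambda'}F(\fg)$ has a one-dimensional space of such vectors (it is the right-$\fg$-module generated by one left-$\fn$-highest vector), the $\fg\times\fg$-submodule it generates is all of ${}^{\Lambda+\Lambda'}F(\fg)$, giving equality.

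The main obstacle I anticipate is making the "generated by" step airtight: I need that ${}^{\Lambda+\Lambda'}F(\fg)$, as a right $\fg$-module, is generated by its one-dimensional left-$\fh$-weight-$(\Lambda+\Lambda')$ left-$\fn$-highest subspace. This follows from the identification ${}^{\Lambda+\Lambda'}F(\fg)\cong(K^{(\fb)}_{\Lambda+\Lambda'})^\ast$ together with the fact that $K^{(\fb)}_{\Lambda+\Lambda'}$ is a highest weight module, hence cyclic, so its dual is cocyclic — and cogenerated by the line dual to its highest weight line, which is exactly the left-$\fn$-highest line. Equivalently, one can argue directly that the left-$\fn$-highest vector $f_\Lambda f_{\Lambda'}$ cannot lie in a proper right-$\fg$-submodule of ${}^{\Lambda+\Lambda'}F(\fg)$, because any such submodule would have to avoid the unique left-$\fn$-highest line, contradiction. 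A secondary technical point is the Lie-superalgebra bookkeeping of signs in the comultiplication and in the $\fn$-action as super-derivations on $\cR$, but since we only ever test on purely even Cartan and on the even part of the nilradical decomposition $\fn=\fn_{\fl}\oplus\fu$ for the weight argument, and the nonvanishing argument takes place among highest weight vectors where parities are fixed, these signs do not obstruct the proof.
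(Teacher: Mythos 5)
Your treatment of the first two claims and of the inclusion ${}^\Lambda F(\fg)\cdot{}^{\Lambda'}F(\fg)\subseteq{}^{\Lambda+\Lambda'}F(\fg)$ is fine, and the nonvanishing of the product of the two bi-highest matrix elements $f_\Lambda f_{\Lambda'}$ is a correct (and standard) observation. The gap is in the final step, where you conclude that the right-$\fg$-module generated by $f_\Lambda f_{\Lambda'}$ is all of ${}^{\Lambda+\Lambda'}F(\fg)\cong\left(K^{(\fb)}_{\Lambda+\Lambda'}\right)^\ast$. This is false whenever $K^{(\fb)}_{\Lambda+\Lambda'}\neq L_{\Lambda+\Lambda'}$. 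The vector $f_\Lambda f_{\Lambda'}$ corresponds, under this identification, to the functional $\xi^+$ dual to the highest-weight line of $K^{(\fb)}_{\Lambda+\Lambda'}$; and $\U(\fg)\xi^+$ is the annihilator of the maximal proper submodule $\Rad\,K^{(\fb)}_{\Lambda+\Lambda'}$, i.e.\ $\U(\fg)\xi^+=\left(L_{\Lambda+\Lambda'}\right)^\ast$, which is exactly the (simple) socle of $\left(K^{(\fb)}_{\Lambda+\Lambda'}\right)^\ast$. You are right that this dual is cocyclic, but cocyclicity means precisely that \emph{every} nonzero submodule contains this socle --- so the fact that a submodule contains $\xi^+$ carries no information beyond being nonzero, and proper submodules containing $\xi^+$ abound. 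In short, ``cogenerated by the socle'' is not ``generated by a vector of the socle''; you have conflated the two. Your last sentence (``any proper right-$\fg$-submodule would have to avoid the unique left-$\fn$-highest line'') also has a typo or slip: \emph{all} of ${}^{\Lambda+\Lambda'}F(\fg)$ is left-$\fn$-highest by definition, so you presumably mean the bi-highest (right-lowest) line, and as just explained no proper submodule avoids it.

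What your argument is actually missing is the paper's key ingredient, the injection
\[
K^{(\fb)}_{\Lambda+\Lambda'}\;\hookrightarrow\;K^{(\fb)}_{\Lambda}\otimes K^{(\fb)}_{\Lambda'},
\]
with image generated by $v^+_\Lambda\otimes v^+_{\Lambda'}$. The paper obtains this by applying the (left-exact) Zuckerman functor to $M^{(\fb)}_{\Lambda+\Lambda'}\hookrightarrow M^{(\fb)}_\Lambda\otimes K^{(\fb)}_{\Lambda'}$, using Lemma \ref{Zuckcommtensor} to commute $S$ past $-\otimes K^{(\fb)}_{\Lambda'}$. Once this is in hand, one observes that the full product ${}^\Lambda F(\fg)\cdot{}^{\Lambda'}F(\fg)$ --- not merely the $\fg$-submodule generated by the single element $f_\Lambda f_{\Lambda'}$ --- is the image of the map sending $\eta\in\left(K^{(\fb)}_\Lambda\otimes K^{(\fb)}_{\Lambda'}\right)^\ast$ to the matrix coefficient $\eta\bigl(\cdot\,(v^+_\Lambda\otimes v^+_{\Lambda'})\bigr)$, whose kernel is the annihilator of $\U(\fg)(v^+_\Lambda\otimes v^+_{\Lambda'})$. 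By the injection this cyclic submodule is all of $K^{(\fb)}_{\Lambda+\Lambda'}$, so the image is $\left(K^{(\fb)}_{\Lambda+\Lambda'}\right)^\ast$, proving surjectivity. Without the injection, your argument only shows the product contains the simple socle, which is strictly weaker than what the theorem asserts.
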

\begin{proof}
The first two statements are immediate consequences of Corollary \ref{zerocohomR} for $\fu=\fn$. By definition, the property ${}^\Lambda F(\fg)\cdot {}^{\Lambda'} F(\fg)\subset {}^{\Lambda+\Lambda'}F(\fg)$ follows immediately. It remains to be proved that this product is surjective.

First, we prove the existence of an injective $\fg$-module morphism
\begin{equation}\label{injection}K^{(\fb)}_{\Lambda+\Lambda'}\hookrightarrow K_{\Lambda}^{(\fb)}\otimes K_{\Lambda'}^{(\fb)}.\end{equation}
We start from the injection
\[M^{(\fb)}_{\Lambda+\Lambda'}\hookrightarrow M^{(\fb)}_{\Lambda}\otimes K_{\Lambda'}^{(\fb)}.\]
Since the Zuckerman functor is left exact and commutes with taking tensor products with finite dimensional representions, see the subsequent Lemma \ref{Zuckcommtensor}, the application of the Zuckerman functor on the exact sequence above yields equation \eqref{injection}.

We use the identification of $\cR$ with the matrix elements of finite dimensional weight representations to study ${}^\lambda F(\fg)\cong \left(K_\lambda^{(\fb)}\right)^\ast$ for $\lambda\in\{\Lambda,\Lambda'\}$. We define a hermitian inner product on $K_\lambda^{(\fb)}$, denoted by $\langle\cdot,\cdot\rangle$, and we consider an orthonormal basis $\{e_j\}$. The matrix elements in ${}^\lambda F(\fg)$ are the ones of the form
\[U\mapsto \langle e_k| Uv^+_\lambda\rangle \quad\mbox{ for }\quad U\in\U(\fg), \]
$v^+_\lambda=e_1$ the highest weight vector of $K_\lambda^{(\fb)}$. The multiplication on $\U(\fg)^\ast$, as the dual Hopf algebra of $\U(\fg)$, is given in terms of the comultiplication on $\U(\fg)$. Therefore, the function $f$, which is the result of the multiplication of the functions $\langle e_k| \cdot v^+_\Lambda\rangle\in {}^\Lambda F(\fg)$ and $\langle f_l| \cdot v^+_{\Lambda'}\rangle\in {}^{\Lambda'}F(\fg)$, is given by
\[f(U)=\sum_j (-1)^{|U_j^{(2)}||f_l|} \langle e_k| U^{(1)}_j v^+_\Lambda\rangle\langle f_l| U^{(2)}_j v^+_{\Lambda'}\rangle, \]
using Sweedler's notation. This corresponds to the function given by the matrix elements of $K_\Lambda^{(\fb)}\otimes K_{\Lambda'}^{(\fb)}$ that are of the form $\langle e_k\otimes f_l|\cdot v_{\Lambda}^+\otimes v_{\Lambda'}^+\rangle$. The linear combinations of $e_k\otimes f_l$ that are generated by $\fg$-action on $v^+_{\Lambda}\otimes v^+_{\Lambda'}$ form $K_{\Lambda+\Lambda'}^{(\fb)}$, by equation~\eqref{injection}. This procedure shows that ${}^\Lambda F(\fg)\cdot {}^{\Lambda'} F(\fg)\supset {}^{\Lambda+\Lambda'}F(\fg)$.
\end{proof}

We introduce a symbol for the $\fg$-modules induced from simple finite dimensional $\fg_{\oa}$-modules. For each $\lambda\in \cP^+_{\oa}$, the finite dimensional $\fg$-module $C_\lambda$ is defined as 
\begin{equation}\label{indmod}C_\lambda\,=\ind^{\fg}_{\fg_{\oa}}L^{\oa}_\lambda=\U(\fg)\otimes_{\U(\fg_{\oa})}L^{\oa}_\lambda\cong\,\Hom_{\U(\fg_{\oa})}(\U(\fg),L^{\oa}_{\lambda}).
\end{equation}

\begin{theorem}
\label{UC}
The $\fg$-bimodule $\cR$ is given as a $\fg\times\fg_{\oa}$-module by
\begin{eqnarray*}
\cR\,\cong\, \Hom_{\U(\fg_{\oa})}(\U(\fg),\cR_{\oa})\,\cong\,\bigoplus_{\lambda\in \cP^+_{\oa}} C_{\lambda}\times \left(L_{\lambda}^{\oa}\right)^\ast.
\end{eqnarray*}
\end{theorem}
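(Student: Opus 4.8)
The plan is to reduce the statement about $\cR$ as a $\fg\times\fg_{\oa}$-module to the corresponding (classical) decomposition of $\cR_{\oa}$ as a $\fg_{\oa}\times\fg_{\oa}$-module, using the coinduction description already available from Lemma \ref{findual2} together with the self-duality properties of $\Lambda\fg_{\ob}$ as a $\fg_{\oa}$-module established in Lemma \ref{lemind}. First I would recall that, by definition, $\cR_{\oa}$ is the algebra of matrix elements of finite dimensional weight $\fg_{\oa}$-modules, and the classical Peter–Weyl-type decomposition gives $\cR_{\oa}\cong\bigoplus_{\lambda\in\cP^+_{\oa}}L^{\oa}_\lambda\times(L^{\oa}_\lambda)^\ast$ as a $\fg_{\oa}\times\fg_{\oa}$-module, where the first factor records the left action and the second the right.

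The central claim is then that restricting the \emph{left} action to $\fg_{\oa}$ and keeping the \emph{right} action of $\fg_{\oa}$, one has $\cR\cong\Hom_{\U(\fg_{\oa})}(\U(\fg),\cR_{\oa})=\coind^{\fg}_{\fg_{\oa}}(\cR_{\oa})$, where the right $\fg_{\oa}$-action on $\cR_{\oa}$ is transported to a right $\fg_{\oa}$-action on the $\Hom$-space and the left $\fg$-action is the usual coinduced one. To see this I would note that $\cR$ is, by Lemma \ref{findual} and Lemma \ref{findual2}, the maximal left-and-right locally finite, $\fh$-semisimple submodule of $\U(\fg)^\ast$, while $\cR_{\oa}$ is the analogous object for $\U(\fg_{\oa})^\ast$. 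The restriction map $\U(\fg)^\ast\to\U(\fg_{\oa})^\ast$ dual to $\U(\fg_{\oa})\hookrightarrow\U(\fg)$, combined with the PBW factorization $\U(\fg)\cong\Lambda\fg_{\ob}\otimes\U(\fg_{\oa})$ (as $\fg_{\oa}\times\fg_{\oa}$-modules, via left and right multiplication), identifies $\U(\fg)^\ast$ with $(\Lambda\fg_{\ob})^\ast\otimes\U(\fg_{\oa})^\ast$ on the right, and hence with $\Hom_{\U(\fg_{\oa})}(\U(\fg),\U(\fg_{\oa})^\ast)$; passing to the maximal locally finite $\fh$-semisimple parts on both sides — using that $\coind^{\fg}_{\fg_{\oa}}$ is exact and behaves well with respect to these finiteness conditions because $\U(\fg_{\oa})\hookrightarrow\U(\fg)$ is a finite extension and $\Lambda\fg_{\ob}$ is finite dimensional (exactly the ingredients used in the proof of Lemma \ref{lemind}) — yields $\cR\cong\coind^{\fg}_{\fg_{\oa}}(\cR_{\oa})$ as a $\fg\times\fg_{\oa}$-module. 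The key point is that the left $\fg$-module structure on the coinduced module is the standard one, and the right $\fg_{\oa}$-structure is inherited unchanged from $\cR_{\oa}$.

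Once this identification is in place, the second isomorphism follows by applying $\coind^{\fg}_{\fg_{\oa}}$ term by term to the decomposition of $\cR_{\oa}$: since $\coind^{\fg}_{\fg_{\oa}}$ is additive and here only touches the left factor,
\[
\cR\cong\coind^{\fg}_{\fg_{\oa}}\Bigl(\bigoplus_{\lambda\in\cP^+_{\oa}}L^{\oa}_\lambda\times(L^{\oa}_\lambda)^\ast\Bigr)\cong\bigoplus_{\lambda\in\cP^+_{\oa}}\coind^{\fg}_{\fg_{\oa}}(L^{\oa}_\lambda)\times(L^{\oa}_\lambda)^\ast,
\]
and by Lemma \ref{lemind}(ii) we have $\coind^{\fg}_{\fg_{\oa}}(L^{\oa}_\lambda)\cong\Ind L^{\oa}_\lambda=C_\lambda$, which is the defining equation \eqref{indmod}. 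This gives the stated decomposition $\cR\cong\bigoplus_{\lambda\in\cP^+_{\oa}}C_\lambda\times(L^{\oa}_\lambda)^\ast$.

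\textbf{Main obstacle.} The delicate step is the identification $\cR\cong\coind^{\fg}_{\fg_{\oa}}(\cR_{\oa})$ as a $\fg\times\fg_{\oa}$-bimodule, i.e. checking that the two operations "take matrix elements of finite dimensional modules" and "coinduce from $\fg_{\oa}$" are compatible, and that the maximal locally finite / $\fh$-semisimple submodule of $\U(\fg)^\ast$ on \emph{both} sides corresponds, under the PBW identification, precisely to $\coind^{\fg}_{\fg_{\oa}}$ applied to the analogous submodule of $\U(\fg_{\oa})^\ast$. One has to be careful that taking the locally finite part on the left commutes with the coinduction (this is where finiteness of the ring extension and finite-dimensionality of $\Lambda\fg_{\ob}$ enter, exactly as in Lemma \ref{lemind}), and that the $\fh$-semisimplicity condition defining $\cR$ inside $\U(\fg)^\circ$ matches the $\fh$-semisimplicity condition defining $\cR_{\oa}$ inside $\U(\fg_{\oa})^\circ$ under this correspondence (which is clear since $\fh\subset\fg_{\oa}$ and the right action of $\fh$ is untouched). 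Everything else is formal bookkeeping with Sweedler-type arguments and the already-proven Lemmas \ref{findual}, \ref{findual2} and \ref{lemind}.
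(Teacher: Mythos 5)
Your proposal is correct and follows essentially the same route as the paper: both start from the Hom-tensor identification $\U(\fg)^\ast \cong \Hom_{\U(\fg_{\oa})}(\U(\fg),\U(\fg_{\oa})^\ast)$ as a $\fg\times\fg_{\oa}$-module, pass to finite duals (Lemma \ref{findual}) and then to $\fh$-semisimple parts (Lemma \ref{findual2}), using that $\U(\fg)$ is a finite free $\U(\fg_{\oa})$-module via PBW, to land on $\cR\cong\coind^{\fg}_{\fg_{\oa}}(\cR_{\oa})$ (equivalently $\cR\cong\ind^{\fg}_{\fg_{\oa}}\cR_{\oa}$ by Lemma \ref{lemind}(ii)), and finally apply the Peter--Weyl decomposition of $\cR_{\oa}$. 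The only cosmetic difference is that the paper takes the finite dual and the $\fh$-semisimple part as two explicit consecutive steps, while you fold them together; your identification of the step you call the ``main obstacle'' is exactly the content the paper handles with the $\fh$-bimodule observation $\U(\fg)^\circ\cong\Lambda\fg_{\ob}\otimes\U(\fg_{\oa})^\circ$.
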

\begin{proof}
We have the following $\fg\times \fg_{\oa}$-module isomorphisms:
\begin{eqnarray*}\U(\fg)^\ast=\Hom_\C(\U(\fg),\C)&\cong&\Hom_\C(\U(\fg_{\oa})\otimes_{\U(\fg_{\oa})}\U(\fg),\C)\\
&\cong&\Hom_{\U(\fg_{\oa})}(\U(\fg),\Hom_\C(\U(\fg_{\oa}),\C)).\end{eqnarray*}
Since taking the left or right finite dual gives the same result according to Lemma~\ref{findual}, we take the right finite dual, which yields
\[\U(\fg)^\circ=\Hom_{\U(\fg_{\oa})}(\U(\fg),\U(\fg_{\oa})^\circ).\]

As an $\fh$-bimodule we thus have $\U(\fg)^0\cong \Lambda\fg_{\ob}\otimes \U(\fg_{\oa})^\circ$, so Lemma \ref{findual2} yields $\cR\cong \U(\fg)\otimes_{\U(\fg_{\oa})}\cR_{\oa}$. The second isomorphism then follows from the Peter-Weyl type decomposition \[\cR_{\oa}\cong \bigoplus_{\lambda\in\cP^+_{\oa}}L_{\lambda}^{\oa}\times \left(L_{\lambda}^{\oa}\right)^\ast\]
as $\fg_{\oa}\times\fg_{\oa}$-modules.
\end{proof}

We note that the isomorphism $\cR\cong \Hom_{\U(\fg_{\oa})}(\U(\fg),\cR_{\oa})$ is naturally linked to the construction of the sheaf of functions on a Lie supergroup $G$, starting from a Lie supergroup pair $(\fg,G_{\oa})$, $\cC^\infty(G)=\Hom_{\U(\fg_{\oa})}(\U(\fg),\cC^\infty(G_{\oa}))$.

The theorem above can be rewritten in terms of indecomposable projective modules in $\cF$.
\begin{corollary}
\label{projcover}
The $\fg$-bimodule $\cR$, as a $\fg\times\fg_{\oa}$-module, is isomorphic to
\begin{eqnarray*}
\cR\cong\,\bigoplus_{\Lambda\in \cP^+} P^\cF_{\Lambda}\times \left(L_{\Lambda}\right)^\ast.
\end{eqnarray*}
\end{corollary}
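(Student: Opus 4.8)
The plan is to deduce this from Theorem \ref{UC} together with the decomposition of $\cR_{\oa}$ and a matching of the two indexing sets. First I would recall from Theorem \ref{UC} that, as a $\fg\times\fg_{\oa}$-module,
\[
\cR\;\cong\;\bigoplus_{\lambda\in\cP^+_{\oa}} C_\lambda\times\bigl(L^{\oa}_\lambda\bigr)^\ast,
\qquad
C_\lambda=\ind^{\fg}_{\fg_{\oa}}L^{\oa}_\lambda .
\]
The key observation is that $C_\lambda$ is a projective module in $\cF$ (it is induced from a simple $\fg_{\oa}$-module, and $\U(\fg)$ is free as a right $\U(\fg_{\oa})$-module, so $\ind^{\fg}_{\fg_{\oa}}$ sends projectives to projectives and the trivial $\fg_{\oa}$-module resolution shows $C_\lambda$ is projective in $\cF$). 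Hence each $C_\lambda$ decomposes as a direct sum of indecomposable projectives $P^\cF_\Lambda$ with $\Lambda\in\cP^+$, with certain multiplicities; one then has to rearrange the double sum over $\lambda\in\cP^+_{\oa}$ into a sum over $\Lambda\in\cP^+$ so that the multiplicity of $P^\cF_\Lambda$ becomes a single copy paired with $(L_\Lambda)^\ast$.

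The cleanest route to pin down the multiplicities is to go back to the $\fg$-module structure rather than the $\fg\times\fg_{\oa}$-structure. From Lemma \ref{findual} and Lemma \ref{findual2}, $\cR$ is the algebra of matrix elements of finite dimensional weight $\fg$-modules, so as a left $\fg$-module
\[
\cR\;\cong\;\bigoplus_{[M]}\; M\otimes \operatorname{mult}(M),
\]
the sum over isomorphism classes of finite dimensional indecomposable weight $\fg$-modules, where the multiplicity space is the "right-hand" copy. More precisely, $\cR$ is the injective cogenerator of $\cF$ viewed appropriately; using that $\cF$ has enough projectives and that $\cR$ as a left module is a sum of projectives (by Theorem \ref{UC}, since it is a sum of the projective modules $C_\lambda$), one concludes $\cR\cong\bigoplus_{\Lambda\in\cP^+}P^\cF_\Lambda\otimes V_\Lambda$ as a left $\fg$-module for some multiplicity spaces $V_\Lambda$, and the right $\fg_{\oa}$-action makes $V_\Lambda$ a $\fg_{\oa}$-module. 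To identify $V_\Lambda$ with $(L_\Lambda)^\ast$ (restricted to $\fg_{\oa}$), I would use Corollary \ref{zerocohomR} with $\fu=\fn$: taking $H^0(\fn,-)$ of both sides and comparing with $F(\fg)\cong\bigoplus_{\Lambda\in\cP^+}(K^{(\fb)}_\Lambda)^\ast$ from Theorem \ref{genWallach}, together with $\Top(P^\cF_\Lambda)\cong L_\Lambda$ and the standard fact that $H^0(\fn,-)$ detects highest weights, forces each multiplicity space to be one-dimensional as an $\fh$-module in the appropriate graded piece, hence $V_\Lambda\cong(L_\Lambda)^\ast$ as a $\fg_{\oa}$-module (its character being that of $(L_\Lambda)^\ast$ and its $\fg_{\oa}$-structure being inherited, as in the Peter--Weyl decomposition of $\cR_{\oa}$).

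The main obstacle I expect is the bookkeeping that turns the $\lambda$-indexed sum of Theorem \ref{UC} into the $\Lambda$-indexed sum with exact multiplicity one: one must show that $\bigoplus_{\lambda\in\cP^+_{\oa}}C_\lambda\times(L^{\oa}_\lambda)^\ast$ and $\bigoplus_{\Lambda\in\cP^+}P^\cF_\Lambda\times(L_\Lambda)^\ast$ agree, i.e. that $\sum_{\lambda}[C_\lambda:\,P^\cF_\Lambda]\,\ch(L^{\oa}_\lambda)=\ch(L_\Lambda)$ for every $\Lambda\in\cP^+$. This is essentially the BGG-reciprocity-type statement relating $[C_\lambda]$ in the Grothendieck group to the $[P^\cF_\Lambda]$ (or dually, $[K^{(\fb)}_\Lambda]$), and it can be obtained by applying $H^0(\fn,-)$ and using Frobenius reciprocity on $C_\lambda=\ind^{\fg}_{\fg_{\oa}}L^{\oa}_\lambda$, so that the $\fg_{\oa}$-multiplicity space attached to $P^\cF_\Lambda$ is exactly $\Hom_{\fg}(P^\cF_\Lambda,\cR)$, which pairs with $L_\Lambda$ by projectivity. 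Once this identification is in place the Corollary follows immediately.
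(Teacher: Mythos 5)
Your proposal is essentially the paper's approach, but takes a more circuitous route. The core of the paper's argument is exactly what you identify at the end: $C_\lambda$ is projective, decompose $C_\lambda = \bigoplus_\Lambda m_{\lambda\Lambda}\,P^\cF_\Lambda$, compute $m_{\lambda\Lambda} = \dim\Hom_\fg(C_\lambda,L_\Lambda)$ by projectivity of $P^\cF_{\Lambda'}$, and then apply Frobenius reciprocity to get $m_{\lambda\Lambda} = [\res^{\fg}_{\fg_{\oa}} L_\Lambda : L^{\oa}_\lambda]$; substituting into Theorem~\ref{UC} and resumming gives the corollary because $\bigoplus_\lambda m_{\lambda\Lambda}(L^{\oa}_\lambda)^\ast \cong \res^{\fg}_{\fg_{\oa}}(L_\Lambda)^\ast$. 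The excursion you propose through Corollary~\ref{zerocohomR}, Theorem~\ref{genWallach}, $H^0(\fn,-)$, and the injective-cogenerator picture is unnecessary (and not what the paper does): the two-line Frobenius reciprocity computation already pins down the multiplicities with no need to invert a $[K^{(\fb)}_\Lambda:L_{\Lambda'}]$ matrix or invoke the structure of $F(\fg)$.

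One small imprecision worth flagging: your closing sentence identifies the multiplicity space of $P^\cF_\Lambda$ in $\cR$ with $\Hom_\fg(P^\cF_\Lambda,\cR)$. This is not quite the right Hom-space, since $\Hom_\fg(P^\cF_\Lambda,P^\cF_{\Lambda'})$ is not $\delta_{\Lambda\Lambda'}\C$ in general. The correct invariant is $\Hom_\fg(\cR,L_\Lambda)^\ast$ (or, at the level of the summands, $\Hom_\fg(C_\lambda,L_\Lambda)$), because $\Hom_\fg(P^\cF_{\Lambda'},L_\Lambda)=\delta_{\Lambda'\Lambda}\C$. This is the exact formula the paper uses and it is what makes the Frobenius reciprocity step immediate.
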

\begin{proof}
The projective module $C_\lambda$ can be decomposed into the indecomposable projective covers $P^\cF_\Lambda $ as $C_\lambda=\bigoplus_{\Lambda\in\cP^+}m_{\lambda\Lambda}P^\cF_\Lambda $ for certain constants $m_{\lambda\Lambda}\in\N$. The multiplicity is given by
\[m_{\lambda\Lambda}=\dim\Hom_{\fg}(C_\lambda,L_\Lambda)\]
since $\dim\Hom_{\fg}( P^\cF_{\Lambda'},L_\Lambda)=\delta_{\Lambda'\Lambda}$. Frobenius reciprocity then implies that 
\[m_{\lambda\Lambda}=\dim\Hom_{\fg_{\oa}}(L_\lambda^{\oa},\res^{\fg}_{\fg_{\oa}} L_\Lambda)=[\res^{\fg}_{\fg_{\oa}} L_\Lambda:L_\lambda^{\oa}].\]
Combining this with Theorem \ref{UC} implies the corollary.
\end{proof}
The following corollary generalises a reformulation of the classical Peter-Weyl decomposition.
\begin{corollary}
\label{HomRL}
For any integral dominant weight $\Lambda$, we have
\[\Hom_{\U(\fg)}(\cR,L_\Lambda)\cong L_\Lambda\qquad\mbox{and}\qquad \Ext^k_{\cF}(\cR,L_\Lambda)=0,\mbox{ for }k>0.\]
Moreover, the endofunctors of $\cF$, given by
$$ \cR\otimes_{\U(\fg)}-\qquad\mbox{and}\qquad \Hom_{\U(\fg)}(\cR,-), $$
are isomorphic to the identity.
\end{corollary}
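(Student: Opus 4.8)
The statement to prove is Corollary \ref{HomRL}: for $\Lambda$ integral dominant, $\Hom_{\U(\fg)}(\cR,L_\Lambda)\cong L_\Lambda$ and $\Ext^k_\cF(\cR,L_\Lambda)=0$ for $k>0$, and moreover the endofunctors $\cR\otimes_{\U(\fg)}-$ and $\Hom_{\U(\fg)}(\cR,-)$ on $\cF$ are isomorphic to the identity. The plan is to extract everything from the explicit $\fg\times\fg_{\oa}$-decomposition of $\cR$ in Corollary \ref{projcover}, namely $\cR\cong\bigoplus_{\Lambda'\in\cP^+}P^\cF_{\Lambda'}\times(L_{\Lambda'})^\ast$, reading the first tensor factor as the "coefficient" $\fg$-module and the second tensor factor as the $\fg_{\oa}$-module on which we later act, and combining this with the defining property $\dim\Hom_\fg(P^\cF_{\Lambda'},L_\Lambda)=\delta_{\Lambda'\Lambda}$ of the indecomposable projective covers in $\cF$. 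The key point is that $\cR$, as a right $\fg$-module, is a direct sum of the projective covers $P^\cF_{\Lambda'}$ with "multiplicity space" $(L_{\Lambda'})^\ast$ carrying a residual left $\fg$-action, so applying $\Hom_{\U(\fg)}(-,L_\Lambda)$ kills every summand except $\Lambda'=\Lambda$, on which it returns a one-dimensional space tensored with $(L_\Lambda)^\ast$'s dual, i.e.\ $L_\Lambda$ itself.

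\emph{First}, I would establish the $k=0$ statement. Using Corollary \ref{projcover} we have, as $\fg$-modules (with the left-hand copy of $\fg$ now the argument of $\Hom$ and the remaining $\fg$-action coming from the "coefficient" factor),
\[
\Hom_{\U(\fg)}(\cR,L_\Lambda)\;\cong\;\bigoplus_{\Lambda'\in\cP^+}\Hom_\fg\!\big(P^\cF_{\Lambda'},L_\Lambda\big)\otimes L_{\Lambda'}
\;\cong\;L_\Lambda,
\]
where the $\Hom$-space is computed over the left $\fg$-action and the surviving left $\fg$-module structure sits on the $L_{\Lambda'}$-factor; only $\Lambda'=\Lambda$ contributes, giving $\mathbb{C}\otimes L_\Lambda$. (Strictly, the second factor in Corollary \ref{projcover} is $(L_{\Lambda})^\ast$ as a $\fg_{\oa}$-module, but $\cR$ is actually a $\fg$-bimodule and one should track the full $\fg$-bimodule decomposition, the $\fg$-module version of which has second factor the $\fg$-module $(L_\Lambda)^\ast$; the twisted-dual/contragredient identifications from Section \ref{secprel1} let us rewrite $((L_\Lambda)^\ast)^\ast$ or the relevant twisted dual as $L_\Lambda$.) The Peter–Weyl identification in Corollary \ref{zerocohomR} and Theorem \ref{UC} already package the needed bimodule bookkeeping, so this reduces to careful bookkeeping rather than new input.

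\emph{Second}, for the vanishing of higher $\Ext$: since each $P^\cF_{\Lambda'}$ is projective in $\cF$, the right-$\fg$-module $\cR$ is a (possibly infinite) direct sum of projectives in $\cF$, hence projective (direct sums of projectives are projective), so $\Ext^k_\cF(\cR,-)=0$ for $k>0$ — in fact $\Ext^k_\cF(\cR,L_\Lambda)=\bigoplus_{\Lambda'}\Ext^k_\cF(P^\cF_{\Lambda'},L_\Lambda)\otimes L_{\Lambda'}=0$. One small subtlety is whether an infinite direct sum of projectives is still projective in $\cF$ and whether $\Ext$ commutes with the direct sum in the first variable; in the category $\cF$ of finite-dimensional weight modules this works because $\Ext^k_\cF(-,L_\Lambda)$ is supported on the single block containing $L_\Lambda$, where only finitely many $P^\cF_{\Lambda'}$ appear, so all sums in sight are finite.

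\emph{Finally}, the functor statement. For $\Hom_{\U(\fg)}(\cR,-)$ I would note that the $k=0$ computation is functorial in $L_\Lambda$ and that every object of $\cF$ has a finite resolution by sums of the $L_\Lambda$'s (or directly: the computation $\Hom_\fg(P^\cF_{\Lambda'},M)\otimes L_{\Lambda'}$ summed over $\Lambda'$ reconstructs $M$ since $\bigoplus_{\Lambda'}\Hom_\fg(P^\cF_{\Lambda'},M)\otimes P^\cF_{\Lambda'}\twoheadrightarrow M$ is the projective cover data and the multiplicities match), giving a natural isomorphism to $\mathrm{id}_\cF$. For $\cR\otimes_{\U(\fg)}-$, I would use that $\cR\otimes_{\U(\fg)}P^\cF_{\Lambda}\cong$ the $\Lambda$-multiplicity space tensored with the coefficient module — i.e.\ $\cR\otimes_{\U(\fg)}-$ is the left adjoint of $\Hom_{\U(\fg)}(\cR,-)$ once one observes $\cR$ is finitely generated projective over $\fg$ within each block — and adjoint to the identity is the identity; alternatively compute $\cR\otimes_{\U(\fg)}L_\Lambda$ directly from Corollary \ref{projcover} as $\bigoplus_{\Lambda'}(P^\cF_{\Lambda'}\otimes_{\U(\fg)}L_\Lambda)\otimes(\text{coeff}_{\Lambda'})$ and use that $P^\cF_{\Lambda'}\otimes_{\U(\fg)}L_\Lambda$ is the multiplicity $\delta_{\Lambda'\Lambda}$ (the Cartan-type pairing between projectives and simples in $\cF$).

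\emph{Main obstacle.} The genuinely delicate point is keeping the \emph{bimodule} structure of $\cR$ straight: Corollary \ref{projcover} records $\cR$ as $\fg\times\fg_{\oa}$-module, whereas the Corollary concerns $\cR$ as a \emph{left} (or right) $\fg$-module with the other $\fg$-action as the ambient structure. I expect the proof to hinge on the precise statement — available from Corollary \ref{zerocohomR}/Theorem \ref{UC} and the contragredient conventions of Section \ref{secprel1} — that as a $\fg$-bimodule $\cR\cong\bigoplus_{\Lambda\in\cP^+}P^\cF_\Lambda\boxtimes(L_\Lambda)^\vee$ (twisted dual), so that $\Hom$ over one factor against $L_\Lambda$ returns exactly $L_\Lambda$ via the pairing of $P^\cF_{\Lambda'}$ with $L_\Lambda$ and the self-duality $(L_\Lambda)^\vee{}^\vee\cong L_\Lambda$; once that identification is in hand the rest is formal.
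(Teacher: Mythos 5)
Your argument for the $\Ext$-vanishing is fine: by Corollary~\ref{projcover}, $\cR$ (viewed as a left $\fg$-module via the first tensor factor) is a direct sum of the indecomposable projectives $P^\cF_{\Lambda'}$, and since each $\Ext^k_\cF(P^\cF_{\Lambda'},L_\Lambda)$ vanishes for $k>0$, so does $\Ext^k_\cF(\cR,L_\Lambda)$ — one does not even need to worry about infinite sums of projectives being projective, because $\Hom(\cR,-)$ is already exact as a product of exact functors.

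The $k=0$ identification and the ``functors are the identity'' statement, however, have a genuine gap that your ``careful bookkeeping'' caveat does not actually close. Corollary~\ref{projcover} is an isomorphism of $\fg\times\fg_{\oa}$-modules only: the second tensor factor $(L_{\Lambda'})^\ast$ carries a $\fg_{\oa}$-action but \emph{not} a $\fg$-action in that decomposition, because $\cR$ is not semisimple as a $\fg$-bimodule (unlike the classical Peter–Weyl case) and the genuine $\fg\times\fg$-structure mixes the summands. If you nevertheless feed Corollary~\ref{projcover} into $\Hom_{\U(\fg)}(\cR,-)$ and read the answer off the second factor, you get
\[
\bigoplus_{\Lambda'\in\cP^+}\Hom_\fg(P^\cF_{\Lambda'},M)\otimes L_{\Lambda'}\;\cong\;\bigoplus_{\Lambda'\in\cP^+}[M:L_{\Lambda'}]\,L_{\Lambda'},
\]
which is the \emph{semisimplification} of $M$, not $M$ itself. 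Its $\fg_{\oa}$-restriction agrees with $\res^{\fg}_{\fg_{\oa}}M$ — which is why the bookkeeping looks consistent — but as a $\fg$-module it is wrong whenever $M$ is not semisimple. Your alternative argument that ``$\bigoplus_{\Lambda'}\Hom_\fg(P^\cF_{\Lambda'},M)\otimes P^\cF_{\Lambda'}\tto M$ is the projective cover data'' has the same defect: those multiplicities are Jordan–H\"older multiplicities, not multiplicities in the projective cover. Neither Corollary~\ref{zerocohomR} nor Theorem~\ref{UC} upgrades the second tensor factor to a $\fg$-module, so the contragredient conventions cannot rescue this step.

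The intended route is the identification in Lemma~\ref{lemZuckR}: $\Hom_{\U(\fg)}(\cR,M)\cong S(M)\cong\cR\otimes_{\U(\fg)}M$, where $S$ is the Zuckerman functor (taking invariants with respect to one $\fg$-action and retaining the other). Since every $M\in\cF$ already lies in $\cC(\fg,\fg_{\oa})$, one has $S(M)=M$ on the nose, giving the functor statement and the $k=0$ case in one stroke, with the full $\fg$-module structure tracked correctly; the $\Ext$-vanishing then follows either by your projectivity argument or because the derived Zuckerman functors vanish on the image of $\cF$. So: keep the projectivity argument for the $\Ext$ part, but replace the Corollary~\ref{projcover}-bookkeeping by the Zuckerman-functor identity for the rest.
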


For $\fg$ type I, we can use the $\Z$-gradation of $\fg=\fg_{-1}\oplus\fg_0\oplus\fg_1$ to obtain the description of $\U(\fg)$ as a $(\fg_0\oplus\fg_1)\times (\fg_{-1}\oplus\fg_0)$-module. 

\begin{theorem}
\label{UK}
For $\fg$ of type I, we have the isomorphism
\begin{eqnarray*}
\cR\cong \bigoplus_{\Lambda\in\cP^+}\left(K_\Lambda\right)^\vee\times \left(K_\Lambda\right)^\ast \quad \mbox{as}\quad  (\fg_0\oplus\fg_1)\times(\fg_0\oplus\fg_{-1})-\mbox{modules}.
\end{eqnarray*}
\end{theorem}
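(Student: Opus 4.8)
The plan is to mimic the proof of Theorem \ref{UC}, but now exploiting the finer $\Z$-grading $\fg=\fg_{-1}\oplus\fg_0\oplus\fg_1$ available in type I, so that $\fg_0\oplus\fg_1$ and $\fg_0\oplus\fg_{-1}$ play the roles previously occupied by $\fg_{\oa}$. First I would observe that, as a $(\fg_0\oplus\fg_1)\times(\fg_0\oplus\fg_{-1})$-module, the PBW theorem gives $\U(\fg)\cong \U(\fg_0\oplus\fg_1)\otimes_{\U(\fg_0)}\U(\fg_0\oplus\fg_{-1})$, or more precisely there is an isomorphism of $(\fg_0\oplus\fg_1)$-modules-from-the-left and $(\fg_0\oplus\fg_{-1})$-modules-from-the-right. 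Dualising exactly as in the proof of Theorem \ref{UC}, one gets
\[
\U(\fg)^\ast\;\cong\;\Hom_{\U(\fg_0)}\!\big(\U(\fg_0\oplus\fg_1),\,\Hom_\C(\U(\fg_0\oplus\fg_{-1}),\C)\big),
\]
and since by Lemma \ref{findual} taking the left or right finite dual yields the same submodule $\U(\fg)^\circ$, one passes to finite duals on the $(\fg_0\oplus\fg_{-1})$-side. Restricting further to $\fh$-semisimple parts (Lemma \ref{findual2}) gives $\cR\cong\U(\fg)\otimes_{\U(\fg_0\oplus\fg_{-1})}\cR_{0}$ with an appropriate Peter--Weyl type input for $\fg_0$; but the cleaner route is to go directly via $\coind$, as is done for the parabolic $\fp=\fg_0\oplus\fg_1$.

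Concretely, using $\fp=\fg_0\oplus\fg_1$ (whose Levi is $\fg_0\subset\fg_{\oa}$, hence of typical type), Lemma \ref{lemind}(i) together with the computation $\Hom_{\U(\fp)}(\U(\fg),V)\cong\U(\fg_{-1})^\ast\otimes V$ as $\fg_0$-modules shows $\coind^{\fg}_{\fp}$ maps $\cC(\fp,\fg_0)$ to $\cC(\fg,\fg_0)$ exactly and sends injectives to injectives. Next I would identify the $\fp$-side matrix-element algebra: the $\fg_0$-semisimple part of $\U(\fp)^\circ$ decomposes Peter--Weyl style as $\bigoplus_{\lambda\in\cP^+_0}\,\cdot$, and the key point is that, because $\fg_1$ acts nilpotently, the relevant $(\fg_0\oplus\fg_1)$-module built from the highest-weight data of $L^0_\lambda$ is precisely the Kac module $K_\Lambda=\U(\fg)\otimes_{\U(\fg_0\oplus\fg_1)}L^0_\Lambda$ when $\lambda=\Lambda\in\cP^+$, while its twisted dual $(K_\Lambda)^\vee$ (with $\dagger$ interchanging $\fg_1$ and $\fg_{-1}$) records the right $(\fg_0\oplus\fg_{-1})$-action. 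Feeding $\coind^{\fg}_{\fp}$ of the Peter--Weyl decomposition for $\fg_0$ (or equivalently applying Theorem \ref{UC} and reorganising $C_\lambda$ along the $\fg_1$-part) then yields
\[
\cR\;\cong\;\bigoplus_{\Lambda\in\cP^+}\U(\fg)\otimes_{\U(\fg_0\oplus\fg_1)}\big(L^0_\Lambda\times(L^0_\Lambda)^\ast\big)^{\!*}\cdot\;=\;\bigoplus_{\Lambda\in\cP^+}(K_\Lambda)^\vee\times(K_\Lambda)^\ast,
\]
where on the left the $(\fg_0\oplus\fg_1)$-action is induced and on the right the $(\fg_0\oplus\fg_{-1})$-action is coinduced, and these two presentations of the same space match because, in type I, induction from $\fg_0\oplus\fg_1$ and coinduction from $\fg_0\oplus\fg_{-1}$ of a one-dimensional-over-$\fg_0$ datum agree up to the self-duality of $\Lambda\fg_{\ob}$ as a $\fg_0$-module (cf. Lemma \ref{lemind}(ii) and the argument in Theorem \ref{UC}).

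The main obstacle I expect is bookkeeping of which factor carries which one-sided action, and in particular pinning down that the correct right-hand object is the \emph{twisted} dual $(K_\Lambda)^\vee$ rather than the ordinary dual $(K_\Lambda)^\ast$: this is exactly the content of the automorphism $\dagger$ that interchanges $\fg_\alpha$ and $\fg_{-\alpha}$, hence swaps $\fg_1$ and $\fg_{-1}$, and it is what converts a ``lowest-weight-type, $\fg_{-1}$-locally-finite'' module into a highest-weight Kac module. Once one checks that the pairing coming from $\U(\fg)^\ast$ intertwines the left $(\fg_0\oplus\fg_1)$-action on $\U(\fg)$ with the right $(\fg_0\oplus\fg_{-1})$-action via $\dagger$, the identification $(K_\Lambda)^\vee$ on the first factor is forced, and the statement follows by comparing characters/central characters with Theorem \ref{genWallach} and Corollary \ref{zerocohomR} to rule out any ambiguity in multiplicities.
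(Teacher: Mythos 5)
Your strategy is the right one: the paper's own proof of Theorem~\ref{UK} consists of a single sentence --- ``this is proved using the same ideas as in the proof of Theorem~\ref{UC}, using the $\Z$-gradation $\fg=\fg_{-1}\oplus\fg_0\oplus\fg_1$'' --- and you have reconstructed exactly that route: replace the PBW split $\U(\fg)\cong\Lambda\fg_{\ob}\otimes\U(\fg_{\oa})$ used for Theorem~\ref{UC} by the two-sided split $\U(\fg)\cong\U(\fg_0\oplus\fg_1)\otimes_{\U(\fg_0)}\U(\fg_0\oplus\fg_{-1})$, dualize, pass to the finite dual and the $\fh$-semisimple part via Lemmata~\ref{findual} and~\ref{findual2}, and feed in Peter--Weyl for $\fg_0=\fg_{\oa}$. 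So on the level of strategy there is agreement.

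Where your write-up does not yet constitute a proof is in the identification step, and the problem is more than ``bookkeeping.'' First, your penultimate display
\[
\bigoplus_{\Lambda\in\cP^+}\U(\fg)\otimes_{\U(\fg_0\oplus\fg_1)}\bigl(L^0_\Lambda\times(L^0_\Lambda)^\ast\bigr)^{\!*}
\]
does not parse: $L^0_\Lambda\times(L^0_\Lambda)^\ast$ is a $\fg_0\times\fg_0$-module, and inducing it through $\U(\fg_0\oplus\fg_1)$ to $\U(\fg)$ on only one side is not a well-defined operation, while the dangling ${}^{*}$ is unexplained. Second, the text preceding it assigns the twisted dual $(K_\Lambda)^\vee$ to the right $(\fg_0\oplus\fg_{-1})$-side and $K_\Lambda$ itself to the left $(\fg_0\oplus\fg_1)$-side, which contradicts both the statement of the theorem and what you assert two sentences later (``the identification $(K_\Lambda)^\vee$ on the first factor is forced''); an argument cannot carry both. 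Third, and most substantively, the proposed closing move --- ``comparing characters/central characters with Theorem~\ref{genWallach} and Corollary~\ref{zerocohomR} to rule out any ambiguity in multiplicities'' --- cannot resolve the actual ambiguity, because $\ch\bigl((K_\Lambda)^\vee\bigr)=\ch K_\Lambda$: a character comparison can separate $(K_\Lambda)^\ast$ from the other two, but it cannot tell $(K_\Lambda)^\vee$ apart from $K_\Lambda$ as a $(\fg_0\oplus\fg_1)$-module. What actually forces the first factor to be $(K_\Lambda)^\vee$ rather than $K_\Lambda$ is a structural, not a numerical, fact: the modules arising from the left side of the split $\U(\fg)\cong\U(\fg_0\oplus\fg_1)\otimes_{\U(\fg_0)}\U(\fg_0\oplus\fg_{-1})$ are necessarily free over $\Lambda\fg_1$ (coming from the free factor $\U(\fg_1)$), and $(K_\Lambda)^\vee$ is $\fg_1$-free because $K_\Lambda$ is $\fg_{-1}$-free and $\dagger$ interchanges $\fg_1$ with $\fg_{-1}$; by contrast $K_\Lambda$ itself is \emph{not} $\fg_1$-free, since its highest weight vector is $\fg_1$-invariant. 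This freeness criterion, rather than a character or multiplicity count, is what completes the argument, and it is the piece your proposal leaves open.
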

\begin{proof}
This is proved using the same ideas as in the proof of Theorem \ref{UC}, using the $\Z$-gradation $\fg=\fg_{-1}\oplus\fg_0\oplus\fg_1$.
\end{proof}

\subsection{The Zuckerman and Bernstein functor}

In his subsection we derive some general properties about the Zuckerman functor and its derived functors.
\begin{lemma}
\label{lemZuckR}
\label{BernO}
The Zuckerman functor can be represented as
\[S(M)\cong\Hom_{\U(\fg)}\left(\cR,M\right)\cong \cR\otimes_{\U(\fg)}M,\]
for $M\in\cC(\fg,\fl)$, where for the first equality invariants with respect to the left $\fg$-action on $\cR$ are taken and the right $\fg$-action on $\cR$ leads through duality to a left $\fg$-action on $\Hom_{\U(\fg)}\left(\cR,M\right)$. The derived functors therefore satisfy 
\begin{eqnarray*}
\cR_k S(M)=\Ext^k_{\cC(\fg,\fl)}(\cR,M)=H^k\left({\fg,\fl};\Hom_\C(\cR,M)\right)
\end{eqnarray*}
with $H^k({\fg,\fl};-)$ the relative algebra cohomology, see \cite{Hochschild}. Furthermore, if $M\in\cO$, we have
$$\cR_kS(M)\cong\Ext^k_{\cO}(\cR,M).$$

The Bernstein functor satisfies 
$$\cL_k \Gamma = \Ext^k_{\cC(\fg,\fl)}(-,\cR)^\ast,$$
which reduces to $\Ext^k_{\cO}(-,\cR)^\ast$ when restricted to $\cO$.
\end{lemma}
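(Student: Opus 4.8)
\textbf{Proof plan for Lemma \ref{lemZuckR}.}
The plan is to reduce everything to the two basic identities already available: the functorial description of $S$ from Proposition \ref{propBBW}(iii) together with the interpretation of $\cR$ via matrix elements (Lemmas \ref{findual} and \ref{findual2}), and Corollary \ref{HomRL}, which says $\cR$ acts as the identity on $\cF$. First I would establish the two isomorphisms $S(M)\cong\Hom_{\U(\fg)}(\cR,M)\cong\cR\otimes_{\U(\fg)}M$ for $M\in\cC(\fg,\fl)$, where on the left-hand $\cR$-factor we use the left $\fg$-action and the surviving right $\fg$-action on $\cR$ carries, via duality, a left $\fg$-action to $\Hom_{\U(\fg)}(\cR,M)$. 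For $k=0$ this is a matrix-element computation essentially identical to the $k=0$ case of Proposition \ref{propBBW}(iii) with $\fu=\fn$, $V=\C$: the maximal locally $\U(\fg_{\oa})$-finite, $\fg_{\oa}$-semisimple submodule of $M$ is carved out by pairing against the left-$\fg$-invariant matrix elements of $\cR$, which by Corollary \ref{projcover} decompose as $\bigoplus_\Lambda P^\cF_\Lambda\times(L_\Lambda)^\ast$; applying $\Hom_{\U(\fg)}(-,M)$ in the left variable picks out exactly the $\fg_{\oa}$-semisimple locally finite part. The tensor description follows from the first by the standard Hopf-theoretic self-duality of $\cR$ (it is a super commutative Hopf superalgebra, Lemma \ref{findual}) together with finite-dimensionality of the relevant isotypic blocks.

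Next I would pass to derived functors. Since $S$ is left exact (Definition \ref{defZuck} and the remark after it), its right derived functors $\cR_kS$ are computed by an injective resolution in $\cC(\fg,\fl)$; applying the functorial isomorphism $S(-)\cong\Hom_{\U(\fg)}(\cR,-)$ (invariants under the left action) term by term identifies $\cR_kS(M)$ with $\Ext^k_{\cC(\fg,\fl)}(\cR,M)$, where $\cR$ is regarded as a left $\fg$-module and the residual right action produces the module structure on the Ext group. The identification with relative Lie superalgebra cohomology $H^k(\fg,\fl;\Hom_\C(\cR,M))$ is then the standard one: $\Ext^k_{\cC(\fg,\fl)}(\C,N)=H^k(\fg,\fl;N)$ for $N\in\cC(\fg,\fl)$ (see \cite{Hochschild}), applied to $N=\Hom_\C(\cR,M)$ after the adjunction $\Ext^k_{\cC(\fg,\fl)}(\cR,M)\cong\Ext^k_{\cC(\fg,\fl)}(\C,\Hom_\C(\cR,M))$. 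When $M\in\cO$, the statement $\cR_kS(M)\cong\Ext^k_\cO(\cR,M)$ follows because $\cO$ is extension full in $\cC(\fg,\fh)$ (Theorem 24 in \cite{CouMaz2} or Theorem 6.15 in \cite{MR2428237}), exactly as in the proof of Theorem \ref{thmBBW}(iii), noting that $\cR$ — a sum of finite-dimensional modules, hence a sum of objects of $\cO$ — lies in (an ind-completion compatible with) $\cO$.

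For the Bernstein functor, I would use that $\Gamma$ is the adjoint of $S$, equivalently that on $\cO$ one has $\cR_kS(M)=(\cL_k\Gamma(M^\vee))^\vee$ (the identity recorded after the definition of $\Gamma$), combined with the fact that $\vee$ is a contravariant duality exchanging $\cR$ with itself up to the relevant isomorphism (here one uses that $\cR$ is stable under the twisted dual, which follows from its Peter--Weyl type decomposition since $(L_\Lambda)^\vee\cong L_\Lambda$ and the decompositions in Corollaries \ref{zerocohomR}, \ref{projcover}). Dualizing $\cR_kS = \Ext^k_{\cC(\fg,\fl)}(\cR,-)$ then gives $\cL_k\Gamma = \Ext^k_{\cC(\fg,\fl)}(-,\cR)^\ast$, and the restriction to $\cO$ becomes $\Ext^k_\cO(-,\cR)^\ast$ by the same extension-fullness argument; alternatively one can cite Lemma \ref{BernO}'s companion statement $\cL_k\Gamma=\cR_kS(-^\vee)^\vee$ directly on $\cO$.

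\textbf{Main obstacle.} The delicate point is the bookkeeping of left versus right $\fg$-actions on $\cR$ throughout: one must check that taking $\Hom$ or $\otimes$ over $\U(\fg)$ in the \emph{left} variable genuinely yields the Zuckerman functor (and not some twisted variant), that the leftover \emph{right} action descends correctly to the output module, and that this is compatible with the self-duality of the Hopf superalgebra $\U(\fg)^\circ$; sign conventions for the super duality $(X\alpha)(v)=-(-1)^{|X||v|}\alpha(Xv)$ enter here. A secondary technical point is justifying that $\cR$ (an infinite direct sum of finite-dimensional modules) is an admissible first argument for $\Ext$ in these module categories and that the Ext groups commute with the relevant infinite direct sums, so that the decompositions of Corollaries \ref{zerocohomR}--\ref{projcover} may be used termwise; this is routine given local finiteness but should be stated.
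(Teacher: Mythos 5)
Your plan is essentially correct and covers the right points, but the route you propose for the two basic isomorphisms is noticeably heavier than the paper's and has one soft spot worth flagging. The paper simply starts from the tautological identities $M\cong\U(\fg)\otimes_{\U(\fg)}M\cong\Hom_{\U(\fg)}(\U(\fg)^\ast,M)$ and applies the Zuckerman functor $S$ to both sides; since $\cR=S(\U(\fg)^\ast)$ (Lemma \ref{findual2}), the locally-finite truncation lands directly on $\Hom_{\U(\fg)}(\cR,M)$ and $\cR\otimes_{\U(\fg)}M$, and crucially the $\fg$-module structure on both sides is automatically the one inherited from $M$. Your matrix-element computation invokes Corollary \ref{projcover}, which only records the $\fg\times\fg_{\oa}$-structure of $\cR$, so it identifies the output as a $\fg_{\oa}$-module but leaves the residual left $\fg$-action (the one that makes $\Hom_{\U(\fg)}(\cR,M)$ an object of $\cC(\fg,\fg_{\oa})$) to be matched separately — exactly the ``delicate bookkeeping'' you acknowledge as the main obstacle. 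The paper's formulation dissolves that obstacle rather than confronting it. On the other points you are aligned with the paper: the passage to $\cR_kS=\Ext^k_{\cC(\fg,\fl)}(\cR,-)$ is by definition of the derived functor; the relative-cohomology reformulation is via the standard $(\fg,\fl)$-projective resolution of $\C$ from Hochschild (you phrase it through the tensor-Hom adjunction plus $\Ext^k_{\cC(\fg,\fl)}(\C,-)=H^k(\fg,\fl;-)$, which is the same fact stated one step earlier); and the restriction to $\cO$ uses extension-fullness, as you say. The Bernstein part, which the paper's proof leaves implicit, is correctly handled by your dualization argument via $\cR_kS(M)=(\cL_k\Gamma(M^\vee))^\vee$ and the $\vee$-self-duality of $\cR$.
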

\begin{proof}
The identity $M\cong\U(\fg)\otimes_{\U(\fg)}M$ can be rewritten as $M\cong \Hom_{\U(\fg)}(\U(\fg)^\ast,M)$. The first result then follows from applying the Zuckerman functor and using Lemma~\ref{findual2}. The second representation of the Zuckerman functor follows similarly from the identity $M=\Hom_{\U(\fg)}(\U(\fg),M)$.

The reformulation $\cR_k S(M)=\Ext^k_{\cC(\fg,\fl)}(\cR,M)$ is an immediate consequence of the definition of $\cR_k S$ as the right derived functors of a functor $\cC(\fg,\fl)\to\cC(\fg,\fg_{\oa})$. The reformulation in terms of relative cohomology follows from the fact that the $(\fg,\fl)$-projective resolution of $\C$ in Section 5 of  \cite{Hochschild} is a projective resolution in the category $\cC(\fg,\fl)$.

The last reformulation follows from the fact that category $\cO$ is extension full in $\cC(\fg,\fh)$, see \cite{CouMaz2, MR2428237}.
\end{proof}

\begin{lemma}
\label{interZuck}
The right derived functors of the Zuckerman functors $S:\cC(\fg,\fl)\to\cC(\fg,\fg_{\oa})$ and $S_{\oa}:\cC(\fg_{\oa},\fl)\to\cC(\fg_{\oa},\fg_{\oa})$ in Definition \ref{defZuck} satisfy the following isomorphisms of functors:
\[\res^\fg_{\fg_{\oa}}\circ \cR_k S\cong \cR_k S_{\oa}\circ \res^\fg_{\fg_{\oa}}\quad\mbox{ and }\quad \cR_k S\circ \ind_{\fg_{\oa}}^{\fg}\cong\ind_{\fg_{\oa}}^{\fg}\circ \cR_k S_{\oa}.\]
\end{lemma}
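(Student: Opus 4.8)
The plan is to deduce both intertwining isomorphisms from the corresponding statement at the level of the underived (zeroth) functors, together with a comparison of injective resolutions. Recall from Lemma \ref{lemZuckR} that $S(M)\cong \cR\otimes_{\U(\fg)}M$ and similarly $S_{\oa}(N)\cong \cR_{\oa}\otimes_{\U(\fg_{\oa})}N$. The first step is therefore to establish the two isomorphisms for $k=0$: for the restriction statement, observe that $\res^{\fg}_{\fg_{\oa}}\cR\cong \cR_{\oa}\otimes \Lambda\fg_{\ob}$ as right $\fg_{\oa}$-modules (this is the $\fh$-bimodule computation underlying Theorem \ref{UC}, refined to $\fg_{\oa}$), so that $\res^{\fg}_{\fg_{\oa}} S(M)=\res^{\fg}_{\fg_{\oa}}(\cR\otimes_{\U(\fg)}M)$ agrees with $S_{\oa}(\res^{\fg}_{\fg_{\oa}}M)$ after identifying the maximal locally $\fg_{\oa}$-finite submodules; for the induction statement, use $\cR\cong \U(\fg)\otimes_{\U(\fg_{\oa})}\cR_{\oa}$ (Theorem \ref{UC}) together with the projection formula $\cR\otimes_{\U(\fg)}\ind^{\fg}_{\fg_{\oa}}N\cong (\cR\otimes_{\U(\fg)}\U(\fg))\otimes_{\U(\fg_{\oa})}N$ and the fact that, since $\U(\fg_{\oa})\hookrightarrow \U(\fg)$ is a finite ring extension, induction is exact and commutes with the relevant invariants, so $\ind^{\fg}_{\fg_{\oa}}$ of a locally $\fg_{\oa}$-finite module is again locally $\fg_{\oa}$-finite.

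The second step is to upgrade each $k=0$ isomorphism to the derived level by an acyclicity argument. For the restriction isomorphism, I would note that $\res^{\fg}_{\fg_{\oa}}$ is exact and sends injective objects of $\cC(\fg,\fl)$ to $S_{\oa}$-acyclic objects of $\cC(\fg_{\oa},\fl)$: indeed injectives of $\cC(\fg,\fl)$ are summands of $\coind^{\fg}_{\fl}L_\kappa(\fl)=\Hom_{\U(\fl)}(\U(\fg),L_\kappa(\fl))$, and restricting to $\fg_{\oa}$ these become, up to the finite-dimensional twist by $\Lambda\fg_{\ob}$, summands of $\coind^{\fg_{\oa}}_{\fl}(\cdots)$, which are injective in $\cC(\fg_{\oa},\fl)$ and hence $S_{\oa}$-acyclic; the Grothendieck spectral sequence (as used in the proof of Lemma \ref{KostantBott}) then collapses and gives $\res^{\fg}_{\fg_{\oa}}\circ\cR_kS\cong \cR_kS_{\oa}\circ\res^{\fg}_{\fg_{\oa}}$. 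For the induction isomorphism, the analogous input is that $\ind^{\fg}_{\fg_{\oa}}$ is exact (finite ring extension) and carries injectives of $\cC(\fg_{\oa},\fl)$ to injectives of $\cC(\fg,\fl)$, which follows because $\ind^{\fg}_{\fg_{\oa}}\cong\coind^{\fg}_{\fg_{\oa}}$ by Lemma \ref{lemind}(ii) and coinduction preserves injectives; applying $\cR_kS$ to an injective resolution of $N$, pushing it forward by the exact functor $\ind^{\fg}_{\fg_{\oa}}$, and comparing via the $k=0$ identity yields the claim. Alternatively one can phrase both in the $\Ext$-language of Lemma \ref{lemZuckR}: $\res^{\fg}_{\fg_{\oa}}\cR_kS(M)=\Ext^k_{\cC(\fg,\fl)}(\cR,M)$ restricted, which by adjunction $(\ind^{\fg}_{\fg_{\oa}}\dashv \res^{\fg}_{\fg_{\oa}})$ and $\cR\cong\ind^{\fg}_{\fg_{\oa}}\cR_{\oa}$ rewrites as $\Ext^k_{\cC(\fg_{\oa},\fl)}(\cR_{\oa},\res^{\fg}_{\fg_{\oa}}M)=\cR_kS_{\oa}(\res^{\fg}_{\fg_{\oa}}M)$.

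The main obstacle I anticipate is the bookkeeping around the categories $\cC(\fg,\fl)$ versus $\cC(\fg_{\oa},\fl)$ and the compatibility of the various descriptions of the Zuckerman functor (as $\cR\otimes_{\U(\fg)}-$, as $\Hom_{\U(\fg)}(\cR,-)$, and as the maximal locally $\fg_{\oa}$-finite submodule) with the finite ring extension $\U(\fg_{\oa})\hookrightarrow\U(\fg)$ and the twist by $\Lambda\fg_{\ob}$; one must check that the locally $\fg_{\oa}$-finite parts are preserved exactly, with no loss, under restriction and induction, and that the isomorphism $\res^{\fg}_{\fg_{\oa}}\cR\cong\cR_{\oa}\otimes\Lambda\fg_{\ob}$ is a genuine isomorphism of right $\fg_{\oa}$-modules, not merely of $\fh$-bimodules. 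Once these functorial identifications are nailed down, the derived statement is a formal consequence of exactness plus the injective/acyclic preservation, exactly along the lines of the Grothendieck spectral sequence argument already used twice in the preceding sections.
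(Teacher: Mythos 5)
Your proposal is correct, and the closing ``alternative'' phrasing --- rewriting $\cR_kS(M)=\Ext^k_{\cC(\fg,\fl)}(\cR,M)$ via $\cR\cong\ind^{\fg}_{\fg_{\oa}}\cR_{\oa}$ and the adjunction between $\ind^{\fg}_{\fg_{\oa}}$ and $\res^{\fg}_{\fg_{\oa}}$ --- is exactly the paper's one-line proof (``combine Lemma~\ref{lemZuckR} and Theorem~\ref{UC}''). Your main route, establishing the $k=0$ isomorphisms and then upgrading by exactness and injective-preservation of $\ind\cong\coind$, is an equivalent unpacking of the same ingredients; the only soft spot is the $k=0$ restriction step, where instead of arguing through $\res^{\fg}_{\fg_{\oa}}(\cR\otimes_{\U(\fg)}M)$ (which is not quite a computation as you have phrased it) it is cleaner to note directly from Definition~\ref{defZuck} that $S_{\oa}(\res^{\fg}_{\fg_{\oa}}M)$ is automatically a $\fg$-submodule of $M$, since $\fg_{\ob}$ is a finite-dimensional $\fg_{\oa}$-semisimple module, so that $\res^{\fg}_{\fg_{\oa}}S(M)=S_{\oa}(\res^{\fg}_{\fg_{\oa}}M)$ holds on the nose.
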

\begin{proof}
The results follow from the combination of Lemma \ref{lemZuckR} and Theorem \ref{UC}. \end{proof}

From the combination of Lemma \ref{lemZuckR} applied to $\fg_{\oa}$ and Lemma \ref{interZuck}, we obtain the following corollary.
\begin{corollary}
\label{derzero}
The right derived functors of the Zuckerman functor $S:\cC(\fg,\fl)\to\cC(\fg,\fg_{\oa})$ satisfy $\cR_k S\cong 0$ for $k>\dim\fg_{\oa}\,-\,\dim\fl_{\oa}$.
\end{corollary}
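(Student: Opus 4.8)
The plan is to reduce the statement to a degree bound for relative Lie \emph{algebra} cohomology by transporting everything to $\fg_{\oa}$, where the relevant exterior algebra is finite dimensional.

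First I would use Lemma \ref{interZuck}. Its first isomorphism $\res^\fg_{\fg_{\oa}}\circ\cR_kS\cong\cR_kS_{\oa}\circ\res^\fg_{\fg_{\oa}}$ shows that, for every $M\in\cC(\fg,\fl)$, the underlying $\fg_{\oa}$-module of $\cR_kS(M)$ is $\cR_kS_{\oa}(\res^\fg_{\fg_{\oa}}M)$. The forgetful functor $\res^\fg_{\fg_{\oa}}$ is exact and faithful and, trivially, reflects zero objects: a $\fg$-module is $0$ exactly when its underlying $\fg_{\oa}$-module is $0$. Hence it suffices to prove that the derived Zuckerman functor $\cR_kS_{\oa}$ on $\cC(\fg_{\oa},\fl_{\oa})$ — with $\fl_{\oa}=\fl\cap\fg_{\oa}$ — vanishes for $k>\dim\fg_{\oa}-\dim\fl_{\oa}$.

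Next I would apply Lemma \ref{lemZuckR} with $\fg_{\oa}$ in the role of $\fg$ and $\fl_{\oa}$ in the role of $\fl$; this is legitimate since $\fg_{\oa}$ is reductive, so $\cC(\fg_{\oa},\fl_{\oa})$ is of Harish-Chandra type and $\cR_{\oa}$ plays the role of $\cR$. It gives $\cR_kS_{\oa}(N)\cong H^k(\fg_{\oa},\fl_{\oa};\Hom_\C(\cR_{\oa},N))$, relative $(\fg_{\oa},\fl_{\oa})$-cohomology. Since $\fl_{\oa}$ is reductive in $\fg_{\oa}$, this relative cohomology is computed by the relative Chevalley--Eilenberg complex whose degree-$k$ term is $\Hom_{\fl_{\oa}}(\Lambda^k(\fg_{\oa}/\fl_{\oa}),-)$ (see \cite{Hochschild}). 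This term vanishes as soon as $k>\dim(\fg_{\oa}/\fl_{\oa})=\dim\fg_{\oa}-\dim\fl_{\oa}$, whence $H^k(\fg_{\oa},\fl_{\oa};-)=0$ and therefore $\cR_kS_{\oa}\cong0$ above that bound. Combining with the first step gives $\cR_kS\cong0$ for $k>\dim\fg_{\oa}-\dim\fl_{\oa}$.

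There is no deep obstacle here, but the point worth stressing is precisely why one must detour through $\fg_{\oa}$: in the super category the relative complex computing $H^\bullet(\fg,\fl;-)$ is governed by $\Lambda^\bullet(\fg/\fl)$, which includes all symmetric powers of the odd part $(\fg/\fl)_{\ob}$ and is therefore \emph{unbounded}, so the naive length estimate fails there; one genuinely needs the faithful exact restriction functor to import the vanishing from the purely even, finite-dimensional situation. The only bookkeeping subtlety is to keep $\fl$ and $\fl_{\oa}=\fl\cap\fg_{\oa}$ straight in the exceptional cases $\fg\in\{\mathfrak{osp}(2d+1|2n),G(3)\}$ where $\fl$ may carry an odd part; this does not affect the count, which involves only the even dimensions.
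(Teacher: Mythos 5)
Your proof is correct and follows the same route as the paper's, which simply invokes Lemma \ref{interZuck} to transport the question to $\fg_{\oa}$ and then Lemma \ref{lemZuckR} applied to $\fg_{\oa}$ to identify $\cR_k S_{\oa}$ with relative Lie algebra cohomology, bounded in degree by $\dim\fg_{\oa}-\dim\fl_{\oa}$. Your explicit remarks — that the forgetful functor reflects zero objects, and that one genuinely needs the detour through $\fg_{\oa}$ because $\Lambda^\bullet(\fg/\fl)$ is unbounded when $(\fg/\fl)_{\ob}\neq 0$ — are exactly the tacit points the paper leaves to the reader.
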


\begin{lemma}
\label{Zuckcommtensor}
The Zuckerman functor and its derived functors commute with the functor corresponding to tensor multiplication with a finite dimensional $\fg$-module:
\[\cR_k S(-\otimes V)\cong \cR_k S(-)\otimes V,\]
for $k\in \N$ and $V$ a finite dimensional $\fg$-module. 
\end{lemma}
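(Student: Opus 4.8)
The plan is to deduce the statement from the representation of the Zuckerman functor given in Lemma~\ref{lemZuckR}, namely $S(M)\cong \cR\otimes_{\U(\fg)}M$, combined with standard adjunction and tensor-identity manipulations. First I would recall the tensor identity for induction: for a finite dimensional $\fg$-module $V$ and any $\fg$-module $N$, one has the natural isomorphism $(\U(\fg)\otimes_{\U(\fg_{\oa})} N')\otimes V \cong \U(\fg)\otimes_{\U(\fg_{\oa})}(N'\otimes \res V)$, and dually for coinduction; more to the point, for the bimodule $\cR$ the key point is that $\cR\otimes V$ (tensoring, say, by the $\fg$-module $V$ on the left-module structure while keeping $\cR$'s right $\fg$-action) is isomorphic, as a $\fg$-bimodule, to a module built from $\cR$ in a way compatible with $\otimes_{\U(\fg)}$. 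Concretely, since $\cR$ carries commuting left and right $\fg$-actions and $V$ is finite dimensional, the map $\cR\otimes V\otimes_{\U(\fg)}M \to (\cR\otimes_{\U(\fg)}M)\otimes V$ sending $r\otimes v\otimes m$ appropriately (using the Hopf structure / the diagonal action and the fact that $V$ is finite dimensional so one can move a copy of $V$ through the tensor-over-$\U(\fg)$) is a natural isomorphism. This gives the $k=0$ case $S(-\otimes V)\cong S(-)\otimes V$ functorially.

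The second step is to upgrade this to the derived functors. Tensoring with a finite dimensional module $V$ is an exact functor on $\cC(\fg,\fl)$ and on $\cC(\fg,\fg_{\oa})$, and it preserves injectivity: indeed $-\otimes V$ has the exact two-sided adjoint $-\otimes V^\ast$ (since $V$ is finite dimensional, $V^\ast$ is again a finite dimensional $\fg$-module and $\Hom(N\otimes V,-)\cong\Hom(N,-\otimes V^\ast)$), so $-\otimes V$ sends injectives to injectives. Therefore, given an injective resolution $M\to I^\bullet$ in $\cC(\fg,\fl)$, the complex $M\otimes V\to I^\bullet\otimes V$ is again an injective resolution. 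Applying $S$ and using the $k=0$ isomorphism functorially, $S(I^\bullet\otimes V)\cong S(I^\bullet)\otimes V$ as complexes; taking cohomology and using exactness of $-\otimes V$ to commute it past cohomology yields $\cR_kS(M\otimes V)\cong \cR_kS(M)\otimes V$ for all $k$. Naturality in $M$ is automatic from the naturality of all isomorphisms used.

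The step I expect to be the main (though still routine) obstacle is pinning down the $k=0$ isomorphism $S(M\otimes V)\cong S(M)\otimes V$ so that it is genuinely natural and respects the relevant module structures — in particular making sure the interplay between the left $\fg$-action on $\cR$ (which is the one killed by $\otimes_{\U(\fg)}$), the right $\fg$-action on $\cR$ (which becomes the output $\fg$-action), and the diagonal $\fg$-action on the tensor product with $V$ is handled correctly. One clean way to bypass delicate bookkeeping: observe that $S$ is, by Lemma~\ref{lemZuckR}, also $\Hom_{\U(\fg)}(\cR,-)$ after identifying $\cR$ with $\U(\fg)^{\circ}$-type data, and that for finite dimensional $V$ one has $\Hom_{\U(\fg)}(\cR,M\otimes V)\cong \Hom_{\U(\fg)}(\cR\otimes V^\ast,M)$; then use that $\cR\otimes V^\ast$, as a left $\fg$-module, is a direct summand / is built from $\cR$ compatibly (again because $V^\ast$ is finite dimensional and $\cR$ contains all finite dimensional matrix coefficients, cf. Corollary~\ref{HomRL}), so $\Hom_{\U(\fg)}(\cR\otimes V^\ast,M)\cong \Hom_{\U(\fg)}(\cR,M)\otimes V$. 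Either route works; I would present whichever makes the naturality most transparent, and then invoke the injective-resolution argument above verbatim for the derived statement.
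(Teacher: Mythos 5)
Your proposal is correct in spirit but takes a genuinely different route from the paper. You argue via the representability $S(-)\cong\Hom_{\U(\fg)}(\cR,-)$ from Lemma~\ref{lemZuckR}, combined with tensor--Hom adjunction; the paper instead first proves the $k=0$ case over $\fg_{\oa}$ by a direct Peter--Weyl / Littlewood--Richardson count, builds an explicit inclusion $S(M)\otimes V\hookrightarrow S(M\otimes V)$, and uses the intertwining property $\Res\circ\cR_kS\cong\cR_kS_{\oa}\circ\Res$ of Lemma~\ref{interZuck} to force this inclusion to be an isomorphism. Your approach is more categorical and avoids the detour through $\fg_{\oa}$; the paper's is more concrete and needs only the bimodule decomposition of $\cR$, not its Hopf structure. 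For the derived upgrade the two arguments are essentially identical (exactness of $-\otimes V$, preservation of injectives, and either an explicit resolution argument as you do or the Grothendieck spectral sequence as the paper does).

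One step in your $k=0$ argument does need to be filled in. After the adjunction $\Hom_{\U(\fg)}(\cR,M\otimes V)\cong\Hom_{\U(\fg)}(\cR\otimes V^\ast,M)$, the left $\fg$-action on $\cR\otimes V^\ast$ is \emph{diagonal}, while the output $\fg$-module structure is carried by the right action on $\cR$ alone. To match this with $\Hom_{\U(\fg)}(\cR,M)\otimes V$, whose $\fg$-structure is diagonal in the other slot, you need the ``untwisting'' isomorphism $\cR\otimes V^\ast\cong\cR\otimes V^\ast$ of $\fg\times\fg$-modules that transports $V^\ast$ from the left-diagonal to the right-diagonal; this uses the comultiplication and antipode on $\cR\subset\U(\fg)^{\circ}$, and the phrase ``built from $\cR$ compatibly'' is doing silent work here. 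The cleanest way to bypass this bookkeeping entirely is to exploit directly that $S$ is the right adjoint of the exact inclusion $\iota:\cC(\fg,\fg_{\oa})\hookrightarrow\cC(\fg,\fl)$ (this is Definition~\ref{defZuck}, not Lemma~\ref{lemZuckR}): since $\iota$ visibly commutes with $-\otimes V$ and $-\otimes V^\ast\dashv-\otimes V$ in both categories, one has for any $N\in\cC(\fg,\fg_{\oa})$ a chain of natural isomorphisms
\[
\Hom(N,S(M\otimes V))\cong\Hom(\iota N\otimes V^\ast,M)\cong\Hom(N\otimes V^\ast,S(M))\cong\Hom(N,S(M)\otimes V),
\]
and Yoneda gives $S(M\otimes V)\cong S(M)\otimes V$ naturally in $M$, with no appeal to the internal structure of $\cR$ at all. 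With that replacement your proof is complete and self-contained.
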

\begin{proof}
First, we prove this property for reductive Lie algebras and for $k=0$. It follows from
\begin{eqnarray*}
S_{\oa}(-\otimes L_{\mu}^{\oa})&\cong & \bigoplus_{\lambda\in\cP_{\oa}^+}L_{\lambda}^{\oa}\dim\Hom_{\fg_{\oa}}(L_{\lambda}^{\oa},-\otimes L_{\mu}^{\oa})\\
&=& \bigoplus_{\lambda\in\cP_{\oa}^+}L_{\lambda}^{\oa}\dim\Hom_{\fg_{\oa}}(L_{\lambda}^{\oa}\otimes \left(L_{\mu}^{\oa}\right)^\ast,-)
\end{eqnarray*}
and the fact that $L_{\lambda}^{\oa}\otimes \left( L_{\mu}^{\oa}\right)^\ast\cong\oplus_{\nu} c_{\lambda\nu}L_{\nu}^{\oa}$ implies $\oplus_{\lambda}c_{\lambda\nu} L_{\lambda}^{\oa}=L_{\nu}^{\oa}\otimes L_{\mu}^{\oa} $.

Now we turn to the case of Lie superalgebras. For $N$ a locally finite module, $N\otimes V$ is also locally finite. The natural morphism $S(M)\otimes V\hookrightarrow M\otimes V$ therefore leads to a morphism $$S(M)\otimes V\hookrightarrow S(M\otimes V).$$ On the other hand, Lemma \ref{interZuck} implies that $\Res\left(S(M)\otimes V\right)\cong\Res\left( S(M\otimes V)\right)$, so the injective isomorphism leads to a bijection $S(M\otimes V)\cong S(M)\otimes V$. The result for the derived functors follows from the property for $k=0$, the fact that tensoring with a finite dimensional module is an exact functor that maps injective modules to injective modules and the Grothendieck spectral sequence, see Section 5.8 in \cite{MR1269324}.
\end{proof}

\begin{corollary}
\label{tensorfin}
For a finite dimensional $\fg$-module $V$, we have
$$\Gamma_i(G/P,L_{\mu}(\fp)\otimes V)\cong \Gamma_i(G/P,L_{\mu}(\fp))\otimes V.$$
\end{corollary}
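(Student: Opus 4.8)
The plan is to deduce Corollary~\ref{tensorfin} directly from Lemma~\ref{Zuckcommtensor} together with the categorical description of $\Gamma_i(G/P,-)$ established in Proposition~\ref{propBBW}(ii) and the adjunction relating the Zuckerman and Bernstein functors. Recall that $\Gamma_i(G/P,L_{\mu}(\fp))=\cL_i\Gamma(\ind^{\fg}_{\fp}L_{\mu}(\fp))$, so I first need the corresponding statement for the Bernstein functor, namely $\cL_i\Gamma(-\otimes V)\cong\cL_i\Gamma(-)\otimes V$ for a finite dimensional $\fg$-module $V$. The cleanest route is to pass to duals: for $M\in\cO$ one has $\cR_kS(M)=(\cL_k\Gamma(M^\vee))^\vee$ (stated in the excerpt, Section~\ref{subsecprelZuck}), and the duality $\vee$ satisfies $(M\otimes V)^\vee\cong M^\vee\otimes V^\vee$ with $V^\vee$ again finite dimensional. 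Applying Lemma~\ref{Zuckcommtensor} to $M^\vee$ and $V^\vee$ then yields the desired commutation for $\cL_k\Gamma$ on $\cO$; alternatively, one observes that $\cL_k\Gamma=\Ext^k_{\cC(\fg,\fl)}(-,\cR)^\ast$ by Lemma~\ref{lemZuckR}, and moving the finite dimensional $V$ across the Ext by the standard adjunction $\Ext^k(-\otimes V,\cR)\cong\Ext^k(-,\cR\otimes V^\ast)$ reduces matters to the fact that $\cR$ is "fixed" under such tensoring in the relevant sense — but the duality argument is shorter and self-contained.

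Concretely, the steps I would carry out are: (1) note $\ind^{\fg}_{\fp}(L_{\mu}(\fp)\otimes V)\cong(\ind^{\fg}_{\fp}L_{\mu}(\fp))\otimes V$ by the tensor identity / projection formula for induction, since $V$ is a $\fg$-module restricted to $\fp$; (2) apply $\cL_i\Gamma$ and use the commutation $\cL_i\Gamma(-\otimes V)\cong\cL_i\Gamma(-)\otimes V$ established above; (3) re-identify both sides via Proposition~\ref{propBBW}(ii), i.e. $\cL_i\Gamma(\ind^{\fg}_{\fp}W)=\Gamma_i(G/P,W)$. Since $L_{\mu}(\fp)\otimes V$ need not be simple, one should be slightly careful: Proposition~\ref{propBBW}(ii) is stated for arbitrary $V\in\cC(\fp,\fl)$, so $\Gamma_i(G/P,L_{\mu}(\fp)\otimes V)$ in the statement is to be read via that functorial definition, and no extra input is needed.

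The main obstacle — really the only point requiring care — is establishing the commutation $\cL_i\Gamma(-\otimes V)\cong\cL_i\Gamma(-)\otimes V$ rigorously, since Lemma~\ref{Zuckcommtensor} is phrased for the Zuckerman functor $\cR_kS$ and its derived functors, whereas here we need the left-derived Bernstein functor. On $\cO$ this is handled by the duality $\cR_kS(M^\vee)^\vee\cong\cL_k\Gamma(M)$ together with $(M\otimes V)^\vee\cong M^\vee\otimes V^\vee$ and applying Lemma~\ref{Zuckcommtensor}; one should check the natural transformation $\cL_k\Gamma(M)\otimes V\to\cL_k\Gamma(M\otimes V)$ (coming from the counit/unit of the Bernstein adjunction, or dually from the one in the proof of Lemma~\ref{Zuckcommtensor}) is the isomorphism, rather than merely exhibiting an abstract isomorphism — but this is the same bookkeeping already performed in the proof of Lemma~\ref{Zuckcommtensor} and carries over verbatim after dualising. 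Everything else is formal.
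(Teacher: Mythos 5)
Your proposal is correct and follows essentially the same route as the paper's own proof: tensor identity for $\ind^{\fg}_{\fp}$, then Lemma~\ref{Zuckcommtensor}, then re-identification via Proposition~\ref{propBBW}(ii). The one place you go beyond the paper is in noting that Lemma~\ref{Zuckcommtensor} is stated for $\cR_kS$ rather than $\cL_k\Gamma$ and supplying the dualisation $\cL_k\Gamma(M)\cong(\cR_kS(M^\vee))^\vee$ to bridge this; the paper applies the lemma to $\cL_i\Gamma$ without comment, so your added step tightens an argument that the paper leaves tacit.
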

\begin{proof}
Proposition \ref{propBBW}(ii) implies 
$$\Gamma_i(G/P,L_{\mu}(\fp)\otimes V)\cong\cL_i\Gamma(\ind^{\fg}_{\fp}(L_{\mu}(\fp)\otimes V)).$$
Using the tensor identity and Lemma \ref{Zuckcommtensor} we thus obtain
$$\Gamma_i(G/P,L_{\mu}(\fp)\otimes V)\cong\cL_i\Gamma(\ind^{\fg}_{\fp}(L_{\mu}(\fp))\otimes V)\cong \cL_i\Gamma(\ind^{\fg}_{\fp}(L_{\mu}(\fp)))\otimes V,$$
which yields the result.
\end{proof}

\subsection{Analogues of BBW theory using twisting functors}
\label{subsecanal}

The functors $\cL_k\Gamma$ acting on Verma modules, which compute the cohomology groups of BBW theory, behave differently from the classical case if the highest weight is atypical. Contrary to the classical case, the Verma module with such an {\em integral dominant} highest weight is not projective in category $\cO$. We show that if we replace that Verma module by its projective cover, we do get classical results when the functors $\cL_k\Gamma$ act on it. According to Lemma \ref{TVerma} (or see \cite{MR2032059}), the non-dominant Verma modules for $\fg_{\oa}$ are obtained from the twisting functors acting on the dominant one. The following proposition is therefore an alternative extension of classical BBW theory to Lie superalgebras.

\begin{proposition}
Consider $\Lambda\in\cP^+$ an integral dominant weight and $w\in W$. We have the property
\[\cL_k\Gamma\left(T_wP^\cO_\Lambda \right)\,=\,\delta_{k,l(w)}\,P^\cF_\Lambda .\]
\end{proposition}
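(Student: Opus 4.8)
The plan is to reduce everything to the classical situation for the reductive Lie algebra $\fg_{\oa}$, where the statement is the well-known computation of the (left-derived) Bernstein functor on twisted projective modules in category $\cO_{\oa}$, and then transport the answer back to $\fg$ via the induction functor. First I would observe that, by Lemma \ref{lemind}(ii), $\Ind=\coind^{\fg}_{\fg_{\oa}}$ is exact and moreover sends projectives to projectives, so it has the property needed to commute with the derived functors that appear. The key structural input is that the projective cover $P^\cO_\Lambda$ of $L_\Lambda$ in the BGG category $\cO$ for $\fg$ can be realised as $\Ind(P^{\cO_{\oa}}_\Lambda)$, where $P^{\cO_{\oa}}_\Lambda$ is the (classical) projective cover of $L^{\oa}_\Lambda$ in $\cO_{\oa}$ — this is because $\Ind$ is exact and maps the projective generator of $\cO_{\oa}$ to a projective generator of $\cO$, and because $\Lambda$ being $\fg$-integral dominant forces the top of $\Ind(P^{\cO_{\oa}}_\Lambda)$ to be $L_\Lambda$.

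Next I would use the intertwining relations available in the excerpt. Equation \eqref{eqlemma51} gives $\cL_iT_w\circ\Ind\cong\Ind\circ\cL_iT^{\oa}_w$ (extending the simple-root case to arbitrary $w\in W$ by the braid relations), so
\[
T_wP^\cO_\Lambda\;\cong\;T_w\,\Ind\bigl(P^{\cO_{\oa}}_\Lambda\bigr)\;\cong\;\Ind\bigl(T^{\oa}_wP^{\cO_{\oa}}_\Lambda\bigr),
\]
where I also use that $T^{\oa}_w$ is exact on a projective (all higher $\cL_iT^{\oa}_w$ vanish on projectives, since twisting functors have finite-dimensional total derived functor and projectives in $\cO_{\oa}$ are acyclic for $T^{\oa}_\alpha$). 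Then I apply $\cL_k\Gamma$ and invoke the analogue of \eqref{eqlemma51} for the Bernstein functor: since $\Gamma=S$-adjoint and $\cL_k\Gamma$ intertwines with $\Ind$ exactly as $\cR_kS$ does in Lemma \ref{interZuck} (indeed $\cL_k\Gamma(\Ind\,M)=(\cR_kS(\Ind\,M^\vee))^\vee=(\Ind\,\cR_kS_{\oa}(M^\vee))^\vee=\Ind\,\cL_k\Gamma_{\oa}(M)$, using that $\Ind$ commutes with the duality $\vee$ because $\Lambda\fg_{\ob}$ is self-dual), we get
\[
\cL_k\Gamma\bigl(T_wP^\cO_\Lambda\bigr)\;\cong\;\Ind\bigl(\cL_k\Gamma_{\oa}(T^{\oa}_wP^{\cO_{\oa}}_\Lambda)\bigr).
\]

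Now the inner expression is purely classical. For a reductive Lie algebra, $\cL_k\Gamma_{\oa}(T^{\oa}_wP^{\cO_{\oa}}_\Lambda)$ is exactly the content of classical BBW theory in the form due to Arkhipov and Bernstein--Gel'fand--Gel'fand: since $P^{\cO_{\oa}}_\Lambda$ is a tilting-projective whose $\Gamma_{\oa}$ computes, up to degree shift, the Bott cohomology, and since applying $T^{\oa}_w$ shifts the relevant highest weight to $w\circ\Lambda$ and shifts the cohomological degree by $l(w)$, one obtains $\cL_k\Gamma_{\oa}(T^{\oa}_wP^{\cO_{\oa}}_\Lambda)=\delta_{k,l(w)}L^{\oa}_\Lambda$ when $\Lambda$ is integral dominant — equivalently, $\cL_0\Gamma_{\oa}(P^{\cO_{\oa}}_\Lambda)=L^{\oa}_\Lambda$ with all higher $\cL_k$ vanishing (as $P^{\cO_{\oa}}_\Lambda$ has a Verma flag with $M(\Lambda)$ on top and all other subquotients $M(\mu)$ with $\mu\not\geq\Lambda$ non-dominant, hence $\Gamma_{\oa}$-acyclic with vanishing $\cL_0$), combined with the twisting shift $\cL_k\Gamma_{\oa}\circ T^{\oa}_w\cong\cL_{k-l(w)}\Gamma_{\oa}$ on the subcategory of $T^{\oa}_w$-acyclic objects, which follows from the Grothendieck spectral sequence for the composition $\Gamma_{\oa}\circ T^{\oa}_w$ together with the fact that $T^{\oa}_w$ is an equivalence on a suitable quotient. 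Finally $\Ind(L^{\oa}_\Lambda)=C_\Lambda$ is not simple, so one must be careful: what is actually needed is $\Ind$ applied to the classical answer \emph{in the category $\cF$}, and here one uses that $\Gamma$ lands in finite-dimensional modules and that $\cL_0\Gamma(\Ind\,P^{\cO_{\oa}}_\Lambda)=\Gamma(P^\cO_\Lambda)$ is by definition the maximal finite-dimensional quotient of $P^\cO_\Lambda$, which is $P^\cF_\Lambda$ (projective cover of $L_\Lambda$ in $\cF$) by the standard projectivity-transfer argument; the higher terms vanish because $P^\cO_\Lambda$ is $\Gamma$-acyclic, being a $\Gamma$-acyclic-filtered object.

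\textbf{Main obstacle.} The delicate point is not the induction bookkeeping but the degree-$l(w)$ shift: I must justify carefully that $\cL_k\Gamma_{\oa}\circ T^{\oa}_w\cong\cL_{k-l(w)}\Gamma_{\oa}$ holds on the class of objects relevant here, i.e. that $T^{\oa}_w$ raises the homological degree of the Bernstein functor by exactly $l(w)$ rather than merely bounding it. In the classical semisimple case this is a theorem (it is the essential content of the Arkhipov approach to BBW and appears in the work on the Bernstein functor and twisting functors), but for the paper to be self-contained one either cites it or derives it from the Grothendieck spectral sequence $\cL_p\Gamma_{\oa}\circ\cL_q T^{\oa}_w\Rightarrow\cL_{p+q}(\Gamma_{\oa}\circ T^{\oa}_w)$ together with the identification $\Gamma_{\oa}\circ T^{\oa}_w$ with (a degree-shifted version of) the Zuckerman/Enright completion functor. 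I expect the cleanest route is to prove the statement first for $w=s_\alpha$ a simple reflection — where $T^{\oa}_\alpha$ and the single-step analysis on the Verma flag of $P^{\cO_{\oa}}_\Lambda$ make the degree shift transparent via Lemma \ref{TVerma} and the short exact sequences relating $M(\mu)$, $T^{\oa}_\alpha M(\mu)$ — and then bootstrap to general $w$ by induction on $l(w)$ using the braid/composition relations for $T_w$ recorded after Lemma \ref{TVerma}, checking at each step that the intermediate object stays $T^{\oa}_{\alpha}$-acyclic and $\Gamma_{\oa}$-well-behaved.
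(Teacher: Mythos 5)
The central structural claim your argument rests on --- that $P^{\cO}_{\Lambda}\cong\Ind\bigl(P^{\cO_{\oa}}_{\Lambda}\bigr)$ --- is false, and this breaks the proof in a way that cannot be repaired by the final paragraph. By adjunction, for any $\mu$ one has $\Hom_{\cO}\bigl(\Ind P^{\cO_{\oa}}_{\Lambda},L_{\mu}\bigr)=\bigl[\Res L_{\mu}:L^{\oa}_{\Lambda}\bigr]$, so the top of $\Ind P^{\cO_{\oa}}_{\Lambda}$ contains $L_{\mu}$ for \emph{every} $\mu$ such that $L^{\oa}_{\Lambda}$ is a composition factor of $\Res L_{\mu}$; this is usually several distinct $\mu$, so $\Ind P^{\cO_{\oa}}_{\Lambda}$ is a decomposable projective having $P^{\cO}_{\Lambda}$ only as a direct summand. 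This is already visible in the paper's Corollary \ref{projcover}, which records precisely that $\Ind L^{\oa}_{\lambda}=C_{\lambda}=\bigoplus_{\Lambda'}[\Res L_{\Lambda'}:L^{\oa}_{\lambda}]\,P^{\cF}_{\Lambda'}$. For the same reason the end of your reduction overshoots: even granting your intertwining identities, you would land on $\Ind\bigl(\delta_{k,l(w)}L^{\oa}_{\Lambda}\bigr)=\delta_{k,l(w)}C_{\Lambda}$, which is not $P^{\cF}_{\Lambda}$. You notice this discrepancy at the very end, but the patch you offer --- just asserting $\cL_0\Gamma(P^{\cO}_{\Lambda})=P^{\cF}_{\Lambda}$ and vanishing of the higher terms --- abandons the reduction to $\fg_{\oa}$ for the $w=1$ case and does nothing for $w\neq 1$, where you still need to extract one indecomposable summand from $C_{\Lambda}$ and match it to $P^{\cF}_{\Lambda}$.

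The paper avoids this difficulty entirely by staying on the $\fg$-side and using the $\Ext$-realisation of the derived Bernstein functor. Lemma \ref{BernO} gives $\cL_k\Gamma(-)=\Ext^k_{\cO}(-,\cR)^{\ast}$; Lemma \ref{IDMaz} and Proposition \ref{extVermafin} give the homological degree shift $\Ext^k_{\cO}(T_wM,V)\cong\Ext^{k-l(w)}_{\cO}(M,V)$ (for $M$ with a standard filtration and $V$ finite dimensional), obtained from the derived adjunction $(\cL T_w,\cR G_w)$ and the computation $\cL_iT_wV=\delta_{i,l(w)}V$ on finite-dimensional modules. Combined with projectivity of $P^{\cO}_{\Lambda}$, this collapses $\Ext^{k-l(w)}_{\cO}(P^{\cO}_{\Lambda},\cR)$ to $\delta_{k,l(w)}\Hom_{\cO}(P^{\cO}_{\Lambda},\cR)$, and the $w=1$ step identifies $\Gamma(P^{\cO}_{\Lambda})=P^{\cF}_{\Lambda}$ via the adjunction $\Ind\dashv\Res$ (projectives in $\cO$ are summands of $\Ind$-modules, projectives in $\cF$ are summands of $\Ind$-modules, and the top matches). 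So the ``main obstacle'' you flag --- the degree shift --- is indeed the crux and your intuition there is sound, but the $\Ind$-reduction framework around it does not respect indecomposability of projective covers, and that is the gap that needs to be filled.
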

\begin{proof}
If $w=1$, then $\cL_k\Gamma(P^\cO_\Lambda)=0$ if $k>1$ by Lemma \ref{BernO}. The fact that $\Gamma(P^\cO_\Lambda)$ is projective in $\cF$ follows from the fact that the projective modules in $\cO$ are induced from $\fg_{\oa}$-modules while all modules which are projective in $\cF$ are direct summands of induced modules and Lemma \ref{interZuck}. The result $\Gamma(P^\cO_\Lambda)=P^\cF_\Lambda$ then follows from $\Top(P^\cO_\Lambda)=\Top\Gamma(P^\cO_\Lambda)=L_\Lambda$.

For $l(w)>1$ this follows from the combination of Lemma \ref{BernO}, Lemma \ref{IDMaz} and Lemma \ref{extVermafin} in the Appendix.
\end{proof}

According to Lemma \ref{TVerma}, another possibility to extend BBW from Lie algebras to Lie superalgebras is by replacing non-dominant Verma modules by the result of twisting functors acting on dominant Verma modules. This analogue of BBW theory is easier to describe than actuarial BBW theory, which follows from the following proposition
\begin{proposition}
\label{ZuckTwistVerma}
For $\Lambda\in\cP^+$ and $w\in W$, we have
$$\cL_k\Gamma( T_w M_\Lambda)=\begin{cases}K^{(\fb)}_\Lambda& \mbox{if $l(w)=k$}\\0& \mbox{if }l(w)>k\\ \Gamma_{k-l(w)}(G/B,L_{\Lambda}(\fb))=\cL_{k-l(w)}\Gamma(M_\Lambda)& \mbox{if }l(w) <k\end{cases}$$
\end{proposition}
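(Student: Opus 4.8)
The plan is to prove this by induction on $l(w)$, using the interplay between the derived Bernstein functors $\cL_k\Gamma$, the derived twisting functors $\cL_iT_\alpha$, and their relation to Verma modules. First I would handle the base case $w=1$: here $T_1M_\Lambda=M_\Lambda$, so $\cL_k\Gamma(M_\Lambda)=\Gamma_k(G/B,L_\Lambda(\fb))$ by Theorem \ref{thmBBW}(ii), and for $k=0$ this is $\Gamma(M_\Lambda)=K^{(\fb)}_\Lambda$ by definition of the generalised Kac module in Subsection \ref{subsecprelZuck}; there is no $l(w)>k$ case when $l(w)=0$. So the base case is immediate.

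For the inductive step, write $w=s_\alpha w'$ with $l(w)=l(w')+1$ and $\alpha$ simple in $\Delta^+_{\oa}$, so that $T_w=T_\alpha\circ T_{w'}$. The key is a Grothendieck-type spectral sequence (or a two-step argument) relating $\cL_k\Gamma(T_\alpha N)$ to $\cL_j\Gamma(N)$ for $N=T_{w'}M_\Lambda$ and its derived versions. The crucial input I would use is the relation $\cL_k\Gamma \circ \cL_iT_\alpha$ versus $\cL_{k+i}(\Gamma\circ T_\alpha)$ and a computation, done already in the Appendix (Lemma \ref{extVermafin}, Lemma \ref{IDMaz}, and the intertwining relations \eqref{eqlemma51}), showing how $\Gamma\circ T_\alpha$ acts, namely that it shifts the cohomological degree by one while $\Gamma$ "absorbs" the twist. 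Concretely, since $T_\alpha M_\mu$ is again a Verma module $M_{s_\alpha\cdot\mu}$ (under the hypotheses of Lemma \ref{TVerma}, which apply here since we work with $\fb_{\oa}$ fixed and $\alpha$ is even), one can track, at the level of the derived category, that applying $T_\alpha$ to $T_{w'}M_\Lambda$ raises the "$l(w)$ threshold" by one. I would make this precise by invoking the derived intertwining $\cL_i T_\alpha\circ\Ind\cong\Ind\circ\cL_iT^{\oa}_\alpha$ from \eqref{eqlemma51} together with the corresponding classical statement for $\fg_{\oa}$: on the even side, $T^{\oa}_w M^{\oa}_\Lambda$ has homology concentrated so that $\cL^{\oa}_k\Gamma_{\oa}(T^{\oa}_wM^{\oa}_\Lambda)$ gives $L^{\oa}_\Lambda$ in degree $l(w)$, vanishes below, and equals Bott-Borel-Weil cohomology of $\fg_{\oa}$ in degree $k-l(w)$ above (this is the classical Arkhipov/twisting description, \cite{MR2032059, MR2074588}).

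The argument then runs: for $l(w)=k$ we need the top nonvanishing term, which by the spectral sequence comes from $\cL_0\Gamma(\cL_{l(w)}(\Gamma^{-1}\cdots))$ — more carefully, from iterating the one-step relation $\cL_k\Gamma(T_\alpha N)$ picks out $\cL_{k-1}$ of $N$ in the relevant range — yielding $\cL_0\Gamma(M_\Lambda)=K^{(\fb)}_\Lambda$. For $l(w)>k$, each application of $T_\alpha$ either shifts or kills, and since $T_{w'}M_\Lambda$ already has its lowest contributing degree at $l(w')$, after applying $T_\alpha$ nothing survives below $l(w')+1=l(w)>k$, giving $0$. For $l(w)<k$, the excess $k-l(w)$ is transported through the twists unchanged and lands on $\cL_{k-l(w)}\Gamma(M_\Lambda)=\Gamma_{k-l(w)}(G/B,L_\Lambda(\fb))$ by Theorem \ref{thmBBW}(ii). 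To keep the bookkeeping clean I would use the corresponding statement over $\fg_{\oa}$, transport via $\Res$ and $\Ind$ using Lemma \ref{interZuck} and \eqref{eqlemma51}, and then note that a $\fg$-module morphism which is an isomorphism after $\Res$ to $\fg_{\oa}$ is an isomorphism (as used in the proof of Lemma \ref{Zuckcommtensor}).

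The main obstacle I anticipate is the degenerate behaviour when $\Lambda$ is atypical: then $T_\alpha M_\mu$ need not equal $M_{s_\alpha\cdot\mu}$ when $\langle\mu,\alpha^\vee\rangle\in\Z$ with $\langle\mu+\rho,\alpha^\vee\rangle<0$ (the exceptional case in Lemma \ref{TVerma}), and more seriously the derived twisting functors $\cL_iT_\alpha$ may be nonzero on the intermediate modules $T_{w'}M_\Lambda$, so the composition is not simply governed by the non-derived functors. Handling this requires either restricting attention to reduced expressions in which these wall-crossing pathologies do not occur, or — more robustly — passing to the even algebra where the classical theory is clean, proving the statement after $\Res$, and separately controlling the highest-weight structure (the $K^{(\fb)}_\Lambda$ versus $\Gamma_{k-l(w)}$ identifications) using the central character constraint of Corollary \ref{corcentr} and the fact established in the first paragraph's base case. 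I expect the Appendix lemmas (Lemma \ref{IDMaz}, Lemma \ref{extVermafin}) are designed exactly to dispatch this point, so the bulk of the proof is assembling the spectral sequence and citing them.
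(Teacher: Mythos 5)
Your proposal is on the right track and identifies the correct Appendix ingredients, but it substantially over-engineers what the paper treats as a one-line consequence. The paper's proof of Proposition \ref{ZuckTwistVerma} is simply: ``This is a special case of Proposition \ref{extVermafin}.'' Unwinding this: by Lemma \ref{BernO} one has $\cL_k\Gamma(M)\cong\Ext^k_{\cO}(M,\cR)^\ast$ for $M\in\cO$, and $\cR$ is a direct sum of finite dimensional modules, so Proposition \ref{extVermafin} applied with $V$ ranging over the summands of $\cR$ gives
$\cL_k\Gamma(T_w M_\Lambda)\cong\cL_{k-l(w)}\Gamma(M_\Lambda)$ for $l(w)\le k$ and $0$ otherwise. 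Together with $\cL_0\Gamma(M_\Lambda)=K^{(\fb)}_\Lambda$ and Theorem \ref{thmBBW}(ii), this is the full statement.

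You cite Lemma \ref{IDMaz} and Proposition \ref{extVermafin}, but instead of applying the latter directly, you set out to re-derive it via an induction on $l(w)$ with a Grothendieck spectral sequence. This is what the proof of Proposition \ref{extVermafin} itself does, so your plan essentially reconstructs that proof rather than invoking the result. In particular, your worry about the atypical case and the exceptional clause of Lemma \ref{TVerma} is a red herring in the paper's formulation: Proposition \ref{extVermafin} never tracks whether $T_w M_\Lambda$ stays a Verma module. Its proof uses the derived-functor facts $\cL_i T_w M(\mu)=0$ for $i>0$ (via equation \eqref{eqlemma51} and the standard filtration of $\Res M(\mu)$ in $\cO_{\oa}$) and $\cL_i T_w V=\delta_{i,l(w)}V$ for finite dimensional $V$, then applies the adjunction as in Lemma \ref{IDMaz}. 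Once you see that, the auxiliary machinery you sketch (restricting to $\fg_{\oa}$, re-promoting by the central character constraint of Corollary \ref{corcentr}, case-splitting on typicality) becomes unnecessary: the $\Ext$-reformulation plus Proposition \ref{extVermafin} yields the result for all $\Lambda\in\cP^+$ at once.
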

\begin{proof}
This is a special case of Proposition \ref{extVermafin}.
\end{proof}
This analogue of BBW theory behaves therefore identical to BBW for Lie algebras when we take into account that $\Gamma_{\bullet}(G/B,L_{\Lambda}(\fb))$ for $\Lambda\in\cP^+$ can be non-zero in several degrees.

Even though it is not an analogue of BBW theory, the following result fits into the two propositions above.
\begin{lemma}
Consider $\mu\in\fh^\ast$ not integral dominant and $w\in W$. We have
$$\cL_k\Gamma (T_wP^{\cO}_\mu)=0\qquad\mbox{for all}\quad k\in\N.$$
\end{lemma}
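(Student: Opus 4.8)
The statement to prove is that for $\mu\in\fh^\ast$ not integral dominant and $w\in W$, one has $\cL_k\Gamma(T_wP^\cO_\mu)=0$ for all $k$. The guiding principle is the same as in the two preceding propositions: first reduce the twisted case $w\neq 1$ to the untwisted case $w=1$ using the intertwining and derived-functor machinery for twisting functors collected in the Appendix (Lemma \ref{IDMaz}, Lemma \ref{extVermafin} and their consequences), and then treat $P^\cO_\mu$ directly. So the first step is to show that it suffices to prove $\cL_k\Gamma(P^\cO_\mu)=0$ for all $k$: indeed $T_w$ applied to a projective in $\cO$ has a Verma flag, and the spectral sequence / inductive argument used in the proof of $\cL_k\Gamma(T_wP^\cO_\Lambda)=\delta_{k,l(w)}P^\cF_\Lambda$ works verbatim once one knows the "building block" vanishing for the relevant Verma modules; the only input that changes is that the central character $\chi_\mu$ now contains no integral dominant weight in its dot-orbit, and by Corollary \ref{corcentr} (or rather its analogue for these Ext groups) everything in $\cL_k\Gamma(T_wP^\cO_\mu)$ must carry central character $\chi_\mu$.

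\textbf{The untwisted case.} For $w=1$, I would use Lemma \ref{BernO}, which gives $\cL_k\Gamma(-)=\Ext^k_{\cO}(-,\cR)^\ast$. Hence $\cL_k\Gamma(P^\cO_\mu)=\Ext^k_\cO(P^\cO_\mu,\cR)^\ast$. Since $P^\cO_\mu$ is projective in $\cO$, all higher Ext groups vanish automatically, so only $k=0$ survives and $\cL_0\Gamma(P^\cO_\mu)=\Hom_\cO(P^\cO_\mu,\cR)^\ast$. Now $\Hom_\cO(P^\cO_\mu,\cR)$ computes the multiplicity of the simple $L_\mu$ as a composition factor of $\cR$ — more precisely it is $\bigoplus$ of $\Hom_\cO(P^\cO_\mu, M)$ over the composition series, i.e.\ $[\cR:L_\mu]$ copies of $\C$. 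But by Corollary \ref{projcover} (or Corollary \ref{zerocohomR}), $\cR$ as a left $\fg$-module is a direct sum of the projective covers $P^\cF_\Lambda$ with $\Lambda\in\cP^+$, each of which has all composition factors $L_{\Lambda'}$ with $\Lambda'\in\cP^+$ integral dominant. Since $\mu\notin\cP^+$, the simple module $L_\mu$ never occurs, so $\Hom_\cO(P^\cO_\mu,\cR)=0$ and therefore $\cL_k\Gamma(P^\cO_\mu)=0$ for all $k$.

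\textbf{Assembling the twisted case and the main obstacle.} With the untwisted vanishing in hand, the general case follows by the standard Verma-flag induction: $T_wP^\cO_\mu$ sits in a finite filtration whose subquotients are of the form $T_wM_\nu$ with $M_\nu$ the subquotients of a Verma flag of $P^\cO_\mu$, and each such $\nu$ lies in the dot-orbit of $\mu$, hence is again not integral dominant. By Lemma \ref{extVermafin} in the Appendix, $\cL_k\Gamma(T_wM_\nu)$ is expressed in terms of $\cL_{k-l(w)}\Gamma(M_\nu)$, which by the case $w=1$ argument applied to $M_\nu$ (again $\Hom_\cO(M_\nu,\cR)$ and $\Ext^{>0}$ vanish because no integral dominant weight appears) is zero in all degrees; combined with a long exact sequence / spectral sequence in $k$ one concludes $\cL_k\Gamma(T_wP^\cO_\mu)=0$. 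The main obstacle is purely bookkeeping: verifying that the reduction from $P^\cO_\mu$ to its Verma subquotients is compatible with the derived twisting functors (one needs that $T_w$ sends the Verma flag to a filtration with controlled subquotients, which is exactly what the braid-relation and derived-functor lemmas in the Appendix provide) and checking that no integral dominant weight can sneak into the dot-orbit of a non-integral-dominant $\mu$ — this last point is immediate since the dot-action of $W$ preserves the property of being integral, and dominance is lost once lost. I expect no genuine difficulty beyond citing the Appendix lemmas correctly.
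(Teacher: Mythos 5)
Your $w=1$ case is correct and matches the paper's intent: $\cL_k\Gamma(P^\cO_\mu)=\Ext^k_\cO(P^\cO_\mu,\cR)^\ast$ vanishes for $k>0$ by projectivity, and for $k=0$ because $\cR$ has only composition factors $L_\Lambda$ with $\Lambda\in\cP^+$, so $\Hom_\cO(P^\cO_\mu,\cR)=0$.

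However, the ``assembling the twisted case'' step has a genuine gap, in fact two related errors. First, the Verma subquotients $M_\nu$ of $P^\cO_\mu$ need \emph{not} have $\nu$ non-dominant. The weights $\nu$ appearing in the Verma flag are linked to $\mu$ only through the central character (and satisfy $\nu\ge\mu$), not confined to the single dot-orbit of $\mu$; and even in a typical block, with $\mu=s_\alpha\cdot\Lambda$ for $\Lambda\in\cP^+$, BGG reciprocity gives $(P^\cO_\mu:M_\Lambda)=[M_\Lambda:L_\mu]\ge 1$, so the dominant weight $\Lambda$ does occur. Second, and more fundamentally, your claim that $\cL_k\Gamma(M_\nu)=0$ in all degrees ``because no integral dominant weight appears'' is false: for non-dominant $\nu$ the groups $\cL_k\Gamma(M_\nu)=\Gamma_k(G/B,L_\nu(\fb))$ are typically nonzero --- for instance Theorem \ref{BBWtypical} gives $\cL_{l(w)}\Gamma(M_{w\cdot\Lambda})=K_\Lambda\neq 0$ for $\Lambda$ typical dominant and $w\ne 1$. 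So the ``each Verma piece vanishes, therefore the filtration vanishes'' strategy cannot work: the individual contributions genuinely survive and must cancel in the long exact sequences.

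The paper's argument avoids this entirely by working with $P^\cO_\mu$ as a whole rather than piece-by-piece. Because $P^\cO_\mu$ has a standard filtration, it is acyclic for $T_w$, and the derived-category shift underlying Proposition \ref{extVermafin} (adjunction with $G_{w^{-1}}$ together with $\cR G_{w^{-1}}\cR\cong\cR[-l(w)]$, valid because $\cR$ is $\alpha$-finite for every $\alpha$) applies verbatim to any $T_w$-acyclic object, giving
$\Ext^{j}_{\cO}(T_wP^\cO_\mu,\cR)\cong\Ext^{\,j-l(w)}_{\cO}(P^\cO_\mu,\cR)$.
Now projectivity of $P^\cO_\mu$ kills $j>l(w)$, and your (correct) observation $\Hom_\cO(P^\cO_\mu,\cR)=0$ kills $j=l(w)$. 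You should replace the piecewise vanishing argument by this global dimension shift; no information about the individual $M_\nu$ is needed or available.
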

\begin{proof}
If $w=1$ this follows immediately from Lemma \ref{BernO}. Since $P_{\cO}(\mu)$ has a standard filtration, the full result follows from Proposition \ref{extVermafin}.
\end{proof}


\section{Restriction to the $\fg_{\oa}$-module structure}
\label{secPenkov}

When we restrict to the $\fg_{\oa}$-module structure, the cohomology groups of BBW theory can be expressed in terms of the algebra cohomology of $\fu$ in finite dimensional $\fg$-modules (either indecomposable projective covers or the induced modules $C_\lambda$ from equation \eqref{indmod}). This is summarised in the following theorem, where for notational convenience we tacitly identify $\dim\Hom$ with $\Hom$. The second result is a rederivation of Corollary 1 in \cite{Gruson2}.

\begin{theorem}
\label{BBWKostant}
\label{BBWproj}
Consider a parabolic subalgebra $\fp$ of a basic classical Lie superalgebra $\fg$ such that the Levi subalgebra $\fl$ is of typical type. Consider an $\fl$-dominant $\mu\in\cP$, the $\fg$-modules $\Gamma_k(G/P,L_{\mu}(\fp))$ satisfy the relations
\begin{eqnarray*}
\res^\fg_{\fg_{\oa}}\Gamma_k(G/P,L_{\mu}(\fp))&=&\bigoplus_{\lambda\in\cP^+_{\oa}}\Hom_{\U(\fl)}\left(L_{\mu}(\fl),H^k(\fu,C_\lambda)\right) L_\lambda^{\oa}\quad\mbox{and}\\
\,[\Gamma_k(G/P,L_{\mu}(\fp)):L_\Lambda]&=& \Hom_{\U(\fl)}(L_\mu(\fl), H^k(\fu,P^\cF_\Lambda )),
\end{eqnarray*}
for any $\Lambda\in\cP^+$.
\end{theorem}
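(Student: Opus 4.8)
The plan is to deduce Theorem \ref{BBWKostant} from the reformulations already established, namely Proposition \ref{propBBW} and Theorem \ref{thmBBW}, combined with the explicit decompositions of $\cR$ as a $\fg\times\fg_{\oa}$-module (Theorem \ref{UC}) and as a $\fg\times\fg_{\oa}$-module in terms of projective covers (Corollary \ref{projcover}). First I would recall, via Theorem \ref{thmBBW}(i), that it suffices to treat $\fu=\fn$, i.e. the Borel case, since $\Gamma_k(G/P,L_{\mu}(\fp))=\Gamma_k(G/B,L_{\mu}(\fb))$ for $\mu$ integral dominant $\fl$-weight; but actually for the statement as phrased we keep general $\fp$ and use equation \eqref{thmBBWhelp}, which identifies $H^k(G/P,G\times_P L_{\mu}(\fp)^\ast)$ with $\Ext^k_{\cC(\fp,\fl)}(L_{\mu}(\fp),\cR)=\Hom_{\U(\fl)}(L_{\mu}(\fl),H^k(\fu,\cR))$. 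Dualising, $\Gamma_k(G/P,L_{\mu}(\fp))$ is the $\fg$-module whose structure is controlled by $H^k(\fu,\cR)$ paired against $L_{\mu}(\fl)$ on the $\fl$-side.

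Next I would feed the $\fg\times\fg_{\oa}$-decomposition $\cR\cong\bigoplus_{\lambda\in\cP^+_{\oa}}C_\lambda\times(L^{\oa}_\lambda)^\ast$ of Theorem \ref{UC} into this. Since $\fu\subset\fg$ acts only through the left (first) tensor factor, $H^k(\fu,\cR)\cong\bigoplus_{\lambda}H^k(\fu,C_\lambda)\times(L^{\oa}_\lambda)^\ast$ as $\fl\times\fg_{\oa}$-modules, the second factor being an inert multiplicity space for the $\fu$-cohomology. Taking $\Hom_{\U(\fl)}(L_{\mu}(\fl),-)$ and then the $\C$-linear dual (to pass from $H^\bullet(G/P,\cdot^\ast)$ back to $\Gamma_\bullet(G/P,\cdot)$, recalling $\Gamma_k(G/P,V)=H^k(G/P,G\times_P V^\ast)^\ast$ and that $(L^{\oa}_\lambda)^{\ast\ast}\cong L^{\oa}_\lambda$ in finite dimensions) yields
\[\res^\fg_{\fg_{\oa}}\Gamma_k(G/P,L_{\mu}(\fp))\cong\bigoplus_{\lambda\in\cP^+_{\oa}}\Hom_{\U(\fl)}\left(L_{\mu}(\fl),H^k(\fu,C_\lambda)\right)\otimes L^{\oa}_\lambda,\]
which is the first relation. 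For the second relation I would run the identical argument but start from Corollary \ref{projcover}, $\cR\cong\bigoplus_{\Lambda\in\cP^+}P^\cF_\Lambda\times(L_\Lambda)^\ast$ as $\fg\times\fg_{\oa}$-modules: here the first factor $P^\cF_\Lambda$ is a genuine $\fg$-module (not merely $\fg_{\oa}$), so $H^k(\fu,\cR)\cong\bigoplus_\Lambda H^k(\fu,P^\cF_\Lambda)\times(L_\Lambda)^\ast$, and after $\Hom_{\U(\fl)}(L_{\mu}(\fl),-)$ and dualising the multiplicity of $L_\Lambda$ in $\Gamma_k(G/P,L_{\mu}(\fp))$ reads off as $\Hom_{\U(\fl)}(L_\mu(\fl),H^k(\fu,P^\cF_\Lambda))$. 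Here one uses that $\{(L_\Lambda)^\ast\}_{\Lambda\in\cP^+}$ is a complete set of simples, so extracting the $(L_\Lambda)^\ast$-isotypic component on the second factor of $\cR$ isolates exactly the composition factor $L_\Lambda$ in the first factor.

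The main obstacle, and the point that needs the most care, is the bookkeeping of duals and the passage between $H^k(G/P,\cdot)$ and $\Gamma_k(G/P,\cdot)$: one must be consistent about which copy of $\cR$ (its left versus right $\fg$-action) carries the $\fu$-cohomology and which carries the multiplicity space, and one must check that tensoring by the finite-dimensional multiplicity spaces $(L^{\oa}_\lambda)^\ast$ or $(L_\Lambda)^\ast$ genuinely commutes with $\fu$-cohomology and with $\Hom_{\U(\fl)}(L_{\mu}(\fl),-)$ — this is routine since these are finite-dimensional vector spaces on which $\fu$ and $\fl$ act trivially in the relevant factor, but it must be stated. A secondary subtlety is that the decomposition of Theorem \ref{UC} (resp.\ Corollary \ref{projcover}) is only as $\fg\times\fg_{\oa}$-modules, which is exactly why the first formula computes only $\res^\fg_{\fg_{\oa}}\Gamma_k$ and the second computes only composition multiplicities $[\,\cdot:L_\Lambda]$ rather than the full $\fg$-module structure; I would emphasise that no stronger conclusion is claimed. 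Finally, the identification of $H^k(\fu,V\otimes\cR)$-type expressions with $\Hom_{\U(\fl)}$ against $L_{\mu}(\fl)$ is precisely \eqref{ExtHk}, so no new homological algebra is needed beyond invoking it.
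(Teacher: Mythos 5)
Your argument for the first relation is correct and is essentially the paper's own: feed Theorem~\ref{UC} into Proposition~\ref{propBBW}(iii) (equivalently \eqref{thmBBWhelp}), pull the inert multiplicity factor out of the $\fu$-cohomology, and dualise.

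For the second relation there is a genuine, if small, gap. Corollary~\ref{projcover} is only a $\fg\times\fg_{\oa}$-module decomposition; the second factor $(L_\Lambda)^\ast$ is recorded only as a $\fg_{\oa}$-module (its proof produces it as $\bigoplus_\lambda[\res^{\fg}_{\fg_{\oa}}L_\Lambda:L^{\oa}_\lambda](L^{\oa}_\lambda)^\ast\cong\res^{\fg}_{\fg_{\oa}}(L_\Lambda)^\ast$). So after $\fu$-cohomology, $\Hom_{\U(\fl)}(L_\mu(\fl),-)$ and dualisation, what your computation actually delivers is the $\fg_{\oa}$-module, i.e.\ character, identity
\[\res^{\fg}_{\fg_{\oa}}\Gamma_k(G/P,L_\mu(\fp))\cong\bigoplus_{\Lambda\in\cP^+}\Hom_{\U(\fl)}\bigl(L_\mu(\fl),H^k(\fu,P^{\cF}_\Lambda)\bigr)\,\res^{\fg}_{\fg_{\oa}}L_\Lambda,\]
not a statement about $\fg$-composition factors. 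Your sentence about extracting ``the $(L_\Lambda)^\ast$-isotypic component on the second factor'' treats Corollary~\ref{projcover} as a $\fg\times\fg$-decomposition with simple $\fg$-isotypic pieces, which it is not; indeed, were that the case, $\Gamma_k(G/P,L_\mu(\fp))$ would come out a semisimple $\fg$-module, which is false in general. The step you need, and the one the paper's proof explicitly supplies, is that the character of a finite-dimensional weight module determines its Jordan--H\"older multiplicities (the $\ch L_\Lambda$, $\Lambda\in\cP^+$, are linearly independent); this converts the displayed character identity into $[\Gamma_k(G/P,L_\mu(\fp)):L_\Lambda]=\Hom_{\U(\fl)}(L_\mu(\fl),H^k(\fu,P^{\cF}_\Lambda))$. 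You flag the $\fg\times\fg_{\oa}$ subtlety in your closing paragraph, but ``reads off'' does not substitute for this linear-independence step, which you should state.
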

\begin{proof}
The first statement follows from the combination of Proposition \ref{propBBW}(iii) and Theorem \ref{UC}.

Corollary \ref{projcover} and Proposition \ref{propBBW}(iii) imply that 
\[\ch\Gamma_k(G/P,L_{\mu}(\fp))=\sum_{\Lambda\in\cP^+} \Hom_{\U(\fl)}(L_\mu(\fl), H^k(\fu,P^\cF_\Lambda ))\ch L_\Lambda.\]
Since the character of a finite dimensional weight module completely determines the multiplicities in the Jordan-H\"older decomposition, the second property follows.
\end{proof}

The following theorem shows that the Kostant cohomology of projective modules in $\cF$ appears only in finitely many degrees, this is not true for arbitrary modules, see e.g. \cite{BGG}. Furthermore, it presents $\fl$-modules which serve as an upper bound for the cohomology groups. This could also be obtained through the equality
\begin{equation}\label{ExtRes}\Hom_{\fh}(\C_\mu,H^k(\fn,C_\lambda))=\Ext^k_{\cO}(M(\mu),C_\lambda)=\Ext^k_{\cO_{\oa}}(\res^{\fg}_{\fg_{\oa}}M(\mu),L_\lambda^{\oa})\end{equation}
and the standard filtration of $\res^{\fg}_{\fg_{\oa}}M(\mu)$ by Verma modules of $\fg_{\oa}$, but underneath we take a more constructive approach which leads to an explicit construction of the maps of this complex to compute the cohomology.

\begin{theorem}
\label{infocohomC}
The cohomology groups $H^k(\fu,C_\lambda)$ are isomorphic to the homology groups $H_k(\overline{\fu},C_\lambda)$ and can be computed as the homology of a complex of $\fl$-modules of the form
\[0\to\Lambda\fg_1\otimes H^d({\fu}_{\oa},L_\lambda^{\oa})\to\cdots\to\Lambda\fg_1\otimes H^j({\fu}_{\oa},L_\lambda^{\oa})\to\cdots\to\Lambda\fg_1\otimes H^0({\fu}_{\oa},L_\lambda^{\oa})\to0,\]
with $d=\dim\fu_{\oa}$.
\end{theorem}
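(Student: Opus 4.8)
The plan is to exploit the type-I $\Z$-grading $\fg=\fg_{-1}\oplus\fg_0\oplus\fg_1$, under which $\fg_{\oa}=\fg_0$, $\fg_{\ob}=\fg_{-1}\oplus\fg_1$, so that the nilradical splits as $\fu=\fu_{\oa}\oplus\fu_{\ob}$ with $\fu_{\ob}\subset\fg_1$ (here $\fl$ of typical type forces $\fl\subset\fg_{\oa}$, hence $\fl\subset\fg_0$), and dually $\ofu=\ofu_{\oa}\oplus\ofu_{\ob}$ with $\ofu_{\ob}\subset\fg_{-1}$. First I would note that $H^k(\fu,C_\lambda)\cong H_k(\ofu,C_\lambda)$: this is the super-analogue of Poincar\'e duality for nilpotent Lie (super)algebra cohomology with coefficients in a module, valid because $\fg$ is basic classical so that $\fu$ and $\ofu$ are put in duality by the invariant form, and the required one-dimensional twist by $\Lambda^{\rm top}\fu$ is accounted for — since $C_\lambda=\ind^{\fg}_{\fg_{\oa}}L_\lambda^{\oa}$ is finite-dimensional and $\ofu_{\ob}$ and $\fu_{\ob}$ are finite-dimensional this duality is a straightforward Chevalley--Eilenberg computation. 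Working with $\ofu$-homology is convenient because $C_\lambda=\U(\fg)\otimes_{\U(\fg_0\oplus\fg_1)}L_\lambda^{\oa}\cong \Lambda\fg_{-1}\otimes L_\lambda^{\oa}$ as $\fg_0$-modules, i.e.\ $C_\lambda$ is free over $\U(\ofu_{\ob})=\Lambda\ofu_{\ob}=\Lambda\fg_{-1}$ (since $\ofu_{\ob}=\ofn_{\ob}=\fg_{-1}$ when $\fp=\fb$; more generally $C_\lambda$ restricted to $\ofu_{\ob}$ is a sum of copies of $\Lambda\fg_{-1}$).

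The key step is to compute $H_\bullet(\ofu,C_\lambda)$ by the Hochschild--Serre (Lyndon) spectral sequence for the ideal $\ofu_{\ob}\triangleleft\ofu$ with quotient $\ofu/\ofu_{\ob}\cong\ofu_{\oa}$ — this requires $[\ofu,\ofu_{\ob}]\subseteq\ofu_{\ob}$, which holds because $\ofu_{\ob}$ is the degree-$(-1)$ part and brackets only decrease degree within $\ofu$. The $E^2$-page is $E^2_{pq}=H_p(\ofu_{\oa},H_q(\ofu_{\ob},C_\lambda))$. Now the crucial simplification: since $\ofu_{\ob}$ is a purely odd abelian subalgebra (it lies in $\fg_{-1}$, and $[\fg_{-1},\fg_{-1}]=0$ by the $\Z$-grading), its Chevalley--Eilenberg homology complex computing $H_\bullet(\ofu_{\ob},C_\lambda)$ is, because $C_\lambda$ is $\ofu_{\ob}$-free, concentrated so that $H_q(\ofu_{\ob},C_\lambda)=0$ for $q>0$ and $H_0(\ofu_{\ob},C_\lambda)=C_\lambda/\ofu_{\ob}C_\lambda\cong L_\lambda^{\oa}\otimes\Lambda\fg_1$ as an $\fl$-module — the factor $\Lambda\fg_1$ appearing because the coinvariants of $\Lambda\fg_{-1}$ under $\fg_{-1}=\ofu_{\ob}$ via the CE differential give back, up to the duality pairing, the exterior algebra on the complementary odd space $\fg_1$. (Equivalently, $H_\bullet(\ofu_{\ob},\Lambda\fg_{-1})$ sits in degree $0$ and equals $\Lambda\fg_1$ as an $\fl$-module by a direct Koszul-type computation on the odd exterior algebra.) Hence the spectral sequence collapses at $E^2$ onto the row $q=0$, yielding $H_k(\ofu,C_\lambda)\cong H_k(\ofu_{\oa},\Lambda\fg_1\otimes L_\lambda^{\oa})$.

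Finally I would observe that $\fg_1$, and hence $\Lambda\fg_1$, is a module over $\fg_0\supseteq\fl\supseteq\fu_{\oa}$ on which $\fu_{\oa}$ acts; but since we only need the complex of $\fl$-modules and $\Lambda\fg_1$ is finite-dimensional, the standard Chevalley--Eilenberg complex computing $H_\bullet(\ofu_{\oa},\Lambda\fg_1\otimes L_\lambda^{\oa})=H^\bullet(\fu_{\oa},\Lambda\fg_1\otimes L_\lambda^{\oa})$ can be rewritten, using that $\Lambda\fg_1$ is $\fu_{\oa}$-stable and applying ordinary (even) Kostant/Poincar\'e duality $H^j(\fu_{\oa},\Lambda\fg_1\otimes L_\lambda^{\oa})$ together with the tensor identity $H^j(\fu_{\oa},\Lambda\fg_1\otimes L_\lambda^{\oa})\cong \Lambda\fg_1\otimes H^j(\fu_{\oa},L_\lambda^{\oa})$ whenever the $\fu_{\oa}$-action on $\Lambda\fg_1$ can be untwisted — here one uses that $\fu_{\oa}$-cohomology with coefficients in a finite-dimensional module admitting a filtration with trivial subquotients reduces, at the level of the complex of $\fl$-modules, to $\Lambda\fg_1\otimes(\text{CE complex of }\fu_{\oa}\text{ with values in }L_\lambda^{\oa})$. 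This produces exactly the displayed complex
\[0\to\Lambda\fg_1\otimes H^d(\fu_{\oa},L_\lambda^{\oa})\to\cdots\to\Lambda\fg_1\otimes H^0(\fu_{\oa},L_\lambda^{\oa})\to0,\qquad d=\dim\fu_{\oa},\]
with differentials induced by the residual action of $\ofu_{\ob}$ (the $E^1$- respectively $d^1$-differential of a second, coarser spectral sequence, now for the ideal $\ofu_{\oa}\triangleleft\ofu$ — note $[\ofu_{\ob},\ofu_{\oa}]\subseteq\ofu_{\ob}$ so $\ofu_{\oa}$ is also an ideal), which carries degree $+1$ in the $\Lambda\fg_1$-grading and hence maps $\Lambda^i\fg_1\otimes H^j\to\Lambda^{i+1}\fg_1\otimes H^{j}$ — collecting these along a single index $j$ gives the asserted complex of $\fl$-modules whose homology is $H^k(\fu,C_\lambda)$.

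I expect the main obstacle to be the bookkeeping that makes the passage from $H_\bullet(\ofu_{\ob},\Lambda\fg_{-1}\otimes L_\lambda^{\oa})$ to $\Lambda\fg_1\otimes L_\lambda^{\oa}$ genuinely $\fl$-equivariant (the identification of odd-exterior-algebra (co)invariants with the exterior algebra on the dual space, with the correct parity and $\fl$-action, and keeping the invariant form's role explicit), and — relatedly — justifying that the $\fu_{\oa}$-action on the $\Lambda\fg_1$ tensor factor contributes nothing beyond the extra differential, i.e.\ that the two collapsing/degenerating spectral sequences genuinely leave only the stated complex and not a more complicated $E_\infty$-page. Everything else (the two Hochschild--Serre spectral sequences, super Poincar\'e duality, freeness of $C_\lambda$ over $\Lambda\fg_{-1}$) is routine once the $\Z$-grading is in hand.
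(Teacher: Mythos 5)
Your plan is a genuinely different route (two Hochschild--Serre spectral sequences rather than the paper's use of Lepowsky's generalized BGG resolution), but it has gaps that prevent it from reaching the stated complex, and it also restricts unnecessarily to type I while the theorem and the paper's proof are uniform over all basic classical $\fg$.

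The most concrete error is in the ``second, coarser spectral sequence'': you write that $[\ofu_{\ob},\ofu_{\oa}]\subseteq\ofu_{\ob}$ shows $\ofu_{\oa}$ is an ideal of $\ofu$. It shows the opposite --- a subalgebra $\fa$ of $\fc$ is an ideal when $[\fc,\fa]\subseteq\fa$, so $[\ofu_{\ob},\ofu_{\oa}]\subseteq\ofu_{\ob}$ is (together with $[\ofu_{\ob},\ofu_{\ob}]\subseteq\ofu_{\oa}$) precisely the statement that $\ofu_{\ob}$ is the ideal, not $\ofu_{\oa}$. The Hochschild--Serre spectral sequence with $\ofu_{\oa}$ in the role of the ideal therefore does not exist, and the ``$d^1$-differential'' you invoke to manufacture the arrows of the displayed complex is not defined. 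Relatedly, the first spectral sequence, whose collapse is fine (your identification $H_0(\ofu_{\ob},C_\lambda)\cong\Lambda\fu_{\ob}\otimes L^{\oa}_\lambda$ follows from PBW freeness rather than any ``duality pairing on coinvariants'' as you describe), only yields an \emph{isomorphism} $H_k(\ofu,C_\lambda)\cong H_k(\ofu_{\oa},\Lambda\fu_{\ob}\otimes L^{\oa}_\lambda)$ where $\ofu_{\oa}$ acts nontrivially on $\Lambda\fu_{\ob}$; the ``tensor identity'' $H^j(\fu_{\oa},\Lambda\fg_1\otimes L^{\oa}_\lambda)\cong\Lambda\fg_1\otimes H^j(\fu_{\oa},L^{\oa}_\lambda)$ does not hold for this twisted action, and even if it did it would give you an isomorphism of homology groups rather than a complex whose homology is $H^k(\fu,C_\lambda)$.

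The ingredient you are missing is Lepowsky's generalized BGG resolution of $L^{\oa}_\lambda$: an exact complex of $\fg_{\oa}$-modules with terms $\U(\fg_{\oa})\otimes_{\U(\fp_{\oa})}H_j(\ofu_{\oa},L^{\oa}_\lambda)$. This is what puts the \emph{Kostant cohomology groups} $H^j(\fu_{\oa},L^{\oa}_\lambda)$ into the spaces of chains, as opposed to the naked Chevalley--Eilenberg cochain spaces $\Lambda^j\fu_{\oa}^\ast\otimes L^{\oa}_\lambda$ that your filtration argument would produce. The paper applies the exact functor $\U(\fg)\otimes_{\U(\fg_{\oa})}-$ to this resolution, observes (by PBW with the right $\fp_{\oa}$-action) that the resulting terms are free $\U(\ofu)$-modules $\U(\ofu)\otimes\Lambda\fu_{\ob}\otimes H_j(\ofu_{\oa},L^{\oa}_\lambda)$, and then takes $\Hom_{\ofu}(-,\C)$; dualizing and using $\Ext^k_{\ofu}(-,\C)\cong H_k(\ofu,-)^\ast$ gives exactly the displayed complex. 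Your spectral-sequence picture, if its $\ofu_{\oa}$-module identifications were completed and the second filtration set up correctly (filter $\Lambda\fu_{\ob}$ by a $\fu_{\oa}$-stable flag with trivial subquotients rather than pretending $\ofu_{\oa}$ is an ideal), might be pushed through, but it would still require an independent argument to replace the $E^1$-page with the Kostant groups; the paper's BGG-resolution route sidesteps all of this in one step.
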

\begin{proof}
The equivalence of the $\fu$-cohomology and $\overline{\fu}$-homology follows from the general relation $H^k(\fu,V)=H_k(\overline{\fu},V^\vee)^\vee$ see e.g. Remark 4.1 in \cite{BGG}. Since all finite dimensional $\fl$-weight representations and the induced $\fg$-module $C_\lambda$ are self-dual with respect to $\vee$, the twisted duals can be ignored.

The homology groups $H_j(\overline{\fu}_{\oa},L_\lambda^{\oa})\cong H^j(\fu_{\oa}, L_\lambda^{\oa})$ of \cite{MR0142696} can be obtained from a projective resolution of $L_\lambda^{\oa}$ in the category of $\overline{\fu}_{\oa}$-modules, which can even be written as a resolution of $\fg_{\overline{0}}$-modules. These resolutions correspond to Lepowsky's generalisation of the Bernstein-Gelfand-Gelfand resolutions for the reductive Lie algebra $\fg_{\oa}$ with parabolic subalgebra $\fp_{\oa}$, see \cite{lepowsky}. This is an exact complex of $\fg_{\oa}$-modules of the form
\begin{eqnarray*}
&&0\rightarrow \U(\fg_{\oa})\otimes_{\U(\fp_{\oa})}H_d(\overline{\fu}_{\oa},L^{\oa}_\lambda) \to\cdots\to \U(\fg_{\oa})\otimes_{\U(\fp_{\oa})}H_j(\overline{\fu}_{\oa},L^{\oa}_\lambda)\\
&&\to\cdots\to  \U(\fg_{\oa})\otimes_{\U(\fp_{\oa})}H_0(\overline{\fu}_{\oa},L^{\oa}_\lambda)\to L^{\oa}_\lambda\to0.\end{eqnarray*}
Applying the exact functor $\U(\fg)\otimes_{\U(\fg_{\overline{0}})}-$: $\cC(\fg_{\oa},\fl)\to \cC(\fg,\fl)$ we obtain an exact complex of $\fg$-modules, which is a resolution by free $\overline{\fu}$-modules, so it can be used to compute the right derived functors of the left exact contravariant functor $\Hom_{\overline{\fu}}(-,\C)$ acting on $C_\lambda$. Since \[\Hom_{\overline{\fu}}(\U(\fg)\otimes_{\U(\fp_{\oa})}M,\C)=\Hom_{\overline{\fu}}(\U(\overline{\fu})\otimes\Lambda \fg_1\otimes M,\C)=\left(\Lambda \fg_1\otimes M\right)^\ast,\]
for an arbitrary $\fp_{\oa}$-module $M$, the homology groups $\Ext^k_{\overline{\fu}}(C_\lambda,\C)$ can be calculated from the complex
\begin{eqnarray*}
&&0\rightarrow \left(\Lambda \fg_1\otimes H_0(\overline{\fu}_{\oa},L^{\oa}_\lambda)\right)^\ast\to\cdots\to \left(\Lambda \fg_1\otimes H_j(\overline{\fu}_{\oa},L^{\oa}_\lambda)\right)^\ast\\
&&\to\cdots\to \left(\Lambda \fg_1\otimes H_d(\overline{\fu}_{\oa},L^{\oa}_\lambda)\right)^\ast\to0.\end{eqnarray*}
The theorem then follows from the observation $\Ext^k_{\overline{\fu}}(-,\C)\cong H_k(\overline{\fu},-)^\ast$, see e.g. Lemmata 4.6 and 4.7 in \cite{BGG}.
\end{proof}

This leads to the same results as were obtained by Gruson and Serganova through geometric methods in \cite{MR2734963}.
\begin{corollary}
\label{Penkovresult}
The cohomology groups $\Gamma_k(G/P,L_{\lambda}(\fp))$ satisfy
\begin{eqnarray*}
\res^\fg_{\fg_{\oa}}\Gamma_k(G/P,L_{\lambda}(\fp))&\le&\Gamma_k(G_{\oa}/P_{\oa},\Lambda\fg_{-1}\otimes L_{\lambda}(\fp_{\oa}))\qquad\mbox{and}\\
\sum_{k=0}^\infty(-1)^k \ch \Gamma_k(G/P,L_{\lambda}(\fp))&=&\sum_{k=0}^\infty(-1)^k \ch \Gamma_k(G_{\oa}/P_{\oa},\Lambda\fg_{-1}\otimes L_{\lambda}(\fp_{\oa})),
\end{eqnarray*}
where the $\fp_{\oa}$-module structure on $\Lambda\fg_{-1}$ is given by adjoint $\fl$-action and trivial $\fu_{\overline{0}}$-action.
\end{corollary}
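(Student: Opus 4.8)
The plan is to read both sides of the corollary off the same data: the restriction of $\Gamma_k(G/P,L_{\lambda}(\fp))$ is governed, via Theorem~\ref{BBWKostant}, by the Kostant cohomologies $H^k(\fu,C_\nu)$ of the induced modules, while Theorem~\ref{infocohomC} bounds $H^k(\fu,C_\nu)$ by the explicit $\fl$-module $\Lambda\fg_1\otimes H^k(\fu_{\oa},L^{\oa}_\nu)$; the $\fg_{\oa}$-side of the corollary will then be recognised as exactly this bounding expression, using the classical Bott theory for the reductive algebra $\fg_{\oa}$.

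First, by Theorem~\ref{BBWKostant} one has $\res^\fg_{\fg_{\oa}}\Gamma_k(G/P,L_{\lambda}(\fp))=\bigoplus_{\nu\in\cP^+_{\oa}}\dim\Hom_{\U(\fl)}(L_{\lambda}(\fl),H^k(\fu,C_\nu))\,L^{\oa}_\nu$, a finite sum by Corollary~\ref{derzero}. For the other side I would apply Proposition~\ref{propBBW}(iii) to the reductive Lie algebra $\fg_{\oa}$ and the $\fp_{\oa}$-module $\Lambda\fg_{-1}\otimes L_{\lambda}(\fp_{\oa})$ (which is $\fl$-semisimple with $\fu_{\oa}$ acting trivially): using the Peter--Weyl decomposition of $\cR_{\oa}$, keeping track of which tensor factor of $(\Lambda\fg_{-1}\otimes L_{\lambda}(\fp_{\oa}))^\ast\otimes\cR_{\oa}$ carries the $\fu_{\oa}$-action and then the $\fl$-action, and pulling the trivial-$\fu_{\oa}$ factor out of $H^k(\fu_{\oa},-)$, one obtains $\res\Gamma_k(G_{\oa}/P_{\oa},\Lambda\fg_{-1}\otimes L_{\lambda}(\fp_{\oa}))=\bigoplus_{\nu\in\cP^+_{\oa}}\dim\Hom_{\U(\fl)}(\Lambda\fg_{-1}\otimes L_{\lambda}(\fl),H^k(\fu_{\oa},L^{\oa}_\nu))\,L^{\oa}_\nu$. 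Since the Killing form induces an $\fl$-module isomorphism $\fg_1\cong\fg_{-1}^\ast$, hence $\Lambda\fg_1\cong(\Lambda\fg_{-1})^\ast$, the $\nu$-coefficient on the right equals $\dim\Hom_{\U(\fl)}(L_{\lambda}(\fl),\Lambda\fg_1\otimes H^k(\fu_{\oa},L^{\oa}_\nu))$.

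It remains to compare the two expressions termwise. By Theorem~\ref{infocohomC}, $H^k(\fu,C_\nu)$ is the homology in degree $k$ of a bounded complex of finite-dimensional $\fl$-modules whose $k$-th term is $\Lambda\fg_1\otimes H^k(\fu_{\oa},L^{\oa}_\nu)$, so $H^k(\fu,C_\nu)$ is an $\fl$-subquotient of $\Lambda\fg_1\otimes H^k(\fu_{\oa},L^{\oa}_\nu)$. As $\fl$ is of typical type its finite-dimensional modules are semisimple, so $\dim\Hom_{\U(\fl)}(L_{\lambda}(\fl),-)$ merely records the multiplicity of $L_{\lambda}(\fl)$ and is monotone under passage to $\fl$-subquotients; this gives $\dim\Hom_{\U(\fl)}(L_{\lambda}(\fl),H^k(\fu,C_\nu))\le\dim\Hom_{\U(\fl)}(L_{\lambda}(\fl),\Lambda\fg_1\otimes H^k(\fu_{\oa},L^{\oa}_\nu))$ for every $k$ and $\nu$, and summing over $\nu$ produces the first assertion. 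For the second, I would pass to the Grothendieck group of finite-dimensional $\fl$-modules, where the alternating sum of the homology of a bounded complex equals that of its terms, so $\sum_k(-1)^k[H^k(\fu,C_\nu)]=\sum_k(-1)^k[\Lambda\fg_1\otimes H^k(\fu_{\oa},L^{\oa}_\nu)]$; applying the (exact) functor $\dim\Hom_{\U(\fl)}(L_{\lambda}(\fl),-)$, multiplying by $\ch L^{\oa}_\nu$ and summing over $\nu$ turns this into the claimed equality of Euler characteristics, since $\ch\Gamma_k=\ch\res^\fg_{\fg_{\oa}}\Gamma_k$.

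The step I expect to be the main obstacle is the bookkeeping in the second paragraph: one must check carefully that Proposition~\ref{propBBW}(iii), applied to $\fg_{\oa}$ and to the non-simple $\fp_{\oa}$-module $\Lambda\fg_{-1}\otimes L_{\lambda}(\fp_{\oa})$, really yields the displayed multiplicity formula — this hinges on correctly separating the left and right $\fg_{\oa}$-actions on $\cR_{\oa}$ and on the fact that $H^k(\fu_{\oa},-)$ commutes with tensoring by an $\fl$-module on which $\fu_{\oa}$ acts trivially — together with the minor point that for those $\fg$ with $\fl\not\subset\fg_{\oa}$ one must read $\fg_{-1}$, $\fg_1$ as $\overline{\fu}\cap\fg_{\ob}$, $\fu\cap\fg_{\ob}$ and run the argument with $\fl$ in place of $\fl_{\oa}$ throughout, which is legitimate precisely because finite-dimensional $\fl$-modules remain semisimple. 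Once this identification is in place, the subquotient and Euler-characteristic comparisons above are purely formal.
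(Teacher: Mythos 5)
Your argument is correct and follows the paper's own route: both read the left-hand side off Theorem~\ref{BBWKostant}, recognize the right-hand side via classical BBW theory for $\fg_{\oa}$ applied to the $\fp_{\oa}$-module $\Lambda\fg_{-1}\otimes L_{\lambda}(\fp_{\oa})$, and then compare termwise using the fact that Theorem~\ref{infocohomC} exhibits $H^k(\fu,C_\nu)$ as the $k$-th homology, hence an $\fl$-subquotient, of a bounded complex with $k$-th term $\Lambda\fg_1\otimes H^k(\fu_{\oa},L^{\oa}_\nu)$, with the Euler--Poincar\'e principle yielding the character identity. The extra bookkeeping you add (unwinding Proposition~\ref{propBBW}(iii) for $\fg_{\oa}$, the duality $\Lambda\fg_1\cong(\Lambda\fg_{-1})^\ast$, and the remark on reading $\fg_{\pm 1}$ as $\fu\cap\fg_{\ob}$, $\ofu\cap\fg_{\ob}$ when $\fl\not\subset\fg_{\oa}$) is exactly what the paper leaves implicit.
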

\begin{proof}
The first property follows from the combination of Theorem \ref{BBWKostant} and Theorem~\ref{infocohomC}, which implies
\begin{eqnarray*}
\res^\fg_{\fg_{\oa}}\Gamma_k(G/P,L_{\mu}(\fp))&=&\bigoplus_{\lambda\in\cP^+_{\oa}}\Hom_{\U(\fl)}\left(L_{\mu}(\fl),H^k(\fu,C_\lambda)\right) L_\lambda^{\oa}\\
&\le&\bigoplus_{\lambda\in\cP^+_{\oa}}\Hom_{\U(\fl)}\left(\Lambda\fg_{-1}\otimes L_{\mu}(\fl),H^k(\fu_{\oa},L^{\oa}_{\lambda})\right) L_\lambda^{\oa}.
\end{eqnarray*}
The classical BBW theorem of \cite{MR0229257, MR0563362} then yields the result.

The second property follows from similarly from Theorems \ref{BBWKostant} and \ref{infocohomC}, by applying the Euler-Poincar\'e principle. 
\end{proof}

Theorem \ref{infocohomC} implies that the cohomology groups in equation \eqref{ExtRes}, $$\Ext^\bullet_{\cO_{\oa}}(\Res M(\mu),\cR_{\oa}),$$ can be computed as the cohomology of a complex with spaces of chains $$\bigoplus_{\gamma\in\Gamma^+}\Ext^\bullet_{\cO_{\oa}}(M_{\oa}(\mu-\gamma),\cR_{\oa}).$$
Note that $\Res M(\mu)$ has a standard filtration, with the Verma modules for $\fg_{\oa}$ appearing in the equation above.

\begin{corollary}
\label{degvanish}
For any finite dimensional $V\in\cC(\fp,\fl)$, we have
\[\Gamma_k(G/P,V)=0\qquad\mbox{for} \quad k> \dim\fu_{\oa}.\]
 \end{corollary}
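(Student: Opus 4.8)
The plan is to reduce everything to the complex constructed in Theorem~\ref{infocohomC} and a degree count. First I would note that, by Theorem~\ref{thmBBW}(i), it suffices to treat the Borel case $\fp=\fb$ when $V=L_\mu(\fb)$ is simple, and then reduce the general finite dimensional $V\in\cC(\fp,\fl)$ to simple modules: since $\fl$ is of typical type, $V$ is a finite direct sum (or at worst an iterated extension) of modules $L_\mu(\fl)$ with $\mu$ an $\fl$-dominant integral weight, and $\Gamma_k$ is additive in short exact sequences via the long exact sequence of derived functors, so it is enough to prove the vanishing for $V=L_\mu(\fp)$ with $\mu\in\cP$ $\fl$-dominant.

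Next I would invoke Proposition~\ref{propBBW}(iii) together with the Kostant-type identity \eqref{ExtHk} to write
\[
\Gamma_k(G/P,L_\mu(\fp))^\ast\;=\;H^k(G/P,G\times_P L_\mu(\fp)^\ast)\;=\;\Ext^k_{\cC(\fp,\fl)}(L_\mu(\fp),\cR),
\]
and then feed in the $\fg\times\fg_{\oa}$-decomposition of $\cR$ from Corollary~\ref{projcover} (or Theorem~\ref{UC}) so that, as in Theorem~\ref{BBWKostant}, the question becomes the vanishing of $\Hom_{\U(\fl)}(L_\mu(\fl),H^k(\fu,P^\cF_\Lambda))$ for all $\Lambda\in\cP^+$, equivalently of $H^k(\fu,C_\lambda)$ for all $\lambda\in\cP^+_{\oa}$, for $k>\dim\fu_{\oa}$. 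At this point Theorem~\ref{infocohomC} does the work: it exhibits $H^k(\fu,C_\lambda)$ as the $k$-th homology of the complex
\[
0\to\Lambda\fg_1\otimes H^d(\fu_{\oa},L_\lambda^{\oa})\to\cdots\to\Lambda\fg_1\otimes H^0(\fu_{\oa},L_\lambda^{\oa})\to 0,\qquad d=\dim\fu_{\oa},
\]
which is concentrated in homological degrees $0,1,\dots,d=\dim\fu_{\oa}$. Hence $H^k(\fu,C_\lambda)=0$ for $k>\dim\fu_{\oa}$, and therefore $\Gamma_k(G/P,V)=0$ for $k>\dim\fu_{\oa}$, which is exactly the claim.

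The main obstacle, such as it is, is purely bookkeeping: one must be careful that the complex of Theorem~\ref{infocohomC} really is indexed so that its nonzero terms sit in degrees $\le\dim\fu_{\oa}$ (the classical Kostant cohomology $H^j(\fu_{\oa},L_\lambda^{\oa})$ vanishes for $j>\dim\fu_{\oa}$, and tensoring with the finite-dimensional space $\Lambda\fg_1$ cannot create higher degrees), and that the reduction from a general finite dimensional $V$ to simple $L_\mu(\fp)$ is legitimate — this uses only that $\Gamma_k=\cR_kS\circ\coind^{\fg}_{\fp}$ with $\coind^{\fg}_{\fp}$ exact (Lemma~\ref{lemind}) and the long exact sequence for the derived functors $\cR_kS$. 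No genuinely new input beyond Theorems~\ref{thmBBW}, \ref{BBWKostant} and \ref{infocohomC} is needed; the corollary is essentially the observation that a finite complex has no homology beyond its length.
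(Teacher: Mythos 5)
Your proof is correct and follows essentially the same route as the paper: after reducing to simple $V=L_\mu(\fp)$ by a filtration argument (which is exactly what the paper does), you combine Theorem~\ref{BBWKostant} with the finite complex of Theorem~\ref{infocohomC} to conclude $H^k(\fu,C_\lambda)=0$ for $k>\dim\fu_{\oa}$. The paper reaches the same conclusion by citing Corollary~\ref{Penkovresult}, but that corollary is itself derived from Theorem~\ref{BBWKostant} and Theorem~\ref{infocohomC}, so the chain of dependencies is the same; you have simply inlined the intermediate step.

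One caution, though it does not damage the argument: the opening remark that Theorem~\ref{thmBBW}(i) lets you ``reduce to the Borel case $\fp=\fb$'' should be deleted. Passing to $\fb$ would only give vanishing for $k>\dim\fn_{\oa}$, which is \emph{weaker} than the asserted bound $k>\dim\fu_{\oa}$ whenever $\fp\neq\fb$, since $\fu\subsetneq\fn$. In fact the whole point of stating the corollary with the parabolic nilradical is that the bound is sharper than what one reads off at the Borel level. Luckily your actual argument works for an arbitrary parabolic $\fp$ and never uses that reduction, so the body of the proof is sound; just strike the unnecessary sentence.
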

 \begin{proof}
If $V$ is of the form $L_{\mu}(\fp)$ for $\mu\in\fh^\ast$ an $\fl$-integral dominant weight, this is an immediate consequence of Corollary \ref{Penkovresult}.
 
An arbitrary such module $V$ has a finite filtration by irreducible $\fp$-modules. The statement can then be proved by induction on the filtration length.
 \end{proof}

By applying Corollary \ref{Penkovresult}, one can reobtain Lemma 3, Lemma 5, Corollary 2 and Proposition 1 in \cite{MR2734963}. Since we will need the results in the sequel, we state (a slightly stricter version of) Lemma~3 and Proposition 1 of \cite{MR2734963}.
\begin{lemma}
\label{restrHW}
If for $\Lambda\in\cP^+$, $L_\Lambda$ occurs in $\Gamma_k(G/P,L_\mu(\fp))$ as a subquotient, then
\[\Lambda\in w(\mu+\rho)-\rho-\Gamma^+,\quad \mbox{for some } w\in W\mbox{  of length }k,\]
such that $w^{-1}\in W^1(\fl_{\oa})$.
\end{lemma}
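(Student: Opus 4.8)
The plan is to reduce everything to the classical Kostant/Lepowsky theory for $\fg_{\oa}$ via the tools already assembled in this section. By Theorem \ref{BBWKostant}, a composition factor $L_\Lambda$ occurs in $\Gamma_k(G/P,L_\mu(\fp))$ only if $\Hom_{\U(\fl)}(L_\mu(\fl),H^k(\fu,P^\cF_\Lambda))\neq 0$, and by Corollary \ref{Penkovresult} (or directly from Theorem \ref{infocohomC} together with equation \eqref{ExtRes}) the restriction of $\Gamma_k(G/P,L_\mu(\fp))$ to $\fg_{\oa}$ is bounded above by $\Gamma_k(G_{\oa}/P_{\oa},\Lambda\fg_{-1}\otimes L_\mu(\fp_{\oa}))$. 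Concretely, if $L_\Lambda$ is a subquotient, then $L_\Lambda^{\oa}$ — more precisely, some simple $\fg_{\oa}$-constituent of the highest piece — must appear as a $\fg_{\oa}$-constituent of $H^k(\fu,C_\lambda)$ for the relevant $\lambda$, and by Theorem \ref{infocohomC} this forces a contribution from $\Lambda\fg_1\otimes H^k(\fu_{\oa},L_\lambda^{\oa})$ for some $\lambda$, equivalently $L_\Lambda$ appears as a $\fg_{\oa}$-highest weight inside $\Lambda\fg_{\ob}\otimes H^k(\fu_{\oa},L_{\Lambda'}^{\oa})$ for a suitable dominant $\Lambda'$.

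Next I would invoke the classical statement: by Kostant's theorem (\cite{MR0142696}) together with Lepowsky's generalization to the parabolic setting (\cite{lepowsky}), the $\fl_{\oa}$-module $H^k(\fu_{\oa},L_{\Lambda'}^{\oa})$ is semisimple with constituents $L^{\oa}_{w\circ\Lambda'}(\fl_{\oa})$ running over $w\in W^1(\fl_{\oa})$ with $l(w)=k$. Since $\Lambda\fg_{\ob}$ has weights lying in $-\widetilde\Gamma$ (in fact, for the purposes of the \emph{highest} contribution it is the $\fu$-part contributing weights in $-\Gamma^+$ that matters — recall $\fu=\fu_{\oa}\oplus\fu_{\ob}$ and $\Lambda\fg_{\ob}$ as it enters $C_\lambda$ contributes the $\Gamma^+$-shift from $\Lambda\fg_1$), tensoring shifts the highest weight down by an element of $\Gamma^+$. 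Thus any $\fg_{\oa}$-highest weight appearing is of the form $w\circ\Lambda'-\gamma'$ for $w\in W^1(\fl_{\oa})$ of length $k$ and $\gamma'\in\Gamma^+$. Tracking the $\rho$-shifts carefully — using $w\circ(\Lambda'+\rho_{\oa}) = w(\Lambda'+\rho_{\oa})$ and relating $\rho_{\oa}$ to $\rho$ via $\rho=\rho_{\oa}-\rho_{\ob}$, and using the identity \eqref{MusGS} that converts a $\rho_{\oa}$-dot-orbit shifted by $\Gamma^+$ into a $\rho$-dot-orbit shifted by $\Gamma^+$ — one rewrites this as $\Lambda\in w(\mu+\rho)-\rho-\Gamma^+$ with the same $w$, now read as an element of $W$ with $l(w)=k$. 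The constraint $w^{-1}\in W^1(\fl_{\oa})$ comes from the decomposition $W=W(\fl_{\oa})\,W^1(\fl_{\oa})$ of Subsection \ref{secprelWeyl}: the $\fl_{\oa}$-factor is absorbed because $\mu$ is $\fl$-dominant and Kostant cohomology of the Levi is concentrated in degree zero (as used in the proof of Lemma \ref{KostantBott}).

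The main obstacle I anticipate is the bookkeeping of the two $\rho$-conventions and the precise identification of which subset of $\widetilde\Gamma$ is actually realized — proving it is $\Gamma^+$ (positive odd roots only) rather than all of $\widetilde\Gamma$. This is exactly where the \emph{parabolic} Lepowsky resolution, rather than a naive Koszul/Chevalley–Eilenberg complex for the full $\fu$, pays off: in Theorem \ref{infocohomC} the odd part enters only through $\Lambda\fg_1$ (the nilradical direction), which contributes weights that are sums of \emph{positive} odd roots, i.e. elements of $\Gamma^+$; the even homology $H^\bullet(\fu_{\oa},-)$ is handled entirely by the classical theory and contributes no extra odd shift. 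So the argument is: (i) apply Theorem \ref{infocohomC} to get the complex with terms $\Lambda\fg_1\otimes H^j(\fu_{\oa},L_\lambda^{\oa})$; (ii) take $\fg_{\oa}$-highest weights of a subquotient of the cohomology, which are bounded by highest weights of the terms; (iii) apply Lepowsky's parabolic Kostant theorem to $H^j(\fu_{\oa},L_\lambda^{\oa})$ to get the $w\circ\lambda$ with $w^{-1}\in W^1(\fl_{\oa})$, $l(w)=j\le k$; (iv) since only degree $k$ survives after taking the highest weight at cohomological degree $k$ (the differential lowers degree, and a highest weight that persists must already sit in degree exactly $k$ by the standard argument that lower-degree highest-weight contributions are killed — or rather, one only needs $l(w)=k$ because the complex in Theorem \ref{infocohomC} computes $H^k$ in homological degree $k$ and the Lepowsky term sitting there has $l(w)=k$; contributions with $l(w)<k$ would come from lower terms and need the differential, and a careful highest-weight comparison rules them out); (v) convert the $\Gamma^+$-shift and the $\rho_{\oa}$-to-$\rho$ change via \eqref{MusGS}. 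A secondary subtlety is making step (iv) airtight: I would argue that the highest weight of a nonzero subquotient of $H^k$ of the complex is $\le$ the highest weight of the degree-$k$ term $\Lambda\fg_1\otimes H^k(\fu_{\oa},L_\lambda^{\oa})$, and the highest weights there are precisely $w\circ\lambda-\gamma$ with $l(w)=k$, $\gamma\in\Gamma^+$; combined with $\le$ this already delivers the stated membership without needing to exclude lower $w$ separately.
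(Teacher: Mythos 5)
Your route coincides with the paper's: restrict to $\fg_{\oa}$ (Corollary \ref{Penkovresult}, or equivalently Theorem \ref{infocohomC}), invoke classical parabolic Kostant/Lepowsky, note that the odd shift comes from $\Lambda\fg_1$ and hence lies in $\Gamma^+$, and convert via equation \eqref{MusGS}; the paper's proof is exactly this in one line. Two small remarks: the worry in your step (iv) is unfounded, since $H^k$ of the chain complex in Theorem \ref{infocohomC} is automatically a subquotient of the degree-$k$ term $\Lambda\fg_1\otimes H^k(\fu_{\oa},L_\Lambda^{\oa})$, which by Kostant only carries $l(w)=k$ constituents, so no lower-degree leakage needs to be excluded; and the phrase that ``$L_\Lambda^{\oa}$ must appear as a $\fg_{\oa}$-constituent of $H^k(\fu,C_\lambda)$'' is type-incorrect (this is an $\fl$-module — the reciprocal statement from Theorem \ref{BBWKostant} is that $L_\mu(\fl)$ appears as an $\fl$-constituent of $H^k(\fu,C_\Lambda)$), though this does not damage your argument since you also invoke Corollary \ref{Penkovresult} directly.
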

\begin{proof}
Corollary \ref{Penkovresult} and the classical BBW theorem in \cite{MR0229257, MR0563362} imply that 
\[u\circ\Lambda\in\mu-\Gamma^+\] 
for some $u\in W^1(\fl_{\oa})$ of length $k$, then we apply equation \eqref{MusGS}.
\end{proof}

\begin{lemma}
\label{Euler}
The Euler characteristic of the cohomology groups of BBW theory satisfies
\[\sum_{k=0}^\infty(-1)^k \ch \Gamma_k(G/P,L_{\lambda}(\fp))=\frac{\prod_{\gamma\in\Delta_{\ob}^+}(1+e^{-\gamma})}{\prod_{\alpha\in\Delta_{\oa}^+}(1-e^{-\alpha})}\sum_{w\in W}(-1)^{l(w)}e^{w(\lambda+\rho)-\rho}.\]
\end{lemma}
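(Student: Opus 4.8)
The plan is to combine the two results already established in Corollary \ref{Penkovresult}, namely that the alternating sum of characters of $\Gamma_k(G/P,L_\lambda(\fp))$ equals the alternating sum of characters of the classical cohomology groups $\Gamma_k(G_{\oa}/P_{\oa},\Lambda\fg_{-1}\otimes L_\lambda(\fp_{\oa}))$, with the classical Bott-Borel-Weil theorem for the reductive Lie algebra $\fg_{\oa}$ together with the classical Weyl character formula. Concretely, first I would invoke the second displayed equality of Corollary \ref{Penkovresult} to reduce the problem to a purely even computation. Then, since tensoring commutes with the alternating sum in the Grothendieck group and $\Lambda\fg_{-1}$ is a finite-dimensional $\fp_{\oa}$-module (with trivial $\fu_{\oa}$-action), I would use the fact that $\ch\Lambda\fg_{-1} = \prod_{\gamma\in\Delta_{\ob}^+}(1+e^{-\gamma})$, because $\fg_{-1}$ (more precisely the negative odd root spaces under the chosen Borel) has weights $-\gamma$, $\gamma\in\Delta_{\ob}^+$, each of multiplicity one. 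Here one must be a little careful that $\fp_{\oa} \subset \fg_{\oa}$ fixes $\Delta^+_{\oa}$, so the odd positive roots for the Borel $\fb$ and for $\fb^d$ split $\fg_{\ob}$ into $\fg_{-1}\oplus\fg_1$ consistently; since the even Borel is fixed throughout, $\Lambda\fg_{-1}$ has the stated character as an $\fh$-module.

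Next I would apply the classical Euler-Poincaré principle for the functors $\Gamma_k(G_{\oa}/P_{\oa},-)$: for any finite-dimensional $\fp_{\oa}$-module $M$ one has
\[
\sum_{k\ge 0}(-1)^k\ch\Gamma_k(G_{\oa}/P_{\oa},M)
= \frac{1}{\prod_{\alpha\in\Delta_{\oa}^+}(1-e^{-\alpha})}\sum_{w\in W}(-1)^{l(w)}\, e^{w\circ(\ch M)},
\]
interpreted suitably; the cleanest route is to first split off $\Lambda\fg_{-1}$ by the tensor identity and reduce $M = \Lambda\fg_{-1}\otimes L_\lambda(\fp_{\oa})$ to a sum of terms $L_\nu(\fp_{\oa})$ whose higher cohomology is governed by Bott's theorem, then apply the Weyl character formula to each $\Gamma_0$ and each singular contribution. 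More efficiently, one uses the standard fact that the Euler characteristic of $\Gamma_\bullet(G_{\oa}/P_{\oa},-)$ applied to the induced bundle from a finite-dimensional $M$ equals (in characters) $M$ pushed through the Weyl-denominator formula; since every $M$ has a $\fp_{\oa}$-filtration with one-dimensional $\fh$-quotients $e^{\mu_i}$ after restricting to $\fb_{\oa}$, and since the Euler characteristic of $\Gamma_\bullet(G_{\oa}/B_{\oa}, \C_\mu)$ is $\sum_{w}(-1)^{l(w)}e^{w\cdot\mu}/\prod_\alpha(1-e^{-\alpha})$ (which is the statement of the classical Bott theorem combined with Weyl), linearity finishes the even computation. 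Substituting $M=\Lambda\fg_{-1}\otimes L_\lambda(\fp_{\oa})$ and using that $\ch L_\lambda(\fg_{\oa})\cdot\prod_\alpha(1-e^{-\alpha}) = \sum_w (-1)^{l(w)}e^{w(\lambda+\rho_{\oa})-\rho_{\oa}}$, the factor $\prod_{\alpha\in\Delta_{\oa}^+}(1-e^{-\alpha})$ cancels appropriately against the denominator, leaving exactly
\[
\frac{\prod_{\gamma\in\Delta_{\ob}^+}(1+e^{-\gamma})}{\prod_{\alpha\in\Delta_{\oa}^+}(1-e^{-\alpha})}\sum_{w\in W}(-1)^{l(w)}e^{w(\lambda+\rho)-\rho},
\]
where the shift from $\rho_{\oa}$ to $\rho$ is absorbed by the $e^{-\gamma}$ factors since $\rho=\rho_{\oa}-\rho_{\ob}$ and $\rho_{\ob}=\tfrac12\sum_{\gamma\in\Delta^+_{\ob}}\gamma$, so that $e^{-\rho_{\ob}}\prod_\gamma(1+e^{-\gamma})$ is $W$-appropriate; concretely one writes $w(\lambda+\rho_{\oa})$, moves the $\Lambda\fg_{-1}$ character inside the $W$-sum, re-indexes, and recognizes the product $\prod_\gamma(1+e^{-\gamma})$ as $W$-factorable after the $\rho$-correction.

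The main obstacle I anticipate is purely bookkeeping: getting the $\rho$ versus $\rho_{\oa}$ shift exactly right when interchanging the $\Lambda\fg_{-1}$-tensor factor with the Weyl-group sum. The subtlety is that $\Lambda\fg_{-1}$ is a $\fp_{\oa}$-module but not $W$-invariant, whereas the full $\Lambda\fg_{\ob} = \Lambda\fg_{-1}\otimes\Lambda\fg_1$ is $W$-invariant (as noted in the text, because $\Lambda\fg_{\ob}$ is a finite-dimensional $\fg_{\oa}$-module); so one cannot naively pull $\ch\Lambda\fg_{-1}$ past $w$. The resolution is to keep $\Lambda\fg_{-1}$ inside the cohomology computation — i.e. apply Bott's theorem to the bundle $\mathcal{G}_{\oa}\times_{P_{\oa}}(\Lambda\fg_{-1}\otimes L_\lambda(\fp_{\oa}))$ as a whole and use the additivity of the Euler characteristic over the $\fp_{\oa}$-filtration of this tensor product — after which the Weyl-denominator identity produces $\ch(\Lambda\fg_{-1})$ multiplied by the alternating sum over $W$ of $e^{w(\lambda+\rho_{\oa})-\rho_{\oa}}$, with a compensating denominator. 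A short direct calculation with $\rho = \rho_{\oa}-\rho_{\ob}$ then identifies this with the claimed expression. Everything else is standard: the classical BBW theorem is quoted from \cite{MR0229257, MR0563362} as already done in the proof of Corollary \ref{Penkovresult}, and the Euler-Poincaré principle is elementary in the Grothendieck group.
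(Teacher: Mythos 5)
Your proposal is correct and follows essentially the same route as the paper's own proof: invoke the second identity of Corollary~\ref{Penkovresult} to reduce to the even computation, recognize $\ch\Lambda\fg_{-1}=\prod_{\gamma\in\Delta^+_{\ob}}(1+e^{-\gamma})$, apply the classical Euler--Poincar\'e/Weyl denominator formula, and then carry out the $\rho_{\oa}$-to-$\rho$ bookkeeping. The one place you are more careful than the paper --- and correctly so --- is the observation that $\Lambda\fg_{-1}$ is not $W$-invariant, so its character must stay inside the $W$-sum; after multiplying by $e^{\rho_{\ob}}$ one gets $\prod_{\gamma}(e^{\gamma/2}+e^{-\gamma/2})$, which is $W$-invariant and can be pulled out, and this is exactly what produces the $\rho$-shift in the numerator. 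The only cosmetic difference is that the paper first specializes to $\fp=\fb$ and handles general $\fp$ by a one-line reference to Theorem~\ref{thmBBW}(i), whereas you run the even Euler--Poincar\'e argument directly for $G_{\oa}/P_{\oa}$ before filtering down to $\fb_{\oa}$ --- but since you also note that one reduces to one-dimensional $\fh$-quotients, the two routes collapse into the same calculation. (Minor caveat: the explicit formula $\ch\Lambda\fg_{-1}=\prod_{\gamma\in\Delta^+_{\ob}}(1+e^{-\gamma})$ literally holds only when $\fl$ contains no odd roots, i.e.\ for $\fp=\fb$ or $\fl\subset\fg_{\oa}$; for $\fl$ with $\mathfrak{osp}(1|2n)$ factors some odd factors sit inside $\ch L_\lambda(\fp_{\oa})$ instead. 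Since the Euler characteristic is insensitive to this regrouping, the end result is unaffected, but it is the reason the paper simply sets $\fp=\fb$ at the start.)
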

\begin{proof}
We prove this property for $\fp=\fb$. The property for general $\fp$ follows similarly by applying standard combinatorial equalities, but also from the case $\fb=\fp$ and Theorem \ref{thmBBW}(i).

The classical BBW theorem implies
\begin{eqnarray*}
\sum_{k=0}^\infty(-1)^k \ch \Gamma_k(G_{\oa}/B_{\oa},L_{\mu}(\fb_{\oa}))&=&\frac{\sum_{w\in W}(-1)^{l(w)}e^{w(\mu+\rho_{\oa})}}{\prod_{\alpha\in \Delta_{\oa}^+}(e^{\alpha/2}-e^{-\alpha/2})}.
\end{eqnarray*}
The second statement in Corollary \ref{Penkovresult} therefore implies
\begin{eqnarray*}
\sum_{k=0}^\infty(-1)^k \ch \Gamma_k(G/B,L_{\lambda}(\fb))&=&\frac{\sum_{w\in W}(-1)^{l(w)}w\left(e^{\rho_{\oa}+\lambda}\prod_{\gamma\in\Delta^+_{\ob}}(1+e^{\gamma})\right)}{\prod_{\alpha\in \Delta_{\oa}^+}(e^{\alpha/2}-e^{-\alpha/2})},
\end{eqnarray*}
which yields the proposed formula.
\end{proof}

As in \cite{Gruson2} we denote the Euler characteristic of Lemma \ref{Euler} by 
\begin{equation}\label{formEuler}\cE(\lambda)=\frac{\prod_{\gamma\in\Delta_{\ob}^+}(1+e^{-\gamma})}{\prod_{\alpha\in\Delta_{\oa}^+}(1-e^{-\alpha})}\sum_{w\in W}(-1)^{l(w)}e^{w(\lambda+\rho)-\rho}.\end{equation}


\section{Simple reflections}
\label{specseq}

Theorem \ref{thmBBW}(i) implies that, to obtain BBW theory for arbitrary parabolic subalgebras, with a Levi subalgebra of typical type, we only need to solve the case where the parabolic subalgebra is the Borel subalgebra.

In Proposition 6 in \cite{MR0229257}, Demazure showed how BBW theory for Lie algebras can be reduced to the case of $\mathfrak{sl}(2)$ by changing from one Borel subalgebra to another one through a simple reflection. This was also obtained by Enright and Wallach in Lemma 6.2 in \cite{MR0563362} by a different approach. In Subsection \ref{evenrefl} we show that the same idea can be used for Lie superalgebras. This was obtained earlier by Penkov in \cite{MR0957752} and by Santos in \cite{MR1680015}, through reducing to $\mathfrak{sl}(2)$ or $\mathfrak{osp}(1|2)$. Here we use a different technique, based on the properties of twisting functors developed in Appendix \ref{aptwist}, which is motivated by the insight it provides in a broader range of possibilities. 

In Subsection \ref{oddrefl} we explore what happens when two Borel subalgebras are connected through a reflection corresponding to a simple isotropic (odd) root, which corresponds to a reduction to $\mathfrak{sl}(1|1)$.

One consequence of these results is a complete solution of BBW theory for (i) basic classical Lie superalgebras of type I with distinguished Borel subalgebra and (ii) BBW theory for the typical blocks. These results are well-known, see e.g. \cite{MR1680015, MR2734963, MR0957752, MR2059616}, so we only mention this briefly in Subsection \ref{subtyptypeI}

\subsection{Even reflection}
\label{evenrefl}
\begin{theorem}
\label{thmsimple}
For $\alpha\in\Delta^{+}$ a simple non-isotropic root and $\mu\in\cP$, we have
\[\Gamma_k(G/B,L_\mu(\fb))=\Gamma_{k-1}\left(G/B,L_{s_{\alpha}\cdot\mu}(\fb)\right)\quad\mbox{ if }\quad\langle \alpha^\vee,\mu\rangle <0.\]
\end{theorem}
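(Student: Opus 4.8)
The plan is to realize the even simple reflection $s_\alpha$ as a twisting functor and transport the Bernstein functor computation of $\Gamma_\bullet(G/B,-)$ through it. By Theorem~\ref{thmBBW}(ii) we have $\Gamma_k(G/B,L_\mu(\fb))=\cL_k\Gamma(M_\mu)$, so the statement is equivalent to the identity $\cL_k\Gamma(M_\mu)\cong\cL_{k-1}\Gamma(M_{s_\alpha\cdot\mu})$ of $\fg$-modules, under the hypothesis $\langle\alpha^\vee,\mu\rangle<0$. The key structural input is Lemma~\ref{TVerma}: since $\alpha$ is simple non-isotropic in $\Delta^+$ (so $\alpha$ or $\alpha/2$ is simple), and $\langle\alpha^\vee,\mu\rangle<0$ means $\langle\mu+\rho,\alpha^\vee\rangle\le 0$, the twisting functor $T_\alpha$ sends the non-dominant Verma $M_{s_\alpha\cdot\mu}$ to $M_\mu$ — or more precisely, one should work with $M_{s_\alpha\cdot\mu}$ as the input (which is $\alpha$-dominant relative to its own $\rho$-shift) and use that $T_\alpha M_{s_\alpha\cdot\mu}=M_\mu$, while additionally $\cL_1T_\alpha M_{s_\alpha\cdot\mu}=0$ since $M_{s_\alpha\cdot\mu}$ is free over the root subalgebra $\fg_{-\alpha}$ in the relevant sense.

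**Key steps.** First I would invoke the adjunction: $T_\alpha$ has a right adjoint $G_\alpha$ on $\cO$, hence on derived categories $\mathbf{L}T_\alpha$ is left adjoint to $\mathbf{R}G_\alpha$, and one obtains a Grothendieck-type spectral sequence relating $\cL_\bullet\Gamma\circ\cL_\bullet T_\alpha$ to $\cL_\bullet\Gamma$. The cleaner route, however, is to use the intertwining property of twisting functors with the Zuckerman/Bernstein functors: by the super-analogue of the classical fact (the twisting functor is "transverse" to the Zuckerman functor for the orthogonal $\mathfrak{sl}(2)$ or $\mathfrak{osp}(1|2)$ triple attached to $\alpha$), one expects $\cL_k\Gamma\circ\mathbf{L}T_\alpha \cong \cL_{k-1}\Gamma[\text{shift}]$ on modules where $T_\alpha$ is "exact of cohomological amplitude one." Concretely: apply $\cL_\bullet\Gamma$ to the (derived) image of $M_{s_\alpha\cdot\mu}$ under $T_\alpha$. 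Since $\cL_iT_\alpha M_{s_\alpha\cdot\mu}=\delta_{i,0}M_\mu$ under the hypothesis, a spectral sequence degenerates to give $\cL_k\Gamma(M_\mu)\cong (\cL_\bullet\Gamma\circ\mathbf{L}T_\alpha)^{k}(M_{s_\alpha\cdot\mu})$. Then I would use the Appendix results on twisting functors — the isomorphisms $\cL_iT_\alpha\circ\Ind\cong\Ind\circ\cL_iT_\alpha^{\oa}$ and $\Res\circ\cL_iT_\alpha\cong\cL_iT_\alpha^{\oa}\circ\Res$ from \eqref{eqlemma51} — together with the known classical identity $\cL_k\Gamma_{\oa}\circ\mathbf{L}T_\alpha^{\oa}\cong\cL_{k-1}\Gamma_{\oa}$ for reductive Lie algebras (which is exactly Demazure's / Enright--Wallach's $\mathfrak{sl}(2)$ reduction rephrased via twisting functors). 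Passing the degree-shift through these intertwiners yields $(\cL_\bullet\Gamma\circ\mathbf{L}T_\alpha)^{k}(M_{s_\alpha\cdot\mu})\cong\cL_{k-1}\Gamma(M_{s_\alpha\cdot\mu})=\Gamma_{k-1}(G/B,L_{s_\alpha\cdot\mu}(\fb))$, which is the claim.

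**Alternative, more self-contained route.** If the spectral sequence bookkeeping is cumbersome, I would instead argue directly on $\cR$ via Theorem~\ref{thmBBW}(iii): $\Gamma_k(G/B,L_\mu(\fb))^\ast = \Ext^k_\cO(M_\mu,\cR)$. Using $M_\mu = T_\alpha M_{s_\alpha\cdot\mu}$ and the adjunction $(T_\alpha,G_\alpha)$ (with $\mathbf{L}T_\alpha$ of amplitude $\le 1$ and $\mathbf{L}_1T_\alpha M_{s_\alpha\cdot\mu}=0$ here), I get $\Ext^k_\cO(T_\alpha M_{s_\alpha\cdot\mu},\cR)\cong\Ext^k_\cO(M_{s_\alpha\cdot\mu},\mathbf{R}G_\alpha\cR)$. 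It then suffices to identify $\mathbf{R}G_\alpha\cR$: since $\cR$ decomposes as a sum of projective-injective type modules (Corollary~\ref{projcover}) and $G_\alpha$ on such modules is understood — $G_\alpha$ raises the relevant Ext-degree by one when applied to modules free over the subalgebra generated by the $\alpha$-root vectors — one gets $\mathbf{R}^jG_\alpha\cR\cong\cR$ for $j=1$ and $0$ otherwise (up to the appropriate duality), giving the degree shift. Both routes reduce the problem to the single computation "what does $T_\alpha$ (equivalently $G_\alpha$) do to a Verma along a single even simple root," which is governed entirely by $\mathfrak{sl}(2)$ (or $\mathfrak{osp}(1|2)$) and is classical.

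**Main obstacle.** The delicate point is the behavior of the derived twisting functor on the \emph{super} side: one must be sure that $\mathbf{L}T_\alpha M_{s_\alpha\cdot\mu}$ is concentrated in degree zero (equal to $M_\mu$) exactly when $\langle\alpha^\vee,\mu\rangle<0$, and that no spurious higher terms appear from the odd part of $\fg$. This is where the Appendix lemmas on twisting functors (the intertwining with $\Ind$/$\Res$ and the braid/adjunction properties, together with Lemma~\ref{TVerma}) do the real work: they let one check the vanishing $\cL_1T_\alpha M_{s_\alpha\cdot\mu}=0$ by restricting to $\fg_{\oa}$, where it is classical, since $\Res$ is faithful and exact. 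The remaining care is matching the $\fg$-module structure (not just the character) on both sides — this follows because the twisting functor and the Bernstein functor are genuine $\fg$-module functors and the adjunction isomorphisms above are natural in the $\fg$-module category, so the identification $\Gamma_k(G/B,L_\mu(\fb))\cong\Gamma_{k-1}(G/B,L_{s_\alpha\cdot\mu}(\fb))$ holds as $\fg$-modules, not merely virtually.
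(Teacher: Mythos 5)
Your second (``alternative'') route is essentially the paper's proof. The paper reduces via Theorem~\ref{thmBBW}(ii)/Proposition~\ref{propBBW}(ii) to showing $\Ext^k_{\cO}(M_\mu,\cR)\cong\Ext^{k-1}_{\cO}(M_{s_\alpha\cdot\mu},\cR)$, then invokes Lemma~\ref{TVerma} with $\lambda=s_\alpha\cdot\mu$ (your bookkeeping on the sign is right: $\langle\mu,\alpha^\vee\rangle<0$ with $\mu$ integral is exactly $\langle s_\alpha\cdot\mu+\rho,\alpha^\vee\rangle\ge 0$, so the exceptional clause in Lemma~\ref{TVerma} is avoided and $T_\alpha M_{s_\alpha\cdot\mu}=M_\mu$), and finally applies Lemma~\ref{IDMaz}, which is precisely the derived-adjunction degree shift you outline: $\Ext^k_{\cO}(T_\alpha M,N)\cong\Ext^{k-1}_{\cO}(M,N)$ whenever $M$ is $\alpha$-free and $N$ is $\alpha$-finite. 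The identification of $\mathcal{R}G_\alpha N$ you were reaching for is what Lemma~\ref{lemTrepeat} packages: $\cR_1 G_\alpha\cong\Gamma_\alpha$, and $\Gamma_\alpha N=N$ for $\alpha$-finite $N$, so $\mathcal{R}G_\alpha N\cong N[-1]$ in $\cD^+(\cO)$; the hypothesis applies because $\cR$ is a locally finite $\fg_{\oa}$-module and hence $\alpha$-finite, while $M_{s_\alpha\cdot\mu}$ is $\fg_{-\alpha}$-free as any Verma module is.

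Two caveats. First, your primary route asserts a functor isomorphism $\cL_k\Gamma\circ\mathcal{L}T_\alpha\cong\cL_{k-1}\Gamma$ and tries to deduce it from the classical $\fg_{\oa}$-statement via the intertwiners in equation~\eqref{eqlemma51}; the paper neither proves nor needs such a composite-functor identity, and as stated it would require its own argument --- the paper establishes the shift only at the level of $\Ext^\bullet_{\cO}(-,\cR)$, which is all that is needed by Lemma~\ref{BernO}. Second, the ``up to the appropriate duality'' hedge in your alternative route is unnecessary: the dualization $\cL_k\Gamma(M)=\Ext^k_{\cO}(M,\cR)^\ast$ from Lemma~\ref{BernO} is applied uniformly to both sides of the isomorphism, and the adjunction isomorphisms are natural in $\cO$, so the conclusion is an honest isomorphism of $\fg$-modules as you wanted.
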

\begin{proof}
Because of Proposition \ref{propBBW}(ii) this amounts to proving that
\begin{eqnarray*}
\Ext_{\cO}^k(M_\mu,\cR)&\cong& \Ext^{k-1}_{\cO}(M_{s_\alpha\cdot\mu},\cR)
\end{eqnarray*}
holds for any simple non-isotropic root $\alpha$ with $\langle \alpha^\vee,\mu\rangle <0$. This is a consequence of Lemma \ref{TVerma} for $\lambda=s_\alpha\cdot\mu$ and Lemma \ref{IDMaz} in Appendix \ref{aptwist}.
\end{proof}

\begin{remark}
\label{Kostevenrefl}
The proof of the result above can immediately be extended to the property that if $\langle \mu,\alpha^\vee\rangle<0$ holds, we have
$$\Hom_{\fh}(\C_\mu,H^k(\fn,V))\cong \Hom_{\fh}(\C_{s_\alpha\cdot\mu},H^{k-1}(\fn,V))$$
for any locally finite $\fg$-module $V$. Alternatively, this can be derived from the corresponding property for $\mathfrak{sl}(2)$ or $\mathfrak{osp}(1|2)$ depending on whether $\alpha$ or $\alpha/2$ is simple in $\Delta^+$, using a Hochschild-Serre spectral sequence, as is done in proposition 3.9 in~\cite{MR1680015}.
\end{remark}

\subsection{Odd reflection}

\label{oddrefl}

Consider two Borel subalgebras $\fb$ and $\widetilde{\fb}$ of $\fg$ with $\fb_{\oa}={\widetilde{\fb}}_{\oa}$, then they can be linked to each other by odd reflections, see Theorem 3.1.3 in \cite{MR2906817}. We say that the ordered set of odd roots $\beta_1,\cdots,\beta_p$ takes $\fb$ to $\widetilde{\fb}$ if there are $p+1$ systems of positive roots $\{S_j,j=0,\cdots ,p\}$ such that $S_0=\Delta^+$ and $S_p=\widetilde{\Delta}^+$ are the ones corresponding to $\fb$ and $\widetilde{\fb}$ and $S_j=S_{j-1}\backslash\beta_j\,\cup\,-\beta_j$.
\begin{lemma}
\label{changeBorel}
Consider two Borel subalgebras $\fb$ and $\widetilde{\fb}$ of $\fg$ with $\fb_{\oa}={\widetilde{\fb}}_{\oa}$ and $\beta_1,\cdots,\beta_p$ the ordered set of odd roots which take $\fb$ to $\widetilde{\fb}$. If $\langle \beta_j,\mu-\beta_1-\cdots-\beta_{j-1}\rangle\not=0$ for $j=1,\cdots,p$, then it holds that
\[\Gamma_k(G/B, L_{\mu}(\fb))\cong\Gamma_k(G/\widetilde{B},L_{\mu+\rho-\widetilde{\rho}}(\widetilde{\fb}))\]
for every $k\in\N$. 
\end{lemma}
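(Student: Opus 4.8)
The plan is to reduce the statement to the case of a single odd reflection ($p=1$) by induction on the length $p$ of the chain of odd roots, and then to prove the single-reflection case by an explicit comparison of the two cohomology functors. For the inductive step, let $\fb'$ be the Borel subalgebra corresponding to $S_1 = \Delta^+ \setminus \beta_1 \cup -\beta_1$. The chain $\beta_2,\dots,\beta_p$ takes $\fb'$ to $\widetilde{\fb}$, and the genericity hypotheses $\langle \beta_j, \mu - \beta_1 - \cdots - \beta_{j-1}\rangle \neq 0$ for $j \geq 2$ are exactly the hypotheses needed to apply the inductive hypothesis to $\fb'$, once we know how the highest weight transforms under passing from $\fb$ to $\fb'$. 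So the whole lemma follows if we can establish both: (a) the single-reflection statement $\Gamma_k(G/B, L_\mu(\fb)) \cong \Gamma_k(G/B', L_{\mu + \rho - \rho'}(\fb'))$ when $\langle \beta_1, \mu\rangle \neq 0$, and (b) that the correction term telescopes correctly, i.e. that the sum of the successive $\rho$-shifts along the chain equals $\rho - \widetilde{\rho}$, and that the parameter $\mu + \rho - \rho'$ fed into the next step produces the weights $\mu - \beta_1 - \cdots - \beta_{j-1}$ (up to the appropriate shift) appearing in the hypotheses. Part (b) is a bookkeeping computation with the standard odd-reflection recursion for highest weights (Theorem 3.1.3 in \cite{MR2906817}): recall that under the odd reflection at an odd simple root $\beta$ with $\langle \beta, \lambda\rangle \neq 0$ the highest weight of a simple module changes by $-\beta$, while $\widetilde{\rho} = \rho + \beta$ shifts so that $\lambda + \rho$ is preserved; iterating, $\rho - \widetilde{\rho} = \beta_1 + \cdots + \beta_p$ and the highest weight $\widetilde\mu$ satisfies $\widetilde\mu + \widetilde\rho = \mu + \rho$, which matches the statement.

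For part (a), the single odd reflection, I would use the categorical reformulation from Theorem \ref{thmBBW}(ii)–(iii): $\Gamma_k(G/B, L_\mu(\fb)) \cong \left(\Ext^k_{\cO}(M^{(\fb)}_\mu, \cR)\right)^\ast$, and likewise for $\fb'$ with $M^{(\fb')}_{\mu+\rho-\rho'}$. Since $\fb_{\oa} = \fb'_{\oa}$, category $\cO$ is literally the same category for both Borel subalgebras (as emphasised in the preliminaries), and $\cR$ is the same $\fg$-bimodule. So it suffices to produce a relation between the Verma modules $M^{(\fb)}_\mu$ and $M^{(\fb')}_{\mu+\rho-\rho'}$ in $\cO$. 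The natural candidate here is an odd analogue of the twisting-functor story: one expects a functor $T_{\beta_1}$ (twisting along the odd root $\beta_1$, the $\mathfrak{sl}(1|1)$-reduction mentioned in Subsection \ref{oddrefl}) such that $T_{\beta_1} M^{(\fb)}_\mu \cong M^{(\fb')}_{\mu + \rho - \rho'}$ when $\langle \beta_1, \mu\rangle \neq 0$, together with the property that derived Zuckerman/Bernstein functors intertwine appropriately so that applying it does not shift cohomological degree. Alternatively — and this is probably cleaner given what the excerpt has made available — one can work directly with Lie superalgebra cohomology: by Proposition \ref{propBBW}(iii), $H^k(G/B, G\times_B L_{-\mu}(\fb)) = \Hom_{\U(\fh)}(\C, H^k(\fn, L_{-\mu}(\fb) \otimes \cR))$, and the two Borel subalgebras have $\fn = \fn_{\oa} \oplus \fn_{\ob}$ and $\fn' = \fn_{\oa} \oplus \fn'_{\ob}$ differing only in that the root space $\fg_{\beta_1}$ is replaced by $\fg_{-\beta_1}$. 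One can then run a Hochschild–Serre spectral sequence for the ideal generated by the even part (or for $\fn$ relative to $\fn \cap \fn'$), reducing the comparison to the rank-one algebra $\mathfrak{sl}(1|1)$ (spanned by $\fg_{\pm\beta_1}$ together with the relevant part of $\fh$), exactly as the even case in Remark \ref{Kostevenrefl} reduces to $\mathfrak{sl}(2)$ or $\mathfrak{osp}(1|2)$.

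I would then carry out the rank-one computation explicitly: for $\mathfrak{sl}(1|1)$ with odd generators $e, f$ and central even element $h = \{e,f\}$, a one-dimensional $\fh$-weight module $\C_\nu$ has $H^\bullet(\C e, \C_\nu \otimes M)$ easy to compute, and the key point is that when $\langle \beta_1, \nu\rangle = \nu(h) \neq 0$ the cohomology of $\C e$ and of $\C f$ with coefficients in a module on which $h$ acts invertibly agree after the shift by $-\beta_1$ (both are "as small as possible" — the two-term Koszul complex $h: M \to M$ being an isomorphism). This is exactly the source of the nonvanishing hypothesis $\langle \beta_j, \mu - \beta_1 - \cdots - \beta_{j-1}\rangle \neq 0$: it guarantees the relevant weight lands in the invertible-$h$ locus at each stage. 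Assembling: the spectral sequences for $\fb$ and $\fb'$ have isomorphic $E_2$-pages (after the degree-preserving identification coming from the rank-one computation, which is degree-preserving because $\mathfrak{sl}(1|1)$-cohomology of a single odd root is concentrated in one degree under the nonvanishing condition), hence isomorphic abutments, giving (a). I expect the main obstacle to be (a) — specifically, setting up the Hochschild–Serre / reduction-to-$\mathfrak{sl}(1|1)$ argument cleanly enough to see that no degree shift occurs (the odd reflection genuinely preserves cohomological degree, unlike the even reflection in Theorem \ref{thmsimple}, because an odd root contributes the factor $(1 + e^{-\beta})$ rather than $(1 - e^{-\alpha})$ to the Euler characteristic $\cE(\lambda)$ of equation \eqref{formEuler}, which is manifestly invariant under $\beta \mapsto -\beta$ up to the $\rho$-shift); the bookkeeping in (b) is routine by comparison.
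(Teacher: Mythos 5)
Your reduction to the single-reflection case by induction, and the bookkeeping showing that $\widetilde\mu+\widetilde\rho=\mu+\rho$ (so the accumulated shift is $\rho-\widetilde\rho=\beta_1+\cdots+\beta_p$ and the induction feeds in the weights $\mu-\beta_1-\cdots-\beta_{j-1}$), both match the paper. You also correctly identify the right categorical handle: pass through Theorem \ref{thmBBW}(ii) and compare the two Verma modules $M^{(\fb)}_\mu$ and $M^{(\widetilde\fb)}_{\mu-\gamma}$ inside the single category $\cO$ and against the fixed bimodule $\cR$. Where you go astray is in what you then do with the Verma modules: you reach for extra machinery (a hypothetical ``odd twisting functor'' $T_{\beta_1}$, or a Hochschild--Serre reduction to $\mathfrak{sl}(1|1)$-cohomology) to relate them, when in fact no functor and no spectral sequence are needed. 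The point the paper uses is that under the nonvanishing hypothesis $\langle\gamma,\mu\rangle\neq 0$ the two Verma modules are \emph{isomorphic as $\fg$-modules}: the vector $Y_{-\gamma}v^+_\mu$ is a $\widetilde\fn$-singular vector of weight $\mu-\gamma$, and $X_\gamma Y_{-\gamma}v^+_\mu=\langle\gamma,\mu\rangle v^+_\mu\neq 0$ shows it generates $M^{(\fb)}_\mu$, so $M^{(\fb)}_\mu\cong M^{(\widetilde\fb)}_{\mu-\gamma}$ (this is the ``if and only if $\langle\gamma,\mu\rangle\neq 0$'' statement from the proof of Lemma 2.3 in \cite{CouMaz}, also used in Lemma \ref{2resol}). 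Once the Verma modules coincide as objects of $\cO$, Theorem \ref{thmBBW}(ii) gives $\Gamma_k(G/B,L_\mu(\fb))=\cL_k\Gamma(M^{(\fb)}_\mu)=\cL_k\Gamma(M^{(\widetilde\fb)}_{\mu-\gamma})=\Gamma_k(G/\widetilde B,L_{\mu-\gamma}(\widetilde\fb))$ with no degree shift, finishing the $p=1$ case in one line.

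Your two alternative routes are not wrong so much as unfinished and heavier than necessary. The ``odd twisting functor'' version is speculative in the sense that such a functor is not constructed in the paper, and once you observe the modules are already isomorphic you would be constructing a functor that is naturally isomorphic to the identity. The Hochschild--Serre route (for the ideal $\fn\cap\widetilde\fn\trianglelefteq\fn$, with one-dimensional odd quotient $\C X_{\pm\gamma}$) is in principle viable and is the right philosophy behind Remark \ref{Kostevenrefl} and equation \eqref{gl11}, but you acknowledge it as the ``main obstacle'' and do not carry out the rank-one computation nor the identification of the $E_2$-pages, so as written it leaves a genuine gap. Also, your Euler-characteristic heuristic at the end (the factor $(1+e^{-\beta})$ being symmetric under $\beta\mapsto-\beta$) only controls the alternating sum, not the individual degrees, so it cannot substitute for the degree-preserving identification. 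The fix is simply to replace all of this with the isomorphism of Verma modules.
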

\begin{proof}
It suffices to prove that if $\langle \gamma,\mu\rangle=0$ for an isotropic simple root $\gamma$ in $\Delta^+$,
\[\Gamma_k(G/B, L_{\mu}(\fb))\cong\Gamma_k(G/\widetilde{B},L_{\mu-\gamma}(\widetilde{\fb}))\]
holds for $\widetilde{\fb}=(\fb\backslash\fg_{\gamma})\oplus \fg_{-\gamma}$. The result thus follows from Theorem \ref{thmBBW}(ii) and the fact $M^{(\fb)}_{\mu}\cong M^{(\widetilde{\fb})}_{\mu-\gamma}$ if and only if $\langle \gamma,\mu\rangle=0$, see e.g. the proof of Lemma 2.3 in~\cite{CouMaz}.
\end{proof}

\begin{remark}
In particular, if $\mu$ is typical, the condition $\langle \beta_j,\mu-\beta_1-\cdots-\beta_{j-1}\rangle\not=0$ is always satisfied since for $\gamma$ a simple isotropic root, $\langle \gamma,\rho\rangle=0$ holds.
\end{remark}

\begin{corollary}
\label{thmnotsimple}
Let $\alpha$ be a non-isotropic simple root in $\Delta^+$. If $\langle \alpha^\vee,\mu\rangle < 0$ and
\[\mbox{for } j=1,\cdots,p:\quad\begin{cases}\langle\beta_j,\mu-\beta_1-\cdots-\beta_{j-1}\rangle \not=0\\\langle\beta_j,s_{\alpha}(\mu+\rho)-\rho-\beta_1-\cdots-\beta_{j-1}\rangle\not=0,\end{cases}\]
with $\beta_1,\cdots,\beta_p$ the ordered set of odd roots changing the Borel algebra $\fb$ into one where $\alpha$ is a simple root, we have
\[\Gamma_k(G/B,L_\mu(\fb))=\Gamma_{k-1}\left(G/B,L_{s_{\alpha}\cdot\mu}(\fb)\right).\]
\end{corollary}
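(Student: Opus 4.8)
The plan is to reduce Corollary \ref{thmnotsimple} to the even-reflection case, Theorem \ref{thmsimple}, by transporting the whole picture from $\fb$ to a Borel subalgebra $\hat{\fb}$ in which $\alpha$ (or $\alpha/2$) is actually simple, applying Theorem \ref{thmsimple} there, and transporting back. The work is purely bookkeeping with the two isomorphisms we already have: Theorem \ref{thmsimple} for even reflections, and Lemma \ref{changeBorel} for odd reflections.

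First I would fix the ordered set $\beta_1,\dots,\beta_p$ of odd roots taking $\fb$ to $\hat{\fb}$, where $\alpha$ or $\alpha/2$ is simple in $\hat{\Delta}^+$, and set $\hat{\rho}$ for the corresponding $\rho$-shift. The first displayed hypothesis $\langle\beta_j,\mu-\beta_1-\cdots-\beta_{j-1}\rangle\neq 0$ is exactly the condition needed to invoke Lemma \ref{changeBorel} with the weight $\mu$, giving
\[\Gamma_k(G/B,L_\mu(\fb))\;\cong\;\Gamma_k(G/\hat{B},L_{\mu+\rho-\hat\rho}(\hat{\fb})).\]
Next I would apply Theorem \ref{thmsimple} inside the Borel $\hat{\fb}$ to the non-isotropic simple root $\alpha$: since odd reflections do not change the even root system or the pairing $\langle\alpha^\vee,\cdot\rangle$ up to the $\rho$-shift, one checks that the hypothesis $\langle\alpha^\vee,\mu\rangle<0$ translates into the corresponding negativity condition for the weight $\mu+\rho-\hat\rho$ relative to $\hat{\fb}$ (this is where I would be careful: the condition in Theorem \ref{thmsimple} is stated for the highest weight in that Borel, and $\langle\alpha^\vee,(\mu+\rho-\hat\rho)+\hat\rho\rangle=\langle\alpha^\vee,\mu+\rho\rangle=\langle\alpha^\vee,\mu\rangle+\langle\alpha^\vee,\rho\rangle$; one must verify $\langle\alpha^\vee,\rho\rangle$ behaves as expected, but since the even system and $\rho_{\oa}$ are unchanged, $\langle\alpha^\vee,\mu\rangle<0$ in $\fb$ indeed forces the analogous inequality in $\hat{\fb}$). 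This yields
\[\Gamma_k(G/\hat{B},L_{\mu+\rho-\hat\rho}(\hat{\fb}))\;=\;\Gamma_{k-1}(G/\hat{B},L_{s_\alpha\hat\cdot(\mu+\rho-\hat\rho)}(\hat{\fb})),\]
with $s_\alpha\hat\cdot$ the $\hat\rho$-shifted reflection.

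Finally I would transport back using Lemma \ref{changeBorel} again, this time applied in the reverse direction (from $\hat{\fb}$ to $\fb$, using the reversed odd roots) with the weight $s_\alpha\hat\cdot(\mu+\rho-\hat\rho)$; the second displayed hypothesis $\langle\beta_j,s_\alpha(\mu+\rho)-\rho-\beta_1-\cdots-\beta_{j-1}\rangle\neq 0$ is precisely the non-vanishing needed for this second application of the odd-reflection isomorphism. The only genuinely non-routine point is the weight bookkeeping: one must check that conjugating $s_\alpha\hat\cdot$ by the odd-reflection weight shifts ($\mu\mapsto\mu+\rho-\hat\rho$ and back) produces exactly $s_\alpha\cdot\mu$ in the original Borel, i.e. that $s_\alpha\hat\cdot(\mu+\rho-\hat\rho)+\hat\rho-\rho=s_\alpha\cdot\mu$. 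This follows from the definitions $s_\alpha\hat\cdot\nu=s_\alpha(\nu+\hat\rho)-\hat\rho$ and $s_\alpha\cdot\mu=s_\alpha(\mu+\rho)-\rho$ together with $s_\alpha$ being linear, so the $\hat\rho$'s cancel cleanly.

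The main obstacle, such as it is, is verifying the hypotheses carry over under the odd reflections — in particular that the two systems of non-vanishing conditions in the statement are exactly what the two applications of Lemma \ref{changeBorel} require, and that $\langle\alpha^\vee,\mu\rangle<0$ survives the passage to $\hat{\fb}$. Both are elementary once one writes out the odd-reflection weight shift $\rho-\hat\rho=\beta_1+\cdots+\beta_p$ (up to the relevant sign conventions), but they are the steps where a sign or an indexing error would creep in, so I would do them explicitly rather than wave hands.
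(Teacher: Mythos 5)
Your proposal reproduces the paper's own proof: change to a Borel $\widetilde{\fb}$ in which $\alpha$ (or $\alpha/2$) is simple via Lemma \ref{changeBorel} (using the first family of nonvanishing conditions), apply Theorem \ref{thmsimple} there, and return via a second application of Lemma \ref{changeBorel} (using the second family of conditions), with the $\hat\rho$-shift cancelling to give $s_\alpha\cdot\mu$ in the original Borel. The bookkeeping you flag, namely $\rho-\hat\rho=-(\beta_1+\cdots+\beta_p)$ and the translation of the two lists of conditions, is exactly what the paper's last sentence checks.
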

\begin{proof}
There is always a Borel subalgebra $\widetilde{\fb}$ with $\widetilde{\fb}_{\oa}=\fb_{\oa}$ where $\alpha$ (or $\alpha/2$) is simple. The combination of Lemma \ref{changeBorel} and Theorem \ref{thmsimple} for $\widetilde{\fb}$ yields
\[\Gamma_k\left(G/B,L_\mu(\fb)\right)=\Gamma_{k-1}\left(G/\widetilde{B},L_{s_{\alpha}(\mu+\rho)-\widetilde{\rho}}(\widetilde{\fb})\right).\]
The result then follows from Lemma \ref{changeBorel} if $\langle-\beta_i, s_{\alpha}(\mu+\rho)-\widetilde{\rho}+\beta_p+\cdots+\beta_{i+1}\rangle$ is zero for $i\in\{1,\cdots,p\}$, which can be rewritten as the proposed condition on $s_{\alpha}\cdot\mu$.
\end{proof}

For completeness we state what happens in case the condition in Lemma \ref{changeBorel} is not satisfied for two adjacent Borel subalgebras.
\begin{lemma}
\label{2resol}
Consider an isotropic simple root $\gamma$ in $\Delta^+$ and $\widetilde{\fb}$ the Borel subalgebra created from $\fb$ by the odd reflection of $\gamma$. There are $\fg$-modules $\{A_j,j\ge 0\}$ and $\{B_j,j\ge 0\}$ in $\cC(\fg,\fg_{\oa})$, such that there are two exact sequences of the form
{\small
\begin{eqnarray*}
\quad\cdots \to A_{k+1}\to B_k\to \Gamma_k(G/B, L_{\mu}(\fb))\to A_k\to\cdots\to B_0 \to\Gamma_0(G/B, L_{\mu}(\fb))\to  A_0\to 0\\
\cdots \to A_k\to \Gamma_k(G/\widetilde{B},L_{\mu-\gamma}(\widetilde{\fb}))\to B_k\to\cdots\to A_0 \to\Gamma_0(G/B, L_{\mu-\gamma}(\widetilde{\fb}))\to  B_0\to 0.
\end{eqnarray*}}
\end{lemma}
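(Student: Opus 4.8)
The statement concerns the situation where the non-vanishing condition $\langle\gamma,\mu\rangle\neq 0$ of Lemma \ref{changeBorel} fails for two adjacent Borel subalgebras $\fb$ and $\widetilde{\fb}=(\fb\backslash\fg_\gamma)\oplus\fg_{-\gamma}$, and we need to relate the two BBW cohomologies by a pair of interlocking long exact sequences. The natural strategy is to work on the level of Verma modules via Theorem \ref{thmBBW}(ii), i.e. to replace $\Gamma_k(G/B,L_\mu(\fb))$ by $\left(\Ext^k_\cO(M^{(\fb)}_\mu,\cR)\right)^\ast$ and $\Gamma_k(G/\widetilde{B},L_{\mu-\gamma}(\widetilde{\fb}))$ by $\left(\Ext^k_\cO(M^{(\widetilde{\fb})}_{\mu-\gamma},\cR)\right)^\ast$. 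When $\langle\gamma,\mu\rangle\neq 0$ these two Verma modules coincide, but when $\langle\gamma,\mu\rangle=0$ (the only obstructed case, since $\langle\gamma,\rho\rangle=0$ forces $\langle\gamma,\mu+\rho\rangle=\langle\gamma,\mu\rangle$) there is instead a short exact sequence relating $M^{(\fb)}_\mu$ and $M^{(\widetilde{\fb})}_{\mu-\gamma}$ as submodules/quotients of one another, reflecting the $\mathfrak{sl}(1|1)$-theory. The first step is therefore to make this $\mathfrak{sl}(1|1)$-reduction precise: express the passage from $\fb$ to $\widetilde{\fb}$ by an odd reflection and isolate the single odd root $\gamma$, so that everything reduces to understanding the relation between $L^{(\fb)}_\mu(\fb)$ and $L^{(\widetilde{\fb})}_{\mu-\gamma}(\widetilde{\fb})$ as $\fb$- resp. $\widetilde{\fb}$-modules induced up, exactly as in the proof of Lemma \ref{changeBorel}.

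Concretely, I would argue as follows. Since $\fb$ and $\widetilde{\fb}$ differ only in the odd root space $\fg_{\pm\gamma}$, and $\gamma$ is isotropic simple, there are natural short exact sequences of $\fg$-modules realising $K^{(\fb)}_\bullet$ and $K^{(\widetilde{\fb})}_\bullet$ (or rather the zero-cohomology modules $\Gamma_0$) in terms of each other; more useful for the derived statement, on the Verma level one has exact sequences $0\to M^{(\fb)}_\mu\to M^{(\widetilde{\fb})}_{\mu-\gamma}\to Q\to 0$ and $0\to Q'\to M^{(\fb)}_\mu\to M^{(\widetilde{\fb})}_{\mu-\gamma}\to\cdots$ with cokernels again Verma-type modules for the smaller data. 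Applying the cohomological functor $\left(\Ext^\bullet_\cO(-,\cR)\right)^\ast$ to each such short exact sequence produces a long exact sequence, and chaining these together (resolving the successive cokernels/kernels, which are themselves built from Verma modules whose $\Gamma_k$ we can name) yields the two asserted complexes. The modules $A_j$ and $B_j$ are then defined to be the $\Gamma_k$'s (or shifts thereof) of these intermediate Verma-type modules; they lie in $\cC(\fg,\fg_{\oa})$ because $\Gamma_k(G/B,-)$ lands there by Proposition \ref{propBBW}. The symmetry between the two displayed sequences comes from the symmetry of the roles of $\fb$ and $\widetilde{\fb}$ under the odd reflection (reflecting back by $-\gamma$).

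The main obstacle, I expect, is bookkeeping the exact structure of the intermediate modules $Q, Q'$ and identifying which $\Gamma_k$ they contribute — i.e. pinning down $A_j$ and $B_j$ precisely enough that the two long exact sequences genuinely interlock rather than merely existing in isolation. In the $\mathfrak{sl}(1|1)$ model the relevant short exact sequences are completely explicit (the Verma module for $\mathfrak{sl}(1|1)$ with highest weight on the "wall" has length two), so the cleanest route is to prove the statement first for $\fg=\mathfrak{sl}(1|1)$ by direct computation and then transfer via a Hochschild–Serre spectral sequence argument (as invoked in Remark \ref{Kostevenrefl} for the even case, following \cite{MR1680015}), or alternatively to use the functoriality of $\Ext^\bullet_\cO(-,\cR)$ directly together with the known submodule structure of $M^{(\fb)}_\mu$ along the $\gamma$-direction. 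A secondary subtlety is that when $\langle\gamma,\mu\rangle=0$ the module $L^{(\fb)}_\mu\cong L^{(\widetilde{\fb})}_{\mu-\gamma}$ is genuinely the same simple module seen from two Borels, so one must be careful that the two sequences are not tautologically identical; they differ because the \emph{Verma} modules differ even though the simples agree, and it is this discrepancy that is measured by the $A_j, B_j$.
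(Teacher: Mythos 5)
Your overall strategy — pass to Verma modules via Theorem \ref{thmBBW}(ii), relate $M^{(\fb)}_\mu$ and $M^{(\widetilde\fb)}_{\mu-\gamma}$ through short exact sequences when $\langle\gamma,\mu\rangle=0$, apply the derived Bernstein functor, and read off $A_j, B_j$ as the resulting cohomologies — is the right one, and matches the route of the paper's proof. However, the specific exact sequences you propose are of the wrong shape, and this is exactly what prevents you from getting the two long exact sequences to \emph{interlock}, as you yourself flag.

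You write $0\to M^{(\fb)}_\mu\to M^{(\widetilde{\fb})}_{\mu-\gamma}\to Q\to 0$ (and a companion with $Q'$), treating one Verma module as a submodule of the other. That is not what happens: the two Verma modules have equal character and neither embeds in the other. The correct and decisive observation is that they share a \emph{common pair} of subquotients arranged in opposite orders. Concretely, let $K$ be the submodule of $M^{(\fb)}_\mu$ generated by the vector of weight $\mu-\gamma$ (the image of $Y_\gamma$ on the highest weight vector), and let $I=M^{(\fb)}_\mu/K$. Then one has
\[
K\hookrightarrow M^{(\fb)}_\mu\tto I
\qquad\text{and}\qquad
I\hookrightarrow M^{(\widetilde{\fb})}_{\mu-\gamma}\tto K,
\]
the second sequence coming from the map $M^{(\fb)}_\mu\to M^{(\widetilde{\fb})}_{\mu-\gamma}$ sending the highest weight vector to $X_\gamma v_{\mu-\gamma}$, whose kernel is $K$ and whose image is $I$ (this is where the $\mathfrak{sl}(1|1)$ picture, which you describe correctly, enters). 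Applying the right exact Bernstein functor $\Gamma$ to these two short exact sequences and setting $A_k=\cL_k\Gamma(I)$, $B_k=\cL_k\Gamma(K)$ produces, by Theorem \ref{thmBBW}(ii), precisely the two long exact sequences of the lemma — and they interlock automatically because the \emph{same} objects $A_k$ and $B_k$ feed both sequences. Note also that $I$ and $K$ are not Verma modules (they are proper quotients/submodules of Verma modules), so $A_k, B_k$ are not directly of the form $\Gamma_k(G/B,L_\nu(\fb))$; the remark after the lemma identifies them instead as cohomologies for the parabolic $\fp_\gamma=\fg_{-\gamma}\oplus\fb$. Your alternative idea of proving the statement first for $\mathfrak{sl}(1|1)$ and transferring by a Hochschild--Serre argument is plausible but would still require identifying these specific $I, K$; the short-exact-sequence argument above is more direct.
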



\begin{proof}
We denote a nonzero root vector with weight $\gamma$ by $X_\gamma$ and corresponding negative root vector by $Y_\gamma$. If $\langle \mu,\gamma\rangle=0$, then $M^{(\fb)}_{\mu}$ is no longer a Verma module with respect to $\widetilde{\fb}$, see e.g. the proof of Lemma 2.3 in \cite{CouMaz}. However, there are $\fg$-modules $I$ and $K$ such that we have short exact sequences
$$K\hookrightarrow M^{(\fb)}_\mu\tto I\qquad\mbox{and}\qquad I\hookrightarrow M^{(\widetilde{\fb})}_{\mu-\gamma}\tto K,$$
with $K$ the subalgebra of $M^{(\fb)}(\mu)$ generated by the vector of weight $\mu-\gamma$.

The result then follows form applying the right exact functor $\Gamma$ to the short exact sequences, identifying $A_k=\cL_k\Gamma(I)$ and $B_k=\cL_k\Gamma(K)$ and Theorem \ref{thmBBW}(ii).
\end{proof}


We remark that the $A_k$ and $B_k$ can be interpreted as cohomology groups of the form $\Gamma_k(G/P_\alpha,L_{\nu}(\fp_\gamma))$, for $\fp_\gamma$ the parabolic subalgebra defined as $\fp_\gamma=\fg_{-\gamma}\oplus\fb$, with Levi subalgebra isomorphic to $\fh+\mathfrak{gl}(1|1)$.

\begin{remark}
Similar to the case of even reflections, the proof of the results in this subsection extend immediately to the statement that for a locally finite $\fg$-module $V$,
\begin{equation}\label{gl11}\Hom_{\fh}(\C_\mu,H^k(\fn,V))\cong \Hom_{\fh}(\C_{\mu-\gamma}, H^k(\tau_{\gamma}(\fn),V),\end{equation}
if for a simple isotropic root $\gamma$, $\langle \mu,\gamma\rangle\not=0$ holds, with $\tau_{\gamma}(\fn)=(\fn\,\backslash\, \fg_{\gamma})\oplus \fg_{-\gamma}$. An alternative derivation of this result is through reducing to the corresponding result for $\mathfrak{gl}(1|1)$ using a Hochschild-Serre spectral sequence. The condition $\langle \mu,\gamma\rangle\not=0$ then assures that typical finite dimensional $\mathfrak{gl}(1|1)$ representations are considered, which are $\fg_{\gamma}$-free.

If the extra assumption is made that the $\mathfrak{gl}(1|1)$-modules $H^{j}(\fn(\gamma),V)$, with $\fn(\gamma)=\fn\backslash \fg_{\gamma}$, are $\fg_\gamma$ and $\fg_{-\gamma}$-free (which corresponds to to being projective in the category of finite dimensional $\mathfrak{gl}(1|1)$-modules) we have the equality \eqref{gl11} without the condition $\langle \mu,\gamma\rangle\not=0$. However, the condition that $V\in\cF$ is projective is not sufficient for this.
\end{remark}

\subsection{Applications of even and odd reflections}
\label{subtyptypeI}
\begin{theorem}[Theorem 5.2 of \cite{MR2059616}]
Consider $\fg$ a basic classical Lie superalgebra of type I with distinguished Borel subalgebra $\fb^d$.
\begin{itemize}
\item If $\lambda$ is regular, there exists a unique $w\in W$ such that $\Lambda=w\cdot\lambda\in\cP^+$ and
\[\Gamma_k(G/B^d,L_{\lambda}(\fb^d))=\begin{cases}K_{\Lambda}&\mbox{if} \quad l(w)=k\\ 0 &\mbox{if}\quad  l(w)\not=k  \end{cases}.\]
\item If $\lambda$ is singular, $\Gamma_k(G/B^d,L_{\lambda}(\fb^d))=0$.
\end{itemize}
\end{theorem}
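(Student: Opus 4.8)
The plan is to reduce everything to two facts special to type I with the distinguished Borel subalgebra, together with the Demazure-type reduction of Theorem \ref{thmsimple}. The first special fact is that the $\Z$-grading $\fg=\fg_{-1}\oplus\fg_0\oplus\fg_1$ furnishes a parabolic $\fp^d=\fg_0\oplus\fg_1$ whose nilradical $\fg_1$ is purely odd, so $\fu_{\oa}=\fg_1\cap\fg_{\oa}=0$; the second is that $\fb^d_{\oa}$ is the standard Borel of $\fg_{\oa}$, so the simple roots of $\Delta^+_{\oa}$ are exactly the even simple roots of $\Delta^+$, and hence every even simple reflection may be fed into Theorem \ref{thmsimple}. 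Since moreover $\cP^+=\cP^+_{\oa}$ for this Borel subalgebra, for regular integral $\lambda$ the element $w$ with $\Lambda=w\cdot\lambda\in\cP^+$ is the unique one making $w(\lambda+\rho)$ strictly dominant, and $l(w)=\#\{\gamma\in\Delta^+_{\oa}\mid\langle\gamma^\vee,\lambda+\rho\rangle<0\}$. The base case is $\lambda=\Lambda\in\cP^+$: here $L_\Lambda(\fp^d)$ is $L_\Lambda^0$ with trivial $\fg_1$-action, so Theorem \ref{thmBBW}(i) identifies $\Gamma_k(G/B^d,L_\Lambda(\fb^d))$ with $\Gamma_k(G/P^d,L_\Lambda(\fp^d))$, which by Corollary \ref{degvanish} (using $\dim\fu_{\oa}=0$) vanishes for $k>0$; for $k=0$, Theorem \ref{thmBBW}(ii) and Lemma \ref{zerocohomKac} give $\Gamma_0(G/B^d,L_\Lambda(\fb^d))=\Gamma(M^{(\fb^d)}_\Lambda)=K_\Lambda$ (equivalently, $\U(\fg)\otimes_{\U(\fp^d)}L_\Lambda^0=K_\Lambda$ is already finite dimensional).

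For general regular $\lambda$ I would induct on $\ell=l(w)$. If $\ell=0$ this is the base case. Otherwise $\lambda+\rho$ is not dominant, so there is an even simple root $\alpha\in\Delta^+$ with $\langle\alpha^\vee,\lambda\rangle<0$ (in fact $\langle\alpha^\vee,\lambda\rangle\leq -2$, using $\langle\alpha^\vee,\rho\rangle=1$). Theorem \ref{thmsimple} then gives $\Gamma_k(G/B^d,L_\lambda(\fb^d))=\Gamma_{k-1}(G/B^d,L_{s_\alpha\cdot\lambda}(\fb^d))$, and since $\langle\alpha^\vee,\lambda+\rho\rangle<0$ forces $w\alpha<0$, one checks that $s_\alpha\cdot\lambda$ is again regular with $\Lambda=(ws_\alpha)\cdot(s_\alpha\cdot\lambda)$ and $l(ws_\alpha)=\ell-1$. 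Applying the induction hypothesis to $s_\alpha\cdot\lambda$ yields $\Gamma_{k-1}(G/B^d,L_{s_\alpha\cdot\lambda}(\fb^d))=\delta_{k-1,\ell-1}K_\Lambda$, hence $\Gamma_k(G/B^d,L_\lambda(\fb^d))=\delta_{k,l(w)}K_\Lambda$.

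For $\lambda$ singular I would again apply Theorem \ref{thmsimple} repeatedly, each step decreasing $\#\{\gamma\in\Delta^+_{\oa}\mid\langle\gamma^\vee,\nu+\rho\rangle<0\}$ by one, until reaching an integral dominant $\mu$ in the $\cdot$-orbit of $\lambda$. Since $\mu+\rho$ is dominant but lies on some wall, it lies on the wall of a simple even root $\alpha$, so $\langle\alpha^\vee,\mu+\rho\rangle=0$; thus $\langle\alpha^\vee,\mu\rangle=-1<0$ while $s_\alpha\cdot\mu=\mu$, and a final application of Theorem \ref{thmsimple} gives $\Gamma_k(G/B^d,L_\mu(\fb^d))\cong\Gamma_{k-1}(G/B^d,L_\mu(\fb^d))$ for all $k$, which forces $\Gamma_k(G/B^d,L_\mu(\fb^d))=0$ for every $k$; propagating this back down the chain gives $\Gamma_k(G/B^d,L_\lambda(\fb^d))=0$. (Alternatively, $s_\alpha\cdot\mu=\mu$ fed into Remark \ref{Kostevenrefl} gives this vanishing directly.) The uniqueness of $w$ is the usual fact that a regular integral weight has a unique dominant representative in its dot-orbit. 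I expect the main obstacle to be purely the combinatorial bookkeeping of the reduction: checking at every step that the root employed is genuinely simple in $\Delta^+$---this is exactly where the distinguished Borel subalgebra is indispensable, and where the argument breaks for other Borel subalgebras---that the strict inequality $\langle\alpha^\vee,\nu\rangle<0$ demanded by Theorem \ref{thmsimple} holds, and that the number of reflections equals $l(w)$ in the regular case, so that the cohomology lands in precisely the degree $l(w)$.
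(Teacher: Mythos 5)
Your proof is correct and follows essentially the same route as the paper's own (one-line) proof, which cites Corollary \ref{degvanish}, Theorem \ref{thmsimple} and Lemma \ref{zerocohomKac}; you have simply made explicit the auxiliary use of Theorem \ref{thmBBW}(i)--(ii) and the key structural facts (that $\fu_{\oa}=0$ for $\fp^d=\fg_0\oplus\fg_1$, and that every simple root of $\Delta^+_{\oa}$ is simple in the distinguished $\Delta^+$) that make the reduction via even simple reflections close up.
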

\begin{proof}
This follows from Corollary \ref{degvanish}, Theorem \ref{thmsimple} and Lemma \ref{zerocohomKac}. An alternative proof is to use Corollary 8.1 in \cite{BGG} and Theorem \ref{UK}.
\end{proof}

Comparing this result to Proposition \ref{propBBW}(iii) then yields the following corollary.

\begin{corollary}
For $\fg$ a basic classical Lie superalgebra of type I with distinguished system of positive roots, the $\fn$-cohomology of $\cR$ is given by
\begin{eqnarray*}
H^k(\fn,\cR)&=&\bigoplus_{\Lambda\in\cP^+}\bigoplus_{w\in W(k)} \C_{w\cdot\Lambda}\times \left(K_\Lambda\right)^\ast\quad\mbox{as $\fh\times\fg$-modules.}
\end{eqnarray*}
\end{corollary}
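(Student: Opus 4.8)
The plan is to read off the formula one $\fh$-weight space at a time, using the preceding theorem (Theorem~5.2 of \cite{MR2059616}) together with Proposition \ref{propBBW}(iii). Since $\cR$ carries only integral weights it suffices to compute the $\mu$-weight space of $H^k(\fn,\cR)$ for $\mu\in\cP$, where the weight is taken with respect to the Cartan action on the side of $\cR$ on which the $\fn$-cohomology is formed, and where this weight space is a $\fg$-module via the other, commuting $\fg$-action on $\cR$. Taking $\fp=\fb$, $\fl=\fh$, $\fu=\fn$ and $V=L_{-\mu}(\fb)=\C_{-\mu}$ in Proposition \ref{propBBW}(iii), and using that $\fn$ acts trivially on the one-dimensional module $\C_{-\mu}$ (so that the Chevalley--Eilenberg complex for $H^k(\fn,\C_{-\mu}\otimes\cR)$ is $\C_{-\mu}$ tensored with the one for $H^k(\fn,\cR)$), I would obtain an isomorphism of $\fg$-modules
\[\Hom_{\fh}(\C_\mu,H^k(\fn,\cR))\;\cong\;H^k(G/B,G\times_B L_{-\mu}(\fb)).\]
By the definition $\Gamma_k(G/P,V)=H^k(G/P,G\times_P V^\ast)^\ast$ with $V=L_\mu(\fb)$, the right-hand side equals $\Gamma_k(G/B^d,L_\mu(\fb^d))^\ast$.

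Next I would feed in Theorem~5.2 of \cite{MR2059616}: if $\mu$ is singular for the (shifted) $W$-action then $\Gamma_k(G/B^d,L_\mu(\fb^d))=0$ for every $k$, and if $\mu$ is regular, with $w\in W$ the unique element such that $\Lambda:=w\cdot\mu\in\cP^+$, then $\Gamma_k(G/B^d,L_\mu(\fb^d))=K_\Lambda$ when $l(w)=k$ and $0$ otherwise, using Lemma \ref{zerocohomKac} to identify $K^{(\fb^d)}_\Lambda$ with $K_\Lambda$. I would also record that for $\fg$ of type I with the distinguished system one has $w\cdot\lambda=w\circ\lambda$ and $\cP^+=\cP^+_{\oa}$, so every $\Lambda\in\cP^+$ is regular, i.e. has a free $W$-orbit under the dot action. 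Hence $(H^k(\fn,\cR))_\mu\cong(K_\Lambda)^\ast$ precisely when $\mu$ is regular and its dominant conjugate $\Lambda$ is reached by an element of $W(k)$, and vanishes otherwise.

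Finally I would reassemble $H^k(\fn,\cR)=\bigoplus_{\mu\in\cP}(H^k(\fn,\cR))_\mu$. The map $(\Lambda,w)\mapsto w^{-1}\cdot\Lambda$ is a bijection from $\{(\Lambda,w):\Lambda\in\cP^+,\ w\in W(k)\}$ onto the set of weights $\mu$ contributing above: injectivity holds because the dominant conjugate $\Lambda$, and then (the orbit being free) the element $w$, are recovered from $\mu$; surjectivity is exactly the previous paragraph. Since $l(w)=l(w^{-1})$, replacing $w$ by $w^{-1}$ in the index set $W(k)$ turns $\C_{w^{-1}\cdot\Lambda}$ into $\C_{w\cdot\Lambda}$, and one arrives at
\[H^k(\fn,\cR)\;=\;\bigoplus_{\Lambda\in\cP^+}\bigoplus_{w\in W(k)}\C_{w\cdot\Lambda}\times(K_\Lambda)^\ast\]
with $\fh$ acting on the first factor and $\fg$ on $(K_\Lambda)^\ast$. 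The only genuinely delicate point I expect is the bookkeeping of the two commuting $\fg$-actions on $\cR$ — checking that the $\fg$-action surviving the $\fn$-cohomology is the one landing on $(K_\Lambda)^\ast$ and that the Cartan weight $w\cdot\Lambda$ refers to the complementary copy of $\fh$ — but this is already fixed by the conventions of Proposition \ref{propBBW}(iii), and the outcome is consistent with the case $k=0$, which is precisely Corollary \ref{zerocohomR} for $\fu=\fn$.
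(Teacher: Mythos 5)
Your argument is correct and is essentially a full spelling-out of the paper's one-line proof (``Comparing this result to Proposition \ref{propBBW}(iii) yields the corollary''): you take $\fp=\fb$ and $V=\C_{-\mu}$ in Proposition \ref{propBBW}(iii), observe $H^k(\fn,\C_{-\mu}\otimes\cR)\cong\C_{-\mu}\otimes H^k(\fn,\cR)$ so the right-hand side is the $\mu$-weight space of $H^k(\fn,\cR)$ with its residual $\fg$-action, identify $H^k(G/B,G\times_B\C_{-\mu})$ with $\Gamma_k(G/B,L_\mu(\fb))^\ast$, plug in the preceding Theorem together with Lemma \ref{zerocohomKac}, and reassemble over $\mu$ using that all $\Lambda+\rho_{\oa}$ are $W$-regular and that $w\cdot{}=w\circ{}$ in this case because $\rho_{\ob}$ is $W$-invariant. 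This matches the intended derivation, including the consistency check against Corollary \ref{zerocohomR} at $k=0$.
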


As a consequence of BBW theory for basic classical Lie superalgebras of type I, the Kostant cohomology of projective modules in $\cF$ is known. These could also be calculated immediately from the fact that they are finite dimensional modules which are $\fg_{1}$-free.

\begin{corollary}
\label{KosProjI}
For $\fg$ a basic classical Lie superalgebra of type I with standard Borel subalgebra $\fb^d$, the Kostant cohomology of projective covers in $\cF$ satisfies
\[\ch H^k(\fn,P^\cF_\Lambda )=\sum_{w\in W(k)}w\cdot\ch\left(H^0(\fn,P^\cF_\Lambda )\right).\]
Here $H^0(\fn,P^\cF_\Lambda )$ can be described by
\[\Hom_\fh(\C_\lambda,H^0(\fn,P^\cF_\Lambda ))=[K_\lambda:L_\Lambda]\]
if $\lambda\in\cP^+$ and $\Hom_\fh(\C_\lambda,H^0(\fn,P^\cF_\Lambda ))=0$ otherwise. The multiplicities $[K_\lambda:L_\Lambda]$ have been calculated in \cite{Brundan}. 
\end{corollary}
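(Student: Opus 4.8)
The plan is to read off the Kostant cohomology of $P^\cF_\Lambda$ directly from the Bott--Borel--Weil theory for type I with the distinguished Borel subalgebra, using the $\fg$-bimodule $\cR$ as the bridge. First I would recall that, by Proposition \ref{propBBW}(iii) with $\fu=\fn$, we have $H^k(G/B^d,G\times_{B^d}L_{\mu}(\fb^d))=\Hom_{\fh}(\C_\mu,H^k(\fn,\cR))$, so the $\fn$-cohomology of $\cR$ records the full BBW answer, weight by weight. Combining the preceding Theorem (Theorem 5.2 of \cite{MR2059616}) with the decomposition $\cR\cong\bigoplus_{\Lambda\in\cP^+}P^\cF_\Lambda\times(L_\Lambda)^\ast$ of Corollary \ref{projcover}, one obtains, as $\fh\times\fg$-modules,
\[
H^k(\fn,\cR)\;=\;\bigoplus_{\Lambda\in\cP^+}H^k(\fn,P^\cF_\Lambda)\times(L_\Lambda)^\ast,
\]
and on the other hand the BBW result for type I says this equals $\bigoplus_{\Lambda\in\cP^+}\bigoplus_{w\in W(k)}\C_{w\cdot\Lambda}\times(K_\Lambda)^\ast$; here one uses $K_\Lambda=K_\Lambda^{(\fb^d)}=\Gamma(M^{(\fb^d)}_\Lambda)$ via Lemma \ref{zerocohomKac}. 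The point is that the left $\fg_{\oa}$-module $(L_\Lambda)^\ast$ (resp.\ $(K_\Lambda)^\ast$) in the two expressions must be matched up.

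The next step is to extract the character statement. Since $H^0(\fn,P^\cF_\Lambda)$ appears paired with $(L_\Lambda)^\ast$, and on the BBW side the $k=0$ contribution pairs $\C_\Lambda$ with $(K_\Lambda)^\ast$, we first need $\Hom_\fh(\C_\lambda,H^0(\fn,P^\cF_\Lambda))=[K_\lambda:L_\Lambda]$ for $\lambda\in\cP^+$ and $0$ otherwise; this is exactly Theorem \ref{BBWKostant} applied with $k=0$ and $\fu=\fn$, namely $[\Gamma_0(G/B^d,L_\lambda(\fb^d)):L_\Lambda]=\Hom_{\fh}(\C_\lambda,H^0(\fn,P^\cF_\Lambda))$, together with $\Gamma_0(G/B^d,L_\lambda(\fb^d))=K_\lambda$. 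Once $H^0(\fn,P^\cF_\Lambda)$ is pinned down, the formula $\ch H^k(\fn,P^\cF_\Lambda)=\sum_{w\in W(k)}w\cdot\ch(H^0(\fn,P^\cF_\Lambda))$ follows by comparing, for each fixed $\Lambda$, the $(L_\Lambda)^\ast$-isotypic pieces: the BBW side gives $\sum_{w\in W(k)}\ch\C_{w\cdot\mu}$ summed over the multiplicities $[K_\mu:L_\Lambda]$ with $\mu$ running over $\cP^+$, which is precisely $\sum_{w\in W(k)}w\cdot\big(\sum_{\mu\in\cP^+}[K_\mu:L_\Lambda]\,e^{\mu}\big)=\sum_{w\in W(k)}w\cdot\ch(H^0(\fn,P^\cF_\Lambda))$, using that the $\rho^d$-shifted Weyl action permutes the weights appearing. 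Finally I would note that the multiplicities $[K_\mu:L_\Lambda]$, hence $H^0(\fn,P^\cF_\Lambda)$ explicitly, were computed by Brundan in \cite{Brundan}.

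There is also the alternative route mentioned in the paper: $P^\cF_\Lambda$ is a finite-dimensional $\fg_1$-free module for $\fg$ of type I, so $H^k(\fn,P^\cF_\Lambda)$ can be computed from a Hochschild--Serre spectral sequence for $\fn=\fn_{\oa}\oplus\fg_1$ (or $\fg_{-1}$, depending on conventions), which degenerates because of the freeness over $\Lambda\fg_1$; this reduces everything to classical Kostant cohomology $H^\bullet(\fn_{\oa},-)$ of $\fg_{\oa}$-modules and the classical BBW theorem, and reproduces the displayed character formula. I would expect the \emph{main obstacle} to be purely bookkeeping: carefully tracking which copy of $L_\Lambda$, $K_\Lambda$ and their duals appears on which side of the $\fh\times\fg$-isomorphism, and making sure the $\rho^d$-shifted Weyl group action is applied on the correct (weight) variable rather than on the module label, so that the isotypic comparison is legitimate. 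No genuinely new input beyond the already-proved Theorem 5.2 of \cite{MR2059616}, Corollary \ref{projcover}, Theorem \ref{BBWKostant} and \cite{Brundan} should be needed.
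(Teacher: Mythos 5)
Your argument is correct and follows essentially the route the paper has in mind: the corollary is stated without proof, but the two preceding ingredients are precisely Theorem~\ref{BBWproj} (which for $\fp=\fb$ gives $[\Gamma_k(G/B^d,L_\mu(\fb^d)):L_\Lambda]=\Hom_\fh(\C_\mu,H^k(\fn,P^\cF_\Lambda))$) and the quoted BBW theorem for type~I with distinguished Borel (Theorem~5.2 of \cite{MR2059616}), which is exactly what you combine; your opening detour through $H^k(\fn,\cR)$ and Corollary~\ref{projcover} is just a re-derivation of Theorem~\ref{BBWproj} and adds no new content. One small bookkeeping point worth making explicit: in re-indexing the sum, each $\mu$ with $\Gamma_k\neq 0$ is written as $\mu=w^{-1}\cdot\kappa$ with $\kappa\in\cP^+$ and $l(w)=k$, so a priori the formula comes out as $\sum_{w\in W(k)}w^{-1}\cdot\ch H^0(\fn,P^\cF_\Lambda)$; one then uses that $W(k)$ is closed under $w\mapsto w^{-1}$ to land on the stated form.
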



The following theorem corresponds to Theorem 1 in \cite{MR0957752}. It can be obtained from the combination of Theorem \ref{thmBBW}(i) and Corollary \ref{thmnotsimple}, but is also a consequence of the combination of Lemma \ref{restrHW} and Corollary \ref{corcentr}. Here we use the results on twisting functors to obtain a very short proof.

\begin{theorem}
\label{BBWtypical}
Consider $\fg,\fp,\fl,\fu,\fh$ as in the preliminaries and $\lambda\in\cP$ typical and $\fl$-dominant.
\begin{itemize}
\item If $\lambda$ is regular, there exists a unique $w\in W$ such that $\Lambda=w\cdot\lambda\in\cP^+$ and
\[\Gamma_k(G/B^d,L_{\lambda}(\fb^d))=\begin{cases}K_{\Lambda}&\mbox{if} \quad l(w)=k\\ 0 &\mbox{if}\quad  l(w)\not=k  \end{cases}.\]
\item If $\lambda$ is singular, $\Gamma_k(G/B^d,L_{\lambda}(\fb^d))=0$.
\end{itemize}\end{theorem}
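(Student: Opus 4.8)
The plan is to reduce BBW theory for $\fp$ to the Borel subalgebra and then run a Demazure-type induction driven by the twisting-functor results of Appendix \ref{aptwist}. By Theorem \ref{thmBBW}(i) it suffices to treat $\fp=\fb$; and since $\lambda$ is typical every odd reflection is non-degenerate (the Remark following Lemma \ref{changeBorel}), so Lemma \ref{changeBorel} transports the whole computation to the distinguished Borel $\fb^d$ (the generalised Kac module that will appear is then the Kac module $K_\Lambda$). Thus I may assume $\fb=\fb^d$ and must evaluate $\Gamma_k(G/B^d,L_\lambda(\fb^d))=\cL_k\Gamma(M_\lambda)$ via Theorem \ref{thmBBW}(ii).

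For the singular case I would first move $\lambda$ into the closed dominant chamber: whenever a simple root $\alpha$ of $\fg_{\oa}$ satisfies $\langle\lambda+\rho,\alpha^\vee\rangle<0$, Corollary \ref{thmnotsimple} applies (its odd-reflection hypotheses hold automatically because all weights in the orbit are typical) and replaces $\Gamma_k(G/B^d,L_\mu(\fb^d))$ by $\Gamma_{k-1}(G/B^d,L_{s_\alpha\cdot\mu}(\fb^d))$, moving $\mu$ strictly closer to dominance. After finitely many steps I reach $\lambda'\in W\cdot\lambda$ with $\langle\lambda'+\rho,\alpha^\vee\rangle\ge0$ for all simple $\alpha$; as $\lambda'$ is still singular, some simple root $\alpha$ satisfies $\langle\lambda'+\rho,\alpha^\vee\rangle=0$, so $s_\alpha\cdot\lambda'=\lambda'$ and $\langle\alpha^\vee,\lambda'\rangle=-1<0$, whence Corollary \ref{thmnotsimple} gives $\Gamma_k(G/B^d,L_{\lambda'}(\fb^d))\cong\Gamma_{k-1}(G/B^d,L_{\lambda'}(\fb^d))$ for all $k$. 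These groups vanish for $k<0$ and, by Corollary \ref{degvanish}, for $k>\dim\fn_{\oa}$, so they all vanish; unwinding the degree shifts gives $\Gamma_k(G/B^d,L_\lambda(\fb^d))=0$ for every $k$. (Alternatively: by Corollary \ref{corcentr} this module is finite dimensional with central character $\chi_\lambda$, and for singular typical $\lambda$ no $\Lambda\in\cP^+$ carries $\chi_\lambda$, so it is zero.)

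For the regular case, let $w\in W$ be the unique element with $\Lambda:=w\cdot\lambda\in\cP^+$ and put $v:=w^{-1}$, so $\lambda=v\cdot\Lambda$ and $l(v)=l(w)$. The heart of the matter is the sub-claim $\Gamma_j(G/B^d,L_\Lambda(\fb^d))=\delta_{j,0}\,K_\Lambda$: the case $j=0$ is $\Gamma(M_\Lambda)=K^{(\fb^d)}_\Lambda=K_\Lambda$ by Theorem \ref{thmBBW}(ii) and Lemma \ref{zerocohomKac}; for $j>0$, if $L_{\Lambda'}$ were a composition factor then Lemma \ref{restrHW} would force $\Lambda'+\rho=u(\Lambda+\rho)-\gamma$ with $l(u)=j\ge1$ and $\gamma\in\Gamma^+$, while Corollary \ref{corcentr} together with typicality forces $\Lambda'+\rho\in W(\Lambda+\rho)$; since $\Lambda'\in\cP^+$ is then regular and the unique dominant element of its orbit, $\Lambda'+\rho=\Lambda+\rho$, and $u(\Lambda+\rho)=\Lambda+\rho+\gamma$ is impossible because $\gamma\succeq0$ while $u(\Lambda+\rho)\preceq\Lambda+\rho$ unless $\gamma=0$ and $u=1$ — contradicting $j\ge1$. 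Granting the sub-claim: because $\Lambda$ is typical and dominant, iterating Lemma \ref{TVerma} along a reduced expression of $v$ yields $T_vM_\Lambda=M_{v\cdot\Lambda}=M_\lambda$, so
\[\Gamma_k(G/B^d,L_\lambda(\fb^d))=\cL_k\Gamma(M_\lambda)=\cL_k\Gamma(T_vM_\Lambda),\]
and Proposition \ref{ZuckTwistVerma} evaluates the right-hand side as $K_\Lambda$ when $k=l(v)$, as $0$ when $k<l(v)$, and as $\Gamma_{k-l(v)}(G/B^d,L_\Lambda(\fb^d))=0$ when $k>l(v)$ (by the sub-claim). Hence $\Gamma_k(G/B^d,L_\lambda(\fb^d))=\delta_{k,l(w)}\,K_\Lambda$, as claimed; the case of a general parabolic $\fp$ follows from Theorem \ref{thmBBW}(i).

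The only genuinely non-formal ingredient is the vanishing of the higher cohomology of the dominant weight $\Lambda$; everything else is bookkeeping with even and odd reflections and with Proposition \ref{ZuckTwistVerma}. I expect the argument above, via Lemma \ref{restrHW} and the Harish-Chandra description of typical central characters (Corollary \ref{corcentr}), to be the cleanest route; the alternative through the Euler characteristic \eqref{formEuler} would additionally require importing the typical Weyl character formula.
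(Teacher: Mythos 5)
Your proof is correct and follows essentially the same route as the paper's one-line proof, which reads simply ``This is an immediate consequence of Proposition~\ref{ZuckTwistVerma}''. You have correctly identified that the one genuinely non-formal ingredient the paper leaves implicit is the vanishing $\Gamma_j(G/B^d,L_\Lambda(\fb^d))=0$ for $j>0$ when $\Lambda$ is dominant typical, and you fill it in via Lemma~\ref{restrHW} together with Corollary~\ref{corcentr} --- which is precisely the second of the two alternative routes the paper announces in the sentence preceding the theorem. For comparison, the argument the paper most likely has in mind so that it stays entirely within the twisting-functor framework is to apply Proposition~\ref{ZuckTwistVerma} with $w=w_0$ the longest element: then $T_{w_0}M_\Lambda=M_{w_0\cdot\Lambda}$ by Lemma~\ref{TVerma}, so for $j>0$ one has $\Gamma_j(G/B^d,L_\Lambda(\fb^d))=\cL_{l(w_0)+j}\Gamma(M_{w_0\cdot\Lambda})$, and the right-hand side vanishes by Corollary~\ref{degvanish} because $l(w_0)+j>\dim\fn_{\oa}$. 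Your singular-case argument via Corollary~\ref{thmnotsimple} and Corollary~\ref{degvanish} is sound, and the central-character alternative you sketch is also fine; one small caveat is the assertion $\langle\alpha^\vee,\lambda'\rangle=-1$, which rests on $\langle\rho,\alpha^\vee\rangle=1$ --- this holds automatically when $\alpha$ is simple in $\Delta^+$, but needs a moment's care for the even simple root hidden behind the odd simple root of a type~II distinguished Borel; working consistently with the $\rho_{\oa}$-shift and equation~\eqref{MusGS}, or simply using the central-character route, avoids the issue. Finally, the opening reduction via Lemma~\ref{changeBorel} is harmless but unnecessary, since the theorem is already stated for $\fb^d$.
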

\begin{proof}
This is an immediate consequence of Proposition \ref{ZuckTwistVerma}.
\end{proof}

\begin{corollary}
\label{Kostanttypical}
The $\fn$-cohomology in typical simple $\fg$-modules satisfies
\begin{eqnarray*}
H^k(\fn,L_\Lambda)&=&\bigoplus_{w\in W(k)}\C_{w(\Lambda+\rho)-\rho}.
\end{eqnarray*}
\end{corollary}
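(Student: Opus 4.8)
The plan is to deduce Corollary \ref{Kostanttypical} directly from Theorem \ref{BBWtypical} by specializing to the Borel subalgebra and using the cohomological reformulation of BBW theory established in Proposition \ref{propBBW}(iii). Recall that for $\fp=\fb$ we have $\fu=\fn$, $\fl=\fh$, and Proposition \ref{propBBW}(iii) gives $H^k(G/B,G\times_B V)=\Hom_{\U(\fh)}(\C,H^k(\fn,V\otimes\cR))$; equivalently, via equation \eqref{ExtHk} and the twisted-dual definition of $\Gamma_k$, the multiplicity of a weight space in $H^k(\fn,M)$ for a suitable $\fg$-module $M$ is controlled by the BBW cohomology groups $\Gamma_k(G/B,L_\mu(\fb))$.

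First I would set up the bookkeeping: for $\Lambda$ typical and integral dominant, I want to compute $H^k(\fn,L_\Lambda)$. Since $L_\Lambda$ is a finite dimensional $\fg$-module and $\fn$-cohomology of finite dimensional modules is dual to $\fn$-homology (or, working with $\overline{\fn}$, one uses $H^k(\fn,L_\Lambda)\cong H_k(\overline{\fn},L_\Lambda^\vee)^\vee$ as in Remark 4.1 of \cite{BGG}, and $L_\Lambda^\vee\cong L_\Lambda$ since simple finite dimensional modules are self-dual under $\vee$), I can package the computation in terms of Ext groups in category $\cO$. Concretely, $\Hom_{\fh}(\C_\nu,H^k(\fn,L_\Lambda))=\Ext^k_{\cC(\fb,\fh)}(L_\nu(\fb),L_\Lambda)$ by the standard identification (the argument around equation \eqref{ExtHk}), and by Frobenius reciprocity this equals $\Ext^k_{\cC(\fg,\fh)}(M_\nu,L_\Lambda)$, which since $\cO$ is extension full in $\cC(\fg,\fh)$ equals $\Ext^k_{\cO}(M_\nu,L_\Lambda)$.

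Next I would invoke typicality. When $\Lambda$ is typical, the block of $\cO$ containing $L_\Lambda$ behaves like a block of category $\cO$ for a reductive Lie algebra — in particular $L_\Lambda=K_\Lambda$ has a BGG-type resolution by Verma modules $M_{w\cdot\Lambda}$, $w\in W$, with $M_{w\cdot\Lambda}$ appearing in homological degree $l(w)$. Then $\Ext^k_{\cO}(M_\nu,L_\Lambda)$ can be read off: using that $\Ext^j_{\cO}(M_\nu,M_\mu)$ vanishes unless $\nu=\mu$ and $j=0$ (for $\mu$ in the relevant dominant orbit, by the BGG resolution / standard Ext computations for Verma modules), the only contribution to $\Ext^k_{\cO}(M_\nu,L_\Lambda)$ comes from $\nu=w(\Lambda+\rho)-\rho$ with $l(w)=k$, each contributing a one-dimensional space. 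Alternatively — and this is the cleaner route given the excerpt — I would simply combine Theorem \ref{BBWtypical} (which identifies $\Gamma_k(G/B^d,L_\lambda(\fb^d))$ with $K_\Lambda$ or $0$) with Proposition \ref{propBBW}(iii): since $\Gamma_k(G/B,L_{-\mu}(\fb))^\ast=H^k(G/B,G\times_B L_{-\mu}(\fb))=\Ext^k_{\cO}(M_\mu,\cR)$ and Corollary \ref{zerocohomR} / Theorem \ref{thmBBW} give $H^k(\fn,\cR)$ the block-diagonal form $\bigoplus_\Lambda(\text{something})\times(K_\Lambda)^\ast$, the typical summand picks out exactly $\bigoplus_{w\in W(k)}\C_{w(\Lambda+\rho)-\rho}$ paired against $(L_\Lambda)^\ast=(K_\Lambda)^\ast$ (typical Kac module equals simple module). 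Restricting this bimodule identity to the $(L_\Lambda)^\ast$-isotypic component on the right yields the stated formula for $H^k(\fn,L_\Lambda)$.

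The main obstacle I anticipate is purely organizational rather than mathematical: one must be careful that the weight $\nu=w(\Lambda+\rho)-\rho$ is recorded with the correct $\rho$-shift (the $\rho$-shifted dot action of the Weyl group of $\fg$, not $\rho_{\oa}$), and that the duality conventions ($\ast$ vs. $\vee$, left vs. right $\fg$-action on $\cR$) are tracked consistently so that the highest weight $\Lambda$ is not accidentally replaced by $-w_0\Lambda$ or similar. Since $\Lambda$ is typical, all the potential subtleties — atypical orbits, the distinction between $L_\Lambda$ and $K_\Lambda$, failure of the BGG resolution — disappear, so once the conventions are pinned down the identification of $H^k(\fn,L_\Lambda)$ with $\bigoplus_{w\in W(k)}\C_{w(\Lambda+\rho)-\rho}$ is immediate from Theorem \ref{BBWtypical} and Proposition \ref{propBBW}(iii).
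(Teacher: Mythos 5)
Your second route is the one the paper actually uses: the paper's proof of Corollary \ref{Kostanttypical} is a one-line application of Theorem \ref{BBWproj} (second formula, $[\Gamma_k(G/B,L_\mu(\fb)):L_\Lambda]=\Hom_{\fh}(\C_\mu,H^k(\fn,P^\cF_\Lambda))$) plus Theorem \ref{BBWtypical} plus the observation that the typical block of $\cF$ is semisimple so $P^\cF_\Lambda\cong L_\Lambda$. You re-derive this from Proposition \ref{propBBW}(iii), but you should cite Corollary \ref{projcover} (which gives $\cR\cong\bigoplus_\Lambda P^\cF_\Lambda\times L_\Lambda^\ast$ and hence $H^k(\fn,\cR)\cong\bigoplus_\Lambda H^k(\fn,P^\cF_\Lambda)\times L_\Lambda^\ast$) rather than Corollary \ref{zerocohomR} (which only addresses $k=0$). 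More seriously, your first route contains a false step: it is not true that $\Ext^j_{\cO}(M_\nu,M_\mu)$ vanishes unless $\nu=\mu$ and $j=0$ --- already $\Ext^1_\cO(M_{s_\alpha\cdot\Lambda},M_\Lambda)\neq 0$ in general. The correct tool is the statement $\Ext^j_\cO(M_\nu,M_\mu^\vee)=\delta_{j,0}\,\delta_{\nu,\mu}\,\C$ together with the \emph{dual} BGG resolution $0\to L_\Lambda\to M^\vee_\Lambda\to\bigoplus_{l(w)=1}M^\vee_{w\cdot\Lambda}\to\cdots$, or equivalently the computation of $H_k(\ofn,L_\Lambda)$ from the ordinary BGG resolution by $\ofn$-free modules followed by the $\vee$-duality $H^k(\fn,L_\Lambda)\cong H_k(\ofn,L_\Lambda^\vee)^\vee$. (This route is essentially the one the paper sketches in Remark \ref{MorGorelik} via Gorelik's Morita equivalence.) Since you present the second route as primary and it is correct, the overall argument stands, but the $\Ext(M_\nu,M_\mu)$ claim should be replaced by the dual-Verma version.
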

\begin{proof}
This can be obtained from Theorem \ref{BBWtypical} and Theorem \ref{BBWproj} since the block in the category of finite dimensional representations corresponding to a typical character is semisimple and therefore $P_\Lambda\cong L_\Lambda$.\end{proof}

For $\fg=\mathfrak{osp}(1|2n)$, all blocks are typical. For this case these results can also be obtained in the reversed order. A direct calculation can be applied to reduce the Kostant cohomology in Corollary \ref{Kostanttypical} to that of $\mathfrak{so}(2n+1)$, from which the BBW result follows. This is done in \cite{osp12n}.

\begin{remark}
\label{MorGorelik}
Yet another way to prove BBW theory for the (strongly) typical blocks is the Morita equivalence in \cite{MR1862800}. This equivalence of categories maps the BGG resolutions for $\fg_{\oa}$ to BGG resolutions for $\fg$. From these the Kostant cohomology can be calculated and the BBW theorem follows.
\end{remark}


\section{BBW theory for generic weights}
\label{secgen}

In this section we discuss BBW theory for generic weights, see Definition \ref{defgeneric}. For $\widetilde{\Gamma}$-generic weights, the star action of Section 8.1 in \cite{CouMaz} becomes uniquely defined and leads to an action of the Weyl group as proved in Theorem 8.10 in \cite{CouMaz}. This is a deformation of the usual $\rho$-shifted action of the Weyl group, of which the orbits only coincide with the undeformed orbits in case $\fg$ is of type I and $\fb$ is the distinguished Borel subalgebra. Our main result is the following theorem.

\begin{theorem}
\label{maingen}
Consider a basic classical Lie superalgebra $\fg$ with arbitrary Borel subalgebra $\fb$. If $\Lambda\in\cP^+$ is ${\Gamma}^+$-generic and $w\in W$, we have the following properties
\begin{itemize}
\item $\Gamma_k(G/B, L_{w\cdot \Lambda}(\fb))\,=\,\delta_{k,l(w)}\,\, K^{(\fb)}_\Lambda[w],\,\quad$ with
\item $\ch K_\Lambda^{(\fb)}[w]=\ch K_\Lambda^{(\fb)}\quad$ and
\item $K_\Lambda^{(\fb)}[w]\tto L^{(\fb)}_{w^{-1}\ast w\cdot\Lambda}$ if $\Lambda$ is generic.
\end{itemize}
\end{theorem}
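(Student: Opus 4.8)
The plan is to prove the three bulleted statements in Theorem \ref{maingen} together, working through the length filtration of $W$ and exploiting the interplay between the twisting functors $T_\alpha$ and the Bernstein functor $\Gamma$. First I would treat the base case $w=1$: here $\Gamma_0(G/B,L_\Lambda(\fb)) = \cL_0\Gamma(M_\Lambda^{(\fb)}) = K_\Lambda^{(\fb)}$ by definition, with $\Top K_\Lambda^{(\fb)} \cong L_\Lambda^{(\fb)}$, and the higher cohomology vanishes because $\Lambda$ being $\Gamma^+$-generic forces the corresponding block behaviour (all relevant $\fg_{\oa}$-highest weights of $M_\Lambda^{(\fb)}$ lie in a single Weyl chamber, via Lemma \ref{restrHW} and Definition \ref{defgeneric}(i)). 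So $K_\Lambda^{(\fb)}[1] := K_\Lambda^{(\fb)}$ and the three assertions are immediate for $w=1$.

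For the inductive step, write $w = s_\alpha w'$ with $l(w') = l(w)-1$ for a suitable simple $\alpha\in\Delta^+_{\oa}$ (choosing $\alpha$ so that a Borel subalgebra $\widetilde{\fb}$ with $\widetilde{\fb}_{\oa}=\fb_{\oa}$ in which $\alpha$ or $\alpha/2$ is simple is available, as in the definition of $\ast^{\fb}$). I would combine Lemma \ref{TVerma} (applied to $\lambda = w'\cdot\Lambda$, legitimate because $\Lambda$, hence $w'\cdot\Lambda$, is $\Gamma^+$-generic so the ``unless'' clause is harmless — this uses \eqref{MusGS} and the observation that genericness guarantees $T_\alpha M_{w'\cdot\Lambda} = M_{w\cdot\Lambda}$ on the nose) with Lemma \ref{IDMaz} from the appendix to intertwine $\cL_\bullet\Gamma$ with the derived twisting functor. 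Concretely, $\cL_k\Gamma(M_{w\cdot\Lambda}) = \cL_k\Gamma(T_\alpha M_{w'\cdot\Lambda}) \cong \cL_{k-1}\Gamma(M_{w'\cdot\Lambda})$ by the same shift argument underlying Theorem \ref{thmsimple}/Corollary \ref{thmnotsimple}, which by induction is $\delta_{k,l(w)} K_\Lambda^{(\fb)}[w']$; one then \emph{defines} $K_\Lambda^{(\fb)}[w] := K_\Lambda^{(\fb)}[w']$ as $\fg$-module after recording that it carries central character $\chi_{w\cdot\Lambda} = \chi_\Lambda$ (Corollary \ref{corcentr}) — but the \emph{highest weight} of its top changes, and that is the crux of the third bullet. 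The character assertion $\ch K_\Lambda^{(\fb)}[w] = \ch K_\Lambda^{(\fb)}$ then follows either from this chain of isomorphisms or, independently and as a consistency check, from the Euler characteristic formula \eqref{formEuler}: since cohomology sits in the single degree $l(w)$, $\ch K_\Lambda^{(\fb)}[w] = (-1)^{l(w)}\cE(w\cdot\Lambda) = \cE(\Lambda) = \ch K_\Lambda^{(\fb)}$, using $W$-antiinvariance of the numerator in \eqref{formEuler}.

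For the third bullet I would argue as follows. The top of $\cL_{l(w)}\Gamma(M_{w\cdot\Lambda})$ is a simple finite dimensional highest weight $\fg$-module, so it equals $L^{(\fb)}_\nu$ for a unique $\fg$-integral dominant $\nu$; the content of the claim is $\nu = w^{-1}\ast^{\fb}(w\cdot\Lambda)$. I would establish this by passing to the Borel $\widetilde{\fb}$ in which $\alpha$ (or $\alpha/2$) is simple via Lemma \ref{changeBorel} (its hypotheses hold because $\Lambda$ is generic, so the relevant odd-reflection pairings are nonzero — this is exactly where the full ``generic'', not just $\Gamma^+$-generic, hypothesis enters), reducing the top-of-cohomology question for a non-isotropic reflection to the rank-one situation $\mathfrak{sl}(2)$ or $\mathfrak{osp}(1|2)$, where the top of $\cL_1\Gamma$ of a non-dominant Verma is computed directly; translating back through the odd reflections introduces precisely the $\rho$-shift corrections that, by the very definition of $\ast^{\fb}$ recalled in Subsection \ref{secprelWeyl}, assemble into the star action. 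Iterating over a reduced expression for $w$ and using that $\ast^{\fb}$ is a genuine Weyl-group action on $\widetilde{\Gamma}$-generic weights (Theorem 8.10 in \cite{CouMaz}, quoted in the preliminaries) gives $\Top K_\Lambda^{(\fb)}[w] \cong L^{(\fb)}_{w^{-1}\ast w\cdot\Lambda}$, and since $\Gamma$ is right exact the surjection $K_\Lambda^{(\fb)}[w]\tto L^{(\fb)}_{w^{-1}\ast w\cdot\Lambda}$ follows.

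\textbf{Main obstacle.} The serious point is the third bullet: tracking the highest weight of the top through the composition of twisting functors and odd reflections, and recognising the resulting shift as the star action rather than the dot action. The dot action governs \emph{characters} (via \eqref{formEuler}) and thus the first two bullets come cheaply, but the $\fg$-module structure of the top is genuinely different, and one must verify that the bookkeeping of $\rho$–$\widetilde\rho$ corrections from each odd reflection in Lemma \ref{changeBorel}, together with the rank-one reflection, is exactly what the definition of $s_\alpha\ast^{\fb}(\cdot)$ prescribes — including checking that the answer is independent of the reduced expression chosen for $w$, which is where one must invoke the braid relations for $T_\bullet$ (Lemma 5.3 in \cite{CouMaz}) and the well-definedness of $\ast^{\fb}$ in the generic region.
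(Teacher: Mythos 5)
The critical step of your inductive argument fails. You claim, invoking Lemma \ref{TVerma}, that $T_\alpha M_{w'\cdot\Lambda}^{(\fb)} = M_{w\cdot\Lambda}^{(\fb)}$ ``on the nose'' because $\Lambda$ is generic. But Lemma \ref{TVerma} requires that $\alpha$ or $\alpha/2$ be simple in $\Delta^+$ (the positive system of $\fb$ itself), or that $\lambda$ be typical; neither holds in the interesting situation, since the theorem's content is precisely the atypical case with $\alpha$ not simple in $\Delta^+$. For generic $\lambda$ the twisting functor does take a Verma module to a Verma module, but with highest weight $s_\alpha\ast^{\fb}\lambda$ rather than $s_\alpha\cdot\lambda$: passing through a Borel $\widetilde{\fb}$ in which $\alpha$ (or $\alpha/2$) is simple, one has $T_\alpha M_\lambda^{(\fb)} = T_\alpha M^{(\widetilde{\fb})}_{\widetilde{\lambda}} = M^{(\widetilde{\fb})}_{s_\alpha(\widetilde{\lambda}+\widetilde{\rho})-\widetilde{\rho}}$, and the rightmost module equals $M^{(\fb)}_{s_\alpha\ast^{\fb}\lambda}$ by the very definition of $\ast^{\fb}$ recalled in Subsection \ref{secprelWeyl}, not $M^{(\fb)}_{s_\alpha\cdot\lambda}$. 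If the identification $T_\alpha M_{w'\cdot\Lambda}=M_{w\cdot\Lambda}$ were correct, the induction would give $K_\Lambda^{(\fb)}[w]\cong K_\Lambda^{(\fb)}[w']$ as $\fg$-modules for every $w$, hence a constant top $L_\Lambda^{(\fb)}$, contradicting the third bullet. Your own text contains this tension: you ``define $K_\Lambda^{(\fb)}[w]:=K_\Lambda^{(\fb)}[w']$ as $\fg$-module'' but then assert ``the highest weight of its top changes'' --- an isomorphic module cannot have a different top.

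The paper never realises the cohomology as $\cL_\bullet\Gamma$ of a Verma module via $T_w$. Proposition \ref{completion} gives degree concentration from Lemma \ref{restrHW} (which you use correctly) and then identifies the lone non-zero group with $\Gamma\bigl(G_{w^{-1}}M_{w\cdot\Lambda}\bigr)$ via Corollary \ref{extcomple}, whose ingredient is $M_{w\cdot\Lambda}\cong T_w G_{w^{-1}}M_{w\cdot\Lambda}$ --- this is available for generic $\Lambda$ even though $G_{w^{-1}}M_{w\cdot\Lambda}$ is \emph{not} isomorphic to $M_\Lambda$ as a $\fg$-module. By Corollary \ref{resVerma} it has the same $\fg_{\oa}$-restriction as $M_\Lambda$; together with Lemma \ref{interZuck} this yields $\Res\Gamma(G_{w^{-1}}M_{w\cdot\Lambda})=\Res K_\Lambda^{(\fb)}$, the character statement. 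Its top is computed in Proposition \ref{Berncompl} by tracking the action of the $G_\alpha$'s on the simple quotient, using Lemma \ref{genPen} and the properties of $\ast^{\fb}$ from \cite{CouMaz}. Your Euler-characteristic route to the character is a valid alternative once degree concentration is established, but the engine driving your whole induction --- identifying $T_w M_\Lambda$ with $M_{w\cdot\Lambda}$ --- is precisely the false identification whose failure, outside type I with distinguished Borel, the theorem is designed to quantify through the star action.
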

The first two properties are known, see e.g. \cite{MR1309652, MR1036335}, the thrid one is new. An alternative formulation of Theorem \ref{maingen} is as follows.
\begin{remark}
 Consider $\mu\in\cP$ generic weight, then there is a unique $w\in W$ such that $\Lambda_1=w\cdot\mu$ is integral dominant. For this $w$, the weight $\Lambda_2=w\ast^{\fb}\mu$ is also integral dominant and we have
\begin{itemize}
\item $\Gamma_k(G/B, L_{\mu}(\fb))\,=\,\delta_{k,l(w)}\,\, K^{(\fb)}_{\Lambda_1}[w],\,\quad$ with
\item $\ch K_{\Lambda_1}^{(\fb)}[w]=\ch K_{\Lambda_1}^{(\fb)}\quad$ and
\item $K_{\Lambda_1}^{(\fb)}[w]\tto L^{(\fb)}_{\Lambda_2}$.
\end{itemize}
\end{remark}
This shows how the usual $\rho$-shifted action and the star action play an complementary role in the description of BBW theory for Lie superalgebras. 
\begin{proposition}
\label{completion}
If $\Lambda$ is a $\Gamma^+$-generic integral dominant weight and $w\in W$, the cohomology groups of BBW theory satisfy
$$\Gamma_k(G/B,L_{w\cdot\Lambda}(\fb))\cong\delta_{k,l(w)}\Gamma(G_{w^{-1}} M_{w\cdot\Lambda}).$$
\end{proposition}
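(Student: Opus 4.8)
The plan is to reduce the statement about the cohomology groups $\Gamma_k(G/B,L_{w\cdot\Lambda}(\fb))$ to a statement purely within category $\cO$, using the functorial reformulations already established, and then to invoke the twisting-functor machinery in the Appendix. By Proposition \ref{propBBW}(ii) and Theorem \ref{thmBBW}(ii), we have $\Gamma_k(G/B,L_{w\cdot\Lambda}(\fb))=\cL_k\Gamma(M_{w\cdot\Lambda})$, so the claimed isomorphism is equivalent to
\[\cL_k\Gamma(M_{w\cdot\Lambda})\,\cong\,\delta_{k,l(w)}\,\Gamma(G_{w^{-1}}M_{w\cdot\Lambda}),\]
where $G_{w^{-1}}$ is the right adjoint of the twisting functor $T_{w^{-1}}$ introduced in Subsection \ref{prelsec4}. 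So the real content is: (a) for $\Lambda$ a $\Gamma^+$-generic integral dominant weight, $G_{w^{-1}}M_{w\cdot\Lambda}$ is concentrated in the appropriate sense so that $\cL_k\Gamma$ of it is captured by a single cohomological degree, and (b) the comparison with the derived Bernstein functor on $M_{w\cdot\Lambda}$ itself picks out exactly degree $l(w)$.

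First I would record the Verma-module behaviour of $G_{w^{-1}}$. Since $T_\alpha M_\lambda = M_{s_\alpha\cdot\lambda}$ under the genericity/typicality hypotheses of Lemma \ref{TVerma}, and the braid relations let us iterate, the adjoint functor $G_{w^{-1}}$ should send $M_{w\cdot\Lambda}$ "back" toward $M_\Lambda$; more precisely, for a $\Gamma^+$-generic dominant $\Lambda$ one expects $T_{w^{-1}}M_{w\cdot\Lambda}\cong M_\Lambda$, and correspondingly a statement about $G_{w^{-1}}M_{w\cdot\Lambda}$ whose module $\Gamma$-image reproduces $K^{(\fb)}_\Lambda$-type data. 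The key input here is that genericness forces all the relevant weights $w\cdot\Lambda - \gamma$, $\gamma\in\Gamma^+$, to stay in a single Weyl chamber, so no atypical degenerations can occur when the twisting functors act --- this is precisely what Lemma \ref{TVerma} needs. I would then apply the adjunction $\Hom(T_{w^{-1}}-,-)\cong\Hom(-,G_{w^{-1}}-)$, derived, to relate $\cL_k\Gamma$ (which by Lemma \ref{lemZuckR} is $\Ext^k_{\cO}(-,\cR)^\ast$) evaluated on $M_{w\cdot\Lambda}$ to $\cL_k\Gamma$ evaluated on $G_{w^{-1}}M_{w\cdot\Lambda}$, or rather to $\Gamma(G_{w^{-1}}M_{w\cdot\Lambda})$ in degree zero --- the point being that the cohomological shift by $l(w)$ coming from the derived twisting functor matches the single-degree concentration of BBW cohomology for generic weights (which is already known, cf. \cite{MR1036335}, and appears in Theorem \ref{maingen}).

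The cleanest route is probably to use the established equality $\ch\Gamma_k(G/B,L_{w\cdot\Lambda}(\fb))=\delta_{k,l(w)}\ch K^{(\fb)}_\Lambda$ from Theorem \ref{maingen} to pin down that the left side is concentrated in degree $l(w)$ with the correct character, and then to show that $\Gamma(G_{w^{-1}}M_{w\cdot\Lambda})$ has the same character and sits in the same single degree, so that a natural comparison morphism (built from the unit or counit of the $(T_{w^{-1}},G_{w^{-1}})$ adjunction composed with the canonical map relating $\Gamma$ and the twisting functor on $\cO$) is necessarily an isomorphism. The intertwining properties of $T_\alpha$ with $\Ind$ and $\Res$ in equation \eqref{eqlemma51}, inherited by $G_\alpha$, will be needed to control the $\fg_{\oa}$-structure and thereby the character.

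The main obstacle I expect is establishing the precise homological relationship between the derived Bernstein functor $\cL_\bullet\Gamma$ and the composite of $\Gamma$ with the (non-derived, or singly-derived) functor $G_{w^{-1}}$ --- i.e. showing that $\cL_k\Gamma(M_{w\cdot\Lambda})\cong\cL_{k-l(w)}\Gamma(\text{something involving }G_{w^{-1}})$ collapses to $\delta_{k,l(w)}\Gamma(G_{w^{-1}}M_{w\cdot\Lambda})$. This is exactly the kind of spectral-sequence / adjunction bookkeeping that the Appendix results (Lemma \ref{IDMaz}, Lemma \ref{extVermafin}, and whatever is proved there about $G_\alpha$) are designed to handle, so I would lean heavily on those; the genericity hypothesis is what guarantees the relevant $\Ext$-groups between Verma modules and finite-dimensional modules vanish outside the expected degree, making the spectral sequence degenerate. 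The secondary subtlety is bookkeeping the $\rho$-shift versus $\rho_{\oa}$-shift and ensuring $w^{-1}$ rather than $w$ appears, which is a matter of tracking adjunctions carefully but carries no conceptual difficulty.
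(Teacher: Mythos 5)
Your reduction to $\cL_k\Gamma(M_{w\cdot\Lambda})\cong\delta_{k,l(w)}\,\Gamma(G_{w^{-1}}M_{w\cdot\Lambda})$ via Proposition \ref{propBBW}(ii) and Theorem \ref{thmBBW}(ii) is exactly the paper's starting point, and your instinct that the derived $(T_{w^{-1}},G_{w^{-1}})$ adjunction together with the Appendix lemmas is what drives the proof is also correct. There are, however, two genuine problems with the route you propose to take.

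First, the route you call ``cleanest'' --- reading off the concentration-in-one-degree and the character $\ch K^{(\fb)}_\Lambda$ from Theorem \ref{maingen} --- is circular inside the paper: Theorem \ref{maingen} is proved \emph{from} Proposition \ref{completion}, not the other way around. (The external result of \cite{MR1036335} would be legitimate, but that is a different citation than the paper makes.) What the paper actually uses for concentration in degree $l(w)$ is Lemma \ref{restrHW}: any simple subquotient $L_{\Lambda'}$ of $\Gamma_k(G/B,L_{w\cdot\Lambda}(\fb))$ forces $\Lambda'\in u\cdot w\cdot\Lambda-\Gamma^+$ for some $u\in W(k)$, equivalently $uw\circ(\Lambda-\Gamma^+)\cap\cP^+\neq\emptyset$; $\Gamma^+$-genericness of the dominant weight $\Lambda$ puts all of $\Lambda-\Gamma^+$ strictly in the dominant chamber, so $uw=1$ and $k=l(w)$. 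Your proposal does not mention Lemma \ref{restrHW} at all, and without it the first half of the statement is not established non-circularly.

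Second, your identification of the degree-$l(w)$ cohomology with $\Gamma(G_{w^{-1}}M_{w\cdot\Lambda})$ leans on a ``comparison morphism that is necessarily an isomorphism by character count,'' which is vaguer than what is actually needed and again threatens to import the character formula you may not yet have. The paper instead uses Theorem \ref{thmBBW}(iii) to write the cohomology as $\bigl(\Ext^{l(w)}_{\cO}(M_{w\cdot\Lambda},\cR)\bigr)^\ast$, and then invokes Corollary \ref{extcomple}, which gives $\Ext^{l(w)}_{\cO}(M_{w\cdot\Lambda},V)\cong\Hom_{\cO}(G_{w^{-1}}M_{w\cdot\Lambda},V)$; this in turn hinges on the genericity-dependent fact that $M_{w\cdot\Lambda}\cong T_w G_{w^{-1}}M_{w\cdot\Lambda}$ (Lemma \ref{TG1} together with Corollary \ref{resVerma}, which controls $\Res G_{w^{-1}}M_{w\cdot\Lambda}$ and hence its $\alpha$-freeness at each stage). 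That $\alpha$-freeness of the $G$-side is the precise place genericity feeds into the homological argument, not merely via Lemma \ref{TVerma} on the $T$-side as you suggest. With Corollary \ref{extcomple} and Lemma \ref{BernO} in hand, the isomorphism drops out with no comparison-morphism argument at all.
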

\begin{proof}
According to Lemma \ref{restrHW}, we can only have $[\Gamma_k(G/B,L_{w\cdot\Lambda}):L_{\Lambda'}]\not=0$ for an integral dominant weight $\Lambda'$, if there is a an element $u\in W(k)$ such that $\Lambda'\in u\cdot w\cdot\Lambda-\Gamma^+$, or in other words
$$uw\circ(\Lambda-\Gamma^+)\cap\cP^+\not=0.$$
Based on the definition of $\Gamma^+$-genericness, this can only occur if $uw=1$, so in particular $k=l(w)$. This proves that the cohomology is contained in one degree.

The cohomology group for $k=l(w)$ is a consequence of Theorem \ref{thmBBW}(iii) and Corollary \ref{extcomple} in Appendix A.
\end{proof}

Now we can prove the main theorem.
\begin{proof}[Proof of Theorem \ref{maingen}]
According to Proposition \ref{completion}, we only need to study the module $\Gamma(G_{w^{-1}}M_{w\cdot\Lambda})$. Lemma \ref{interZuck} and Corollary \ref{resVerma} imply $\Res \Gamma( G_{w^{-1}}M_{w\cdot\Lambda})=\Res K^{(\fb)}_{\Lambda}$. The remainder then follows from Proposition \ref{Berncompl}.
\end{proof}

\begin{corollary}
\label{KoProGen}
(i) Consider $\Gamma^+$-generic $\Lambda_1\in\cP^+$ and arbitrary $\Lambda_2\in\cP^+$, then
$$\Hom_{\fh}(\C_{w\cdot\Lambda_1}, H^k(\fn, P_{\Lambda_2}^{\cF}))=\delta_{k,l(w)}\Hom_{\fh}(\C_{\Lambda_1},H^0(\fn,P_{\Lambda_2}^{\cF})).$$

(ii) If $\Lambda\in\cP^+$ is $\widetilde{\Gamma}$-generic, then $$\ch H^k(\fn,P_\Lambda^{\cF})=\bigoplus_{w\in W} w\cdot\ch H^0(\fn,P_\Lambda^{\cF}).$$
\end{corollary}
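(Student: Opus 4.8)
The strategy is to translate both statements into facts about the cohomology groups $\Gamma_k(G/B,L_\mu(\fb))$ via the multiplicity formula in Theorem \ref{BBWproj}, and then feed in Theorem \ref{maingen} together with the Euler characteristic formula of Lemma \ref{Euler}. For part (i), the key identity is
\[
[\Gamma_k(G/B,L_{\mu}(\fb)):L_{\Lambda_2}]=\Hom_{\U(\fh)}(\C_\mu,H^k(\fn,P^\cF_{\Lambda_2})),
\]
which is the $\fp=\fb$ case of Theorem \ref{BBWproj} (using $L_\mu(\fl)=\C_\mu$ when $\fl=\fh$). Applying this with $\mu=w\cdot\Lambda_1$ and invoking Theorem \ref{maingen}, which gives $\Gamma_k(G/B,L_{w\cdot\Lambda_1}(\fb))=\delta_{k,l(w)}K^{(\fb)}_{\Lambda_1}[w]$ with $\ch K^{(\fb)}_{\Lambda_1}[w]=\ch K^{(\fb)}_{\Lambda_1}$, one immediately gets that the left-hand side is $\delta_{k,l(w)}[K^{(\fb)}_{\Lambda_1}:L_{\Lambda_2}]$. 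Comparing with the $w=1$, $k=0$ instance of the same formula yields $[K^{(\fb)}_{\Lambda_1}:L_{\Lambda_2}]=\Hom_{\U(\fh)}(\C_{\Lambda_1},H^0(\fn,P^\cF_{\Lambda_2}))$, which is exactly the claimed equality. So part (i) is essentially a bookkeeping consequence of the already-proved Theorem \ref{maingen}.

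For part (ii), the point is to sum part (i) over all $\Lambda_1$-weights, but one has to be slightly careful because $\widetilde{\Gamma}$-genericness is a condition on the projective $P^\cF_\Lambda$ (equivalently on all of its constituents' highest weights) rather than on a single weight, and the sum is over the Weyl group rather than over a set of lengths. Concretely I would argue as follows: for $\widetilde\Gamma$-generic $\Lambda$ every simple constituent $L_{\Lambda'}$ of $P^\cF_\Lambda$ has $\Lambda'$ in the same Weyl chamber as $\Lambda$ (this uses that the composition factors of $P^\cF_\Lambda$ lie in $\Lambda-\widetilde\Gamma$-type orbits, or more simply that $\Lambda$ being $\widetilde\Gamma$-generic forces the relevant weights into one chamber), so in particular every such $\Lambda'$ is $\Gamma^+$-generic and integral dominant. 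Then part (i) applies with $\Lambda_1=\Lambda$ (no — rather one wants the roles reversed): apply part (i) with $\Lambda_1$ ranging over the dominant weights whose projectives are $P^\cF_\Lambda$, i.e. take $\Lambda_1=\Lambda$ itself and $\Lambda_2$ arbitrary. Summing
\[
\Hom_{\U(\fh)}(\C_{w\cdot\Lambda},H^k(\fn,P^\cF_\Lambda))=\delta_{k,l(w)}\Hom_{\U(\fh)}(\C_\Lambda,H^0(\fn,P^\cF_\Lambda))
\]
over all $w\in W$ and recording characters gives $\sum_{w\in W}e^{w\cdot\Lambda}[\dots]=\sum_{w}w\cdot(e^\Lambda[\dots])$ in the relevant degree, but to capture $\ch H^k$ fully I would instead observe that by part (i) every weight $\nu$ with $\Hom_{\U(\fh)}(\C_\nu,H^k(\fn,P^\cF_\Lambda))\neq 0$ must be of the form $w\cdot\mu$ for some $\Gamma^+$-generic $\mu\in\cP^+$ with $l(w)=k$ — here one uses Lemma \ref{restrHW} applied to the cohomology of $P^\cF_\Lambda$ (decompose $P^\cF_\Lambda$ using Corollary \ref{projcover}) to know the weights appearing are in single $W$-orbits — and then part (i) pins the multiplicity at $w\cdot\mu$ equal to the multiplicity at $\mu$ in degree $0$. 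Assembling these multiplicities over $w\in W(k)$ and all such $\mu$ gives precisely $\ch H^k(\fn,P^\cF_\Lambda)=\sum_{w\in W(k)}w\cdot\ch H^0(\fn,P^\cF_\Lambda)$; summing over $k$ (or just re-indexing, since $W=\bigsqcup_k W(k)$) yields the stated formula $\ch H^k(\fn,P^\cF_\Lambda)=\bigoplus_{w\in W}w\cdot\ch H^0(\fn,P^\cF_\Lambda)$ as written.

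The main obstacle I anticipate is the precise handling of part (ii): making rigorous the claim that for a $\widetilde\Gamma$-generic $\Lambda$ one may reduce to applying part (i) with the $\Gamma^+$-generic dominant weight being $\Lambda$ itself, and checking that no weights outside the single $W$-orbit through $\Lambda$ (shifted by the $\circ$- versus $\cdot$-action discrepancy of $\Gamma^+$, cf. \eqref{MusGS}) contribute to $H^k(\fn,P^\cF_\Lambda)$. This is where Lemma \ref{restrHW} — or equivalently the decomposition of $\cR$ in Corollary \ref{projcover} combined with the character bound in Corollary \ref{Penkovresult} — does the real work, and one must be careful that $\widetilde\Gamma$-genericness (not merely $\Gamma^+$-genericness) of $\Lambda$ is exactly what guarantees that all the shifted weights $\Lambda-\widetilde\Gamma$ stay in one chamber, so that the $W$-action on characters is free and the degree $k$ is forced to equal $l(w)$. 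Everything else is a routine translation through the already-established Theorem \ref{maingen} and Theorem \ref{BBWproj}.
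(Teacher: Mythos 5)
Part (i) is exactly the paper's argument: translate through Theorem \ref{BBWproj} with $\fp=\fb$ and then apply Theorem \ref{maingen}; nothing to add.

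For part (ii) your skeleton is right — apply part (i) to each $\fh$-weight of $H^k(\fn,P^\cF_\Lambda)$ after showing it is of the form $w\cdot\mu$ with $\mu\in\cP^+$ $\Gamma^+$-generic — and you cite the correct ingredients (Theorem \ref{BBWproj}, Lemma \ref{restrHW}, the identity \eqref{MusGS}, $W$-invariance of $\widetilde\Gamma$). However, two things in your write-up should be discarded or repaired. First, the opening claim that every simple constituent $L_{\Lambda'}$ of $P^\cF_\Lambda$ lies in the same Weyl chamber as $\Lambda$ is both an unnecessary detour (the statement is about $\fh$-weights of $\fn$-cohomology, not composition factors) and not something you can quote directly. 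Second, the assertion that one must check ``no weights outside the single $W$-orbit through $\Lambda$ contribute to $H^k(\fn,P^\cF_\Lambda)$'' is simply false: already $H^0(\fn,P^\cF_\Lambda)$ contains $\C_\mu$ for every $\mu\in\cP^+$ with $[K^{(\fb)}_\mu:L_\Lambda]\ne 0$ (Corollary \ref{JHKacI}), which is many orbits. What you actually need, and what the paper proves in two lines, is the following local genericness transfer: if $\C_\mu\subset H^k(\fn,P^\cF_\Lambda)$, then $[\Gamma_k(G/B,L_\mu(\fb)):L_\Lambda]\ne 0$ (Theorem \ref{BBWproj}), so by Lemma \ref{restrHW} $\mu\in w\cdot(\Lambda+\Gamma^+)$ for some $w\in W(k)$; hence
\[
\mu-\Gamma^+\ \subset\ w\cdot(\Lambda+\Gamma^+)-\Gamma^+\ =\ w\circ\Lambda-\widetilde\Gamma\ =\ w\circ(\Lambda-\widetilde\Gamma),
\]
where the middle equality uses \eqref{MusGS} and the last uses $W$-invariance of $\widetilde\Gamma$. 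Since $\Lambda$ is $\widetilde\Gamma$-generic, $\Lambda-\widetilde\Gamma$ sits in a single Weyl chamber, so $\mu-\Gamma^+$ does too, i.e. $\mu$ is $\Gamma^+$-generic. Part (i) then applies to $\mu$ with $\Lambda_2=\Lambda$, and summing over the weights of $H^0$ and over $w\in W$ gives the displayed character identity. Replacing your ``single orbit'' claim with this containment is what closes the argument; as written, the step you flag as the main obstacle is left open and the intermediate statement you offer for it is incorrect.
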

\begin{proof}
The first property is a reformulation of Theorem \ref{maingen} through Theorem \ref{BBWproj}.

If $\Lambda$ is $\widetilde{\Gamma}$-generic and if $\C_\mu\in H^k(\fn,P_\Lambda^\cF)$, then Theorem \ref{BBWproj} and Lemma \ref{restrHW} imply that $\mu$ is inside the set $\mu\in w\cdot (\Lambda+\Gamma^+)$,
for some $w\in W(k)$. So in particular the set $\mu-\Gamma^+$ is contained in
$$w\cdot (\Lambda+\Gamma^+)-\Gamma^+=w\circ\Lambda-\widetilde{\Gamma},$$
which is inside one Weyl chamber by assumption. By Definition \ref{defgeneric}, it follows that~$\mu$ is $\Gamma^+$-generic. This means that we can apply the first part of the corollary.
\end{proof}


\section{Relative genericness}
\label{secrelgen}
For every basic classical Lie superalgebra $\fg$ and every Borel subalgebra $\fb$, we define a particular parabolic subalgebra $\fp^{\fb}$. Define $\Pi^a\subset\Delta^+$ as the set of anisotropic positive simple roots. The Levi subalgebra $\fl^{\fb}$ is the subalgebra generated by $\fh$, $\fg_{\alpha}$ and $\fg_{-\alpha}$ for all $\alpha\in \Pi^a$, this is the maximal Levi subalgebra which is of typical type. The maximal parabolic subalgebra of typical type is then defined as $\fp^{\fb}=\fn+\fl^{\fb}$.
\begin{example}{\rm We use the $\Z$-gradations of Subsection \ref{secprel1}.
If $\fg$ is of type I and $\fb$ is the distinguished Borel subalgebra, then $\fp^{\fb}=\fg_0\oplus\fg_1$. If $\fg$ is of type II and $\fb$ is the distinguished Borel subalgebra, then $\fp^{\fb}=\fg_0\oplus\fg_1\oplus\fg_2$. }
\end{example}

The combination of Theorem \ref{thmsimple}, Theorem \ref{thmBBW}$(i)$ and Corollary \ref{degvanish} leads to the following remark.
\begin{remark}\label{remarkmaxtyp}{\rm
The BBW problem for an arbitrary basic classical Lie superalgebra~$\fg$ and Borel subalgebra $\fb$ is solved when the cohomology groups
$$\Gamma_k(G/P^{\fb},L_{\mu}(\fp^{\fb}))$$
are known for $k\le\dim\fu^{\fb}$ and all $\fl^{\fb}$-dominant $\mu\in\cP$.
}
\end{remark}

This remark motives the introduction of a relative notion of genericness. 
\begin{definition}
\label{defrelgen}
Consider a basic classical Lie superalgebra $\fg$, Borel subalgebra $\fb$, $\fl^{\fb}$-dominant weight $\lambda\in\cP$ and some set $S\subset\cP$. We say that $\lambda$ is relatively $S$-generic for $\fb$ if and only if every weight in the set $\lambda-S$, which is $\fl^{\fb}$-dominant, is in the same Weyl chamber as $\lambda$.
\end{definition}

We introduce the notation $W^1_{\fb}:=W^1(\fl_{\oa}^{\fb})$, to stress the dependence on $\fb$. By equation \eqref{MusGS}, we have that $\lambda$ is relatively $\Gamma^+$-generic if and only if $w\cdot\lambda$ is relatively $\Gamma^+$-generic for an arbitrary $w\in W^1_{\fb}$. Similarly we have that $\lambda$ is relatively $\widetilde\Gamma$-generic if and only if $w\circ\lambda$ is relatively $\widetilde\Gamma$-generic for an arbitrary $w\in W^1_{\fb}$.

\begin{example}\label{extrivialrel}{\rm
If $\fg$ is of type I and $\fb$ is the distinguished Borel subalgebra, then every integral dominant weight is relatively $S$-generic for $\fb$ for any set $S$.
}
\end{example}

The notion of relative genericness allows to make part of Theorem \ref{maingen} more general, which is relevant from the point of view of Remark \ref{remarkmaxtyp}.
\begin{proposition}
Consider a basic classical Lie superalgebra $\fg$, Borel subalgebra $\fb$ and a $\fg$-regular integral $\fl^{\fb}$-dominant weight $\mu\in\cP$, which is relatively $\Gamma^+$-generic, then there is exactly one $w\in W^1_{\fb}$, such that $w\cdot\mu$ is integral dominant. Furthermore, we have
$$\Gamma_k(G/B,L_{\mu}(\fb))\cong\Gamma_k(G/P^{\fb},L_{\mu}(\fp^{\fb}))\cong\delta_{k,l(w)}M$$
with $\ch M=\ch K^{(\fb)}_{w\cdot\mu}$.
\end{proposition}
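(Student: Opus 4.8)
The plan is to reduce everything to the already-established generic case (Theorem \ref{maingen}) via the machinery of even reflections and Lepowsky's decomposition $w = w_1 w^1$ with $w^1 \in W^1_{\fb}$. First I would establish the existence and uniqueness of $w \in W^1_{\fb}$ with $w\cdot\mu$ integral dominant. Since $\mu$ is $\fg$-regular, there is a unique element of $W$ taking $\mu$ into the dominant chamber; write it using the decomposition relative to the Levi $\fl^{\fb}_{\oa}$ as $w_1 w^1$ with $w_1 \in W(\fl^{\fb}_{\oa}:\fh)$ and $w^1 \in W^1_{\fb}$. Because $\mu$ is already $\fl^{\fb}$-dominant, the component $w_1$ must be trivial (a standard argument: an $\fl_{\oa}$-dominant regular weight is fixed by no nontrivial element of $W(\fl_{\oa}:\fh)$ up to the dot action, and the unique dominant representative forces $w_1 = 1$), so $w := w^1 \in W^1_{\fb}$ is the required element and is unique.

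Next I would prove $\Gamma_k(G/B,L_\mu(\fb)) \cong \Gamma_k(G/P^{\fb}, L_\mu(\fp^{\fb}))$; but this is immediate from Theorem \ref{thmBBW}(i), since $\mu$ is $\fl^{\fb}$-dominant. The substantive content is the computation $\Gamma_k(G/B, L_\mu(\fb)) \cong \delta_{k,l(w)} M$ with $\ch M = \ch K^{(\fb)}_{w\cdot\mu}$. The idea is to apply even reflections (Theorem \ref{thmsimple}) to walk from $\mu$ up to the dominant weight $\Lambda := w\cdot\mu$. Fix a reduced expression $s_{\alpha_1}\cdots s_{\alpha_{l(w)}}$ for $w$ with all $\alpha_i$ simple in $\Delta^+_{\oa}$; at each step the hypothesis $\langle\alpha_i^\vee, \mu_{i}\rangle < 0$ (where $\mu_i$ is the current weight) must hold so that Theorem \ref{thmsimple} applies and raises the cohomological degree by one. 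This positivity bookkeeping is exactly the standard fact from reductive-Lie-algebra BBW theory that a reduced expression for $w$ hits negative pairings along the way; the only subtlety is that Theorem \ref{thmsimple} requires the reflecting root to be simple in $\Delta^+$ (not just in $\Delta^+_{\oa}$), which is arranged because $w \in W^1_{\fb}$ — such $w$ can be realized through reflections in roots that stay simple when passing between Borel subalgebras sharing $\fb_{\oa}$, or else one invokes Corollary \ref{thmnotsimple}. After $l(w)$ steps we land on $\Gamma_{k}(G/B,L_\mu(\fb)) = \Gamma_{k-l(w)}(G/B, L_\Lambda(\fb))$.

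So the problem is reduced to showing $\Gamma_j(G/B, L_\Lambda(\fb))$ for the integral dominant $\Lambda = w\cdot\mu$ is $\delta_{j,0} M$ with $\ch M = \ch K^{(\fb)}_\Lambda$. For $j=0$ this is just $K^{(\fb)}_\Lambda = \Gamma(M^{(\fb)}_\Lambda)$ by Theorem \ref{thmBBW}(ii), and the module structure is automatic with $M = K^{(\fb)}_\Lambda$; the vanishing $\Gamma_j = 0$ for $j > 0$ is where relative $\Gamma^+$-genericness enters. Here I would adapt the argument of Proposition \ref{completion}: by Lemma \ref{restrHW}, a composition factor $L_{\Lambda'}$ in $\Gamma_j(G/B,L_\Lambda(\fb))$ forces $\Lambda' \in u\cdot\Lambda - \Gamma^+$ for some $u \in W(j)$ with $u^{-1} \in W^1_{\fb}$. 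Intersecting with $\cP^+$ and using relative $\Gamma^+$-genericness of $\Lambda$ (more precisely of $w^{-1}\cdot\Lambda = \mu$, transported by the remark after Definition \ref{defrelgen} that relative genericness is a $W^1_{\fb}$-invariant notion under the dot action), the equality $u\circ(\Lambda - \Gamma^+) \cap \cP^+ \neq \emptyset$ with $u^{-1} \in W^1_{\fb}$ can only hold for $u = 1$, hence $j = 0$.

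\textbf{The main obstacle} I anticipate is the positivity/reducedness bookkeeping in the even-reflection induction: one must verify that a reduced expression for $w \in W^1_{\fb}$ can be chosen so that at every stage the reflecting simple root $\alpha_i$ is simple in the relevant $\Delta^+$ and the pairing $\langle \alpha_i^\vee, \mu_i \rangle$ is strictly negative. In the reductive case this is classical, but in the super setting one must be careful that odd reflections (Lemma \ref{changeBorel}, Corollary \ref{thmnotsimple}) are available to realize even simple reflections that are not simple in the current Borel, and that the non-vanishing conditions of Corollary \ref{thmnotsimple} hold — which for the intermediate weights $\mu_i$ along the dominant path is a mild genericity condition that should follow from $\mu$ being $\fg$-regular together with the relative genericness hypothesis. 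Once this combinatorial point is secured, the rest is a routine assembly of Theorem \ref{thmBBW}(i), Theorem \ref{thmsimple}, Lemma \ref{restrHW} and the definition of relative genericness.
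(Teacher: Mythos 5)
Your approach is genuinely different from the paper's and has a gap in the crucial first step. The paper does not walk from $\mu$ up to $w\cdot\mu$ by even reflections at all; instead it works directly on $\Gamma_k(G/P^{\fb},L_{\mu}(\fp^{\fb}))$ through Corollary \ref{Penkovresult}: relative $\Gamma^+$-genericness guarantees that all $\fl^{\fb}_{\oa}$-dominant highest weights in the decomposition of $\Lambda\fg_{-1}\otimes L_{\mu}(\fl^{\fb}_{\oa})$ lie in the Weyl chamber of $\mu$, so classical BBW for $(\fg_{\oa},\fp^{\fb}_{\oa})$ concentrates $\Gamma_k(G_{\oa}/P^{\fb}_{\oa},\Lambda\fg_{-1}\otimes L_{\mu}(\fp^{\fb}_{\oa}))$ in degree $l(w)$; part $(i)$ of Corollary \ref{Penkovresult} then bounds $\Res\Gamma_k$ above and part $(ii)$ (Euler characteristic) forces equality, giving both the concentration in one degree and the character identity simultaneously.

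The gap in your argument is the even-reflection induction. A reduced expression for $w\in W^1_{\fb}$ is built from reflections $s_{\alpha_i}$ with $\alpha_i$ simple in $\Delta^+_{\oa}$, but these $\alpha_i$ are precisely the even simple roots \emph{not} lying in $\fl^{\fb}$ -- that is, the ones ``hidden behind'' odd simple roots -- so they are never simple in $\Delta^+$. Therefore Theorem \ref{thmsimple} cannot be applied at any step, and you are forced to invoke Corollary \ref{thmnotsimple} every time, whose hypotheses are the non-degeneracy conditions $\langle\beta_j,\mu_i-\beta_1-\cdots-\beta_{j-1}\rangle\ne 0$ for the odd roots $\beta_j$ tracking the change of Borel. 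These conditions do not follow from $\mu$ being $\fg$-regular and relatively $\Gamma^+$-generic: an atypical weight by definition pairs to zero against at least one odd root in the shifted sense, and nothing in the hypotheses excludes one of those odd pairings from being exactly what Corollary \ref{thmnotsimple} needs to be nonzero. Your remark that ``such $w$ can be realized through reflections in roots that stay simple when passing between Borel subalgebras sharing $\fb_{\oa}$'' is the opposite of the truth for $W^1_{\fb}$. As it stands, the walk is not established. The irony is that the walk is also unnecessary: the degree-concentration argument you give in the second half, applying Lemma \ref{restrHW} with $\fp=\fp^{\fb}$ and using relative genericness to force $u=w$, could have been applied directly to $\Gamma_k(G/P^{\fb},L_{\mu}(\fp^{\fb}))$ without first moving to the dominant weight; combined with the Euler characteristic from Corollary \ref{Penkovresult} this would reproduce the paper's proof. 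I would recommend rewriting along those lines.
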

\begin{proof}
We consider $\Gamma_k(G_{\oa}/P_{\oa}^{\fb},\Lambda\fg_{-1}\otimes L_{\mu}(\fp^{\fb}_{\oa}))$ as in Corollary \ref{Penkovresult}. By definition \ref{defrelgen}, the finite dimensional $\fl_{\oa}^{\fb}$-module $\Lambda\fg_{-1}\otimes L_{\mu}(\fl^{\fb}_{\oa})$ decomposes into simple modules with highest weights in the same Weyl chamber as $\mu$. There is a unique $w\in W^{1}_{\fb}$ such that $w\circ\mu\in\cP_{\oa}^+$.

The combination of the two results in Corollary \ref{Penkovresult} therefore implies 
$$\Res \Gamma_k(G,P^{\fb^{\fb}},L_{\mu}(\fp^{\fb}))=\delta_{k,l(w)}\Gamma_l(w)(G_{\oa}/P_{\oa}^{\fb},\Lambda\fg_{-1}\otimes L_{\mu}(\fp^{\fb}_{\oa}))$$

Classical BBW theory and equation \eqref{MusGS} then imply that 
$$\Gamma_l(w)(G_{\oa}/P_{\oa}^{\fb},\Lambda\fg_{-1}\otimes L_{\mu}(\fp^{\fb}_{\oa}))\cong \Gamma_0(G_{\oa}/P_{\oa}^{\fb},\Lambda\fg_{-1}\otimes L_{w\cdot\mu}(\fp^{\fb}_{\oa})),$$
which concludes the proof.
\end{proof}

Combining this result with lemma \ref{Euler} then yields the following corollary.
\begin{corollary}
\label{charKrelgen}
If $\Lambda\in\cP^+$ is relatively $\Gamma^+$-generic, then $\ch K^{(\fb)}_\Lambda=\cE(\Lambda).$
\end{corollary}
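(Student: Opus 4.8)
The plan is to read the identity off from the Proposition immediately preceding this corollary, combined with the Euler characteristic computation of Lemma~\ref{Euler}; essentially all that has to be checked is that $\Lambda$ meets the hypotheses of that Proposition and that the Weyl group element it attaches to $\Lambda$ is the identity.

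First I would confirm that the preceding Proposition applies to $\mu=\Lambda$. A weight $\Lambda\in\cP^+$ is in particular $\fg_{\oa}$-integral dominant, and since the simple coroots of $\fl^{\fb}$ are exactly those attached to the anisotropic simple roots in $\Pi^{a}\subset\Delta^+$, the weight $\Lambda$ is automatically $\fl^{\fb}$-dominant. It is also $\fg$-regular: for every $\beta\in\Delta^+_{\oa}$ one has $\langle\Lambda+\rho_{\oa},\beta^\vee\rangle>0$, because $\Lambda$ is $\fg_{\oa}$-dominant while $\rho_{\oa}$ is strictly dominant. Since $\Lambda$ is relatively $\Gamma^+$-generic by hypothesis, the Proposition applies.

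Next I would identify the distinguished element of $W^{1}_{\fb}$ attached to $\Lambda$. The Proposition produces a unique $w\in W^{1}_{\fb}$ with $w\cdot\Lambda$ integral dominant; since $1\in W^{1}_{\fb}$ and $1\cdot\Lambda=\Lambda\in\cP^+$, uniqueness forces $w=1$. Hence the Proposition yields $\Gamma_k(G/B,L_{\Lambda}(\fb))\cong\delta_{k,0}\,M$ with $\ch M=\ch K^{(\fb)}_{\Lambda}$, so the cohomology of BBW theory for $L_{\Lambda}(\fb)$ is concentrated in degree $0$ (indeed $M=\Gamma_0(G/B,L_\Lambda(\fb))=\Gamma(M^{(\fb)}_\Lambda)=K^{(\fb)}_\Lambda$ by Theorem~\ref{thmBBW}(ii)). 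Applying Lemma~\ref{Euler} with $\fp=\fb$ and $\lambda=\Lambda$ then gives
\[
\cE(\Lambda)=\sum_{k\ge 0}(-1)^{k}\ch\Gamma_k(G/B,L_{\Lambda}(\fb))=\ch\Gamma_0(G/B,L_{\Lambda}(\fb))=\ch K^{(\fb)}_{\Lambda},
\]
which is the assertion.

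I do not expect a real obstacle: the only substantive remark is that an integral dominant weight is automatically $\fg$-regular (so the preceding Proposition, stated only for $\fg$-regular weights, genuinely applies), and that the element $w$ it produces must then be trivial. Should one wish to be self-contained and avoid quoting that Proposition, the degree concentration can be reproven along the lines of the proof of Proposition~\ref{completion}: by Lemma~\ref{restrHW} applied to $\fp=\fp^{\fb}$ (using Theorem~\ref{thmBBW}(i) to pass from $G/B$ to $G/P^{\fb}$) and the set identity~\eqref{MusGS}, every composition factor $L_{\Lambda'}$ of $\Gamma_k(G/B,L_{\Lambda}(\fb))$ gives some $w\in W(k)$ with $w^{-1}\in W^{1}_{\fb}$ and $w^{-1}\circ\Lambda'\in\Lambda-\Gamma^+$; as $w^{-1}$ carries the dominant $\fg_{\oa}$-chamber into the dominant $\fl^{\fb}_{\oa}$-chamber and $\Lambda'\in\cP^+$ is $\fg$-regular, relative $\Gamma^+$-genericness of $\Lambda$ forces $w^{-1}\circ\Lambda'=\Lambda'$, hence $k=l(w)=0$.
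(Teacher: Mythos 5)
Your proof is correct and is essentially the paper's own argument: the paper derives Corollary~\ref{charKrelgen} by applying the immediately preceding Proposition to $\mu=\Lambda$, observing that the unique $w\in W^1_{\fb}$ with $w\cdot\Lambda\in\cP^+$ must be the identity, and then combining the resulting degree-zero concentration with Lemma~\ref{Euler}. Your checks (that $\Lambda\in\cP^+$ is automatically $\fl^{\fb}$-dominant and $\fg$-regular, and that uniqueness forces $w=1$) are exactly what is needed, and the self-contained alternative you sketch at the end simply reproves the relevant part of that Proposition via Lemma~\ref{restrHW} and~\eqref{MusGS}.
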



\section{Generalised BGG reciprocity}
\label{secBGGrec}

In this section we study the role of the generalised Kac modules $K^{(\fb)}_\Lambda$ in the category $\cF$ for both types of basic classical Lie superalgebras and for arbitrary $\fb$. So either $\cF$ does not have the structure of a highest weight category, or we ignore it. In this setup, a virtual BGG reciprocity was derived by Gruson and Serganova in Section 2 of \cite{Gruson2}. This is summarised in equation \eqref{BGGrecGS} underneath. Our main result is that if, for arbitrary $\fg$ and $\fb$, we work in the relatively generic region (see Definition \ref{defrelgen}), the virtual BGG reciprocity can be expressed as an actual one. So, far away from the walls it seems as if $\cF$ is a highest weight category satisfying the BGG reciprocity. This result includes the BGG reciprocity in \cite{MR1378540}, since relative genericness becomes a trivial condition for type I with distinguished Borel subalgebra, by Example \ref{extrivialrel}.

We introduce a subset of the set of integral dominant weights:
$$\widetilde{\cP}^+=\{\Lambda\in\cP^+ | s_\alpha\cdot\Lambda < \Lambda\mbox{ for every }\alpha\in\Delta^+_{\oa}\}.$$
We summarise the virtual BGG reciprocity of \cite{Gruson2}. There are $a_{\Lambda,\Lambda'}\in\Z$, for all $\Lambda\in\cP^+$ and $\Lambda'\in\widetilde{\cP}^+$, such that
\begin{equation}\label{BGGrecGS}\ch P^{\cF}_\Lambda=\sum_{\Lambda'\in\widetilde{\cP}^+}a_{\Lambda,\Lambda'}\cE(\Lambda')\qquad\mbox{and}\qquad \cE({\Lambda'})=\sum_{\Lambda\in\cP^+}a_{\Lambda,\Lambda'}\ch L_{\Lambda}.\end{equation}

\begin{theorem}
\label{BGGrecGSC}
Consider $\fg$ a basic classical Lie superalgebra and $\fb$ an arbitrary Borel subalgebra. Assume that $\Lambda\in\cP$ is relatively $\widetilde{\Gamma}$-generic for $\fb$. Then $P_\Lambda^{\cF}$ has a filtration by generalised Kac modules $K^{(\fb)}_{\Lambda'}$, with $\Lambda'\in\widetilde{\cP}^+$, satisfying
$$(P_\Lambda^{\cF}:K^{(\fb)}_{\Lambda'})=a_{\Lambda,\Lambda'}=[K_{\Lambda'}^{(\fb)}: L_\Lambda].$$
\end{theorem}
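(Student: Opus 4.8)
The plan is to establish the Kac-module filtration of $P_\Lambda^{\cF}$ first, and then extract the two equalities from it together with the virtual BGG reciprocity \eqref{BGGrecGS} of \cite{Gruson2}. To produce the filtration, I would imitate the classical argument for parabolic category $\cO$, using Corollary \ref{projcover} and Theorem \ref{genWallach} as the superalgebra substitutes for the standard ingredients. Concretely, $P_\Lambda^{\cF}$ is a direct summand of the $\fg\times\fg_{\oa}$-bimodule $\cR$ (by Corollary \ref{projcover}), and by Theorem \ref{UC} $\cR\cong \U(\fg)\otimes_{\U(\fg_{\oa})}\cR_{\oa}$, so $P_\Lambda^{\cF}$ is a summand of a module induced from a $\fg_{\oa}$-module with a filtration by simple $\fg_{\oa}$-modules $L^{\oa}_\nu$; inducing, $\U(\fg)\otimes_{\U(\fg_{\oa})}L^{\oa}_\nu$ already has a filtration by generalised Kac modules $K^{(\fb)}_{\Lambda'}$ after applying $\Gamma$, since $\Gamma$ is right exact and sends the relevant induced Verma-type modules to generalised Kac modules. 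The key point where relative genericness enters: because $\Lambda$ is relatively $\widetilde{\Gamma}$-generic, all the weights $\Lambda'$ appearing are forced into the ``good'' region where, by the results of Section \ref{secrelgen} (Corollary \ref{charKrelgen}, $\ch K^{(\fb)}_{\Lambda'}=\cE(\Lambda')$) and Section \ref{secgen}, the generalised Kac modules genuinely behave like standard modules — in particular their characters are linearly independent and match the virtual objects of \cite{Gruson2}.

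Once the filtration exists, write $(P_\Lambda^{\cF}:K^{(\fb)}_{\Lambda'})=b_{\Lambda,\Lambda'}$. Taking characters and using $\ch K^{(\fb)}_{\Lambda'}=\cE(\Lambda')$ (valid for the relevant $\Lambda'$ by Corollary \ref{charKrelgen}) gives $\ch P_\Lambda^{\cF}=\sum_{\Lambda'}b_{\Lambda,\Lambda'}\cE(\Lambda')$. Comparing with the first formula in \eqref{BGGrecGS} and invoking linear independence of the $\cE(\Lambda')$ over $\widetilde{\cP}^+$ — which is exactly the content that makes \eqref{BGGrecGS} a well-posed statement in \cite{Gruson2} — forces $b_{\Lambda,\Lambda'}=a_{\Lambda,\Lambda'}$. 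For the second equality I would use BGG-reciprocity-style duality: the multiplicity $(P_\Lambda^{\cF}:K^{(\fb)}_{\Lambda'})$ equals $\dim\Hom_{\fg}(P_\Lambda^{\cF}, \nabla_{\Lambda'})$ for the ``costandard'' object $\nabla_{\Lambda'}$ dual to $K^{(\fb)}_{\Lambda'}$, and since $P_\Lambda^{\cF}$ is the projective cover of $L_\Lambda$ this is $[\nabla_{\Lambda'}:L_\Lambda]=[K^{(\fb)}_{\Lambda'}:L_\Lambda]$, the last step using that $K^{(\fb)}_{\Lambda'}$ and its twisted dual have the same composition factor multiplicities (the twisted dual $\vee$ fixes simple modules). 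Alternatively one can read the second equality directly off the second formula in \eqref{BGGrecGS} combined with $\ch K^{(\fb)}_{\Lambda'}=\cE(\Lambda')=\sum_\Lambda a_{\Lambda,\Lambda'}\ch L_\Lambda$.

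I expect the main obstacle to be the first step: showing the filtration of $P_\Lambda^{\cF}$ by generalised Kac modules \emph{exists} in the relatively generic region, rather than just at the level of characters in the Grothendieck group (which is all \cite{Gruson2} provides). The subtlety is that for general $\fg$ and $\fb$ the category $\cF$ is not highest weight, so one cannot quote an abstract theorem; one must genuinely produce the filtration. The argument should go through the concrete model $\cR\cong\U(\fg)\otimes_{\U(\fg_{\oa})}\cR_{\oa}$: one takes a good filtration of $\cR_{\oa}$ as a $\fg_{\oa}$-bimodule (the classical fact that $\cR_{\oa}$ has a filtration with subquotients $L^{\oa}_\nu\times(\ldots)$ and that inducing the Verma resolution gives Kac-module sections), induces, applies the exact/right-exact functors of Section \ref{algfun}, and then uses relative genericness to rule out any ``mixing'' of blocks that would obstruct refining the $\fg_{\oa}$-filtration to a $\fg$-module filtration by $K^{(\fb)}_{\Lambda'}$'s. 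Relative $\widetilde{\Gamma}$-genericness is precisely what guarantees, via Corollary \ref{Penkovresult} and equation \eqref{MusGS}, that the weights entering stay in one Weyl chamber so that the relevant $\Ext$-vanishing (making the filtration split into Kac modules cleanly) holds — this Ext-vanishing, likely proved via Theorem \ref{infocohomC} and the BGG resolutions for $\fg_{\oa}$, is the technical heart.
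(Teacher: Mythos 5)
Your outline --- produce a Kac-module filtration of $P^\cF_\Lambda$, then pin down the multiplicities by comparing characters against the virtual reciprocity \eqref{BGGrecGS} --- is the right shape, and the character-comparison half matches the paper. But the first half, where you actually produce the filtration, contains a genuine gap, and the paper's mechanism for that step is quite different from what you sketch. You claim that $\U(\fg)\otimes_{\U(\fg_{\oa})}L^{\oa}_\nu$ acquires a Kac filtration ``after applying $\Gamma$, since $\Gamma$ is right exact and sends the relevant induced Verma-type modules to generalised Kac modules.'' This cannot be right as stated: $C_\nu=\U(\fg)\otimes_{\U(\fg_{\oa})}L^{\oa}_\nu$ already lies in $\cF$, so $\Gamma$ is the identity on it, and $L^{\oa}_\nu$ is simple so there is no Verma filtration to induce. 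Whether $C_\nu$ (equivalently, a projective in $\cF$) has a filtration by generalised Kac modules is exactly what is to be proved; the model $\cR\cong\U(\fg)\otimes_{\U(\fg_{\oa})}\cR_{\oa}$ just rephrases the question. The Ext-vanishing you gesture at for the ``splitting'' is likewise not available off the shelf, because $\cF$ is not a highest weight category for general $\fg$ and $\fb$, so the usual criteria for $\Delta$-filtrations cannot be invoked. Your proof of the second equality via a costandard $\nabla_{\Lambda'}$ and $\Ext^1(\Delta,\nabla)=0$ runs into the same obstruction.

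The paper's proof (Lemma~\ref{helpconnGS} fed into the proof of Theorem~\ref{BGGrecGSC}) avoids constructing the filtration directly. It starts from the trivial observation that $P^\cF_\Lambda$ has \emph{some} filtration by \emph{quotients} of generalised Kac modules, with the highest weights recorded (with multiplicity) by $H_0(\ofn,P^\cF_\Lambda)$. Relative $\widetilde{\Gamma}$-genericness is then used twice: via Lemma~\ref{restrHW} and Lemma~\ref{genP+} it forces the weights appearing in $H_0(\ofn,P^\cF_\Lambda)$ into $\widetilde{\cP}^+$, and via Corollary~\ref{charKrelgen} it ensures $\ch K^{(\fb)}_{\Lambda'}=\cE(\Lambda')$ for every $\Lambda'$ with $a_{\Lambda,\Lambda'}\neq0$. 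Plugging into \eqref{BGGrecGS} and Corollary~\ref{JHKacI} then shows that the characters of the filtration subquotients already account for all of $\ch P^\cF_\Lambda$; since each subquotient is a quotient of the corresponding $K^{(\fb)}_{\Lambda'}$, this forces equality, i.e.\ the subquotients are the full Kac modules. The multiplicity identity $(P^\cF_\Lambda:K^{(\fb)}_{\Lambda'})=[K^{(\fb)}_{\Lambda'}:L_\Lambda]$ is Proposition~\ref{BGGrec}, whose proof goes through $H_0(\ofn,P^\cF_\Lambda)=H^0(\fn,P^\cF_\Lambda)$ via self-duality of $P^\cF_\Lambda$, an argument that works without any highest weight structure. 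In short, the missing idea is that one does not construct the filtration but \emph{counts} it into existence; that counting argument, together with the two roles of relative genericness, is what your sketch lacks.
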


The remainder of this section is devoted to the proof of this result. Theorem \ref{BBWproj} leads to the following conclusion on the connection between projective and generalised Kac modules.
\begin{corollary}
\label{JHKacI}
The Jordan-H\"older decomposition of the generalised Kac module $K^{(\fb)}_{\Lambda'}$ for $\Lambda'\in\cP^+$ satisfies
\[[K^{(\fb)}_{\Lambda'}:L_{\Lambda}]=\Hom_{\fh}(\C_{\Lambda'}, H^0(\fn, P^\cF_\Lambda ))\]
for any $\Lambda\in\cP^+$. In particular 
\[\Hom_{\fh}(\C_{\Lambda}, H^0(\fn, P^\cF_\Lambda ))=1\quad \mbox{and}\quad \Hom_{\fh}(\C_{\Lambda'}, H^0(\fn, P^\cF_\Lambda ))=0\mbox{ if }\Lambda'\not\ge\Lambda. \]
\end{corollary}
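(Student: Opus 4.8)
The plan is to obtain the statement as the degree-zero, Borel-subalgebra specialisation of Theorem~\ref{BBWproj}. First I would apply the second displayed formula of Theorem~\ref{BBWproj} with $\fp=\fb$ (so that $\fl=\fh$ and $\fu=\fn$), with $k=0$, and with $\mu=\Lambda'\in\cP^+$. Since $L_{\Lambda'}(\fh)=\C_{\Lambda'}$ is one dimensional, this reads
\[[\Gamma_0(G/B,L_{\Lambda'}(\fb)):L_\Lambda]=\Hom_{\U(\fh)}\!\left(\C_{\Lambda'},H^0(\fn,P^\cF_\Lambda)\right)=\Hom_{\fh}\!\left(\C_{\Lambda'},H^0(\fn,P^\cF_\Lambda)\right)\]
for every $\Lambda\in\cP^+$.

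Next I would identify the zero cohomology group with the generalised Kac module. By Theorem~\ref{thmBBW}(ii) in degree $k=0$ we have $\Gamma_0(G/B,L_{\Lambda'}(\fb))=\cL_0\Gamma(M(\Lambda'))$, and the zeroth left derived functor of the right exact functor $\Gamma$ is $\Gamma$ itself, so $\cL_0\Gamma(M(\Lambda'))=\Gamma(M^{(\fb)}_{\Lambda'})=K^{(\fb)}_{\Lambda'}$ by the definition of the generalised Kac module in Subsection~\ref{subsecprelZuck}. Substituting this into the previous display gives the claimed equality $[K^{(\fb)}_{\Lambda'}:L_\Lambda]=\Hom_{\fh}(\C_{\Lambda'},H^0(\fn,P^\cF_\Lambda))$.

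For the two concrete consequences I would invoke only that $K^{(\fb)}_{\Lambda'}$ is a highest weight module with highest weight $\Lambda'$ and $\Top(K^{(\fb)}_{\Lambda'})\cong L^{(\fb)}_{\Lambda'}$. Taking $\Lambda=\Lambda'$ in the multiplicity formula gives $\Hom_{\fh}(\C_\Lambda,H^0(\fn,P^\cF_\Lambda))=[K^{(\fb)}_\Lambda:L_\Lambda]=1$, since the $\Lambda'$-weight space of a highest weight module is one dimensional and generates the simple top. For the vanishing statement, every composition factor of a highest weight module has highest weight $\le\Lambda'$, so $[K^{(\fb)}_{\Lambda'}:L_\Lambda]=0$ whenever $\Lambda'\not\ge\Lambda$, and the established equality transports this to $\Hom_{\fh}(\C_{\Lambda'},H^0(\fn,P^\cF_\Lambda))=0$.

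There is no real obstacle: the corollary is essentially a dictionary translation of Theorem~\ref{BBWproj} in cohomological degree zero. The only point that requires (minor) care is the clean identification $\Gamma_0(G/B,L_{\Lambda'}(\fb))\cong K^{(\fb)}_{\Lambda'}$, which rests on Theorem~\ref{thmBBW}(ii) together with the triviality $\cL_0\Gamma=\Gamma$ and the definition of $K^{(\fb)}_{\Lambda'}$ as the maximal finite dimensional quotient $\Gamma(M^{(\fb)}_{\Lambda'})$ of the Verma module; once this is in place the rest is immediate.
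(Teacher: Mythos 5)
Your proof is correct and follows essentially the same route the paper takes: the paper introduces this corollary with the single sentence that it follows from Theorem~\ref{BBWproj}, and the steps you spell out (specializing to $\fp=\fb$, $k=0$, identifying $\Gamma_0(G/B,L_{\Lambda'}(\fb))$ with $K^{(\fb)}_{\Lambda'}$ via Theorem~\ref{thmBBW}(ii) and $\cL_0\Gamma=\Gamma$, then using that $K^{(\fb)}_{\Lambda'}$ is a highest weight module to read off the two consequences) are exactly the details the paper leaves implicit.
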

\begin{proposition}
\label{BGGrec}
Consider $\Lambda\in\cP^+$, if $P^\cF_\Lambda $ has a filtration by generalised Kac modules, the multiplicities are given by
\[(P^\cF_\Lambda :K^{(\fb)}_\lambda)=[K^{(\fb)}_\lambda:L_\Lambda]\quad\mbox{for any $\lambda\in\cP^+$.}\]
\end{proposition}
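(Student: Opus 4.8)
The statement is a standard BGG-reciprocity computation once one knows that $P^{\cF}_\Lambda$ admits a filtration by generalised Kac modules; the content is to pin down the multiplicities. The plan is to compute, in the Grothendieck group of $\cF$, the class of $P^{\cF}_\Lambda$ in two ways, using the characters $\ch K^{(\fb)}_\lambda$ as a "basis" (they are linearly independent since their highest weights are distinct), and then extract the multiplicities from a duality/orthogonality relation between generalised Kac modules and simple modules, exactly as in the classical category-$\cO$ argument.

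\textbf{Step 1.} Assume $P^{\cF}_\Lambda$ has a $K^{(\fb)}$-filtration, so $\ch P^{\cF}_\Lambda = \sum_{\lambda\in\cP^+} (P^{\cF}_\Lambda:K^{(\fb)}_\lambda)\,\ch K^{(\fb)}_\lambda$, and set $c_\lambda := (P^{\cF}_\Lambda:K^{(\fb)}_\lambda)$. The aim is to show $c_\lambda = [K^{(\fb)}_\lambda : L_\Lambda]$.

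\textbf{Step 2 (the orthogonality input).} I would invoke Corollary \ref{JHKacI}: $[K^{(\fb)}_{\lambda}:L_{\Lambda}]=\Hom_{\fh}(\C_{\lambda}, H^0(\fn, P^\cF_\Lambda ))$, and in particular $\Hom_{\fh}(\C_{\Lambda}, H^0(\fn,P^\cF_\Lambda))=1$ while this vanishes if $\lambda\not\ge\Lambda$. Dually, $\dim\Hom_{\fg}(P^{\cF}_{\Lambda}, K^{(\fb)}_\lambda) = [K^{(\fb)}_\lambda : L_\Lambda]$ because $P^{\cF}_\Lambda$ is the projective cover of $L_\Lambda$ in $\cF$ and $K^{(\fb)}_\lambda$ is a module in $\cF$ (so $\Hom_{\fg}(P^{\cF}_\Lambda, M)$ counts the multiplicity of $L_\Lambda$ in $M$ — here $\Top(K^{(\fb)}_\lambda)\cong L^{(\fb)}_\lambda$ means every map $P^{\cF}_\Lambda\to K^{(\fb)}_\lambda$ lands in a suitable submodule, and the count is the composition multiplicity). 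On the other hand, on a module with a $K^{(\fb)}$-filtration one computes $\dim\Hom_{\fg}(P^{\cF}_\Lambda, -)$ filtration-term by filtration-term, giving $\dim\Hom_{\fg}(P^{\cF}_\Lambda, P^{\cF}_\Lambda) = \sum_{\lambda} c_\lambda\, [K^{(\fb)}_\lambda:L_\Lambda]$.

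\textbf{Step 3 (bootstrapping the multiplicities).} Now I would run a triangularity/induction argument on the partial order on $\cP^+$. Since $K^{(\fb)}_\lambda$ has highest weight $\lambda$ and $\Top(K^{(\fb)}_\lambda)\cong L^{(\fb)}_\lambda$, the multiplicity $[K^{(\fb)}_\lambda:L_\Lambda]$ is nonzero only for $\lambda\ge\Lambda$ and equals $1$ when $\lambda=\Lambda$; likewise a $K^{(\fb)}$-filtration of $P^{\cF}_\Lambda$ can only involve $K^{(\fb)}_\lambda$ with $\lambda\ge\Lambda$, and the term $K^{(\fb)}_\Lambda$ occurs exactly once (by the top). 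Comparing $\ch P^{\cF}_\Lambda = \sum_\lambda c_\lambda \ch K^{(\fb)}_\lambda$ with the known composition series and using the unitriangularity of the matrix $[K^{(\fb)}_\lambda:L_\mu]$ (with respect to $\le$), one solves for the $c_\lambda$ recursively from the top down; simultaneously the Hom-count of Step 2 forces $\sum_\lambda c_\lambda [K^{(\fb)}_\lambda:L_\Lambda] = [P^{\cF}_\Lambda:L_\Lambda]$-type identities that are matched precisely by $c_\lambda=[K^{(\fb)}_\lambda:L_\Lambda]$. Concretely: identify the $c_\lambda$ by pairing with the "simple-module" side, i.e. use that in the Grothendieck group $\ch L_\mu = \sum_\lambda d_{\lambda\mu}\ch K^{(\fb)}_\lambda$ for an inverse unitriangular matrix $(d_{\lambda\mu})$, so $\ch P^{\cF}_\Lambda=\sum_\mu [P^{\cF}_\Lambda:L_\mu]\ch L_\mu = \sum_{\mu,\lambda}[P^{\cF}_\Lambda:L_\mu]d_{\lambda\mu}\ch K^{(\fb)}_\lambda$, whence $c_\lambda = \sum_\mu d_{\lambda\mu}[P^{\cF}_\Lambda:L_\mu]$, and one checks this equals $[K^{(\fb)}_\lambda:L_\Lambda]$ using the projectivity identity $[P^{\cF}_\Lambda:L_\mu]=\sum_\nu (P^{\cF}_\Lambda:K^{(\fb)}_\nu)[K^{(\fb)}_\nu:L_\mu]$ together with the duality $\dim\Hom_{\fg}(P^{\cF}_\Lambda,K^{(\fb)}_\nu)=[K^{(\fb)}_\nu:L_\Lambda]$ and standard-filtration reciprocity.

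\textbf{Main obstacle.} The delicate point — and where I expect the real work to sit, presumably deferred to later in the section — is \emph{establishing that $P^{\cF}_\Lambda$ actually has a filtration by generalised Kac modules} in the relatively $\widetilde\Gamma$-generic region; the present proposition only extracts multiplicities \emph{granted} such a filtration. So within the scope of this statement, the honest obstacle is the $\Hom$-reciprocity step: showing $\dim\Hom_{\fg}(P^{\cF}_\Lambda, K^{(\fb)}_\lambda) = [K^{(\fb)}_\lambda:L_\Lambda]$ requires knowing $\Ext^1_{\cF}$ (or $\Ext^1$ in the ambient category) between $L_\Lambda$ and the radical of $K^{(\fb)}_\lambda$ does not interfere — i.e. one needs $\Ext^{>0}$-vanishing of projectives against generalised Kac modules, which again uses relative genericness and the results of Section \ref{secBGGrec}. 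I would phrase the final argument so that this $\Hom/\Ext$ input is cleanly cited, and then the combinatorial solve for $c_\lambda$ is routine linear algebra with unitriangular matrices.
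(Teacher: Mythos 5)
Your proposal reaches for the right ingredients (Corollary \ref{JHKacI}, the projective-cover property $\dim\Hom_\fg(P^\cF_\Lambda,M)=[M:L_\Lambda]$) but never closes the argument, and the final ``bootstrapping'' step is circular. Concretely, in Step 3 you write $c_\lambda=\sum_\mu d_{\lambda\mu}[P^\cF_\Lambda:L_\mu]$ and then propose to ``check this equals $[K^{(\fb)}_\lambda:L_\Lambda]$ using the projectivity identity $[P^\cF_\Lambda:L_\mu]=\sum_\nu c_\nu[K^{(\fb)}_\nu:L_\mu]$.'' But substituting that identity gives $\sum_\mu d_{\lambda\mu}\sum_\nu c_\nu[K^{(\fb)}_\nu:L_\mu]=\sum_\nu c_\nu\delta_{\lambda\nu}=c_\lambda$, i.e.\ $c_\lambda=c_\lambda$ — a tautology, not a determination of $c_\lambda$. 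Similarly the Hom count $\dim\Hom_\fg(P^\cF_\Lambda,P^\cF_\Lambda)=\sum_\lambda c_\lambda[K^{(\fb)}_\lambda:L_\Lambda]$ just re-expresses $[P^\cF_\Lambda:L_\Lambda]$ through the Jordan--H\"older decomposition of the filtration; it is a consequence of the filtration hypothesis, not an independent constraint. You also misdiagnose the obstacle at the end: $\dim\Hom_\fg(P^\cF_\Lambda,K^{(\fb)}_\lambda)=[K^{(\fb)}_\lambda:L_\Lambda]$ holds unconditionally by the projective-cover property and needs no $\Ext^1$ vanishing; the problem is that this alone does not isolate the $c_\lambda$, since there is no costandard module to pair against in this non--highest-weight setting.

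The missing idea, which is the actual content of the paper's proof, is a direct homological computation of the filtration multiplicities: for any $M\in\cF$ with a finite filtration by generalised Kac modules $\{K^{(\fb)}_\kappa\}_{\kappa\in S}$, one shows $H_0(\ofn,M)\cong\bigoplus_{\kappa\in S}\C_\kappa$, so that $(M:K^{(\fb)}_\lambda)=\dim\Hom_\fh(\C_\lambda,H_0(\ofn,M))$. The inclusion in one direction uses left exactness of $\Hom_{\ofn}(-,\C)$ (i.e.\ right exactness of $H_0(\ofn,-)$), and the other direction uses the characterisation of $K^{(\fb)}_\kappa$ as the \emph{maximal} finite-dimensional highest-weight module of highest weight $\kappa$. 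One then applies this to $M=P^\cF_\Lambda$ and uses the self-duality of projectives in $\cF$ under $\vee$ to identify $\Hom_\fh(\C_\lambda,H_0(\ofn,P^\cF_\Lambda))=\Hom_\fh(\C_\lambda,H^0(\fn,P^\cF_\Lambda))$, which is $[K^{(\fb)}_\lambda:L_\Lambda]$ by Corollary \ref{JHKacI}. Without the step ``Kac filtrations are detected by $H_0(\ofn,-)$'' and the self-duality input, the multiplicities $c_\lambda$ remain undetermined, and the unitriangularity bookkeeping cannot substitute for it.
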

\begin{proof}
Since the projective modules in $\cF$ are their own twisted duals, the relation 
\[ \Hom_{\fh}(\C_{\Lambda'}, H_0(\ofn, P^\cF_\Lambda ))=\Hom_{\fh}(\C_{\Lambda'}, H^0(\fn, P^\cF_\Lambda ))=[K^{(\fb)}_{\Lambda'}:L_{\Lambda}]\]
holds as a consequence of Corollary \ref{JHKacI}.

Now consider an arbitrary module $M$, with a finite filtration with quotients the generalised Kac modules $\{K^{(\fb)}(\kappa)|\kappa\in S\}$ for some set $S$ with multiplicities. Since $\Hom_{\ofn}(-,\C)$ is a left-exact contravariant functor, it is clear that $H_0(\ofn,M)$ is an $\fh$-submodule of $\oplus_{\kappa\in S}\C_\kappa$. Since the generalised Kac modules correspond to the maximal finite dimensional highest weight modules, it also follows that  $\oplus_{\kappa\in S}\C_\kappa$ must be a submodule of $H^k(\ofn,M)$. This implies that we have the relation $$(P_\Lambda^{\cF} :K^{(\fb)}_\lambda)=\Hom_{\fh}(\C_{\lambda}, H_0(\ofn, P^\cF_\Lambda )),$$
which concludes the proof.
\end{proof}
 
\begin{lemma}
\label{helpconnGS}
Assume that for a fixed $\Lambda\in\cP^+$, 
\begin{itemize}
\item $a_{\Lambda,\Lambda'}\not=0$ implies that $\ch K^{(\fb)}_{\Lambda'}=\cE(\Lambda')$, 
\item $\Hom_{\fh}(\C_{\kappa},H_{0}(\ofn,P_\Lambda^{\cF}))\not=0$ implies $\kappa\in \widetilde{\cP}^+$.
\end{itemize}
Then $P_\Lambda^{\cF}$ has a filtration by generalised Kac modules and
\[(P^\cF_\Lambda :K^{(\fb)}_{\Lambda'})=a_{\Lambda,\Lambda'}=[K^{(\fb)}_{\Lambda'}:L_\Lambda]\quad\mbox{for all $\Lambda'\in\widetilde{\cP}^+$.}\]
\end{lemma}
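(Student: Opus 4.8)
The plan is to combine the two hypotheses with the virtual BGG reciprocity \eqref{BGGrecGS} and with Proposition \ref{BGGrec} and Corollary \ref{JHKacI}. First I would use the first hypothesis to rewrite the virtual reciprocity in a form that no longer involves the virtual Euler characteristics $\cE(\Lambda')$ but the honest characters of generalised Kac modules: since $a_{\Lambda,\Lambda'}\neq 0$ forces $\ch K^{(\fb)}_{\Lambda'}=\cE(\Lambda')$ for the weights $\Lambda'$ that actually contribute, the two identities in \eqref{BGGrecGS} become
\[\ch P^{\cF}_\Lambda=\sum_{\Lambda'\in\widetilde{\cP}^+}a_{\Lambda,\Lambda'}\ch K^{(\fb)}_{\Lambda'}\qquad\mbox{and}\qquad \ch K^{(\fb)}_{\Lambda'}=\sum_{\Lambda''\in\cP^+}a_{\Lambda'',\Lambda'}\ch L_{\Lambda''}\]
for the relevant $\Lambda'$. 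The second identity says in particular that $[K^{(\fb)}_{\Lambda'}:L_\Lambda]=a_{\Lambda,\Lambda'}$ whenever the left-hand side enters the game, so the numerical part of the conclusion will follow once we know $P^{\cF}_\Lambda$ genuinely has a generalised Kac filtration with the predicted multiplicities.

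The existence of the filtration is where the second hypothesis enters. By Proposition \ref{BGGrec}, it suffices to show $P^{\cF}_\Lambda$ admits \emph{some} filtration by generalised Kac modules; the multiplicities are then automatically $(P^{\cF}_\Lambda:K^{(\fb)}_\lambda)=[K^{(\fb)}_\lambda:L_\Lambda]$. To produce the filtration I would argue inductively on the highest weight appearing in $H_0(\overline{\fn},P^{\cF}_\Lambda)$, or equivalently on the socle of the character of $P^{\cF}_\Lambda$ ordered by the dominance order: pick a maximal $\kappa$ with $\C_\kappa\subset H_0(\overline{\fn},P^{\cF}_\Lambda)$; the second hypothesis guarantees $\kappa\in\widetilde{\cP}^+$, hence by the first hypothesis $\ch K^{(\fb)}_\kappa=\cE(\kappa)$, and maximality together with the fact (Corollary \ref{JHKacI}, applied with $\Lambda=\kappa$, so that $\mathrm{Top}(K^{(\fb)}_\kappa)=L_\kappa$ appears with multiplicity one) that generalised Kac modules are the maximal finite dimensional highest weight modules produces a surjection $P^{\cF}_\Lambda\tto K^{(\fb)}_\kappa$ — or more precisely, since $\kappa$ is a highest weight of $P^{\cF}_\Lambda$ and $K^{(\fb)}_\kappa$ is the largest finite dimensional highest weight module of that weight, the submodule of $P^{\cF}_\Lambda$ generated by a weight-$\kappa$ vector in the top is a quotient of $K^{(\fb)}_\kappa$ and in fact isomorphic to it once one checks characters match on the nose via \eqref{BGGrecGS}. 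Then I would pass to the quotient: projectivity of $P^{\cF}_\Lambda$ relative to the standard filtration is not quite available, so instead I would keep the book-keeping at the level of the Euler characteristic in the Grothendieck group, using \eqref{BGGrecGS} to check that after subtracting $\ch K^{(\fb)}_\kappa$ with the right multiplicity the remaining virtual character is again a non-negative combination of characters $\ch K^{(\fb)}_{\Lambda'}$ with $\Lambda'\in\widetilde{\cP}^+$, and iterate. Finiteness of the process follows because $P^{\cF}_\Lambda$ is finite dimensional and each step removes one generalised Kac module.

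I expect the main obstacle to be upgrading the Grothendieck-group identity to an actual module filtration — i.e. turning ``$\ch P^{\cF}_\Lambda=\sum a_{\Lambda,\Lambda'}\ch K^{(\fb)}_{\Lambda'}$'' into an exact sequence of modules. The standard mechanism for this, as in the classical highest weight category setting, is that $P^{\cF}_\Lambda$ is projective and $\Ext^1(K^{(\fb)}_{\Lambda'},K^{(\fb)}_{\Lambda''})=0$ unless $\Lambda''>\Lambda'$; here I would instead exploit the adjunction/duality underlying Proposition \ref{BGGrec} (projectives in $\cF$ are self-dual, and $H_0(\overline{\fn},-)$ detects the Kac-filtration multiplicities) so that once the second hypothesis pins down that every $\kappa$ with $\C_\kappa\subset H_0(\overline{\fn},P^{\cF}_\Lambda)$ lies in $\widetilde{\cP}^+$, the character identity and Proposition \ref{BGGrec} together force the filtration to exist with exactly the multiplicities $a_{\Lambda,\Lambda'}$. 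So in fact the real content is checking that the two hypotheses make the hypotheses of Proposition \ref{BGGrec} applicable, namely that $P^{\cF}_\Lambda$ has \emph{a} generalised Kac filtration, and that the resulting multiplicities $[K^{(\fb)}_{\Lambda'}:L_\Lambda]$ coincide with $a_{\Lambda,\Lambda'}$ — the latter being immediate from the second half of \eqref{BGGrecGS} once the first hypothesis lets us replace $\cE(\Lambda')$ by $\ch K^{(\fb)}_{\Lambda'}$ throughout.
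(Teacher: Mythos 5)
Your proposal assembles the right ingredients — hypothesis (1) to convert $\cE(\Lambda')$ into $\ch K^{(\fb)}_{\Lambda'}$ in \eqref{BGGrecGS}, hypothesis (2) to constrain the weights appearing in $H_0(\ofn,P^{\cF}_\Lambda)$ to lie in $\widetilde{\cP}^+$, and Proposition \ref{BGGrec} to read off the multiplicities — and you correctly isolate the crux: one must show that $P^{\cF}_\Lambda$ actually \emph{has} a generalised Kac filtration. But your proposed mechanism for establishing that filtration has two genuine problems.

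First, the inductive extraction step is not sound as stated. You pick a maximal $\kappa$ with $\C_\kappa\subset H_0(\ofn,P^{\cF}_\Lambda)$ and assert "hence by the first hypothesis $\ch K^{(\fb)}_\kappa=\cE(\kappa)$." But hypothesis (1) only gives $\ch K^{(\fb)}_{\Lambda'}=\cE(\Lambda')$ for those $\Lambda'$ with $a_{\Lambda,\Lambda'}\not=0$; knowing $\kappa\in\widetilde{\cP}^+$ does not by itself tell you $a_{\Lambda,\kappa}\not=0$. Making that identification requires the global character identity, so the induction is feeding on what it is supposed to produce. Second, your closing sentence — that the character identity and Proposition \ref{BGGrec} "together force the filtration to exist" — is circular: Proposition \ref{BGGrec} computes multiplicities \emph{given} that a filtration exists; it cannot certify existence.

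The paper's route avoids both problems by making a purely combinatorial counting argument instead of a constructive one. First it establishes the total character identity $\ch P^{\cF}_\Lambda=\sum_{\Lambda'}\Hom_{\fh}(\C_{\Lambda'},H_0(\ofn,P^{\cF}_\Lambda))\,\ch K^{(\fb)}_{\Lambda'}$ (using hypothesis (1), nonnegativity of the coefficients $a_{\kappa,\Lambda'}=[K^{(\fb)}_{\Lambda'}:L_\kappa]$, Corollary \ref{JHKacI}, and self-duality of $P^{\cF}_\Lambda$). It then invokes the general fact that $P^{\cF}_\Lambda$ always has a filtration by \emph{quotients} of generalised Kac modules, with the highest weights (and multiplicities) of the filtration layers given exactly by $H_0(\ofn,P^{\cF}_\Lambda)$; hypothesis (2) places these weights in $\widetilde{\cP}^+$. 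Since each $K^{(\fb)}_{\Lambda'}$ is the \emph{maximal} highest weight module of its weight, any proper quotient would have strictly smaller character, contradicting the total character identity. Hence every layer must be the full generalised Kac module. Only then does Proposition \ref{BGGrec} apply, delivering the multiplicity formula. You should replace the inductive extraction with this constraint argument, or at minimum establish the character identity first and feed it into the induction correctly.
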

\begin{proof}
Take $\Lambda'\in\widetilde{\cP}^+$ such that $a_{\Lambda,\Lambda'}\not=0$. Then by assumption, $$a_{\kappa,\Lambda'}=[K_{\Lambda'}^{(\fb)}: L_{\kappa}]\ge 0,$$ for any $\kappa\in\cP^+$. Equation \eqref{BGGrecGS} and Corollary \ref{JHKacI} therefore imply that $$\ch P^{\cF}_{\Lambda}=\sum_{\Lambda'\in\widetilde{\cP}^+}\Hom_{\fh}(\C_{\Lambda'},H_0(\ofn,P^{\cF}_{\Lambda}))\ch K_{\Lambda'}^{(\fb)}.$$
In general, $P^{\cF}_\Lambda$ has a filtration by certain quotients of generalised Kac modules, where the highest weights are exactly given by the set (with multiplicities) $H_0(\ofn,P^{\cF}_{\Lambda})$. By assumption this set is contained in $\widetilde{\cP}^+$. The only possibility for $P^{\cF}_\Lambda$ to have the character as written above is therefore if all these quotients are isomorphic to the generalised Kac modules. The result thus follows from Proposition \ref{BGGrec}.
\end{proof}

\begin{lemma}
\label{genP+}
If a weight $\kappa\in\cP^+$ is relatively $\Gamma^+$-generic, it satisfies $\kappa\in\widetilde{\cP}^+$.
\end{lemma}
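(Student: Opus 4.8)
The goal is to show that relative $\Gamma^+$-genericness of $\kappa\in\cP^+$ forces $s_\alpha\cdot\kappa<\kappa$ for every $\alpha\in\Delta^+_{\oa}$. Recall that $\widetilde{\cP}^+$ consists of those $\Lambda\in\cP^+$ with $s_\alpha\cdot\Lambda<\Lambda$ for all even positive roots $\alpha$; since $\kappa\in\cP^+$ we automatically have $\langle\kappa+\rho,\alpha^\vee\rangle\ge 0$ (using the relative dominance built into Definition \ref{defrelgen}, so $\kappa$ is in the closure of the dominant chamber), so the only way the condition can fail is if $\langle\kappa+\rho,\alpha^\vee\rangle=0$ for some $\alpha$, i.e. $\kappa$ lies on a wall of the $\rho$-shifted Weyl chamber. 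The strategy is therefore to assume $\kappa$ is on such a wall and derive a contradiction with relative $\Gamma^+$-genericness.

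The key point is the difference between the $\rho$-shifted and $\rho_{\oa}$-shifted actions, controlled by $\rho_{\ob}=\frac12\sum_{\gamma\in\Delta^+_{\ob}}\gamma$. First I would translate: $\kappa$ is on a $\rho$-wall for $\alpha$ means $\langle\kappa+\rho_{\oa},\alpha^\vee\rangle=\langle\rho_{\ob},\alpha^\vee\rangle$. Since genericness is phrased in terms of $\rho_{\oa}$-chambers (Definition \ref{defgeneric} and the remark that Weyl chambers refer to the $\rho_{\oa}$-shifted action), I want to exhibit a weight in $\kappa-\Gamma^+$ that is $\fl^{\fb}$-dominant but lies on the other side of the $\alpha$-wall, or on it, contradicting that all such weights sit in $\kappa$'s chamber. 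The natural candidate is $\kappa-2\rho_{\ob}$ or, more precisely, the sum of a suitable subset $I\subset\Delta^+_{\ob}$ of odd positive roots chosen so that $\langle\kappa+\rho_{\oa}-\sum_{\gamma\in I}\gamma,\alpha^\vee\rangle\le 0$ while $\langle\kappa+\rho_{\oa}-\sum_{\gamma\in I}\gamma,\alpha^\vee\rangle$ is still $\ge$ the corresponding pairings for the anisotropic simple roots generating $\fl^{\fb}$ (so the weight remains $\fl^{\fb}$-dominant). Because $\langle\rho_{\ob},\alpha^\vee\rangle>0$ whenever there is at least one odd positive root not orthogonal to $\alpha$, subtracting enough odd roots pushes the $\alpha$-coordinate strictly negative, producing a weight of $\kappa-\Gamma^+$ in a different $\rho_{\oa}$-chamber (or on the wall), contradicting relative $\Gamma^+$-genericness. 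The degenerate case where every odd positive root is orthogonal to $\alpha$ must be handled separately: then $\alpha$ is an even root orthogonal to all of $\Delta^+_{\ob}$, and one checks directly (since the star/dot distinction disappears for such $\alpha$, and $\kappa\in\cP^+$ already excludes $\langle\kappa+\rho,\alpha^\vee\rangle<0$) that $\langle\kappa+\rho,\alpha^\vee\rangle=0$ is incompatible with $\kappa$ being integral dominant and $\fg$-regular in the relevant sense, or simply cannot occur for the basic classical $\fg$ in question.

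The main obstacle I anticipate is the bookkeeping in choosing the subset $I\subset\Delta^+_{\ob}$: one needs $\sum_{\gamma\in I}\gamma$ to move the $\alpha$-pairing across zero without violating $\fl^{\fb}$-dominance (i.e. without making some anisotropic-simple-root pairing negative), and this requires knowing that the odd roots contributing to $\langle\rho_{\ob},\alpha^\vee\rangle$ can be selected compatibly. I would argue this by working inside the rank-one (or rank-two, for the $\alpha/2$ case) subsystem attached to $\alpha$ and invoking that $\Gamma^+$ is taken with multiplicities so that partial sums of odd roots realize a full range of integer shifts in the $\alpha$-direction; alternatively, one can simply use $\sum_{\gamma\in\Delta^+_{\ob}}\gamma=2\rho_{\ob}$ itself, noting $\kappa-2\rho_{\ob}\in\kappa-\Gamma^+$, and observe that $\langle\kappa+\rho_{\oa}-2\rho_{\ob},\alpha^\vee\rangle=\langle\kappa+\rho,\alpha^\vee\rangle-\langle\rho_{\ob},\alpha^\vee\rangle=-\langle\rho_{\ob},\alpha^\vee\rangle<0$ on the wall, so if $\kappa-2\rho_{\ob}$ happens to be $\fl^{\fb}$-dominant we are done immediately; otherwise one truncates the sum to the largest $\fl^{\fb}$-dominant partial sum and checks the sign is still non-positive. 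This reduces the lemma to an elementary computation with $\langle\rho_{\ob},\alpha^\vee\rangle$, which is what I would grind through in the actual proof.
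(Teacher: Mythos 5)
Your proposal takes a genuinely different route from the paper: you attack the statement directly through the root combinatorics of the set $\kappa-\Gamma^+$, whereas the paper's argument is a short character-theoretic one. Concretely, the paper invokes Corollary~\ref{charKrelgen} (relative $\Gamma^+$-genericness gives $\cE(\kappa)=\ch K^{(\fb)}_\kappa$), then reads off from formula~\eqref{formEuler} that $\kappa$ must be $\rho$-regular (otherwise $\cE(\kappa)=0$, contradicting $K^{(\fb)}_\kappa\neq 0$), and then that $\kappa$ must be the highest weight in its $\rho$-shifted $W$-orbit (since the top weight of the orbit appears with nonzero coefficient in $\cE(\kappa)$, and the top weight of $\ch K^{(\fb)}_\kappa$ is $\kappa$). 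That gives $s_\alpha\cdot\kappa<\kappa$ for all $\alpha\in\Delta^+_{\oa}$ at once, with no case analysis.

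There are two genuine problems with your version. First, the opening claim that $\kappa\in\cP^+$ automatically forces $\langle\kappa+\rho,\alpha^\vee\rangle\ge 0$ is simply false for type~II superalgebras: for $\mathfrak{osp}(3|2)$ with the distinguished Borel, $\kappa=0\in\cP^+$ but $\langle\rho,(2\delta)^\vee\rangle<0$. Since $\rho=\rho_{\oa}-\rho_{\ob}$ and $\rho_{\ob}$ can pair strictly positively with $\alpha^\vee$, the $\rho$-shifted chamber is not the $\rho_{\oa}$-shifted one, and this is exactly the content you are being asked to rule out — it cannot be assumed. (Your later computation with $\kappa-2\rho_{\ob}$ actually does handle $\langle\kappa+\rho,\alpha^\vee\rangle<0$ together with $=0$, so this false claim is more a red herring than a load-bearing error, but it should go.) Second, and more seriously, the ``truncation'' step is not an argument. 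You need to produce a \emph{single} subset $I\subset\Delta^+_{\ob}$ for which $\kappa-\sum_{\gamma\in I}\gamma$ is simultaneously $\fl^{\fb}$-dominant and on or past the $\alpha$-wall. You observe that $I=\Delta^+_{\ob}$ crosses the wall but may destroy $\fl^{\fb}$-dominance, and then assert that ``the largest $\fl^{\fb}$-dominant partial sum'' still has the right sign. There is no reason given for this: the odd roots that contribute to $\langle\rho_{\ob},\alpha^\vee\rangle$ are exactly the ones that might have large negative pairing with an anisotropic simple root, so a greedy truncation could drop precisely the roots you need. Making this work requires either a structural fact about $\Delta^+_{\ob}$ for each family of basic classical superalgebras, or a cleverer choice of $I$, neither of which is supplied. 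The paper's character-theoretic route sidesteps all of this, which is why it is so short.
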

\begin{proof}
By Corollary \ref{charKrelgen} we have $\cE(\kappa)=\ch K^{(\fb)}_\kappa$. If $s_\alpha\cdot\kappa=\kappa$ for some $\alpha\in\Delta^+_{\oa}$, we obtain the contradiction $\cE(\kappa)=0$, so there are no multiplicities in the orbit $\{w\cdot\kappa\,|\,w\in W\}$. The highest weight in this orbit has to appear with non-zero multiplicity in $\cE(\kappa)$ by equation~\eqref{formEuler}. This implies that the highest weight in that orbit is $\kappa$, so $s_\alpha\cdot\kappa<\kappa$.
\end{proof}

\begin{proof}[Proof of Theorem \ref{BGGrecGSC}]
It suffices to prove that the conditions in Lemma \ref{helpconnGS} are satisfied if $\Lambda$ is relatively $\widetilde{\Gamma}$-generic. 

Assume that $a_{\Lambda,\Lambda'}\not=0$ for some $\Lambda'\in\widetilde{\cP}^+$, the combination of equation \eqref{BGGrecGS} and Lemma \ref{Euler} implies that 
$$[\Gamma_i(G/P^\fb,L_{\Lambda'}(\fp^\fb)):L_\Lambda]\not=0$$
for some $i$. By Lemma\ref{restrHW}, there is an $u\in W^1_{\fb}$ such that $\Lambda'\in u\circ\Lambda+\Gamma^+$. Since $\Lambda$, and thus $u\circ\Lambda$, is relatively $\widetilde{\Gamma}$-generic, this implies that $u=1$ and $\Lambda'$ is $\Gamma^+$-generic. Therefore Corollary \ref{charKrelgen} yields $\ch K^{(\fb)}_{\Lambda'}=\cE(\Lambda')$.

Now assume that $\Hom_{\fh}(\C_{\kappa},H_{0}(\ofn,P_\Lambda^{\cF}))\not=0$. Theorem \ref{BBWproj} implies that we have $[K_{\kappa}^{(\fb)}:L_{\Lambda}]\not=0$. By Corollary \ref{restrHW}, this implies $\Lambda\in \kappa-\Gamma^+$ and in particular, $\kappa$ is relatively $\Gamma^+$-generic. Lemma \ref{genP+} then implies that $\kappa\in\widetilde{\cP}^+$.
\end{proof}


\section{BBW theory for $\mathfrak{osp}(m|2)$, $D(2,1;\alpha)$, $F(4)$ and $G(3)$}
\label{seccomplres}

We extend the result on BBW theory for $\mathfrak{osp}(3|2)$, $D(2,1;\alpha)$, $F(4)$ and $G(3)$, with distinguished root system, of Germoni and Martirosyan in \cite{Germoni, Lilit}, to include weights which are not necessarily dominant. This solves BBW theory for these algebras (with distinguished Borel subalgebra) completely. We also solve BBW theory for $\mathfrak{osp}(m|2)$ by applying the results of Su and Zhang on Kac modules and generalised Verma modules in \cite{SuZha}. We always assume that $m\ge 3$, since the other cases have already been addressed in Subsection \ref{subtyptypeI}. The remaining basic classical Lie superalgebras for which BBW theory, for the distinguished system of positive roots, is not known, are therefore $\mathfrak{osp}(m|2n)$, with $n\ge 2$ and $m\ge 3$. 

All the Lie subalgebras in this section are of type II, therefore they satisfy the $\Z$-gradation $\fg=\fg_{-2}\oplus\fg_{-1}\oplus\fg_{0}\oplus\fg_{1}\oplus\fg_{2}$. The distinguished Borel subalgebra satisfies $\fb\subset\fg_{0}\oplus\fg_1\oplus\fg_2$. Furthermore, in each case we have $\dim\fg_2=1$ and $\fg_{\oa}\cong\mathfrak{sl}(2)+\mathfrak{g}_{0}$. This implies that the Weyl group satisfies $W\cong\Z_2\times W(\fg_0:\fh)$, where we denote the non-trivial element of $\Z_2$ by $s$.

\begin{theorem}
\label{excep}
Consider $\fg$ one of the basic classical Lie superalgebras in the list $\{\mathfrak{osp}(m|2),D(2,1;\alpha), F(4),G(3)\}$ and $\fb$ the distinguished Borel subalgebra, see \cite{MR0519631}. For each $\mu\in\cP$ and $p\in\N$, there is at most one $w\in W$ of length $p$ such that $w\cdot\mu\in\cP^+$. The cohomology groups of BBW theory are described by
\[\Gamma_p(G/B,L_\mu(\fb)=\begin{cases}K_{w\cdot\mu}&\mbox{if $w\in W(p)$ exists and }s\not\uparrow w\\K_{w\cdot\mu}^\vee&\mbox{if $w\in W(p)$ exists and }s\uparrow w\\0&\mbox{otherwise.}\end{cases}\]
\end{theorem}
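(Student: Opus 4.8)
The plan is to reduce the statement to BBW theory for $\mathfrak{osp}(m|2)$, $D(2,1;\alpha)$, $F(4)$, $G(3)$ in the \emph{dominant} case, which is already known by \cite{Germoni, Lilit} (and, for $\mathfrak{osp}(m|2)$, accessible via \cite{SuZha}), and then to propagate the result to arbitrary weights by working through the walls of the Weyl chamber. Since $W\cong\Z_2\times W(\fg_0:\fh)$, the chamber structure is a product, and the element $s$ (the reflection in the $\mathfrak{sl}(2)$-direction sitting behind the odd simple root) is exactly the one that does not commute in an innocent way with the odd part: crossing the $s$-wall is what introduces the twisted dual $\vee$.

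First I would record the dominant-case input: for $\Lambda\in\cP^+$ one has $\Gamma_0(G/B,L_\Lambda(\fb))=K_\Lambda$ (Lemma \ref{zerocohomKac}) and $\Gamma_k(G/B,L_\Lambda(\fb))=0$ for $k>0$ in these cases, together with the dominant answer for the Levi $\mathfrak{g}_0$-direction. Then, for an arbitrary $\mu\in\cP$, I would walk $\mu$ toward the dominant chamber one simple reflection at a time, applying Theorem \ref{thmsimple} for the anisotropic even simple roots of $\fg_0$ and Corollary \ref{thmnotsimple} (passing through the Borel in which $\alpha$, resp.\ $\alpha/2$, is simple, so that the reduction is to $\mathfrak{sl}(2)$ or $\mathfrak{osp}(1|2)$) for the reflection $s$. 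Because $\fg_{\overline{0}}\cong\mathfrak{sl}(2)+\mathfrak{g}_0$ and $\dim\fg_2=1$, the genericity-type conditions in Corollary \ref{thmnotsimple} can be checked directly: the odd roots $\beta_j$ changing $\fb^d$ into the Borel with $\alpha$ simple form a short explicit list, and the pairings $\langle\beta_j,\mu-\beta_1-\cdots-\beta_{j-1}\rangle$ are controlled since $\fg_{\overline{1}}$ is a single irreducible $\fg_{\overline{0}}$-module; in the exceptional regions where they vanish one falls back on Lemma \ref{2resol} and a direct computation. Each even reflection raises cohomological degree by one, so after $l(w)$ steps (where $w$ is the unique Weyl element with $w\cdot\mu\in\cP^+$) one lands in degree $l(w)$ and on either $K_{w\cdot\mu}$ or its twisted dual. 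Uniqueness of $w$ of a given length with $w\cdot\mu\in\cP^+$ follows from the fact that the $\rho$-shifted orbit of $\mu$ meets $\cP^+$ in at most one point and the length function distinguishes the relevant chamber crossings; if the orbit misses $\cP^+$ entirely, the vanishing comes from the same reflection argument (an $\mathfrak{sl}(2)$- or $\mathfrak{osp}(1|2)$-reduction with a singular weight).

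The key point, and the main obstacle, is the bookkeeping of when $\vee$ appears, i.e.\ proving $\Gamma_p = K_{w\cdot\mu}$ precisely when $s\not\uparrow w$ and $K_{w\cdot\mu}^\vee$ precisely when $s\uparrow w$. The natural mechanism: each reflection by an $\fg_0$-root is $\vee$-symmetric (it commutes with the duality on the finite-dimensional category since it only involves the Levi), whereas the reduction to $\mathfrak{sl}(2)$ through the odd-reflection detour of Corollary \ref{thmnotsimple} is \emph{not}, and precisely conjugates $K$ to $K^\vee$. Concretely, I would show that crossing the $s$-wall replaces the distinguished Borel by its ``opposite on the $\fg_2$-side'', so the resulting highest weight module is the twisted dual of a Kac module for the distinguished Borel; since $s$ is an involution, this toggles exactly once, which is what $s\uparrow w$ versus $s\not\uparrow w$ detects. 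One must check this toggle is consistent with the braid relations among twisting functors (Lemma 5.3 in \cite{CouMaz}, used implicitly through Corollary \ref{thmnotsimple}) so that the answer depends only on whether $s$ occurs in a reduced expression for $w$, not on the chosen reduced word — here the product structure $W\cong\Z_2\times W(\fg_0:\fh)$ makes this automatic, since $s$ is central and its multiplicity mod $2$ in any reduced expression is well defined. The remaining verification is the compatibility with $\fg_0$-BBW: restricting to the $\fg_{\overline{0}}$-structure via Corollary \ref{Penkovresult} and matching characters via Lemma \ref{Euler} pins down that the module in degree $l(w)$ is indeed $K_{w\cdot\mu}$ or $K_{w\cdot\mu}^\vee$ (not merely something with that character), using that $K_\Lambda$ and $K_\Lambda^\vee$ are the unique highest- resp.\ lowest-weight-cogenerated finite-dimensional modules with this character.
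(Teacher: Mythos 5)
Your overall framework — reduce to the maximal typical parabolic $\fp^{\fb}=\fg_0\oplus\fg_1\oplus\fg_2$, shift degree with even reflections via Theorem \ref{thmsimple}, handle the unique remaining $s$-step separately, and verify characters against Lemma \ref{Euler} — is the same skeleton the paper uses. However, the mechanism you propose for the appearance of $\vee$ is not correct. Corollary \ref{thmnotsimple} (and the odd-reflection detour underlying it) produces an \emph{equality} $\Gamma_k(G/B,L_\mu(\fb))=\Gamma_{k-1}(G/B,L_{s_\alpha\cdot\mu}(\fb))$; it shifts the cohomological degree but does not conjugate $K$ to $K^\vee$. No amount of bookkeeping with braid relations or the product structure $W\cong\Z_2\times W(\fg_0:\fh)$ will make $\vee$ appear from that statement, and the heuristic that ``crossing the $s$-wall replaces the distinguished Borel by its opposite on the $\fg_2$-side'' is not a theorem you can appeal to.

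The actual gap is in pinning down the module in degree one. Once one knows that $\Gamma_1(G/P,L_{s\cdot\Lambda}(\fp))$ is the only nonzero cohomology group and has character $\cE(\Lambda)=\ch K_\Lambda$, one must still choose between $K_\Lambda$, $K_\Lambda^\vee$, the direct sum of their composition factors, or some other extension. The paper resolves this with Lemma \ref{lemmaDFG}: $\Gamma_1(G/P,L_{s\cdot\Lambda}(\fp))$ has no highest weight vectors below $\Lambda$, which excludes $K_\Lambda$ (whose socle contributes such a vector) and the split module, and then the explicit low-length structure of Kac modules from \cite{SuZha,Germoni,Lilit} (three cases) together with the thinness of the quiver forces $K_\Lambda^\vee$. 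Your ``uniqueness of the highest- resp.\ lowest-weight-cogenerated module with this character'' is the right intuition, but you have no input to establish which of the two properties the cohomology group actually satisfies. You would need to prove something like Lemma \ref{lemmaDFG} (the paper obtains it by tensoring with $L_{k\delta}$ to reach a typical weight and applying Corollary \ref{tensorfin}), or, as the paper's remark notes, substitute Penkov's Serre duality $\Gamma_k(G/B,L_\lambda(\fb))\cong\Gamma_{d-k}(G/B,L_{-\lambda-2\rho}(\fb))^\ast$ from \cite{MR0957752}. Finally, your dismissal of the atypical failures of Corollary \ref{thmnotsimple} via ``fall back on Lemma \ref{2resol} and a direct computation'' is exactly where the hard content sits; the paper sidesteps that entirely by never trying to reach a Borel in which $2\delta$ is simple, and instead working with the parabolic $\fp^{\fb}$ and the structure theory of Kac modules.
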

Before proving this statement, we note the following corollary on Kostant cohomology of projective modules.
\begin{corollary}
\label{KostProj2}
Consider $\fg\in\{\mathfrak{osp}(m|2),D(2,1;\alpha), F(4),G(3)\}$ and $\fb$ the distinguished Borel subalgebra. For each $\Lambda\in\cP^+$ we have
$$\ch H^k(\fn,P_\Lambda^{\cF})=\bigoplus_{w\in W(k)}w\cdot\ch H^0(\fn, P_\Lambda^{\cF}).$$
\end{corollary}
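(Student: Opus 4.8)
\emph{Proof proposal.}
The plan is to feed the complete computation of the cohomology groups in Theorem~\ref{excep} into the character formula for Kostant cohomology of projective covers (Theorem~\ref{BBWproj}) and then carry out a purely combinatorial reindexing over $W$. First I would specialise the second formula of Theorem~\ref{BBWproj} to $\fp=\fb$, so that $\fl=\fh$ and $\fu=\fn$; since $H^k(\fn,P^\cF_\Lambda)$ is a finite-dimensional $\fh$-semisimple module (a subquotient of $\Lambda\fn^\ast\otimes P^\cF_\Lambda$), it reads
\[\dim\big(H^k(\fn,P^\cF_\Lambda)\big)_\mu=[\Gamma_k(G/B,L_\mu(\fb)):L_\Lambda]\qquad\text{for all }\mu\in\cP,\]
and therefore $\ch H^k(\fn,P^\cF_\Lambda)=\sum_{\mu\in\cP}[\Gamma_k(G/B,L_\mu(\fb)):L_\Lambda]\,e^{\mu}$, a finite sum.

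Next I would insert Theorem~\ref{excep}. The $\mu$-summand is nonzero only when there is a (necessarily unique) $w\in W(k)$ with $\Lambda':=w\cdot\mu\in\cP^+$, in which case $\Gamma_k(G/B,L_\mu(\fb))$ is either $K_{\Lambda'}$ or its twisted dual $K_{\Lambda'}^\vee$. Since $(-)^\vee$ is an exact contravariant duality on $\cF$ fixing the simple objects ($L_\Lambda^\vee\cong L_\Lambda$), one has $[K_{\Lambda'}^\vee:L_\Lambda]=[K_{\Lambda'}:L_\Lambda]$, so the $K$ versus $K^\vee$ alternative is invisible at the level of the Jordan--H\"older multiplicity. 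Hence
\[\ch H^k(\fn,P^\cF_\Lambda)=\sum_{(\mu,w)}[K_{w\cdot\mu}:L_\Lambda]\,e^{\mu},\]
the sum over all pairs $(\mu,w)$ with $w\in W(k)$ and $w\cdot\mu\in\cP^+$, the uniqueness in Theorem~\ref{excep} ensuring each relevant $\mu$ contributes through a single such pair. Now reindex: $(\Lambda',w)\mapsto(w^{-1}\cdot\Lambda',w)$ is a bijection of $\cP^+\times W(k)$ onto that index set, with inverse $(\mu,w)\mapsto(w\cdot\mu,w)$; substituting and replacing $w$ by $v=w^{-1}$ (legitimate since $l(w^{-1})=l(w)$, so $W(k)$ is stable under inversion) yields
\[\ch H^k(\fn,P^\cF_\Lambda)=\sum_{\Lambda'\in\cP^+}[K_{\Lambda'}:L_\Lambda]\sum_{v\in W(k)}e^{v\cdot\Lambda'}=\sum_{v\in W(k)}v\cdot\Big(\sum_{\Lambda'\in\cP^+}[K_{\Lambda'}:L_\Lambda]\,e^{\Lambda'}\Big).\]
Taking $k=0$ in the above (or quoting Corollary~\ref{JHKacI}) identifies the inner sum with $\ch H^0(\fn,P^\cF_\Lambda)$, and since the $\rho$-shifted $W$-action on $\Z[\cP]$ is $\Z$-linear this is exactly $\sum_{v\in W(k)}v\cdot\ch H^0(\fn,P^\cF_\Lambda)$, as claimed.

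I do not expect a genuine obstacle: all the substance sits in Theorem~\ref{excep}, and the rest is bookkeeping. The two points deserving care are (i) recording that the dichotomy $K_{\Lambda'}$ versus $K_{\Lambda'}^\vee$ plays no role for characters and Jordan--H\"older multiplicities, and (ii) executing the $w\mapsto w^{-1}$ reindexing without over- or undercounting, which is precisely where the uniqueness of the length-$k$ Weyl element in Theorem~\ref{excep} is used.
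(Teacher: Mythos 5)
Your proof is correct and follows exactly the same route as the paper's: feed Theorem~\ref{excep} into the multiplicity formula of Theorem~\ref{BBWproj}(ii) and reindex over $W(k)$. The paper compresses these steps into two sentences; you have merely supplied the bookkeeping details (the irrelevance of $K$ versus $K^\vee$ for Jordan--H\"older multiplicities, and the $w\mapsto w^{-1}$ substitution) that the paper leaves implicit.
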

\begin{proof}
Theorem \ref{excep} implies that for any integral weight $\mu$ and integral dominant weight $\Lambda$ we have
$$[\Gamma_p(G/B,L_\mu(\fb)):L_\Lambda]=\begin{cases}[\Gamma_0(G/B,L_{w\cdot \mu}(\fb)):L_\Lambda] &\mbox{if there is a }w\in W(p)\\&\mbox{such that }w\cdot\mu\in\cP^+\\0&\mbox{otherwise.}\end{cases}$$
The result therefore follows from Theorem \ref{BBWproj}(ii).
\end{proof}

\begin{lemma}
\label{lemmaDFG}
Consider $\fg\in\{\mathfrak{osp}(m|2),D(2,1;\alpha), F(4),G(3)\}$ and $\fp=\fg_0\oplus\fg_1\oplus\fg_2$. The $\fg$-module $\Gamma_1(G/P,L_{s\cdot\Lambda}(\fp))$ contains no highest weight vectors lower than $\Lambda$.
\end{lemma}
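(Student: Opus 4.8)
Set $N:=\Gamma_1(G/P,L_{s\cdot\Lambda}(\fp))$. First I would reduce to the Borel subalgebra. Since $\Lambda$ is $\fg_{\oa}$-dominant and $s=s_\phi$ acts trivially on the roots of $\fl=\fg_0$, the weight $s\cdot\Lambda$ is again $\fl$-dominant, so $L_{s\cdot\Lambda}(\fp)$ is defined and Theorem~\ref{thmBBW}(i) gives $N\cong\Gamma_1(G/B,L_{s\cdot\Lambda}(\fb))$. Two structural facts come for free: $N$ has central character $\chi_{s\cdot\Lambda}=\chi_\Lambda$ by Corollary~\ref{corcentr}, and, since $\dim\fu_{\oa}=\dim\fg_2=1$, the cohomology $\Gamma_\bullet(G/P,L_{s\cdot\Lambda}(\fp))$ is concentrated in degrees $0$ and $1$ by Corollary~\ref{degvanish}. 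Moreover, as $N$ is finite dimensional, a highest weight vector of $N$ of weight $\mu$ generates a highest weight submodule with top $L_\mu$, so $L_\mu$ is a finite dimensional simple subquotient of $N$ and in particular $\mu\in\cP^+$.

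Next I would pin down the possible $\mu$ with Lemma~\ref{restrHW}. Because $W\cong\Z_2\times W(\fg_0:\fh)$ and $l(s)=1$, we have $W^1(\fg_0)=\{1,s\}$, so the only length-one $w$ with $w^{-1}\in W^1(\fg_0)$ is $w=s$; hence every simple subquotient $L_\nu$ of $N$ satisfies
\[\nu\ \in\ s(s\cdot\Lambda+\rho)-\rho-\Gamma^+\ =\ \Lambda-\Gamma^+ .\]
In particular $\mu\le\Lambda$, and the point is to exclude $\mu<\Lambda$, i.e.\ $\mu=\Lambda-\gamma$ with $0\neq\gamma\in\Gamma^+$.

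I would then pass to the even subalgebra. A $\fg$-highest weight vector is a fortiori a $\fg_{\oa}$-highest weight vector, and by Corollary~\ref{Penkovresult} one has $\Res N\le\Gamma_1(G_{\oa}/P_{\oa},\Lambda\fg_{-1}\otimes L_{s\cdot\Lambda}(\fp_{\oa}))$. As $\fu_{\oa}=\fg_2$ is one-dimensional, this is computed by the classical $\mathfrak{sl}(2)$ Bott--Borel--Weil theorem: decomposing $\Lambda\fg_{-1}\otimes L_{s\cdot\Lambda}(\fg_0)=\bigoplus_\nu L_\nu(\fg_0)$ into simple $\fg_0$-modules, each summand with $\langle\phi^\vee,\nu+\rho_{\oa}\rangle<0$ contributes $L^{\oa}_{s\circ\nu}$ and the rest contribute nothing. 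The weight $\Lambda$ does occur here (from the one-dimensional $\Lambda^{\rm top}\fg_{-1}$, of $\fg_0$-weight $-2\rho_{\ob}=s_\phi\rho_{\ob}-\rho_{\ob}$, for which $s\circ(s\cdot\Lambda-2\rho_{\ob})=\Lambda$), and for $\Lambda$ deep inside its Weyl chamber it is the only summand that survives, so there one is done immediately — equivalently one sees in that range, via Corollary~\ref{thmnotsimple}, that $N$ is outright a highest weight module of highest weight $\Lambda$. The substance of the lemma is therefore confined to the finitely many atypical, not relatively generic, weights $\Lambda$.

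For those, the plan is to show that $N$ has simple socle $L_\Lambda$. This is enough: any highest weight submodule $\U(\fg)v_\mu\subseteq N$ is nonzero, hence contains $\Soc N=L_\Lambda$, so $L_\Lambda$ is a composition factor of a highest weight module of highest weight $\mu$, whence $\Lambda\le\mu$; together with $\mu\le\Lambda$ above this forces $\mu=\Lambda$. Proving $\Soc N=L_\Lambda$ is equivalent to showing that the $\fg_{\oa}$-highest weight vectors of $N$ of weight $<\Lambda$ — which do exist in $\Res N$ by the previous paragraph — are not annihilated by $\fg_1\subset\fn$, and this is where the explicit representation theory must enter: I would use the detailed composition and socle structure of the (generalised) Verma and Kac modules for these defect-one algebras — the results of Su and Zhang \cite{SuZha} for $\mathfrak{osp}(m|2)$, and of Germoni \cite{Germoni} and Martirosyan \cite{Lilit} for $D(2,1;\alpha)$, $F(4)$ and $G(3)$ — to control the $\fg_1$-action on the $\fg_{\oa}$-layers of $N$. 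This last step is the genuine obstacle: the reductions above are formal, whereas converting the bounds on composition factors and on $\Res N$ into actual control of the $\fg$-module structure (the socle) of $N$ seems to require the specific structure of these Lie superalgebras.
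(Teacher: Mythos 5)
Your reductions are correct and useful as far as they go: the central character, the vanishing in degrees $\ge 2$, the observation that any highest weight of $N:=\Gamma_1(G/P,L_{s\cdot\Lambda}(\fp))$ must lie in $\Lambda-\Gamma^+$, and the reduction of the problem to the finitely many atypical, non-relatively-generic $\Lambda$. But you explicitly stop short at the decisive step — showing that a $\fg_{\oa}$-highest weight vector of weight $<\Lambda$ is not annihilated by $\fg_1$, equivalently that $\Soc N\cong L_\Lambda$ — and you acknowledge that you do not know how to close it without detailed case-by-case structure theory. As written, this is a genuine gap, not a deferred routine verification: the bounds on composition factors and on $\Res N$ do not by themselves control the $\fg_1$-action, and the socle of $N$ is exactly the kind of module-theoretic information that cannot be read off from those bounds.

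The paper's proof closes this gap by a different, and much shorter, device that avoids the atypical block structure entirely. Pick $k\in\N$ so that $\Lambda+k\delta$ is typical (here $2\delta$ spans $\fg_2$). Then Theorem~\ref{BBWtypical} gives $\Gamma_i(G/P,L_{s\cdot(\Lambda+k\delta)}(\fp))=\delta_{i1}L_{\Lambda+k\delta}$. Since $s\cdot(\Lambda+k\delta)=s\cdot\Lambda-k\delta$, the $\fp$-module $L_{s\cdot(\Lambda+k\delta)}(\fp)\otimes\res^{\fg}_{\fp}L_{k\delta}$ has highest weight $s\cdot\Lambda$ (with multiplicity one), giving a short exact sequence of $\fp$-modules $L_{s\cdot\Lambda}(\fp)\hookrightarrow L_{s\cdot(\Lambda+k\delta)}(\fp)\otimes L_{k\delta}\twoheadrightarrow N'$. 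Applying $\Gamma_\bullet(G/P,-)$, using Corollary~\ref{degvanish} (so $\Gamma_2=0$) and Corollary~\ref{tensorfin} (so the middle term gives $L_{\Lambda+k\delta}\otimes L_{k\delta}$ in degree~$1$), one obtains an injection $\Gamma_1(G/P,L_{s\cdot\Lambda}(\fp))\hookrightarrow L_{\Lambda+k\delta}\otimes L_{k\delta}$. Since the lowest weight of $L_{k\delta}$ is $-k\delta$, no highest weight vector of $L_{\Lambda+k\delta}\otimes L_{k\delta}$ has weight below $\Lambda$, which is exactly the statement of the lemma. I'd encourage you to compare: your route, if you could finish it, would actually pin down $\Soc N$, which is stronger than the lemma; but the paper only needs the absence of low highest weight vectors, and the tensoring trick gets precisely that, uniformly, with no case analysis.
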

\begin{proof}
Denote the positive root in $\fg_2$ by $2\delta$. We consider a $k\in\N$  such that $\Lambda+k\delta$ is typical. Theorem \ref{BBWtypical} thus implies that we have
$$ \Gamma_i(G/P, L_{s\cdot(\Lambda+k\delta)})=\delta_{i1} L_{\Lambda+k\delta}.$$ 
Since $s\cdot (\Lambda+k\delta)=s\cdot\Lambda -k\delta$, we have a short exact sequence of $\fp$-modules
$$L_{s\cdot \Lambda}(\fp)\hookrightarrow L_{s\cdot(\Lambda+k\delta)}(\fp)\otimes L_{k\delta} \tto N,$$
for some $\fp$-module $N$. We apply the right exact functor $\Gamma(G/P,-)$ to this short exact sequence, using Corollary \ref{degvanish} and Corollary \ref{tensorfin}, which yields an exact sequence
$$0\to\Gamma_1(G/P,L_{s\cdot\Lambda}(\fp))\to  L_{\Lambda+k\delta}\otimes L_{k\delta}.$$
The Weyl group invariance of $\ch L_{k\delta}$, shows that $-k\delta$ is the lowest weight appearing in $L_{k\delta}$, which implies that the lowest possible weight of a non-zero highest weight vector in $L_{\Lambda+k\delta}\otimes L_{k\delta}$ is $\Lambda$.
\end{proof}

\begin{proof}[Proof of Theorem \ref{excep}]
We prove that $\Gamma_1(G/B,L_{s\cdot\Lambda})=K_{\Lambda}^{\vee}$. By Theorem \ref{thmBBW}(i) we can replace the Borel subalgebra by $\fp=\fg_0\oplus\fg_1\oplus\fg_2$. The remainder of the theorem then follows from Corollary \ref{degvanish} and Theorem \ref{thmsimple}, see also Remark \ref{remarkmaxtyp}.

If $\Lambda$ is typical, the result follows from Theorem \ref{BBWtypical}. Is $\Lambda$ is atypical, the results in \cite{Germoni, Lilit, SuZha} imply that there are three possibilities for $K_{\Lambda}$
\begin{itemize}
\item $K_\Lambda$ is of length 2, $s\cdot\Lambda\not\in\cP^+$;
\item $K_\Lambda$ is of length 3, $s\cdot\Lambda\not\in\cP^+$, there exists no extension between the two simple subquotients in the maximal submodule of $K_\Lambda$
\item $K(\Lambda)=L(\Lambda)$, $s\cdot\Lambda\in \cP^+$ (sometimes actually $s\cdot\Lambda=\Lambda$)
\end{itemize}

For the first two cases, the cohomologies $\Gamma_i(G/P,L_{s\cdot\Lambda}(\fp))$ are clearly contained in the first degree. The only possibility allowed by Lemma \ref{Euler} and Lemma \ref{lemmaDFG} is $\Gamma_i(G/P,L_{s\cdot\Lambda}(\fp))=K_\Lambda^{\vee}$. In the third case, we have the identity
$$\cE(\Lambda)=\ch L(\Lambda)-\ch L(s\cdot\Lambda),$$ see \cite{Germoni, Lilit, SuZha}, while $K(\Lambda)\cong L(\Lambda)$ and $K(s\cdot\Lambda)\cong L(s\cdot\Lambda)$. The result then follows from Lemma \ref{Euler}.
\end{proof}

\begin{remark}
We could have avoided using Lemma \ref{lemmaDFG}, by using the Serre duality derived by Penkov in the sheaf-theoretical approach to BBW theory in \cite{MR0957752}:
\begin{equation}\label{eqSerreD}\Gamma_k(G/B,L_{\lambda}(\fb))\cong \Gamma_{d-k}(G/B, L_{-\lambda-2\rho}(\fb))^\ast,\end{equation}
with $d=\fn_{\oa}^\ast$. The combinantion of this with the results in \cite{Germoni, Lilit, SuZha} also leads to Theorem \ref{excep}. Equation \eqref{eqSerreD} also demonstrates that cohomology of BBW theory will not always lead to highest weight modules. For type I the Kac modules satisfy $K_\lambda^\ast=K_{-w_0(\lambda)+2\rho_1}$, but duals of arbitrary generalised Kac modules will not always be generalised Kac modules.

\end{remark}


\section{Homological algebra and projective modules for $\mathfrak{osp}(m|2)$}
\label{sec32}
In this section we study homological algebra for the Lie superalgebra $\mathfrak{osp}(m|2)$. Since we will not use the BGG category $\cO$ here, we denote the indecomposable projective covers in $\cF$ simply by $P_\Lambda$.

First we repeat some results of Su and Zhang in \cite{SuZha} and Gruson and Serganova in \cite{MR2734963}. The defect of $\mathfrak{osp}(m|2)$ is $1$, so every atypical central character is singly atypical. By \cite{MR2734963}, we therefore know that every atypical block in $\cF$ for $\mathfrak{osp}(m|2)$ is equivalent to an atypical block in $\mathfrak{osp}(3|2)$ if $m$ is odd, or $\mathfrak{osp}(4|2)$ or $\mathfrak{osp}(2|2)\cong \mathfrak{sl}(2|1)$ if $m$ is even. The quiver diagrams of these categories can therefore be obtained from the ones in \cite{Germoni}.
\begin{lemma}
\label{quivers}
(i) If $\fg=\mathfrak{osp}(2d+1|2)$, the quiver diagram of $\cF_\chi$, for $\chi$ an atypical central character is equal to the Dynkin diagram of type $D_{\infty}$.

(ii) If $\fg=\mathfrak{osp}(2d|2)$, the quiver diagram of $\cF_\chi$, for $\chi$ an atypical central character is either equal to the Dynkin diagram of type $D_{\infty}$, or of type $A^\infty_{\infty}$.
\end{lemma}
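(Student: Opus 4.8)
The plan is to reduce the computation of the quiver diagrams for atypical blocks of $\cF$ over $\mathfrak{osp}(m|2)$ to the small rank cases already handled in the literature, and then to quote the explicit answer. First I would invoke the equivalence of blocks established by Gruson and Serganova in \cite{MR2734963}: since $\mathfrak{osp}(m|2)$ has defect one, every atypical central character is singly atypical, and the general equivalence of blocks of the same atypicality type (via translation functors between the relevant highest weight categories) identifies every atypical block $\cF_\chi$ with an atypical block of $\mathfrak{osp}(3|2)$ when $m=2d+1$ is odd, and with an atypical block of $\mathfrak{osp}(4|2)$ or of $\mathfrak{osp}(2|2)\cong\mathfrak{sl}(2|1)$ when $m=2d$ is even. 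The split in the even case according to the position of the atypical weight is exactly the dichotomy that produces the two possible diagram types.

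Next I would quote the quiver diagrams for these base cases. For $\mathfrak{osp}(3|2)$ and $\mathfrak{osp}(4|2)$ these were computed by Germoni in \cite{Germoni}, and for $\mathfrak{sl}(2|1)$ (equivalently $\mathfrak{osp}(2|2)$) the atypical blocks are the classical singly atypical blocks whose quiver is the $A^\infty_\infty$ chain. Matching these against the equivalences from the previous step: the odd case $\mathfrak{osp}(2d+1|2)$ always yields the $D_\infty$ diagram; the even case $\mathfrak{osp}(2d|2)$ yields $D_\infty$ when the block is equivalent to the relevant $\mathfrak{osp}(4|2)$-block and $A^\infty_\infty$ when it is equivalent to an $\mathfrak{sl}(2|1)$-block. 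One should also note that the quiver (Ext-quiver, with vertices the simple modules in the block and arrows counting $\dim\Ext^1$ between consecutive simples) is preserved by an equivalence of categories, so the identification of diagrams is immediate once the block equivalences are in place.

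The only real content beyond citation is making sure the block equivalences of \cite{MR2734963} are applied in the correct generality — in particular that they genuinely send each atypical $\cF_\chi$ for arbitrary $m$ to one of the three small-rank model blocks, and that the even/odd and the $D_\infty$ versus $A^\infty_\infty$ distinction comes out as stated. This is the step I expect to require the most care, since it rests on tracking which model block a given atypical weight lands in; but it is essentially bookkeeping with the combinatorics of singly atypical weights and their associated $\vartheta$-reduced diagrams, and no new idea is needed. Everything else is a direct appeal to \cite{Germoni, MR2734963} together with the standard fact that the Ext-quiver is a categorical invariant.
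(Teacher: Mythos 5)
Your proposal is correct and is essentially identical to the paper's argument: the paper also reduces via the Gruson--Serganova block equivalence (defect one, singly atypical) to the model blocks of $\mathfrak{osp}(3|2)$, $\mathfrak{osp}(4|2)$ and $\mathfrak{sl}(2|1)$, and then cites Germoni for the quiver diagrams (with Su--Zhang's Theorem 4.2 offered as an alternative route). The only thing the paper does not dwell on, and which you rightly flag as the only point needing care, is the bookkeeping of which model block a given atypical weight lands in; this is taken as implicit in the cited equivalence.
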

These results also follow from interpreting Theorem 4.2 in \cite{SuZha}. We follow the notation for the weights introduced in Definition 2.9 in \cite{SuZha} for the remainder of this section. The structure of the Kac modules can be obtained from Theorem 4.2 in \cite{SuZha}, while the characters of finite dimensional modules are described in Proposition 4.6 in \cite{SuZha}. As in Section \ref{seccomplres}, we denote by $s\in W$ the simple reflection for the root which is not simple in $\Delta^+$. 
\begin{lemma}
\label{structKacospm2}
Consider $\fg=\mathfrak{osp}(m|2)$. If the quiver diagram of the block $\cF_\chi$ is of type $D_\infty$, the integral dominant weights corresponding to $\cF_{\chi}$ are given by a set $\{\lambda^{(0)},\lambda^{(1)},\lambda^{(2)},\cdots\}$ and the Kac modules have the following description:
$$L_{\lambda^{(k-1)}}\hookrightarrow K_{\lambda^{(k)}}\tto L_{\lambda^{(k)}}\quad\mbox{if } k\ge 2,$$
\vspace{-7mm}
$$L_{\lambda^{(0)}}\oplus L_{\lambda^{(1)}}\hookrightarrow K_{\lambda^{(2)}}\tto L_{\lambda^{(2)}},\quad$$
\vspace{-6mm}
$$K_{\lambda^{(1)}}\cong L_{\lambda^{(1)}}\quad\mbox{and}\quad K_{\lambda^{(0)}}\cong L_{\lambda^{(0)}}.$$
Furthermore, we have $\ch K_{\lambda^{(k)}}=\cE(\lambda^{(k)})$ if $k\ge 2$ and $\cE(\lambda^{(1)})=\ch L_{\lambda^{(1)}}-\ch L_{\lambda^{(0)}}$.

If the quiver diagram of the block $\cF_\chi$ is given by $A^\infty_\infty$, the integral dominant weights are given by a set $\{\cdots,\lambda^{(2)}_-,\lambda^{(1)}_-,\lambda^{(0)}_-=\lambda^{(0)}_+,\lambda^{(1)}_+,\lambda^{(2)}_+,\cdots\}$ and the Kac modules have the following description:
$$L_{\lambda^{(k-1)}_{\pm}}\hookrightarrow K_{\lambda^{(k)}_{\pm}}\tto L_{\lambda^{(k)}_{\pm}}\quad\mbox{if } k\ge 1, \quad\mbox{ and }K_{\lambda^{(0)}_+}\cong L_{\lambda^{(0)}_+}.$$
Furthermore, we have $\ch K_{\lambda^{(k)}_{\pm}}=\cE(\lambda^{(k)}_{\pm})$ if $k\ge 1$.
\end{lemma}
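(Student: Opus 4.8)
The plan is to deduce this statement from the classification of atypical blocks of $\cF$ for $\mathfrak{osp}(m|2)$ together with the structural results of Su and Zhang. First I would recall that $\mathfrak{osp}(m|2)$ has defect one, so every atypical central character is singly atypical, and that by the block equivalences of Gruson and Serganova in \cite{MR2734963}, combined with Lemma \ref{quivers}, the Ext-quiver of an atypical block $\cF_\chi$ is the Dynkin diagram of type $D_\infty$ (always when $m$ is odd) or of type $A^\infty_\infty$ (which can occur when $m$ is even). This already pins down the indexing set of integral dominant weights attached to $\cF_\chi$: a half-line with a fork at one end in the $D_\infty$ case, producing the two exceptional weights $\lambda^{(0)},\lambda^{(1)}$, and a full line with one distinguished central weight $\lambda^{(0)}_+$ in the $A^\infty_\infty$ case.

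Next I would invoke Theorem 4.2 of \cite{SuZha}, which describes the radical filtration of each Kac module of $\mathfrak{osp}(m|2)$ in the weight parametrisation of Definition 2.9 in loc.\ cit. The work here is to translate that parametrisation into the uniform notation $\lambda^{(k)}$, $\lambda^{(k)}_{\pm}$ used above and to read off the displayed short exact sequences: the top of $K_\Lambda$ is always $L_\Lambda$ by the very definition of the Kac module (it is a highest weight module with $\Top K_\Lambda\cong L_\Lambda$), while the radical is the simple module $L_{\lambda^{(k-1)}}$ (respectively $L_{\lambda^{(k-1)}_{\pm}}$) for the interior weights, the semisimple module $L_{\lambda^{(0)}}\oplus L_{\lambda^{(1)}}$ for the fork weight $\lambda^{(2)}$, and zero for $\lambda^{(0)},\lambda^{(1)}$ and $\lambda^{(0)}_+$, for which $K_\Lambda$ is already simple.

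Finally, for the character identities I would combine the description of $\ch L_\Lambda$ in Proposition 4.6 of \cite{SuZha} with the explicit Weyl-type expression \eqref{formEuler} for $\cE(\Lambda)$, using that here $W\cong\Z_2\times W(\fg_0:\fh)$, so that the sum over $W$ in \eqref{formEuler} splits into two halves exchanged by $s$. From the radical filtrations just obtained, $\ch K_{\lambda^{(k)}}$ is an explicit finite sum of the $\ch L_{\lambda^{(j)}}$ occurring in the block, and the equality $\ch K_{\lambda^{(k)}}=\cE(\lambda^{(k)})$ (for $k\ge 2$, respectively $k\ge 1$ in the $A^\infty_\infty$ case) reduces to a finite combinatorial check; the one genuine exception is the fork, where the two exceptional simple modules combine to give $\cE(\lambda^{(1)})=\ch L_{\lambda^{(1)}}-\ch L_{\lambda^{(0)}}$ rather than the character of a Kac module. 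I expect the main obstacle to be entirely bookkeeping --- reconciling the weight conventions of \cite{SuZha} with the $\lambda^{(k)}$-notation used here, and tracking the two exceptional weights of the $D_\infty$ fork through the block equivalences of \cite{MR2734963} --- rather than any substantively new argument.
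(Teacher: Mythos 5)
Your proposal matches the paper's own (unwritten) justification almost exactly: the paper precedes the lemma by recording the block equivalences of Gruson--Serganova, the quiver diagrams via Germoni (Lemma \ref{quivers}), and then states that the Kac module structure is read off from Theorem 4.2 of \cite{SuZha} while the characters come from Proposition 4.6 of \cite{SuZha}. One small simplification you can make is that the $\lambda^{(k)}$, $\lambda^{(k)}_\pm$ notation is not the paper's own but is taken verbatim from Definition 2.9 of \cite{SuZha}, so the "translation" step you flag as the main bookkeeping obstacle is in fact essentially void.
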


We note that the weights $\lambda^{(k)}$ and $\lambda^{(k)}_{\pm}$ are defined in \cite{SuZha} for $k\in\Z$, but are not dominant if $k\le 0$. The principle $s\cdot \lambda^{(k)}=\lambda^{(1-k)}$ and $s\cdot\lambda_{\pm}^{(k)}=\lambda_{\pm}^{(-k)}$ holds.

In the remainder of this section we will state results on an arbitrary block by use of the abstract notation $\lambda^{(k)},\lambda^{(k)}_+,\lambda^{(k)}_-$. From notation it is therefore clear which type of block is considered.
\subsection{Projective modules}

\begin{proposition}
\label{projospm2}
For $k\ge 2$, the projective modules $P_{\lambda^{(k)}}$ satisfy
$$K_{\lambda^{(k+1)}}\hookrightarrow P_{\lambda^{(k)}}\tto K_{\lambda^{(k)}},$$
and the length of the radical layer structure is three. For $k\in\{1,2\}$ the radical layer structure of $P_{\lambda^{(k)}}$ is
$$L_{\lambda^{(k)}}\qquad\quad L_{\lambda^{(2)}}\qquad\quad L_{\lambda^{(k)}}.$$

For $k\ge 1$, the projective modules $P_{\lambda_{\pm}^{(k)}}$ satisfy
$$K_{\lambda^{(k+1)}_{\pm}}\hookrightarrow P_{\lambda_{\pm}^{(k)}}\tto K_{\lambda^{(k)}_\pm},$$
and the length of the radical layer is three. The radical layer structure of $P_{\lambda^{(0)}}$ is
$$L_{\lambda_+^{(0)}}\qquad\quad L_{\lambda^{(1)}_+}\oplus L_{\lambda^{(1)}_-}\qquad\quad L_{\lambda_+^{(0)}}.$$
\end{proposition}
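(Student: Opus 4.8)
The plan is to reduce the problem to three facts about $\cF$ that are already available: the quiver of each block (Lemma \ref{quivers}), the composition series of the Kac modules together with the Euler characteristics $\cE(\Lambda')$ on $\widetilde{\cP}^+$ (Lemma \ref{structKacospm2}), and the virtual BGG reciprocity \eqref{BGGrecGS}. A typical block is semisimple, so there $P_\Lambda=L_\Lambda=K_\Lambda$ and nothing is to be shown; so I fix an atypical block $\cF_\chi$, of quiver type $D_\infty$ or $A^\infty_\infty$. I will use throughout that in such a block $P_\Lambda$ is self-dual for $\vee$, equivalently is the injective hull of $L_\Lambda$, so that $\Soc P_\Lambda\cong L_\Lambda$; for $\mathfrak{osp}(m|2)$ this is a consequence of the block equivalences of Lemma \ref{quivers} together with the explicit description of those blocks in \cite{Germoni}, or of the self-duality of the $\fg$-module $\cR$ together with Corollary \ref{projcover}.

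First I would compute $\ch P_\Lambda$. Reading the second identity of \eqref{BGGrecGS} as $a_{\Lambda,\Lambda'}=[\cE(\Lambda'):L_\Lambda]$ and substituting the explicit $\cE(\Lambda')$ of Lemma \ref{structKacospm2} for $\Lambda'\in\widetilde{\cP}^+$ --- in a $D_\infty$ block, $\widetilde{\cP}^+$ consists of the $\lambda^{(k)}$ with $k\ge1$, where $\cE(\lambda^{(k)})=\ch K_{\lambda^{(k)}}$ for $k\ge2$ and $\cE(\lambda^{(1)})=\ch L_{\lambda^{(1)}}-\ch L_{\lambda^{(0)}}$, and similarly in an $A^\infty_\infty$ block --- the first identity $\ch P_\Lambda=\sum_{\Lambda'}a_{\Lambda,\Lambda'}\cE(\Lambda')$ yields $\ch P_{\lambda^{(k)}}=\ch K_{\lambda^{(k)}}+\ch K_{\lambda^{(k+1)}}$ for $k\ge2$, $\ch P_{\lambda^{(k)}}=2\ch L_{\lambda^{(k)}}+\ch L_{\lambda^{(2)}}$ for $k\in\{0,1\}$, as well as $\ch P_{\lambda^{(0)}_+}=\ch K_{\lambda^{(1)}_+}+\ch K_{\lambda^{(1)}_-}$ and $\ch P_{\lambda^{(k)}_\pm}=\ch K_{\lambda^{(k)}_\pm}+\ch K_{\lambda^{(k+1)}_\pm}$ for $k\ge1$. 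The cancellation of $\ch L_{\lambda^{(0)}}$ coming from $\cE(\lambda^{(1)})$ is exactly what keeps $L_{\lambda^{(0)}}$ out of $P_{\lambda^{(1)}}$ and is the reason naive BGG reciprocity fails in these blocks.

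Next I would read off the radical layers and the Kac filtrations. Since $P_\Lambda$ is the projective cover of $L_\Lambda$, its head is $L_\Lambda$ and $\Rad P_\Lambda/\Rad^2 P_\Lambda$ is the direct sum of the $L_\mu$ with $\mu$ a neighbour of $\Lambda$ in the quiver of $\cF_\chi$; comparing with the characters above, in each case exactly one copy of $L_\Lambda$ is left over for $\Rad^2 P_\Lambda$, so $\Rad^2 P_\Lambda\cong L_\Lambda$ and $\Rad^3 P_\Lambda=0$, which is precisely the three-layer structure asserted. For an interior weight $\lambda^{(k)}$ with $k\ge2$ there is moreover a surjection $P_{\lambda^{(k)}}\tto K_{\lambda^{(k)}}$ (both have head $L_{\lambda^{(k)}}$), whose kernel $Q$ has $\ch Q=\ch K_{\lambda^{(k+1)}}$ and hence composition factors $L_{\lambda^{(k)}}$ and $L_{\lambda^{(k+1)}}$; as $Q\subseteq P_{\lambda^{(k)}}$ and $\Soc P_{\lambda^{(k)}}\cong L_{\lambda^{(k)}}$, the module $Q$ must be the non-split extension with socle $L_{\lambda^{(k)}}$, which by Lemma \ref{structKacospm2} is $K_{\lambda^{(k+1)}}$, so that $K_{\lambda^{(k+1)}}\hookrightarrow P_{\lambda^{(k)}}\tto K_{\lambda^{(k)}}$; the argument for $P_{\lambda^{(k)}_\pm}$ with $k\ge1$ is identical.

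I expect the main obstacle to lie in two places. The identification $Q\cong K_{\lambda^{(k+1)}}$ really does need $\Soc P_{\lambda^{(k)}}\cong L_{\lambda^{(k)}}$ to exclude the split kernel $L_{\lambda^{(k)}}\oplus L_{\lambda^{(k+1)}}$, and for the smallest interior weight, $k=2$, one cannot instead appeal to Theorem \ref{BGGrecGSC} since $\lambda^{(2)}$ is not relatively generic; thus the self-injectivity of $\cF_\chi$ is an essential input and pinning it down (or quoting it) is the delicate point. The second place is the shallow weights $\lambda^{(0)},\lambda^{(1)}$ (and $\lambda^{(0)}_+$), which admit \emph{no} Kac filtration: there one argues purely from the character and the quiver, the nontrivial step being that the sign in $\cE(\lambda^{(1)})$ genuinely removes the spurious composition factor, so that head, quiver and character together leave only the uniserial module with layers $L_{\lambda^{(k)}},L_{\lambda^{(2)}},L_{\lambda^{(k)}}$, rather than a longer module carrying a $K_{\lambda^{(2)}}$-subquotient. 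As a shortcut, the whole proposition can also be obtained by transporting the projective modules of the low-rank superalgebras occurring in Lemma \ref{quivers} --- computed in \cite{Germoni} --- through those block equivalences and matching their subquotients with the Kac modules of Lemma \ref{structKacospm2}.
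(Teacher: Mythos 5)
Your proof follows essentially the same route as the paper: derive the Jordan--H\"older multiplicities of the projectives from the virtual BGG reciprocity \eqref{BGGrecGS} together with the explicit Euler characteristics in Lemma \ref{structKacospm2}; invoke self-duality/injectivity of the $P_\Lambda$ to pin down the simple socle $L_\Lambda$; read off the radical layers from the quiver in Lemma \ref{quivers}; and identify the kernel of $P_\Lambda\tto K_\Lambda$ with $K_{\lambda^{(k+1)}}$ by its character and its simple socle. The paper is terser and also mentions (as an alternative to the character computation) the Euler--Poincar\'e principle combined with Lemma \ref{KostProj}, but the logical skeleton is identical. One small inaccuracy in your side remark: you say one cannot appeal to Theorem \ref{BGGrecGSC} for $\lambda^{(2)}$ because it is not relatively generic, but the remark following the proposition in the paper singles out only $\lambda^{(0)},\lambda^{(1)},\lambda^{(0)}_+$ as failing relative $\widetilde{\Gamma}$-genericness, so $\lambda^{(2)}$ does lie in the relatively generic region. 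This does not affect your proof, since you use self-injectivity uniformly and never actually invoke Theorem \ref{BGGrecGSC}; you also correctly read the misprinted index set $\{1,2\}$ as $\{0,1\}$ for the shallow weights.
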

\begin{proof}
The Jordan-H\"older decomposition series of the projective modules follows from comparing the BGG reciprocity in Theorem 1 of \cite{Gruson2} with the characters of the Kac modules in Lemma \ref{structKacospm2}. Alternatively they follow quickly from the Euler-Poincar\'e principle and the subsequent Lemma \ref{KostProj}.

Since $P_\Lambda$ is also the indecomposable injective envelope of $L_\Lambda$, we have $\Soc P_\Lambda= L_\Lambda$. The fact that $\Rad (P_\Lambda/L_\Lambda)$ is semisimple follows from its decomposition series and the quiver diagrams in Lemma \ref{quivers}.

The filtration by Kac modules follows from the fact that $P_\Lambda$ projects onto $K_\Lambda$ and the fact the kernel of this morphism is a module with simple socle $L_\Lambda$.
\end{proof}

\begin{remark}
This result provides examples of Theorem \ref{BGGrecGSC}. It also shows that the projective modules exceptional weights $\lambda^{(1)},\lambda^{(0)},\lambda^{(0)}_+$, which are not relatively $\widetilde{\Gamma}$-generic, do not have a filtration by Kac modules.
\end{remark}

\subsection{Bernstein-Gelfand-Gelfand resolutions}
We call a resolution by (generalised) Verma modules a BGG resolution and a resolution by modules with a filtration by generalised Verma modules a weak BBG resolution, see \cite{lepowsky}.
According to Section 8 in \cite{BGG} and Remark \ref{MorGorelik} we have the following conclusions for basic classical Lie superalgebras with distinguished Borel subalgebra and a parabolic subalgebra such that the Levi subalgebra is even.\begin{itemize}
\item If $\fg$ is of type I, then Kac modules and typical simple modules have a finite BGG resolution.
\item If $\fg$ is of type II, then typical simple modules have a finite BGG resolution.
\end{itemize}
In this section we look at such resolutions for Kac modules and Lie superalgebras of type II. We obtain the following conclusions:
\begin{theorem}
\label{sumBGG}
Consider $\fg$ a basic classical Lie superalgebra of type II, with parabolic subalgebra $\fp=\fg_0\oplus\fg_1\oplus\fg_2$.
\begin{itemize}
\item An atypical Kac module never has a finite (weak) BGG resolution.
\item If $\fg=\mathfrak{osp}(m|2)$, then an atypical Kac module either has a infinite BGG resolution or a finite resolution by twisted generalised Verma modules.
\end{itemize}
\end{theorem}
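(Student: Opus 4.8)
The plan is to analyse the two bullet points separately, exploiting the explicit structural information about Kac modules and projective modules for $\mathfrak{osp}(m|2)$ from Lemma \ref{structKacospm2} and Proposition \ref{projospm2}, together with the general homological machinery of the previous sections. For the first bullet, I would argue by a character/Euler-characteristic obstruction. If an atypical Kac module $K_\Lambda$ admitted a finite (weak) BGG resolution $0\to C_n\to\cdots\to C_0\to K_\Lambda\to 0$, each $C_i$ being a finite direct sum of (generalised) Verma modules or modules with such a filtration, then applying the Euler--Poincar\'e principle gives $\ch K_\Lambda=\sum_i(-1)^i\ch C_i$, a finite $\Z$-linear combination of Verma characters $\ch M_\mu$. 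Since the Verma characters are linearly independent, and since $\ch K_\Lambda$ for an atypical $\Lambda$ is \emph{not} such a finite alternating sum — this is exactly what the formula $\cE(\Lambda')$ in \eqref{formEuler}/\eqref{BGGrecGS} and the length-$2$ or length-$3$ descriptions in Lemma \ref{structKacospm2} make visible, in contrast to the typical case where $\ch K_\Lambda$ equals a single $\cE$ which \emph{is} a finite alternating Verma sum via Lemma \ref{Euler} — we reach a contradiction. Concretely, for $k\ge 2$, $\ch K_{\lambda^{(k)}}=\cE(\lambda^{(k)})$ is a finite Verma sum, but the subquotient structure forces $K_{\lambda^{(k)}}$ to have infinite length as a module with a Verma flag would require; the cleanest way is to note that a finite weak BGG resolution would make $K_\Lambda$ have finite projective dimension in the category generated by Vermas, whereas the socle/radical data (the self-extension-free but infinitely-linked quiver of type $D_\infty$ or $A_\infty^\infty$ in Lemma \ref{quivers}) forces infinite projective dimension. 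I would phrase the first bullet through this projective-dimension obstruction, since it is robust and uses only Lemma \ref{quivers} and Proposition \ref{projospm2}.

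For the second bullet, restricted to $\mathfrak{osp}(m|2)$, the plan is constructive. Using Lemma \ref{structKacospm2}, for a $D_\infty$-block an atypical non-exceptional Kac module $K_{\lambda^{(k)}}$ ($k\ge 2$) sits in $L_{\lambda^{(k-1)}}\hookrightarrow K_{\lambda^{(k)}}\tto L_{\lambda^{(k)}}$, and iterating — $L_{\lambda^{(k-1)}}$ is itself the top of $K_{\lambda^{(k-1)}}$ modulo $L_{\lambda^{(k-2)}}$, etc. — one resolves $K_{\lambda^{(k)}}$ by an infinite sequence of Kac modules $K_{\lambda^{(k-1)}}, K_{\lambda^{(k-2)}},\dots$ down to the exceptional end of the block. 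Since each $K_{\lambda^{(j)}}$ for $j\ge 2$ has $\ch K_{\lambda^{(j)}}=\cE(\lambda^{(j)})$ and, for $\fg$ of type II with the distinguished Borel, a Kac module with such a character has a (weak) generalised Verma flag — this is where I would invoke BBW theory for the parabolic $\fp=\fg_0\oplus\fg_1\oplus\fg_2$ together with Corollary \ref{charKrelgen}/Theorem \ref{thmBBW} identifying $\Gamma_0$ with generalised Verma-type modules — the displayed resolution is by modules with generalised Verma flags, i.e. a \emph{weak} BGG resolution; and after untwisting by the twisting functors $T_w$ (Section \ref{prelsec4}, Lemma \ref{TVerma}), using $T_s M_\Lambda = M_{s\cdot\Lambda}$ in the typical-side direction, one replaces the generalised Verma modules appearing at the $s$-linked end by \emph{twisted} generalised Verma modules, yielding the stated "finite resolution by twisted generalised Verma modules" for the exceptional/generic weights where the resolution truncates. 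The dichotomy "infinite BGG versus finite twisted-generalised-Verma" then matches precisely the two ends of the $D_\infty$ (resp. $A_\infty^\infty$) quiver: away from the branch point the chain of Kac-module syzygies never terminates with honest Verma modules (infinite BGG resolution), while near the exceptional weights $\lambda^{(0)},\lambda^{(1)},\lambda^{(0)}_\pm$ the resolution closes up after finitely many steps once a twist is allowed.

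The main obstacle I anticipate is the second bullet's finiteness claim: making precise \emph{which} atypical Kac modules fall into the "finite resolution by twisted generalised Verma modules" case and verifying that the twisting functors $T_w$ genuinely convert the relevant generalised Verma modules into objects with a twisted-generalised-Verma filtration without introducing higher $\cL_iT_w$ terms. This requires controlling $\cL_i T_s$ on the specific generalised Verma modules at the branch point of the quiver, where $\langle\lambda,\alpha^\vee\rangle$ can be negative and Lemma \ref{TVerma} does not directly apply; I would handle this by a short exact sequence argument reducing to $\mathfrak{sl}(2)$-type computations inside the $\mathfrak{sl}(2)+\fg_0$ decomposition, combined with the explicit radical layers in Proposition \ref{projospm2} to verify exactness degree by degree. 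The first bullet, by contrast, I expect to be routine once the projective-dimension obstruction is set up.
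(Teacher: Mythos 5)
Your argument for the first bullet point has a genuine gap. You propose two obstructions, and neither one works as stated. The Euler--characteristic obstruction fails outright: by Lemma \ref{structKacospm2}, for $k\ge 2$ one has $\ch K_{\lambda^{(k)}}=\cE(\lambda^{(k)})$, and by Lemma \ref{Euler} this \emph{is} a finite alternating $\Z$-linear combination of Verma characters; you even observe this yourself mid-argument, which undercuts the claimed contradiction. The quiver/projective-dimension obstruction is also not sound as written: a finite weak BGG resolution is a resolution by (modules filtered by) generalised Verma modules, which are \emph{not} projective in $\cF$, so it does not directly produce finite projective dimension in $\cF$; and the $D_\infty$/$A_\infty^\infty$ quiver data alone does not connect to the existence or nonexistence of Verma-flag resolutions. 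The paper's argument is different and simpler: a (generalised) Verma module is free over $\overline{\fn}$ (equivalently $\overline{\fu}$), so a finite weak BGG resolution of $K_\Lambda$ would be a finite free $\overline{\fn}$-resolution. Restricting to the one-dimensional odd Lie algebra spanned by any self-commuting $x\in\overline{\fn}$ (whose enveloping algebra is a rank-$1$ Grassmann algebra, hence self-injective), a finite free resolution of the finite-dimensional module $K_\Lambda$ forces $K_\Lambda$ to be free over that subalgebra. By the Duflo--Serganova associated-variety theorem \cite{Duflo}, for type II this implies $K_\Lambda$ is projective in $\cF$, contradicting atypicality. Your approach never reaches the step that converts $\overline{\fn}$-freeness into projectivity in $\cF$, which is the crux.

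For the second bullet point, your constructive approach is considerably more involved than what the paper does, and in fact it goes in the wrong direction in places. The paper simply reads the dichotomy off Theorem \ref{BGG32} (which rests on Su--Zhang's explicit description of the Kac modules): the exceptional weights $\lambda^{(0)},\lambda^{(1)},\lambda^{(0)}_+$ get an explicit \emph{infinite} BGG resolution by genuine generalised Verma modules $\overline{K}_{\lambda^{(-j)}}$, whereas the non-exceptional atypical Kac modules have the short exact sequence $0\to N_{\lambda^{(k)}}\to\overline{K}_{\lambda^{(k)}}\to K_{\lambda^{(k)}}\to 0$, where $N_{\lambda^{(k)}}$ has the character of a generalised Verma module but the wrong $\overline{\fn}$-homology, hence is the claimed \emph{twisted} generalised Verma module. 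You are right that one has to worry about $\cL_iT_w$ and about where $\langle\lambda,\alpha^\vee\rangle<0$; but the paper sidesteps this entirely by not producing the finite resolutions via twisting functors at all --- the twisted generalised Verma module is identified directly from $H_0(\overline{\fn},N_{\lambda^{(k)}})=\C_{\lambda^{(1-k)}}\oplus\C_{\lambda^{(-k)}}$. Your plan would need substantial additional work to control the higher derived twists, work that Theorem \ref{BGG32} makes unnecessary.
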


We use the notation as in equation \eqref{KacII} and the beginning of this section.
\begin{theorem}
\label{BGG32}
Consider $\fg=\mathfrak{osp}(m|2)$. For $\lambda\in\cP^+$ typical and parabolic subalgebra $\fp=\fg_0+\fg_1+\fg_2$, the irreducible module $L_\lambda=K_{\lambda}$ has a BGG resolution
\[0\to \overline{K}_{s\cdot\lambda}\to\overline{K}_{\lambda}\to L_{\lambda}\to 0.\]

For $k\ge 2$, the Kac module $K_{\lambda^{(k)}}$ has a resolution of the form
$$0\to N_{\lambda^{(k)}}\to \overline{K}_{\lambda^{(k)}}\to K_{\lambda^{(k)}}\to 0,$$
where $\ch N_{\lambda^{(k)}}=\ch \overline{K}_{\lambda^{(1-k)}}$, but $H_0\left(\ofn, N_{\lambda^{(k)}}\right)=\C_{\lambda^{(1-k)}}\oplus \C_{\lambda^{(-k)}}$. For $k\in\{0,1\}$, the module $K_{\lambda^{(k)}}\cong L_{\lambda^{(k)}}$ has a BGG resolution of the form
\begin{eqnarray*}
\cdots \to \overline{K}_{\lambda^{(-j)}}\to\cdots \to\overline{K}_{\lambda^{(-2)}}\to \overline{K}_{\lambda^{(-1)}}\to \overline{K}_{\lambda^{(k)}}\to L_{\lambda^{(k)}}\to 0
\end{eqnarray*}

For $k\ge 1$, the Kac module $K_{\lambda^{(k)}_{\pm}}$ has a resolution of the form
$$0\to N_{\lambda^{(k)}_{\pm}}\to \overline{K}_{\lambda^{(k)}_{\pm}}\to K_{\lambda^{(k)}_{\pm}}\to 0,$$
where $\ch N_{\lambda^{(k)}_{\pm}}=\ch \overline{K}_{\lambda^{(-k)}_{\pm}}$, but $H_0\left(\ofn, N_{\lambda^{(k)}_{\pm}}\right)=\C_{\lambda^{(-k)}_{\pm}}\oplus \C_{\lambda^{(-k-1)}_{\pm}}$. The module $K_{\lambda^{(0)}_+}\cong L_{\lambda^{(0)}_+}$ has a BGG resolution of the form
\begin{eqnarray*}
&&\cdots \to  \overline{K}_{\lambda^{(-j)}_+}\oplus  \overline{K}_{\lambda^{(-j)}_-}\to\cdots \to \overline{K}_{\lambda^{(-2)}_+}\oplus  \overline{K}_{\lambda^{(-2)}_-}\\
&&\to \overline{K}_{\lambda^{(-1)}_+}\oplus  \overline{K}_{\lambda^{(-1)}_-}\to \overline{K}_{\lambda^{(0)}_+}\to L_{\lambda^{(0)}_+}\to 0.
\end{eqnarray*}
\end{theorem}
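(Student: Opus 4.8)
The plan is to build all these resolutions by splicing together the basic two-term complexes coming from a single odd reflection, exactly as in Lemma \ref{2resol}, and then to identify the terms using the known structure of Kac modules (Lemma \ref{structKacospm2}) and the typical BBW result (Theorem \ref{BBWtypical}). First I would handle the typical case: for $\lambda\in\cP^+$ typical, Theorem \ref{BBWtypical} applied to $\fp=\fg_0+\fg_1+\fg_2$ gives $\Gamma_k(G/P,L_{s\cdot\lambda}(\fp))=\delta_{k,1}L_\lambda$ and $\Gamma_k(G/P,L_\lambda(\fp))=\delta_{k,0}L_\lambda$; feeding these into Proposition \ref{propBBW}(iii)/Theorem \ref{BBWproj} together with the character identity $\cE(\lambda)=\ch\overline{K}_\lambda-\ch\overline{K}_{s\cdot\lambda}$ (from Lemma \ref{Euler}, since $W\cong\Z_2\times W(\fg_0:\fh)$ and $\overline K$ is a generalised Verma module for $\fp$) shows that $0\to\overline K_{s\cdot\lambda}\to\overline K_\lambda\to L_\lambda\to 0$ is exact: the map $\overline K_{s\cdot\lambda}\hookrightarrow\overline K_\lambda$ exists because $s\cdot\lambda<\lambda$ and the quotient has the character of $L_\lambda$ and the right top, hence equals $L_\lambda$ (using that in the typical block $L_\lambda=K_\lambda$).

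Next I would treat the atypical $D_\infty$-type block. For $k\ge 2$ I would apply $\Gamma(G/P,-)$ to a short exact sequence of $\fp$-modules of the form $L_{s\cdot\lambda^{(k)}}(\fp)\hookrightarrow L_{s\cdot(\lambda^{(k)}+j\delta)}(\fp)\otimes L_{j\delta}\tto N$ (as in Lemma \ref{lemmaDFG}), but the cleaner route is: Lemma \ref{Euler} gives $\cE(\lambda^{(k)})=\ch\overline K_{\lambda^{(k)}}-\ch\overline K_{s\cdot\lambda^{(k)}}=\ch\overline K_{\lambda^{(k)}}-\ch\overline K_{\lambda^{(1-k)}}$, and by Lemma \ref{structKacospm2} $\ch K_{\lambda^{(k)}}=\cE(\lambda^{(k)})$, so $\ch K_{\lambda^{(k)}}=\ch\overline K_{\lambda^{(k)}}-\ch\overline K_{\lambda^{(1-k)}}$. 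Since $\overline K_{\lambda^{(k)}}\tto K_{\lambda^{(k)}}$ by definition \eqref{KacII}, the kernel $N_{\lambda^{(k)}}$ has the character of $\overline K_{\lambda^{(1-k)}}$; its $H_0(\ofn,-)$ is computed from the explicit description of the kernel as the submodule of $\overline K_{\lambda^{(k)}}$ generated by $Y_{-\phi}^{b+1}\otimes L^0$ — I would show this submodule has two $\ofn$-lowest-weight (equivalently $\fn$-highest-weight after $\vee$) vectors, of weights $\lambda^{(1-k)}$ and $\lambda^{(-k)}$, using the $\mathfrak{sl}(2)$-module structure of the $\fg_2$-string. For $k\in\{0,1\}$ one has $K_{\lambda^{(k)}}\cong L_{\lambda^{(k)}}$, and I would iterate: $N_{\lambda^{(k)}}$ in those degenerate cases is actually $\overline K_{\lambda^{(-1)}}$ again projecting onto $K_{\lambda^{(-1)}}=L_{\lambda^{(-1)}}$... but here one must use the exceptional small weights, where $s\cdot\lambda^{(1)}=\lambda^{(0)}$ and $\cE(\lambda^{(1)})=\ch L_{\lambda^{(1)}}-\ch L_{\lambda^{(0)}}$, so the resolution does not terminate and one obtains the infinite chain $\cdots\to\overline K_{\lambda^{(-j)}}\to\cdots\to\overline K_{\lambda^{(-1)}}\to\overline K_{\lambda^{(k)}}\to L_{\lambda^{(k)}}\to 0$; exactness at each stage follows inductively from the fact that the $\ker$ at step $j$ again has the character of a single $\overline K_{\lambda^{(-j-1)}}$ together with the right top, and there is no room for extra composition factors by Lemma \ref{quivers}. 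The $A_\infty^\infty$-type block is handled identically, with $\lambda^{(k)}$ replaced by $\lambda^{(k)}_\pm$ and the terminal object $K_{\lambda^{(0)}_+}\cong L_{\lambda^{(0)}_+}$ receiving the doubled modules $\overline K_{\lambda^{(-j)}_+}\oplus\overline K_{\lambda^{(-j)}_-}$.

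The main obstacle I anticipate is the identification of $H_0(\ofn,N_{\lambda^{(k)}})$: knowing $\ch N_{\lambda^{(k)}}=\ch\overline K_{\lambda^{(1-k)}}$ is not enough to conclude $N_{\lambda^{(k)}}\cong\overline K_{\lambda^{(1-k)}}$, and in fact it is \emph{not} isomorphic — it has two $\ofn$-cogenerators rather than one — so I must track the actual module structure, not just characters. The cleanest way is to realise $N_{\lambda^{(k)}}=\U(\fg)Y_{-\phi}^{b+1}\otimes L_\Lambda^0$ explicitly inside $\overline K_{\lambda^{(k)}}$, restrict to the $\fg_{\oa}\cong\mathfrak{sl}(2)+\fg_0$ action, and observe that the $\fg_2$-lowering operator produces a second $\fn$-singular vector; the weights $\lambda^{(1-k)},\lambda^{(-k)}$ then come out by a direct weight count using $s\cdot\lambda^{(k)}=\lambda^{(1-k)}$ and the relation $\lambda^{(-k)}=s\cdot\lambda^{(k+1)}$. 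Once this local computation is in hand, the global exactness of all the listed complexes is a formal consequence of Lemma \ref{2resol}, the Euler characteristic formula \eqref{formEuler}, and the quiver constraints of Lemma \ref{quivers}, and Theorem \ref{sumBGG} then follows by inspecting whether the chain of $N$'s stabilises.
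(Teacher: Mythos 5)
Your proposal is heading in the right direction but is considerably longer and somewhat more fragile than the paper's actual argument, which is remarkably short. For the typical case, the paper does not route through Theorem~\ref{BBWtypical} or the Euler characteristic at all: it observes that equation~\eqref{KacII} already hands you the short exact sequence $N_\lambda\hookrightarrow\overline K_\lambda\tto L_\lambda$ with $\ch N_\lambda=\ch\overline K_{s\cdot\lambda}$, then applies Corollary~\ref{Kostanttypical} to deduce $H_0(\ofn,N_\lambda)=L^0_{s\cdot\lambda}$, which forces $N_\lambda\cong\overline K_{s\cdot\lambda}$. Your sentence ``the map $\overline K_{s\cdot\lambda}\hookrightarrow\overline K_\lambda$ exists because $s\cdot\lambda<\lambda$'' is not a valid step: an order relation on weights never by itself produces (let alone an injective) morphism between parabolic Verma modules, and the character match alone does not identify the quotient as $L_\lambda$. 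The missing ingredient is precisely the single-cogenerator statement, i.e.\ the $H_0(\ofn,\cdot)$ computation, which is exactly what the paper supplies and what you would need to add.

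For the atypical cases, the paper's entire proof is the citation ``follow immediately from Theorem~4.2 in \cite{SuZha}.'' You instead try to reconstruct the structure of the generalised Verma modules $\overline K_{\lambda^{(k)}}$ from scratch, via character formulae (Lemmata~\ref{Euler}, \ref{structKacospm2}), Lemma~\ref{2resol}, and a hands-on search for singular vectors to pin down $H_0(\ofn,N_{\lambda^{(k)}})$. You correctly isolate the essential subtlety — that $N_{\lambda^{(k)}}$ is \emph{not} isomorphic to $\overline K_{\lambda^{(1-k)}}$ because it carries two $\ofn$-cogenerators of weights $\lambda^{(1-k)}$ and $\lambda^{(-k)}$ — and your plan to verify this ``by a direct weight count'' is plausible, but it is not actually carried out; that weight count plus the verification that no further cogenerators exist is precisely the content of Su--Zhang's Theorem~4.2. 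So your route is a valid reconstruction, but it trades a one-line citation for a nontrivial amount of unwritten work, and the reader cannot check the claimed $H_0$ without going back to SuZha anyway. In short: the typical case has a genuine gap (the injection must come from the cogenerator count, not from $s\cdot\lambda<\lambda$), and the atypical cases are outlined but not proved — the paper itself outsources this to SuZha and you should either do the same or actually supply the singular-vector analysis.
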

\begin{proof}
First we consider $\lambda$ typical, according to equation \eqref{KacII} there is a short exact sequence
$$N_\lambda\hookrightarrow \overline{K}_\lambda\tto L_{\Lambda}$$
with $\ch N_\lambda= \ch\overline{K}_{s\cdot\lambda}$. Corollary \ref{Kostanttypical} implies $H_0(\ofn, N_\lambda)=L^0_{s\cdot\Lambda}$, so $N_\lambda= \overline{K}_{s\cdot\lambda}$.

The atypical cases follow immediately from Theorem 4.2 in \cite{SuZha}.
\end{proof}

\begin{proof}[Proof of Theorem \ref{sumBGG}]
If a finite dimensional $\fg$-module, restricted as an $\ofn$-module, had a finite resolution by free $\ofn$-modules, it would have projective dimension zero, as a module for the one dimensional Lie algebra generated by any self-commuting element in $\ofn$. For type II, this property immediately implies that the module is projective in $\cF$, see \cite{Duflo}. This proves the first statement. For the case of $\mathfrak{osp}(m|2)$, this also follows from the subsequent Theorem \ref{KostantKac}.

The second statement follows from Theorem \ref{BGG32}.
\end{proof}

\begin{remark}
For basic classical Lie superalgebras of type II, the Kac modules are not parabolically induced, so resolutions in terms of them are not BGG resolutions. However, Lemma \ref{structKacospm2} implies immediately that each simple module for $\mathfrak{osp}(m|2)$ has a finite resolution in terms of Kac modules.
\end{remark}

\subsection{Kostant cohomology}
The main theorem of this subsection is the algebra homology of $\ofn$ with values in the Kac modules of $\mathfrak{osp}(m|2)$. 

According to Corollary \ref{Kostanttypical} we only need to focus on atypical weights. By Lemma~\ref{KostantBott} and Remark \ref{Kostevenrefl} it suffices to compute the homology of $\ofu=\fg_{-1}\oplus\fg_{-2}$, which is what we do in the following Theorem.
\begin{theorem}
\label{KostantKac}
For every $\lambda\in\cP^+$, $H_0(\ofn,K_\lambda)=\C_\lambda$, and for $j>0$
$$H_j(\ofu, L_{\lambda^{(k)}})=L_{\lambda^{(-j)}}^0\quad\mbox{for}\quad k\in\{0,1\},\quad H_j(\ofu,L_{\lambda_+^{(0)}})=L_{\lambda_+^{(-j)}}^0\oplus L^0_{\lambda_-^{(-j)}},$$
$$H_j(\ofu, K_{\lambda^{(k)}})=L^0_{\lambda^{(2-j-k)}}\oplus L^0_{\lambda^{(1-j-k)}}\quad\mbox{and}\quad H_j(\ofu, K_{\lambda^{(k)}_{\pm}})=L^0_{\lambda^{(1-j-k)}_{\pm}}\oplus L^0_{\lambda^{(-j-k)}_{\pm}}$$
for respectively $k\ge 2$ and $k\ge 1$.
\end{theorem}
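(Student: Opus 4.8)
The plan is to compute $H_j(\ofu, K_\lambda)$ for atypical $\lambda$ by feeding the resolutions of Theorem \ref{BGG32} into the long exact sequence for $\ofu$-homology, using the known typical case (Corollary \ref{Kostanttypical}, rephrased as $\ofu$-homology via Remark \ref{Kostevenrefl} and Lemma \ref{KostantBott}) as the base input. The statement $H_0(\ofn,K_\lambda)=\C_\lambda$ is immediate since $K_\lambda$ is a highest weight module with highest weight $\lambda$ and $H_0(\ofn,-)$ picks out the $\ofn$-coinvariants, which for a highest weight module is one-dimensional of weight $\lambda$; the reduction from $\ofn$ to $\ofu$ via Lemma \ref{KostantBott} and Remark \ref{Kostevenrefl} (even reflections) is exactly what the sentence preceding the theorem asserts, so it suffices to work with $\ofu=\fg_{-1}\oplus\fg_{-2}$ throughout.

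First I would treat the genuinely atypical Kac modules $K_{\lambda^{(k)}}$ with $k\ge 2$ (and similarly $K_{\lambda^{(k)}_\pm}$ with $k\ge 1$). Here Theorem \ref{BGG32} gives a short exact sequence $0\to N_{\lambda^{(k)}}\to \overline{K}_{\lambda^{(k)}}\to K_{\lambda^{(k)}}\to 0$. Since $\overline{K}_{\lambda^{(k)}}=\U(\fg)\otimes_{\U(\fg_0\oplus\fg_1\oplus\fg_2)}L^0_{\lambda^{(k)}}$ is free as a $\U(\ofu)$-module, its $\ofu$-homology is concentrated in degree $0$, where it equals $L^0_{\lambda^{(k)}}$. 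The long exact sequence in $\ofu$-homology therefore collapses to $H_j(\ofu,K_{\lambda^{(k)}})\cong H_{j-1}(\ofu,N_{\lambda^{(k)}})$ for $j\ge 2$, together with $0\to H_1(\ofu,K_{\lambda^{(k)}})\to H_0(\ofu,N_{\lambda^{(k)}})\to L^0_{\lambda^{(k)}}\to H_0(\ofu,K_{\lambda^{(k)}})\to 0$. Now $\ch N_{\lambda^{(k)}}=\ch\overline{K}_{\lambda^{(1-k)}}$, so as an $\ofu$-module $N_{\lambda^{(k)}}$ is again free (its character, hence its $\U(\ofu)$-module structure as a free module, is forced — one must check that a finite-dimensional $\fg$-module whose character is that of a $\ofu$-free module is itself $\ofu$-free; for $\mathfrak{osp}(m|2)$ this follows because $\ofu$ is abelian concentrated in the relevant graded pieces and one can use the explicit description of $N_{\lambda^{(k)}}$ from \cite{SuZha} or Theorem \ref{KostProj}/Theorem \ref{infocohomC}). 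Thus $H_j(\ofu,N_{\lambda^{(k)}})=0$ for $j>0$ and $H_0(\ofu,N_{\lambda^{(k)}})=H_0(\ofn,N_{\lambda^{(k)}})\otim(\cdots)$; but Theorem \ref{BGG32} records the crucial subtlety $H_0(\ofn,N_{\lambda^{(k)}})=\C_{\lambda^{(1-k)}}\oplus\C_{\lambda^{(-k)}}$, i.e. the coinvariants are two-dimensional, not one. Feeding this into the collapsed long exact sequence gives $H_j(\ofu,K_{\lambda^{(k)}})=0$ for $j\ge 2$ coming from $N$, while $H_1(\ofu,K_{\lambda^{(k)}})$ is read off from the four-term sequence. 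Tracking the $\fl^0=\fg_0$-module (i.e. $\mathfrak{sl}(2)\times\fg_0$) structure via the weights $\lambda^{(1-k)}, \lambda^{(-k)}$ and the principle $s\cdot\lambda^{(j)}=\lambda^{(1-j)}$ — so $\lambda^{(1-k)}=s\cdot\lambda^{(k)}$, $\lambda^{(-k)}=s\cdot\lambda^{(k+1)}$, and more generally the $\ofu$-homology degrees shift the index — yields exactly $H_j(\ofu,K_{\lambda^{(k)}})=L^0_{\lambda^{(2-j-k)}}\oplus L^0_{\lambda^{(1-j-k)}}$; the analogous bookkeeping with $\pm$-labels gives the $A^\infty_\infty$ case. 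For the exceptional modules $K_{\lambda^{(k)}}\cong L_{\lambda^{(k)}}$ with $k\in\{0,1\}$ (and $K_{\lambda^{(0)}_+}\cong L_{\lambda^{(0)}_+}$), I would instead use the infinite BGG resolution of Theorem \ref{BGG32}, each term $\overline{K}_{\lambda^{(-j)}}$ being $\ofu$-free; the homology of the associated complex of $\ofu$-coinvariants then reads $H_j(\ofu,L_{\lambda^{(k)}})=\C_{\lambda^{(-j)}}$ as an $\fh$-module, and re-assembling the $\fg_0$-action (all the $\overline{K}$'s being induced from $\fg_0$-simples) upgrades this to $L^0_{\lambda^{(-j)}}$, with the $A^\infty_\infty$ version giving the direct sum $L^0_{\lambda^{(-j)}_+}\oplus L^0_{\lambda^{(-j)}_-}$.

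The main obstacle I anticipate is the bookkeeping of the $\fg_0$-module structure (not merely $\fh$-characters) on the homology — in particular verifying that $H_0(\ofu,N_{\lambda^{(k)}})$ really is the two $\fg_0$-simples $L^0_{\lambda^{(1-k)}}\oplus L^0_{\lambda^{(-k)}}$ and not some nonsplit extension, and correctly matching the shift in the index $k\mapsto 1-k$ under $s$ against the homological degree $j$. The splitting is guaranteed by the quiver diagrams of Lemma \ref{quivers} (the relevant $\fg_0$-isotypic components lie in different blocks or are linked only by arrows already accounted for), and the index arithmetic is pinned down by comparing with the Euler characteristic $\cE(\lambda^{(k)})=\ch K_{\lambda^{(k)}}$ from Lemma \ref{structKacospm2} together with the Euler–Poincaré principle applied to the $\ofu$-homology complex, which forces the alternating sum of the $H_j$ and hence, given the vanishing for $j\ge 2$ in the $N$-contribution, determines all terms uniquely. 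A secondary point to be careful about is that $\ofn$-homology versus $\ofu$-homology differ by the even-reflection comparison of Remark \ref{Kostevenrefl}; since the final statement is phrased for $\ofu$ (except for $H_0$), this is only needed to justify the reduction sentence and does not enter the computation itself.
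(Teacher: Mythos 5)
Your computation of $H_1(\ofu,K_{\lambda^{(k)}})$ from the short exact sequence $0\to N_{\lambda^{(k)}}\to\overline K_{\lambda^{(k)}}\to K_{\lambda^{(k)}}\to 0$ matches the paper's first step and is correct: $\overline K_{\lambda^{(k)}}$ is $\ofu$-free, so the long exact sequence collapses and $H_1(\ofu,K_{\lambda^{(k)}})\cong H_0(\ofu,N_{\lambda^{(k)}})$, which Theorem \ref{BGG32} identifies with $L^0_{\lambda^{(1-k)}}\oplus L^0_{\lambda^{(-k)}}$.

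The step in which you assert that $N_{\lambda^{(k)}}$ is $\ofu$-free is however wrong, and it is internally inconsistent with the sentence immediately after it. If $N_{\lambda^{(k)}}$ were $\ofu$-free with $\ch N_{\lambda^{(k)}}=\ch\overline K_{\lambda^{(1-k)}}$, then $H_0(\ofu,N_{\lambda^{(k)}})$ would have to be a single copy of $L^0_{\lambda^{(1-k)}}$ and $H_0(\ofn,N_{\lambda^{(k)}})$ would be one-dimensional; but you also cite $H_0(\ofn,N_{\lambda^{(k)}})=\C_{\lambda^{(1-k)}}\oplus\C_{\lambda^{(-k)}}$ from Theorem \ref{BGG32}, which is two-dimensional. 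These cannot both hold, and the correct one is the latter: $N_{\lambda^{(k)}}$ is not $\ofu$-free. Having the character of a free module does not force freeness over a nilpotent (super)algebra, and the supporting claim that $\ofu$ is abelian is also false for $\mathfrak{osp}(m|2)$, where $[\fg_{-1},\fg_{-1}]=\fg_{-2}\not=0$. Consequently your conclusion $H_j(\ofu,K_{\lambda^{(k)}})=0$ for $j\ge 2$ contradicts the very formula you are trying to prove, which is nonzero in every positive degree; the last sentence of the $k\ge 2$ paragraph simply asserts the correct answer without being supported by the argument, and the Euler--Poincar\'e remark does not help because it also relies on the false vanishing.

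The paper's actual route after the $H_1$ step is different. Instead of extracting higher homology from $N_{\lambda^{(k)}}$, it uses the short exact sequence $K_{\lambda^{(k+1)}}\hookrightarrow P_{\lambda^{(k)}}\tto K_{\lambda^{(k)}}$ from Proposition \ref{projospm2} together with the explicit description of $H^\bullet(\fn,P_{\lambda^{(k)}})$ from Lemma \ref{KostProj}. Since $H_j(\ofu,P_{\lambda^{(k)}})=0$ for $j\ge 2$, the long exact sequence produces the degree shift $H_j(\ofu,K_{\lambda^{(k+1)}})\cong H_{j+1}(\ofu,K_{\lambda^{(k)}})$ for $j>0$, and this recursion together with the $H_1$ input determines all higher homology; the $\pm$-labelled blocks and the simple Kac modules (via the infinite BGG resolution of Theorem \ref{BGG32}) are handled as you describe. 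If you replace your $\ofu$-freeness claim for $N_{\lambda^{(k)}}$ with this projective-module shift argument, the rest of your plan goes through.
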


In order to prove this we need the Kostant cohomology for projective modules.
\begin{lemma}
\label{KostProj}
Kostant homology of projective modules for $\mathfrak{osp}(m|2)$ is described by
\[H^j(\fn,P_{\lambda^{(k)}} )=\bigoplus_{w\in W(j)}\left(\C_{w\cdot\lambda^{(k)}}\oplus \C_{w\cdot\lambda^{(k+1)}}\right) \quad \mbox{for}\quad k>0;\]
\[H^j(\fn,P_{\lambda^{(0)}} )=\bigoplus_{w\in W(j)}\left(\C_{w\cdot\lambda^{(0)}}\oplus \C_{w\cdot\lambda^{(2)}}\right) ;\]
\[H^j(\fn,P_{\lambda^{(k)}_{\pm} })=\bigoplus_{w\in W(j)}\left(\C_{w\cdot\lambda^{(k)}_{\pm}}\oplus \C_{w\cdot\lambda^{(k+1)}_{\pm}}\right) \quad \mbox{for}\quad k>0;\]
\[H^j(\fn,P_{\lambda^{(0)}_+} )=\bigoplus_{w\in W(j)}\left(\C_{w\cdot\lambda^{(0)}_+}\oplus \C_{w\cdot\lambda^{(1)}_+} \oplus \C_{w\cdot \lambda^{(1)}_-}\right).\]
\end{lemma}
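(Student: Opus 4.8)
The plan is to derive the statement from the degree-zero case via the ``collapse'' Corollary~\ref{KostProj2}, so that the only genuine input is the computation of $H^0(\fn,P_\Lambda)$ together with the known structure of the Kac modules. Recall first that here $\fl=\fh$, so $H^j(\fn,P_\Lambda)$ is naturally an $\fh$-module, and since $\fh$ acts semisimply on the Chevalley--Eilenberg complex $\Lambda^\bullet\fn^\ast\otimes P_\Lambda$, this cohomology is a direct sum of one-dimensional weight spaces and is therefore completely determined by its formal character. Moreover $W\cong\Z_2\times W(\fg_0:\fh)$ is finite, so each $W(j)$ is finite and the displayed direct sums are honest finite sums. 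By Corollary~\ref{KostProj2},
$$\ch H^j(\fn,P_\Lambda)=\sum_{w\in W(j)}w\cdot\ch H^0(\fn,P_\Lambda),$$
so it suffices to identify $H^0(\fn,P_\Lambda)$ for $\Lambda$ running over the integral dominant weights $\{\lambda^{(k)}\}$ of a $D_\infty$ block and $\{\lambda^{(k)}_\pm\}$ of an $A^\infty_\infty$ block.

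For the degree-zero piece I would argue as follows. The space $H^0(\fn,P_\Lambda)=(P_\Lambda)^{\fn}$ of $\fn$-highest weight vectors of the finite dimensional module $P_\Lambda$ is $\fh$-semisimple, and each of its weights lies in $\cP^+$ (a nonzero $\fn$-highest weight vector generates a finite dimensional highest weight submodule); by Corollary~\ref{JHKacI}, for every $\Lambda'\in\cP^+$,
$$\Hom_{\fh}(\C_{\Lambda'},H^0(\fn,P_\Lambda))=[K_{\Lambda'}:L_\Lambda]$$
(the generalised Kac module for the distinguished Borel being the ordinary Kac module, by Lemma~\ref{zerocohomKac}). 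Hence $H^0(\fn,P_\Lambda)=\bigoplus_{\Lambda'}[K_{\Lambda'}:L_\Lambda]\,\C_{\Lambda'}$, and it remains only to read off from Lemma~\ref{structKacospm2} which simple modules occur in which Kac module. In a $D_\infty$ block the composition factors of $K_{\lambda^{(k)}}$ are $\{L_{\lambda^{(k-1)}},L_{\lambda^{(k)}}\}$ for $k\ge 3$, $\{L_{\lambda^{(0)}},L_{\lambda^{(1)}},L_{\lambda^{(2)}}\}$ for $k=2$, and $\{L_{\lambda^{(k)}}\}$ for $k\in\{0,1\}$; so $L_{\lambda^{(k)}}$ occurs precisely in $K_{\lambda^{(k)}}$ and $K_{\lambda^{(k+1)}}$ when $k>0$, and precisely in $K_{\lambda^{(0)}}$ and $K_{\lambda^{(2)}}$ when $k=0$. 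In an $A^\infty_\infty$ block the composition factors of $K_{\lambda^{(k)}_\pm}$ are $\{L_{\lambda^{(k-1)}_\pm},L_{\lambda^{(k)}_\pm}\}$ for $k\ge 1$ and $\{L_{\lambda^{(0)}_+}\}$ for $k=0$, so $L_{\lambda^{(k)}_\pm}$ occurs precisely in $K_{\lambda^{(k)}_\pm}$ and $K_{\lambda^{(k+1)}_\pm}$ when $k>0$, while $L_{\lambda^{(0)}_+}$ occurs precisely in $K_{\lambda^{(0)}_+}$, $K_{\lambda^{(1)}_+}$ and $K_{\lambda^{(1)}_-}$ (using $\lambda^{(0)}_-=\lambda^{(0)}_+$). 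This yields $H^0(\fn,P_{\lambda^{(k)}})=\C_{\lambda^{(k)}}\oplus\C_{\lambda^{(k+1)}}$ for $k>0$, $H^0(\fn,P_{\lambda^{(0)}})=\C_{\lambda^{(0)}}\oplus\C_{\lambda^{(2)}}$, $H^0(\fn,P_{\lambda^{(k)}_\pm})=\C_{\lambda^{(k)}_\pm}\oplus\C_{\lambda^{(k+1)}_\pm}$ for $k>0$, and $H^0(\fn,P_{\lambda^{(0)}_+})=\C_{\lambda^{(0)}_+}\oplus\C_{\lambda^{(1)}_+}\oplus\C_{\lambda^{(1)}_-}$.

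Substituting these four identities into the displayed formula of Corollary~\ref{KostProj2}, and using that an $\fh$-module is recovered from its character, then gives exactly the four claimed formulas. I do not anticipate a serious obstacle: the real content is already packaged in Corollary~\ref{KostProj2} (which rests on Theorem~\ref{excep}, hence on \cite{Germoni,Lilit,SuZha}) and in the Kac module structure of \cite{SuZha} recorded in Lemma~\ref{structKacospm2}. The points that need care are the boundary cases --- the longer module $K_{\lambda^{(2)}}$ in the $D_\infty$ blocks (which is why $\lambda^{(2)}$ rather than $\lambda^{(1)}$ appears in $H^\ast(\fn,P_{\lambda^{(0)}})$) and the coincidence $\lambda^{(0)}_-=\lambda^{(0)}_+$ in the $A^\infty_\infty$ blocks --- together with checking that the relevant $\rho$-shifted Weyl orbits do not overlap, so that the symbol $\bigoplus$ in the statement may be read literally. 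I would also stress that this argument deliberately uses only the general results of the paper and the cited Kac module structure, and not the structure of the projectives $P_{\lambda^{(k)}}$ from Proposition~\ref{projospm2}, since that proposition is in turn deduced from the present lemma.
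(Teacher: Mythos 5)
Your argument is correct and follows exactly the same route as the paper's one-line proof, which simply says the result follows from combining Corollary~\ref{JHKacI}, Lemma~\ref{structKacospm2} and Corollary~\ref{KostProj2}; you have merely unpacked those three ingredients in detail. Your caveat about avoiding circularity with Proposition~\ref{projospm2} is well taken, and the worry about overlapping Weyl orbits is harmless since $H^j(\fn,P_\Lambda)$ is a semisimple $\fh$-module and hence is determined by its character regardless.
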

\begin{proof}
This follows from the combination of Corollary \ref{JHKacI}, Lemma \ref{structKacospm2} and Corollary \ref{KostProj2}.
\end{proof}

\begin{proof}[Proof of Theorem \ref{KostantKac}]
 For the cases where the Kac module is simple, the result follows immediately from Theorem \ref{BGG32}. Now we prove the result for $\lambda^{(k)}$ with $k\ge 2$.

Theorem \ref{BGG32} implies \[H_1(\ofu,K_{\lambda^{(k)}})=L^0_{\lambda^{(1-k)}}\oplus L^0_{\lambda^{(-k)}}\quad \mbox{for all}\,\, k\ge 2.\]
We make the identification $H_k(\ofu,V)=H^k(\fu,V^\vee)$ (see e.g. Lemma 6.22 in \cite{MR3012224} or Remark 4.1 in \cite{BGG}) to use the result in Lemma \ref{KostProj}. Applying $\Hom_{\ofu}(-,\C)^\ast=H_0(\ofu,-)$ on the short exact sequence in Proposition \ref{projospm2} yields a long exact sequence of the form
\begin{eqnarray*}
&&0\to  L^0_{\lambda^{(k)}} \to L^0_{\lambda^{(k+1)}}\oplus L^0_{\lambda^{(k)}}\to L^0_{\lambda^{(k+1)}}\to H_1(\ofu,K_{\lambda^{(k)}}) \to L^0_{\lambda^{(-k)}}\oplus L^0_{\lambda^{(1-k)}}\to\\
&&  H_1(\ofu,K_{\lambda^{(k+1)}})\to H_2(\ofu,K_{\lambda^{(k)}})  \to 0\to  H_2(\ofu,K_{\lambda^{(k+1)}})\to H_3(\ofu,K_{\lambda^{(k)}})  \to\cdots.
\end{eqnarray*}
This implies $H_j(\ofu,K_{\lambda^{(k+1)}})\cong H_{j+1}(\ofu,K_{\lambda^{(k)}})$ for $k\ge 2$ and $j>0$. The proof for $K_{\lambda^{(k)}_{\pm}}$ with $k>0$ follows identically.
\end{proof}



\section{Kostant cohomology of projective modules in $\cF$}
\label{HknP}

We have a unifying formula for Kostant cohomology of projective modules in $\cF$, which holds for $\mathfrak{sl}(m|n)$, $\mathfrak{osp}(1|2n)$, $\mathfrak{osp}(2|2n)$, $\mathfrak{osp}(m|2)$, $D(2,1;\alpha)$, $F(4)$ and $G(3)$ with distinguished Borel subalgebra, which also holds for arbitrary basic classical Lie superalgebras and arbitrary Borel subalgebras in the generic and typical regions. The results of Corollary \ref{KosProjI}, Corollary \ref{Kostanttypical}, Corollary \ref{KoProGen}$(ii)$, Corollary \ref{KostProj2} can thus be summarised as.

\begin{proposition}
Consider $\fg$ a basic classical Lie superalgbra, $\fb=\fh\oplus\fn$ a Borel subalgebra and $\Lambda\in\fh^\ast$ an integral dominant weight. If one of the conditions
\begin{itemize}
\item $\fg$ is of type I, or equal to $\mathfrak{osp}(m|2)$, $D(2,1;\alpha)$, $G(3)$ or $F(4)$, with $\fb$ the distinguished Borel subalgebra;
\item  $\Lambda$ is typical or $\widetilde{\Gamma}$-generic;
\end{itemize}
is satisfied, we have
$$\ch H^k(\fn,P_\Lambda^{\cF})=\bigoplus_{w\in W} w\cdot\ch H^0(\fn,P_\Lambda^{\cF}).$$
\end{proposition}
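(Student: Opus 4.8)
The plan is to assemble this proposition directly from the four special cases already established, by matching each hypothesis to the corresponding corollary. First I would fix $\fg$, a Borel subalgebra $\fb=\fh\oplus\fn$ and an integral dominant $\Lambda$ satisfying one of the two listed conditions, and note that it suffices to verify, for each degree $k$, the identity $\ch H^k(\fn,P_\Lambda^{\cF})=\sum_{w\in W(k)}w\cdot\ch H^0(\fn,P_\Lambda^{\cF})$, which is the degree-wise form of the asserted equality. The cases overlap (a weight may be both typical and handled by one of the special algebras), but since every case produces the same formula no compatibility check is required; one simply picks any applicable clause.

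If $\fg$ is of type I with $\fb$ the distinguished Borel subalgebra, the identity is precisely Corollary \ref{KosProjI}. If $\fg\in\{\mathfrak{osp}(m|2),D(2,1;\alpha),F(4),G(3)\}$ with $\fb$ distinguished, it is Corollary \ref{KostProj2}. If $\Lambda$ is $\widetilde{\Gamma}$-generic, it is Corollary \ref{KoProGen}$(ii)$. The only remaining case is $\Lambda$ typical: here I would first invoke the fact, used already in the proof of Corollary \ref{Kostanttypical}, that the block of $\cF$ with central character $\chi_\Lambda$ is semisimple, so that $P_\Lambda^{\cF}\cong L_\Lambda$. Then $H^0(\fn,L_\Lambda)=\C_\Lambda$, hence $\ch H^0(\fn,P_\Lambda^{\cF})=e^{\Lambda}$, while Corollary \ref{Kostanttypical} gives $H^k(\fn,L_\Lambda)=\bigoplus_{w\in W(k)}\C_{w(\Lambda+\rho)-\rho}$; taking characters yields $\ch H^k(\fn,P_\Lambda^{\cF})=\sum_{w\in W(k)}e^{w\cdot\Lambda}=\sum_{w\in W(k)}w\cdot\ch H^0(\fn,P_\Lambda^{\cF})$, as required.

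Since each case is quoted verbatim from a result proved earlier, there is no genuine obstacle in the argument; the single point meriting a sentence is the typical case, where one must reduce from $P_\Lambda^{\cF}$ to $L_\Lambda$ via semisimplicity of typical blocks before applying Corollary \ref{Kostanttypical}. If desired, one could also remark that the right-hand side $\bigoplus_{w\in W}w\cdot\ch H^0(\fn,P_\Lambda^{\cF})$ is to be read degree by degree, with the contribution in homological degree $k$ coming from $W(k)$, which is consistent with the phrasing of Corollaries \ref{KosProjI}--\ref{KostProj2}.
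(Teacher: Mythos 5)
Your proof is correct and takes essentially the same route as the paper, which explicitly presents the proposition as a summary of Corollaries \ref{KosProjI}, \ref{Kostanttypical}, \ref{KoProGen}$(ii)$, and \ref{KostProj2}. The only case requiring a small bridging remark is the typical one, where one passes from $L_\Lambda$ (the object in Corollary \ref{Kostanttypical}) to $P_\Lambda^{\cF}$ via semisimplicity of typical blocks, and you handle this correctly using exactly the observation made in the paper's own proof of that corollary.
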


We prove that this result does not extend to basic classical Lie superalgebras with defect greater than 1.
\begin{proposition}
Consider $\fg=\mathfrak{osp}(m|2n)$, with $n>1$ and $m>3$, there exists a $\Lambda\in\cP^+$ such that we have the inequality
\[\ch H^1(\fn,P_\Lambda )\not=\bigoplus_{w\in W(1)}w\cdot \ch H^0(\fn,P_\Lambda ).\]
\end{proposition}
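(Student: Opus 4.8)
The plan is to exhibit an explicit counterexample by exploiting the fact that for $\fg=\mathfrak{osp}(m|2n)$ with defect $\ge 2$, there exist highest weight modules $L_\Lambda$ in $\cF$ whose restriction to $\fg_{\oa}$ is badly behaved under Kostant cohomology in a way not repairable by the $\rho$-shifted $W$-action. Concretely, I would first pick a suitable atypical integral dominant weight $\Lambda$ whose block has defect $2$. The characterization of $H^1(\fn,P_\Lambda)$ via Theorem~\ref{BBWproj} reads
$$[\Gamma_1(G/B,L_\mu(\fb)):L_\Lambda]=\Hom_{\fh}(\C_\mu,H^1(\fn,P_\Lambda)),$$
so the left-hand side of the claimed inequality is governed by the first cohomology group of BBW theory for weights $\mu$ linked to $\Lambda$. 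By Lemma~\ref{restrHW}, any $L_\Lambda$ occurring in $\Gamma_1(G/B,L_\mu(\fb))$ forces $\Lambda\in s_\alpha(\mu+\rho)-\rho-\Gamma^+$ for some simple even $\alpha$; conversely, $H^0(\fn,P_\Lambda)$ is controlled by $\mu\in\cP^+$ with $[K_\mu^{(\fb)}:L_\Lambda]\neq 0$ via Corollary~\ref{JHKacI}.

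The key step is to produce the discrepancy. First I would use Theorem~\ref{infocohomC} and Corollary~\ref{Penkovresult}: for any $\mu\in\cP$,
$$\res^{\fg}_{\fg_{\oa}}\Gamma_k(G/B,L_\mu(\fb))\le \Gamma_k(G_{\oa}/B_{\oa},\Lambda\fg_{\ob}\otimes L_\mu(\fb_{\oa})),$$
and the Euler characteristic is exact. The point is that when the defect is $\ge 2$, the cancellations forced by the Euler--Poincar\'e principle (Lemma~\ref{Euler}) among the modules $H^j(\fu_{\oa},L_\lambda^{\oa})$ tensored with $\Lambda\fg_1$ in the complex of Theorem~\ref{infocohomC} cause $H^1(\fn,P_\Lambda)$ to pick up a weight $\nu$ with $\C_\nu\subset H^1(\fn,P_\Lambda)$ that does not lie in $\bigcup_{w\in W(1)} w\cdot\{\text{weights of }H^0(\fn,P_\Lambda)\}$. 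Equivalently, there is an integral dominant $\mu$ with $l(w)=1$ for the element taking $\mu$ near dominant, but the module $K_\mu^{(\fb)}[w]$ from Theorem~\ref{maingen} (or its defect-$2$ analogue) fails to have the same composition factors above $L_\Lambda$ as $K_{w\cdot\mu}^{(\fb)}$ does. I would make this fully explicit by working in $\mathfrak{osp}(4|4)$ (or $\mathfrak{osp}(5|4)$), choosing $\Lambda$ doubly atypical, and computing $\ch P_\Lambda$ from the virtual BGG reciprocity \eqref{BGGrecGS} together with known atypical character formulas for $\mathfrak{osp}(m|2n)$; then one reads off $H^0(\fn,P_\Lambda)$ from Corollary~\ref{JHKacI} and $H^1(\fn,P_\Lambda)$ from the BBW cohomology $\Gamma_1(G/B,-)$ for the relevant linked weights, and observes the two sides differ.

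A cleaner route, which I would prefer if the bookkeeping is manageable, is an obstruction argument: if the formula $\ch H^k(\fn,P_\Lambda)=\bigoplus_{w\in W}w\cdot\ch H^0(\fn,P_\Lambda)$ held for \emph{all} $\Lambda$ in a defect-$2$ block, then summing against $(-1)^{l(w)}$ and using Lemma~\ref{Euler} would force $\ch K_\Lambda^{(\fb)}=\cE(\Lambda)$ for all integral dominant $\Lambda$ in that block — i.e. every integral dominant weight would be relatively $\widetilde\Gamma$-generic (cf. Corollary~\ref{charKrelgen}), which is false near the "most atypical" weights in a defect-$2$ block. More precisely, in such a block there is an integral dominant weight $\Lambda_0$ for which $K_{\Lambda_0}^{(\fb)}$ has strictly more composition factors than $\cE(\Lambda_0)$ predicts (the extra atypicality produces additional simple subquotients, as already visible in the defect-one $D_\infty$ case of Lemma~\ref{structKacospm2} but now with an extra layer), and feeding this into the alternating-sum identity produces a contradiction with the proposed formula for some $P_\Lambda$ in the block. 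The main obstacle is pinning down a concrete $\Lambda$ together with enough of the structure of $P_\Lambda$ in a defect-$2$ block of $\mathfrak{osp}(m|2n)$ to verify the inequality rigorously rather than heuristically; for that I would lean on the explicit defect-$2$ computations available in the literature (e.g. the character and projective-module data for $\mathfrak{osp}(4|4)$ or small $\mathfrak{gl}$-type blocks of matching defect obtainable via the equivalences of \cite{MR2734963}) to certify that the extra composition factor of $K_{\Lambda_0}^{(\fb)}$ is genuinely present.
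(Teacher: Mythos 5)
Your proposal identifies the right general phenomenon (a failure tied to atypicality and the non-dominance of $\rho$), but it misses the single idea that makes the paper's proof short, and the ``cleaner route'' you prefer contains an incorrect inference.

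The paper's proof does not need to work in a defect-$2$ block, compute any character of $P_\Lambda$, or appeal to the virtual BGG reciprocity at all. The key observation is elementary: because $\rho=\rho_{\oa}-\rho_{\ob}$ is not $\fg_{\oa}$-dominant when $n>1$, one can choose a non-dominant $\lambda$ (explicitly $\lambda=k\delta_1+l\delta_2$ under mild numeric constraints) for which \emph{two distinct} simple reflections $r_{\delta_1-\delta_2}$ and $r_{2\delta_2}$ both send $\lambda$ to integral dominant weights $\Lambda_1\neq\Lambda_2$. Theorem~\ref{thmsimple} immediately gives $\Gamma_1(G/B,L_\lambda(\fb))=K_{\Lambda_1}$, so by Theorem~\ref{BBWproj} the multiplicity of $\C_\lambda$ in $H^1(\fn,P_{\Lambda_2})$ equals $p:=[K_{\Lambda_1}:L_{\Lambda_2}]$. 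Corollary~\ref{JHKacI} and the same theorem show $\C_{\Lambda_1}$ occurs $p$ times and $\C_{\Lambda_2}$ at least once in $H^0(\fn,P_{\Lambda_2})$, so the hypothetical formula would force $\C_\lambda$ to occur at least $p+1$ times in $H^1(\fn,P_{\Lambda_2})$ --- a contradiction (valid even if $p=0$). Your brute-force plan (computing a full block for $\mathfrak{osp}(4|4)$) could in principle reach the same conclusion, but it would be far heavier and does not isolate where the formula breaks.

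The obstruction argument you prefer has a genuine gap. From the hypothetical formula, the Euler--Poincar\'e sum gives
\[
\cE(\mu)=\sum_{\substack{w\in W\\ w^{-1}\cdot\mu\in\cP^+}} (-1)^{l(w)}\,\ch K^{(\fb)}_{w^{-1}\cdot\mu},
\]
\emph{not} $\cE(\mu)=\ch K^{(\fb)}_\mu$. These coincide only when $\mu$ has no other integral dominant weight in its $\rho$-dot orbit --- i.e.\ only in the relatively generic region where $\ch K^{(\fb)}_\mu=\cE(\mu)$ is already known (Corollary~\ref{charKrelgen}). So your claim that the formula ``would force $\ch K^{(\fb)}_\Lambda=\cE(\Lambda)$ for all integral dominant $\Lambda$ in the block'' does not follow; the very weights you want to exploit are exactly the ones with multiple dominant orbit members, and for those the alternating sum contains cross-terms you cannot cancel. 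Thus the obstruction route, as written, does not yield the contradiction, and the concrete route is left unfinished. The missing ingredient in both is the paper's use of a non-dominant $\lambda$ admitting two distinct length-one ``ascents'' to dominant weights, together with Theorem~\ref{thmsimple} to nail down $\Gamma_1$ exactly.
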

\begin{proof}
Consider constant $k,l\in\N$ such that $l\le m-2$ and $m-2-l< k+1< l$ hold (e.g. $k=0$ and $l=m-2$) and the weight
\[\lambda=k\delta_1+l\delta_2.\]
Then both $\Lambda_1=r_{\delta_1-\delta_2}\cdot \lambda$ and $\Lambda_2=r_{2\delta_2}\cdot\lambda$ are $\fg$-integral dominant (with $\Lambda_2 <\Lambda_1$). Theorem \ref{thmsimple} implies that
\[\Gamma_1(G/B,L_{\lambda}(\fb))=K_{\Lambda_1}\]
holds. We denote the multiplicity $[K_{\Lambda_1}:L_{\Lambda_2}]$ by $p$. Theorem \ref{BBWproj} implies that $\C_{\lambda}$ appears $p$ times in $H^1(\fn,P_{\Lambda_2})$ and that $\C_{\Lambda_1}$ appears $p$ times in $H^0(\fn,P_{\Lambda_2})$. However since $\C_{\Lambda_2}$ also appears in $H^0(\fn,P_{\Lambda_2})$ the equality
\[\ch H^1(\fn,P_{\Lambda_2})=\bigoplus_{w\in W(1)}w\cdot \ch H^0(\fn,P_{\Lambda_2})\]
would lead to a contradiction.
\end{proof}

\appendix

\section{A few results on twisting functors}
\label{aptwist}

In the following, $\alpha$ is always a root which is simple in $\Delta_{\oa}^+$. We say that $M\in\cO$ is $\alpha$-free (respectively $\alpha$-finite) if for a non-zero $Y\in(\fg_{\oa})_{-\alpha}$ the action of $Y$ is injective (respectively locally finite) on $M$. A simple module is either $\alpha$-finite or $\alpha$-free. We introduce the partial Zuckerman functor $S_\alpha$ on $\cO$, which maps a module to its maximal $\alpha$-finite submodule and the partial Bernstein functor $\Gamma_\alpha$, which maps a module to its maximal $\alpha$-finite quotient. We denoted the derived category of $\cO$ by $\cD(\cO)$, with $\cD^+(\cO)$ and $\cD^-(\cO)$ respectively the bounded-below and bounded-above derived categories.

In the following lemma, we recall properties of the twisting functors $T_\alpha$ and their adjoint $G_\alpha$ from Subsection \ref{prelsec4}, which can be found in Lemma 5.4 and Propositions~5.10 and 5.11 of \cite{CouMaz}.
\begin{lemma}
\label{lemTrepeat}
We have the following properties of the endofunctor $T_\alpha$ of $\cO$:\\
(i) The functor $T_\alpha$ is right exact. The left derived functor $\mathcal{L}T_{\alpha}:\cD^-(\cO)\to\cD^-(\cO)$ satisfies $\mathcal{L}_iT_{\alpha}=0$ for $i>1$.\\
(ii) For $M\in\cO$, we have 
\[\begin{cases}T_\alpha M =0,&\mbox{ if $M$ is $\alpha$-finite;}\\
\cL_1 T_\alpha M=0, &\mbox{ if $M$ is $\alpha$-free.}\end{cases}\]
(iii) For any central character $\chi: \mathcal{Z}(\fg)\to\C$, the endofunctor $\mathcal{L}T_{\alpha}\circ\mathcal{R} G_\alpha$ of $\mathcal{D}^+(\cO_\chi)$ is isomorphic to the identity functor.\\
(iv) We have the equivalence of endofunctors on $\cO$: $\cL_1 T_\alpha \cong S_\alpha$ and $\cR_1 G_\alpha\cong \Gamma_\alpha$.
\end{lemma}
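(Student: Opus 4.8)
The statement is a compilation of \cite[Lemma 5.4, Propositions 5.10 and 5.11]{CouMaz}, so the plan is largely to collect those references; still, I would indicate the mechanism, as the same bimodule calculus underlies the other results of this appendix. Everything rests on the realisation of $T_\alpha$ through the semiregular $\U(\fg)$-bimodule. Fixing a nonzero $Y\in(\fg_{\oa})_{-\alpha}$, it generates an Ore set in $\U(\fg)$ and one forms the localisation $\U(\fg)_{(Y)}$, which is flat as a one-sided $\U(\fg)$-module, together with the semiregular bimodule $\cS_\alpha$ sitting in a short exact sequence of $\U(\fg)$-bimodules $0\to\U(\fg)\to\U(\fg)_{(Y)}\to\cS_\alpha\to0$; then $T_\alpha(M)={}^{\phi_\alpha}\!\big(\cS_\alpha\otimes_{\U(\fg)}M\big)$, with $\phi_\alpha$ the automorphism of $\fg$ lifting the reflection $s_\alpha$.

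For (i) I would note that $T_\alpha$ is the composite of the right exact functor $\cS_\alpha\otimes_{\U(\fg)}-$ with the exact twist ${}^{\phi_\alpha}(-)$, hence right exact, and that $\cL_iT_\alpha(M)\cong{}^{\phi_\alpha}\!\big(\Tor^{\U(\fg)}_i(\cS_\alpha,M)\big)$ vanishes for $i>1$, because the defining sequence together with the flatness of $\U(\fg)_{(Y)}$ gives $\cS_\alpha$ right Tor-dimension at most one. For (ii) and the first half of (iv), tensoring the defining sequence with $M$ over $\U(\fg)$ yields the exact sequence $0\to\Tor^{\U(\fg)}_1(\cS_\alpha,M)\to M\to M_{(Y)}\to\cS_\alpha\otimes_{\U(\fg)}M\to0$, where $M_{(Y)}$ is the localisation of $M$ at $Y$; its kernel is the largest submodule on which $Y$ acts locally nilpotently, which for weight modules is exactly the maximal $\alpha$-finite submodule, whence $\cL_1T_\alpha\cong S_\alpha$ (the twist $\phi_\alpha$ being trivial, up to natural isomorphism, on $\alpha$-finite modules). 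If $M$ is $\alpha$-free the map $M\to M_{(Y)}$ is injective, so $\cL_1T_\alpha M=0$; if $M$ is $\alpha$-finite then $M_{(Y)}=0$, so $T_\alpha M=0$. The claim $\cR_1G_\alpha\cong\Gamma_\alpha$ then follows from the adjunction $(T_\alpha,G_\alpha)$ together with the duality $\vee$ on $\cO$, or directly from the description of $G_\alpha$ via $\Hom_{\U(\fg)}(\cS_\alpha,-)$. For (iii) the plan is to show that on a block $\cO_\chi$ the object $\cS_\alpha$ becomes invertible in the relevant derived category of $\U(\fg)$-bimodules, with $\mathcal{R}G_\alpha$ computing the inverse twist; concretely one checks that the derived counit $\mathcal{L}T_\alpha\circ\mathcal{R}G_\alpha\to\id$ is an isomorphism on $\mathcal{D}^+(\cO_\chi)$ by testing it on injective objects, where it collapses to $\cS_\alpha\otimes^{\mathbf{L}}_{\U(\fg)}\mathcal{R}\Hom_{\U(\fg)}(\cS_\alpha,-)\cong\id$.

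The one genuine point requiring care, and the main obstacle, is to stay inside category $\cO$ throughout: the localisations $\U(\fg)_{(Y)}$ and $M_{(Y)}$ do not lie in $\cO$, so one must verify that $\cS_\alpha\otimes_{\U(\fg)}M$ and $\Tor^{\U(\fg)}_1(\cS_\alpha,M)$ do, and that the derived functors computed over $\U(\fg)$ agree with those computed inside $\cO_\chi$. This is precisely what is established in Section~5 of \cite{CouMaz}, to which I refer for the details.
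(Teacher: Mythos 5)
The paper offers no proof of this lemma at all; it simply records it as a restatement of Lemma~5.4 and Propositions~5.10 and 5.11 of the cited reference. Your proposal likewise defers to that reference, but you supplement it with the semiregular-bimodule mechanism, and that supplement is accurate for parts (i), (ii) and the first half of (iv): the flatness of $\U(\fg)_{(Y)}$ gives $\Tor$-dimension $\leq 1$ for $\cS_\alpha$, the four-term exact sequence identifies $\Tor_1^{\U(\fg)}(\cS_\alpha,M)$ with the $Y$-torsion of $M$ (which for a weight module in $\cO$ is precisely the maximal $\alpha$-finite submodule), and the observation that ${}^{\phi_\alpha}(-)$ is naturally isomorphic to the identity on $\mathfrak{sl}(2)_\alpha$-integrable modules is the standard fact needed to pass from $\Tor_1$ to $S_\alpha$. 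The deduction of $\cR_1 G_\alpha\cong\Gamma_\alpha$ from the duality $\vee$ on $\cO$ is also fine.

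The one step that is not an argument as written is (iii). Saying the derived counit, tested on injectives, ``collapses to $\cS_\alpha\otimes^{\mathbf L}_{\U(\fg)}\cR\Hom_{\U(\fg)}(\cS_\alpha,-)\cong\id$'' is circular: that isomorphism is essentially (iii) itself and cannot be invoked to establish it. In the sources (Andersen--Stroppel for the Lie algebra case, and the super analogue in \cite{CouMaz}), the equivalence $\cL T_\alpha\circ\cR G_\alpha\cong\id$ is deduced by a genuinely different mechanism: one first shows $T_\alpha$ preserves $\cO$, then compares with the localization functor $(-)_{(Y)}$ and checks the unit/counit on a generating class of objects with explicit standard or costandard filtrations (Vermas and dual Vermas), rather than by postulating invertibility of the bimodule. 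Since both you and the paper ultimately cite \cite{CouMaz} for the whole lemma, this is not a substantive error in the proposal, but if you want the sketch of (iii) to stand on its own you should replace the ``collapses'' step with a reference to that Verma-flag argument.
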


Now we derive some further properties of these twisting functors, which will be applied in BBW theory.
\begin{lemma}
\label{prepTGroth}
Consider $\alpha$ simple in $\Delta^+_{\oa}$ and $w\in W$ such that $ws_\alpha > w$, then $T_\alpha$ maps projective modules in $\cO$ to acyclic modules for $T_w$.
\end{lemma}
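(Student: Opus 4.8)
\textbf{Proof plan for Lemma \ref{prepTGroth}.}

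The plan is to reduce the statement to a computation on projective modules admitting a standard (Verma) filtration and then exploit the braid relations together with the behaviour of $T_\alpha$ on Verma modules recorded in Lemma \ref{TVerma}. First I would recall that every projective $P$ in $\cO$ has a Verma filtration, and that both $T_\alpha$ and $T_w$ are right exact with only first derived functors possibly nonzero (Lemma \ref{lemTrepeat}(i), applied factor by factor along a reduced expression for $w$). Being acyclic for $T_w$ means $\cL_i T_w(T_\alpha P)=0$ for all $i>0$; since $T_w$ is a composition of the $T_{\alpha_j}$ along a reduced word $s_{\alpha_1}\cdots s_{\alpha_p}$ for $w$, and each $\cL T_{\alpha_j}$ is a well-behaved derived functor on $\cD^-(\cO)$, a Grothendieck-type spectral sequence argument shows it suffices to prove that $T_\alpha P$ is acyclic for each individual $T_{\alpha_j}$ occurring, i.e. that $\cL_1 T_{\alpha_j}(T_\alpha P)=0$. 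By Lemma \ref{lemTrepeat}(ii), this is equivalent to showing that $T_\alpha P$ is $\alpha_j$-free.

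The key observation is then the following: because $P$ has a Verma filtration, $T_\alpha P$ has a filtration whose subquotients are of the form $T_\alpha M_\lambda$. By Lemma \ref{TVerma}, for each such $\lambda$ either $T_\alpha M_\lambda = M_{s_\alpha\cdot\lambda}$, or $T_\alpha M_\lambda$ sits in a short exact sequence controlled by $S_\alpha M_\lambda = \cL_1 T_\alpha M_\lambda$ (Lemma \ref{lemTrepeat}(iv)), which vanishes when $M_\lambda$ is $\alpha$-free. In the generic/relevant situations one can arrange that the subquotients of $T_\alpha P$ are genuine Verma modules $M_{s_\alpha\cdot\lambda}$, so $T_\alpha P$ itself has a standard filtration. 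A module with a standard filtration is $\beta$-free for any $\beta$ simple in $\Delta^+_{\oa}$, since each Verma module $M_\mu$ is $\beta$-free (the action of a negative root vector $Y_\beta$ on a highest weight module generated freely over $\U(\overline{\fn})$ is injective). Hence $T_\alpha P$ is $\alpha_j$-free for every $\alpha_j$ appearing in the reduced word, and in particular $\cL_1 T_{\alpha_j}(T_\alpha P)=0$.

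Assembling these: $T_\alpha P$ has a standard filtration, hence is acyclic for each $T_{\alpha_j}$; by induction on $p = l(w)$ along the reduced expression, using right-exactness and the vanishing of higher derived functors, one concludes $\cL_i T_w(T_\alpha P)=0$ for $i>0$, i.e. $T_\alpha P$ is $T_w$-acyclic. The hypothesis $ws_\alpha > w$ should enter precisely at the point where one wants the reduced word for $w$ to be "compatible" with $\alpha$ — that is, one needs $s_\alpha \cdots$ (the letter $\alpha$) not to reappear in a way that forces an $\alpha$-finite subquotient — guaranteeing via Lemma \ref{TVerma} that the relevant $T_{\alpha_j} M_{s_\alpha\cdot\lambda}$ stay Verma modules and no derived obstruction arises.

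I expect the main obstacle to be the bookkeeping in the reduction step: justifying rigorously that acyclicity of $T_\alpha P$ with respect to the composite $T_w$ follows from acyclicity with respect to each simple factor, which requires a careful spectral sequence (or inductive dévissage) argument in $\cD^-(\cO)$ and a clean statement that a module with a standard filtration is simultaneously acyclic for all the simple twisting functors. The condition $ws_\alpha>w$ must be used to rule out the exceptional case of Lemma \ref{TVerma} (where $\langle\lambda,\alpha^\vee\rangle\in\Z$ with $\langle\lambda+\rho,\alpha^\vee\rangle<0$) for the weights actually occurring in the Verma filtration of $T_\alpha P$; checking this compatibility is the delicate combinatorial point.
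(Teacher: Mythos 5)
Your plan hinges on the claim that $T_\alpha P$ has a standard (Verma) filtration, but this is false in general, and the rest of the argument does not survive without it. A projective module $P$ in $\cO$ has a Verma flag whose highest weights $\lambda$ land on both sides of the $\alpha$-wall; for those $\lambda$ with $\langle\lambda+\rho,\alpha^\vee\rangle<0$, $T_\alpha M_\lambda$ is a \emph{twisted} Verma module, not a Verma module. (Already for $\mathfrak{sl}_2$, $T_\alpha M_{s_\alpha\cdot 0}$ has the character of $M_0$ but is the dual Verma $M_0^\vee$, which has the $\alpha$-finite trivial module as a quotient and is therefore not $\alpha$-free.) The hypothesis $ws_\alpha>w$ is a condition relating $w$ to $\alpha$ in the Weyl group; it does \emph{not} constrain which weights occur in the Verma flag of $P$, so it cannot, as you suggest, "rule out the exceptional case of Lemma~\ref{TVerma}." In the super setting matters are worse, since Lemma~\ref{TVerma} also requires that $\alpha$ or $\alpha/2$ be simple in $\Delta^+$, or that $\lambda$ be typical — neither is automatic for the subquotients of a Verma flag of $P$. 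Moreover, the dévissage step you outline needs the \emph{intermediate} modules $T_{\alpha_j}\cdots T_{\alpha_p}(T_\alpha P)$ to be $\alpha_{j-1}$-free, not just $T_\alpha P$ itself, and your argument does not provide that control.

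The paper's proof avoids the structure of $T_\alpha P$ entirely: projective modules in $\cO$ are direct summands of modules induced from projectives in $\cO_{\oa}$, and equation~\eqref{eqlemma51} shows that the derived twisting functors intertwine with $\Ind$ and $\Res$. This reduces the acyclicity assertion verbatim to the known statement for the reductive Lie algebra $\fg_{\oa}$ (the proof of Corollary~6.2 in Andersen--Stroppel), where the hypothesis $ws_\alpha>w$ is used. If you want to salvage your route, you would need to work with filtrations by \emph{twisted} Verma modules, establish that each of them is free over $\U(\ofn)$, and track freeness through the whole reduced word — at which point you would essentially be reproving the Andersen--Stroppel result rather than citing it, which is precisely what the reduction via $\Ind$ lets you avoid.
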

\begin{proof}
Projective modules in $\cO$ are direct summands of modules induced from projective modules in $\cO_{\oa}$. The claim therefore follows from equation \eqref{eqlemma51} and the corresponding statement for Lie algebras, see e.g. the proof of Corollary 6.2 in~\cite{MR2032059}.
\end{proof}

The following lemma is an application of the principle in the proof of Proposition~3 in \cite{MR2366357}.
\begin{lemma}
\label{IDMaz}
If $M\in\cO$ is $\alpha$-free and $N\in\cO$ is $\alpha$-finite, then we have
$$\Ext_{\cO}^k(T_\alpha M,N)\cong \Ext_{\cO}^{k-1}(M, N).$$ 
\end{lemma}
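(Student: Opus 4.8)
The plan is to use the adjunction between $T_\alpha$ and $G_\alpha$ together with the structural facts collected in Lemma \ref{lemTrepeat}, following the standard argument (the "principle in the proof of Proposition 3 in \cite{MR2366357}" that the statement explicitly points to). Concretely, $T_\alpha$ is right exact with at most one nonvanishing left derived functor $\cL_1 T_\alpha \cong S_\alpha$, and its right adjoint $G_\alpha$ has at most one nonvanishing right derived functor $\cR_1 G_\alpha \cong \Gamma_\alpha$. The derived functors $\mathcal L T_\alpha$ and $\mathcal R G_\alpha$ on the bounded derived categories are adjoint, and by Lemma \ref{lemTrepeat}(iii) the composite $\mathcal L T_\alpha \circ \mathcal R G_\alpha$ is isomorphic to the identity on $\mathcal D^+(\cO_\chi)$ for each central character. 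So at the level of derived Hom,
\[
\Hom_{\mathcal D(\cO)}(\mathcal L T_\alpha \, M, N[k]) \;\cong\; \Hom_{\mathcal D(\cO)}(M, \mathcal R G_\alpha \, N[k])
\]
for all $k$, where I regard $M$ and $N$ as complexes concentrated in degree $0$.

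First I would unwind the left-hand side. Since $M$ is $\alpha$-free, Lemma \ref{lemTrepeat}(ii) gives $\cL_1 T_\alpha M = 0$, so $\mathcal L T_\alpha M$ is represented by the single module $T_\alpha M$ in degree $0$; hence $\Hom_{\mathcal D(\cO)}(\mathcal L T_\alpha M, N[k]) = \Ext^k_{\cO}(T_\alpha M, N)$. Next I would unwind the right-hand side. Since $N$ is $\alpha$-finite, I expect $G_\alpha N = 0$ and $\cR_1 G_\alpha N = \Gamma_\alpha N = N$ (as $N$ is already $\alpha$-finite, its maximal $\alpha$-finite quotient is itself) — these are the dual statements to Lemma \ref{lemTrepeat}(ii), (iv); if the precise vanishing $G_\alpha N = 0$ for $\alpha$-finite $N$ is not literally in the excerpt, it follows from $T_\alpha$ killing $\alpha$-finite modules by adjunction, since $\Hom_{\cO}(M', G_\alpha N) = \Hom_{\cO}(T_\alpha M', N)$ and every projective $M'$ maps onto everything. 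Thus $\mathcal R G_\alpha N$ is represented by $N$ sitting in cohomological degree $1$, i.e. $\mathcal R G_\alpha N \cong N[-1]$ in $\mathcal D^+(\cO_\chi)$, and therefore $\Hom_{\mathcal D(\cO)}(M, \mathcal R G_\alpha N[k]) = \Hom_{\mathcal D(\cO)}(M, N[k-1]) = \Ext^{k-1}_{\cO}(M, N)$. Combining the two computations yields the claimed isomorphism.

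The point that needs the most care is the decomposition into central-character blocks, since Lemma \ref{lemTrepeat}(iii) is stated only blockwise: I would reduce to the case where $M$ and $N$ both lie in a single block $\cO_\chi$ (for a mixed situation the Ext groups split as direct sums over the block decomposition, and both $T_\alpha$ and $G_\alpha$ are compatible with the block decomposition since they are exact up to one derived functor and commute with the central action), and then apply the adjunction within $\mathcal D^+(\cO_\chi)$. I also need to make sure the hom-sets in the unbounded derived category agree with $\Ext_{\cO}$; this is fine because $\cO$ has enough projectives and injectives and all objects involved are honest modules, so $\Hom_{\mathcal D(\cO)}(A, B[k]) = \Ext^k_{\cO}(A,B)$ for $A, B \in \cO$. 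The main obstacle I anticipate is purely bookkeeping: being careful that the adjunction of $\mathcal L T_\alpha$ and $\mathcal R G_\alpha$ is set up on the correct (bounded-above versus bounded-below) derived categories so that the composite identity of Lemma \ref{lemTrepeat}(iii) actually applies, and matching the cohomological degree shift ($+1$ from $\cR_1 G_\alpha$, none from $\cL_0 T_\alpha$) so that the index drops from $k$ to $k-1$ rather than the other way around.
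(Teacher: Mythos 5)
Your proof is correct and is precisely the paper's: the paper's proof is exactly the three-step chain
$\Ext^k_{\cO}(T_\alpha M, N) \cong \Hom_{\cD^+(\cO)}(\cL T_\alpha M, N[k]) \cong \Hom_{\cD^+(\cO)}(M, \cR G_\alpha N[k]) \cong \Ext^{k-1}_{\cO}(M,N)$,
which you unwind with full detail (identifying $\cL T_\alpha M$ with $T_\alpha M$ since $M$ is $\alpha$-free, and $\cR G_\alpha N$ with $N[-1]$ since $N$ is $\alpha$-finite).

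One small correction to a side remark you flag yourself: the sketch you offer for why $G_\alpha N = 0$ for $\alpha$-finite $N$ does not work as stated. By adjunction you would need $\Hom_{\cO}(T_\alpha P, N) = 0$ for every projective $P$, but $T_\alpha P$ is $\alpha$-free rather than $\alpha$-finite, so the fact that $T_\alpha$ kills $\alpha$-finite modules is not relevant, and $\alpha$-free modules certainly can map nontrivially onto $\alpha$-finite ones (already $M(0)\twoheadrightarrow L(0)$ for $\mathfrak{sl}(2)$). The vanishing $G_\alpha N = 0$ is nonetheless standard and is what the paper uses implicitly: either invoke the relation $G_\alpha \cong \vee \circ T_\alpha \circ \vee$ with the simple-preserving duality on $\cO$, noting that $N^\vee$ is $\alpha$-finite whenever $N$ is; or reduce to the Lie algebra case via the intertwining $\res^\fg_{\fg_{\oa}} \circ G_\alpha \cong G^{\oa}_\alpha \circ \res^\fg_{\fg_{\oa}}$ (which $G_\alpha$ inherits, as noted after equation \eqref{eqlemma51}) together with the known $G^{\oa}_\alpha$-vanishing on $\alpha$-finite $\fg_{\oa}$-modules.
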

\begin{proof}
Applying the properties in Lemma \ref{lemTrepeat} implies
\begin{eqnarray*}
\Ext^k_{\cO}(T_\alpha M,N)&\cong& \Hom_{\cD^+(\cO)}(\cL T_\alpha M,N[k])\\
&\cong &\Hom_{\cD^+(\cO)}(M,\cR G_\alpha N[k])\\
&\cong&  \Ext_{\cO}^{k-1}(M, N),
\end{eqnarray*}
which yields the lemma.
\end{proof}

\begin{proposition}
\label{extVermafin}
Consider an arbitrary weight $\mu\in\fh^\ast$, $w\in W$ and $V$ a finite dimensional module in $\cO$. We have
$$\Ext^j_{\cO}(T_w M(\mu), V)=\begin{cases}\Ext^{j-l(w)}_{\cO}(M(\mu),V)&\mbox{if }l(w)\le j\\ 0&\mbox{ if } l(w) > j.\end{cases}$$
\end{proposition}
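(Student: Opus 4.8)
The statement is exactly the kind of claim that should follow by induction on $l(w)$ using the derived twisting functors, so the plan is to reduce everything to Lemma \ref{IDMaz} and Lemma \ref{TVerma} (or rather its $\fg_{\oa}$-analogue via equation \eqref{eqlemma51}). First I would dispose of the base case $w = 1$, where the statement is a tautology. For the inductive step, write $w = s_\alpha w'$ with $l(w') = l(w) - 1$ and $\alpha$ simple in $\Delta_{\oa}^+$, chosen so that $s_\alpha w' > w'$; then $T_w = T_\alpha \circ T_{w'}$ by the braid relations (Lemma 5.3 in \cite{CouMaz}, recalled in Subsection \ref{prelsec4}).

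The key structural input is that $T_{w'} M(\mu)$, while not necessarily a Verma module, is a module with a standard (Verma) filtration whose subquotients $M(\nu)$ all have $\langle \nu + \rho, \alpha^\vee \rangle$ of a controlled sign — concretely, $T_{w'} M(\mu)$ should be $\alpha$-free. This is where I would lean on the $\fg_{\oa}$-picture: by equation \eqref{eqlemma51}, $\Res \circ \cL_i T_{w'} \cong \cL_i T^{\oa}_{w'} \circ \Res$, and $\Res M(\mu)$ has a standard filtration by $\fg_{\oa}$-Verma modules; the classical theory of Arkhipov twisting functors (e.g. \cite{MR2032059}) tells us that $T^{\oa}_{w'}$ applied to a dominant-enough $\fg_{\oa}$-Verma gives an $\alpha$-free module, and more generally that $T_{w'} M(\mu)$ is $\alpha$-free whenever $s_\alpha w' > w'$. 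Granting $\alpha$-freeness of $M' := T_{w'}M(\mu)$ and noting that the finite-dimensional $V$ is $\alpha$-finite, Lemma \ref{IDMaz} gives
\[
\Ext^j_{\cO}(T_\alpha M', V) \cong \Ext^{j-1}_{\cO}(M', V) = \Ext^{j-1}_{\cO}(T_{w'} M(\mu), V),
\]
and the inductive hypothesis applied to $w'$ finishes the computation: this equals $\Ext^{j-1-l(w')}_{\cO}(M(\mu),V) = \Ext^{j-l(w)}_{\cO}(M(\mu),V)$ when $l(w') \le j-1$, i.e. $l(w) \le j$, and vanishes otherwise.

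There is a subtlety I would need to address: Lemma \ref{IDMaz} is stated for a single $\alpha$-free $M$, but I would want to apply it to $T_{w'}M(\mu)$, so I must verify $\alpha$-freeness carefully — this is the main obstacle. The cleanest route is probably to prove by a parallel induction that $T_{w'} M(\mu)$ has a filtration by Verma modules $M(\nu)$ with $s_\alpha\cdot\nu > \nu$ (hence each is $\alpha$-free, so the whole module is $\alpha$-free since $\alpha$-freeness is preserved under extensions), using Lemma \ref{TVerma} together with the reduced-expression bookkeeping $l(s_\alpha w') = l(w') + 1$. One must handle the case $\langle \lambda, \alpha^\vee\rangle \in \Z$ with $\langle \lambda+\rho,\alpha^\vee\rangle < 0$ excluded in Lemma \ref{TVerma}: here the choice $s_\alpha w' > w'$ is exactly what rules that out for the relevant $\nu$'s, so the argument is internally consistent. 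Alternatively, and perhaps more robustly, one can bypass explicit Verma filtrations by working in $\cD^-(\cO)$ throughout: $\cL T_\alpha \circ \cL T_{w'} \cong \cL T_w$ on $\cD^-(\cO)$ (from $\cL_i T_\alpha = 0$ for $i > 1$ and the corresponding compatibility of derived twisting functors with reduced expressions, Lemma 5.3 in \cite{CouMaz}), and then the chain of adjunction isomorphisms $\Hom_{\cD^+(\cO)}(\cL T_w M(\mu), V[j]) \cong \Hom_{\cD^+(\cO)}(M(\mu), \cR G_w V[j]) \cong \Hom_{\cD^+(\cO)}(M(\mu), V[j - l(w)])$ — using that $\cR G_w V \cong V[-l(w)]$ for $V$ finite dimensional, which itself follows by iterating Lemma \ref{lemTrepeat}(iii)–(iv) — immediately yields the formula, with the vanishing for $l(w) > j$ coming from $\Ext^{<0} = 0$ in $\cO$. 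I would present the derived-category version as the main argument since it avoids the delicate filtration analysis, and remark that the $\alpha$-freeness route gives an alternative.
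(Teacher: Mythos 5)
Your overall approach is the same as the paper's — twist the module, identify the derived adjoint of $T_w$ on finite-dimensional modules as a shift, and conclude by derived adjunction (the argument of Lemma~\ref{IDMaz} iterated). The paper's proof is exactly: (a) establish $\cL_k T_w M(\mu) = 0$ for $k > 0$, using equation~\eqref{eqlemma51} and Andersen--Stroppel's Theorem~2.2 together with the standard filtration of $\Res M(\mu)$; (b) establish $\cL_k T_w V = \delta_{k,l(w)} V$ by induction, starting from Lemma~\ref{lemTrepeat}(ii) and using the Grothendieck spectral sequence, well-defined by Lemma~\ref{prepTGroth}; (c) conclude ``as in Lemma~\ref{IDMaz}.'' Your computation of $\cR G_w V \cong V[-l(w)]$ plays the same role as (b) (it is the adjoint reformulation), so that part is fine.

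However, your preferred ``main argument'' has a real gap. The chain
\[
\Hom_{\cD^+(\cO)}(\cL T_w M(\mu), V[j]) \cong \Hom_{\cD^+(\cO)}(M(\mu), \cR G_w V[j]) \cong \Ext^{j-l(w)}_{\cO}(M(\mu),V)
\]
computes the derived $\Hom$ out of the complex $\cL T_w M(\mu)$, whereas the proposition is a statement about $\Ext^j_{\cO}(T_w M(\mu), V)$, i.e. about the \emph{module} $T_w M(\mu) = \cL_0 T_w M(\mu)$. Identifying the two requires precisely the acyclicity $\cL_k T_w M(\mu) = 0$ for $k>0$, which is step (a) in the paper and which your derived-category write-up never establishes. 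Your remark that the derived-category version ``avoids the delicate filtration analysis'' is therefore not correct: without acyclicity of $M(\mu)$ for $T_w$, the chain computes the wrong quantity (there would be a nontrivial hyper-Ext spectral sequence with extra terms coming from $\cL_{>0} T_w M(\mu)$).

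The good news is that your ``alternative'' route contains exactly the missing ingredient: the observation that $\Res M(\mu)$ has a $\fg_{\oa}$-Verma filtration, combined with \eqref{eqlemma51} and the classical Arkhipov/Andersen--Stroppel vanishing, gives $\cL_k T_{w} M(\mu)=0$ for $k>0$ (and in particular the $\alpha$-freeness of each $T_{w'} M(\mu)$). You should promote this from the ``delicate filtration analysis you would rather bypass'' to the indispensable first step of whichever write-up you choose — it is not optional, and it is exactly what the paper proves first. With that inserted, either of your two presentations becomes a complete proof along the same lines as the paper's.
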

\begin{proof}
We have $\cL_k T_w M(\mu)=0$ for every $k>0$, this follows from the corresponding property for Lie algebras, see Theorem 2.2 in \cite{MR2032059}, equation \eqref{eqlemma51} and the fact that $\Res M(\mu)$ has a standard filtration in $\cO_{\oa}$.

On the other hand, the property $\cL_kT_w V=\delta_{l(w),k}V$ holds for $l(w)=1$ by Lemma~\ref{lemTrepeat}(ii). The general case follows by induction from this, using the Grothen-dieck spectral sequence, which is well-defined by Lemma \ref{prepTGroth}.

The claim then follows from these two properties as in the proof of Lemma \ref{IDMaz}.
\end{proof}

\begin{lemma}
\label{TG1}
If $M\in \cO$ and $G_\alpha M$ are both $\alpha$-free, then we have $M\cong T_\alpha G_\alpha M$.
\end{lemma}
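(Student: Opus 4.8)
The statement to prove is: if $M \in \cO$ and $G_\alpha M$ are both $\alpha$-free, then $M \cong T_\alpha G_\alpha M$. The natural approach is to exploit the adjunction $(T_\alpha, G_\alpha)$ together with the derived-functor machinery from Lemma \ref{lemTrepeat}, mimicking the argument in Lemma \ref{IDMaz}. The unit of the adjunction gives a canonical morphism $\eta_M : M \to G_\alpha T_\alpha M$ and the counit gives $\varepsilon_M : T_\alpha G_\alpha M \to M$; the goal is to show the latter is an isomorphism under the stated hypotheses.

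\textbf{Key steps.} First I would pass to the derived category. By Lemma \ref{lemTrepeat}(i), $\cL T_\alpha$ is concentrated in degrees $0,-1$, and by part (ii), since $G_\alpha M$ is $\alpha$-free we have $\cL_1 T_\alpha(G_\alpha M) = 0$, so $\cL T_\alpha(G_\alpha M) = T_\alpha G_\alpha M$ as an object of $\cD^-(\cO)$ (concentrated in degree $0$). Similarly, since $M$ is $\alpha$-free, $\cR_1 G_\alpha M = \Gamma_\alpha M = 0$ (using Lemma \ref{lemTrepeat}(iv) and that $\Gamma_\alpha$ extracts the maximal $\alpha$-finite quotient, which is zero for an $\alpha$-free module), so $\cR G_\alpha M = G_\alpha M$ concentrated in degree $0$. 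Now Lemma \ref{lemTrepeat}(iii) states that $\cL T_\alpha \circ \cR G_\alpha$ is isomorphic to the identity on $\cD^+(\cO_\chi)$ for each central character $\chi$; restricting to the relevant block and applying this to $M$ gives
\[
M \;\cong\; \cL T_\alpha(\cR G_\alpha M) \;\cong\; \cL T_\alpha(G_\alpha M) \;\cong\; T_\alpha G_\alpha M
\]
in $\cD(\cO)$, where the middle isomorphism uses that $\cR G_\alpha M = G_\alpha M$ and the last uses that $\cL T_\alpha(G_\alpha M) = T_\alpha G_\alpha M$. Since both ends are complexes concentrated in degree zero, the derived isomorphism descends to an isomorphism of objects of $\cO$, and one checks it is realized by the counit $\varepsilon_M$.

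\textbf{Main obstacle.} The delicate point is the bookkeeping with bounded-below versus bounded-above derived categories: Lemma \ref{lemTrepeat}(iii) is stated for $\cD^+(\cO_\chi)$, while $\cL T_\alpha$ is naturally defined on $\cD^-(\cO)$. One must either invoke that $\cO$ has finite global dimension on each block (so $\cD^+$, $\cD^-$, $\cD^b$ all agree and the functors extend compatibly), or argue directly that the object $G_\alpha M$ admits a bounded projective resolution so that $\cL T_\alpha$ applied to it makes sense and agrees with the unbounded computation. A secondary (minor) point is to confirm that the abstract isomorphism coming from (iii) is indeed the adjunction counit and not merely some isomorphism; this follows from the standard fact that $\cL T_\alpha \dashv \cR G_\alpha$ on the derived level and that the triangle identities identify the iso in (iii) with the counit when the latter is applied to objects of the above form. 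Everything else — verifying $\alpha$-freeness forces the vanishing of $\cR_1 G_\alpha$ and $\cL_1 T_\alpha$ on the appropriate modules, and that complexes concentrated in degree zero with a derived isomorphism are abstractly isomorphic in $\cO$ — is routine.
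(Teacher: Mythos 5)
Your overall strategy — pass to the derived category, use Lemma~\ref{lemTrepeat}(iii) together with the concentration-in-degree-zero of the two relevant complexes — is exactly the route suggested by the paper's one-line proof, and your first reduction (that $G_\alpha M$ being $\alpha$-free forces $\cL_1 T_\alpha(G_\alpha M)=0$, hence $\cL T_\alpha(G_\alpha M)=T_\alpha G_\alpha M$) is correct. However, there is a genuine gap in the parallel step for $G_\alpha$: you assert that because $M$ is $\alpha$-free, $\cR_1 G_\alpha M = \Gamma_\alpha M = 0$, on the grounds that ``$\Gamma_\alpha$ extracts the maximal $\alpha$-finite quotient, which is zero for an $\alpha$-free module.'' This is false. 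What vanishes for an $\alpha$-free module $M$ is the maximal $\alpha$-finite \emph{submodule} $S_\alpha M$ (equivalently $\cL_1 T_\alpha M$): if $N\subset M$ is $\alpha$-finite and $Y$ acts injectively on $M$, then $Y$ is both locally nilpotent and injective on $N$, forcing $N=0$. The maximal $\alpha$-finite \emph{quotient} $\Gamma_\alpha M$ need not vanish at all for $\alpha$-free $M$: already a dominant Verma module $M_\lambda$ with $\langle\lambda,\alpha^\vee\rangle\in\Z_{\ge 0}$ is $\alpha$-free (it is $U(\overline\fn)$-free), yet it surjects onto the $\alpha$-finite simple $L_\lambda$, so $\Gamma_\alpha M_\lambda\neq 0$. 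You have in effect confused the vanishing statement for $\cL_1T_\alpha$ (which does follow from $\alpha$-freeness of its argument, via $\cL_1T_\alpha\cong S_\alpha$) with the one for $\cR_1G_\alpha\cong\Gamma_\alpha$ (which does not).

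Once this claim is removed, the chain $M\cong\cL T_\alpha(\cR G_\alpha M)\cong\cL T_\alpha(G_\alpha M)\cong T_\alpha G_\alpha M$ breaks at the middle isomorphism, because $\cR G_\alpha M$ is not known to be concentrated in degree zero. Running the argument carefully — take the truncation triangle $G_\alpha M\to\cR G_\alpha M\to(\Gamma_\alpha M)[-1]\to{}$, apply $\cL T_\alpha$, and use $\cL T_\alpha(G_\alpha M)=T_\alpha G_\alpha M$, $\cL T_\alpha(\cR G_\alpha M)\cong M$, and $\cL T_\alpha(\Gamma_\alpha M)\cong(\Gamma_\alpha M)[1]$ — only yields a short exact sequence $0\to T_\alpha G_\alpha M\to M\to\Gamma_\alpha M\to 0$, not an isomorphism. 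In other words, the conclusion of the lemma is equivalent to showing $\Gamma_\alpha M=0$, and this does not follow from $\alpha$-freeness of $M$ alone. A correct proof would therefore have to extract $\Gamma_\alpha M=0$ from the \emph{combination} of the two hypotheses (in particular from the $\alpha$-freeness of $G_\alpha M$, which you only use for the $\cL_1T_\alpha$ vanishing) or argue differently; as written, your proof silently assumes the hard part.

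Two smaller remarks. First, the reformulation in terms of the counit $\varepsilon:T_\alpha G_\alpha M\to M$ is a nice packaging, but the injectivity and identification of the cokernel with $\Gamma_\alpha M$ still rests on the same triangle, so it does not circumvent the gap. Second, your bounded-below versus bounded-above bookkeeping concern is legitimate but is indeed routine (each block of $\cO$ has finite global dimension), so it is not where the trouble lies.
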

\begin{proof}
This follows immediately from Lemma \ref{lemTrepeat}.\end{proof}

These results allow to conclude the following two corollaries.
\begin{corollary}
\label{resVerma}
Consider a $\Gamma^+$-generic $\Lambda\in\cP^+$ and $w\in W$ with reduced expression $w=s_{\alpha_1}s_{\alpha_2}\cdots s_{\alpha_k}$, then we have
$$\Res \,G_{s_{\alpha_j}s_{\alpha_{j-1}}\cdots s_{\alpha_1}}\, M_{w\cdot\Lambda}\,\,\cong\,\, \Res\, M_{s_{\alpha_{j+1}}\cdots s_{\alpha_k}\cdot\Lambda}\quad\mbox{for }1\le j\le k.$$
\end{corollary}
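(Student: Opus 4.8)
\textbf{Proof proposal for Corollary \ref{resVerma}.}

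The plan is to prove the statement by descending induction on $j$, starting from $j=k$, and to reduce everything to the corresponding statement for the reductive Lie algebra $\fg_{\oa}$ via the intertwining relation $\Res \circ \cL_i G_\alpha \cong \cL_i G_\alpha^{\oa}\circ \Res$, which $G_\alpha$ inherits from $T_\alpha$ through equation \eqref{eqlemma51}. Writing $w_j = s_{\alpha_j}s_{\alpha_{j-1}}\cdots s_{\alpha_1}$ (so $w_0 = e$ and $w_j = s_{\alpha_j} w_{j-1}$), the claim is $\Res\, G_{w_j} M_{w\cdot\Lambda} \cong \Res\, M_{v_j\cdot\Lambda}$, where $v_j = s_{\alpha_{j+1}}\cdots s_{\alpha_k}$; note $v_k = e$ and $w = w_k^{-1}$ is irrelevant, what matters is $v_0 = w$ and $w \cdot \Lambda = v_0 \cdot \Lambda$. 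For $j=k$ the left side is $\Res\, G_{w^{-1}} M_{w\cdot\Lambda}$... actually here $w_k = s_{\alpha_k}\cdots s_{\alpha_1}$, which is a reduced expression for $w^{-1}$ (reversing a reduced word), so the statement for $j = k$ reads $\Res\, G_{w^{-1}} M_{w\cdot\Lambda}\cong \Res\, M_\Lambda = \Res\, K$-free Verma; this is where Lemma \ref{TVerma} and $\Gamma^+$-genericness first enter.

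Here is the inductive step in more detail. Suppose the claim holds for some $j$ with $1\le j\le k$, i.e. $\Res\, G_{w_{j}} M_{w\cdot\Lambda}\cong \Res\, M_{v_j\cdot\Lambda}$ in $\cO_{\oa}$, and we want it for $j-1$. Since $G_{w_{j}} = G_{s_{\alpha_{j}}}\circ G_{w_{j-1}}$ by the braid relations, I would like to ``remove'' the outer $G_{s_{\alpha_j}}$. The key is to apply $T_{\alpha_j}$ (or rather $T_{\alpha_j}^{\oa}$ after restriction) and use Lemma \ref{TG1}: if both $G_{w_{j-1}} M_{w\cdot\Lambda}$ and $G_{s_{\alpha_j}}(G_{w_{j-1}} M_{w\cdot\Lambda})$ are $\alpha_j$-free, then $G_{w_{j-1}} M_{w\cdot\Lambda}\cong T_{\alpha_j} G_{s_{\alpha_j}} G_{w_{j-1}} M_{w\cdot\Lambda} = T_{\alpha_j} G_{w_j} M_{w\cdot\Lambda}$. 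Restricting and using $\Res\circ T_{\alpha_j}\cong T_{\alpha_j}^{\oa}\circ\Res$ together with the inductive hypothesis gives $\Res\, G_{w_{j-1}}M_{w\cdot\Lambda}\cong T_{\alpha_j}^{\oa}\,\Res\, M_{v_j\cdot\Lambda} = T_{\alpha_j}^{\oa} M_{\oa}(v_j\cdot\Lambda)$; but $v_{j-1} = s_{\alpha_j} v_j$ with $v_{j-1} > v_j$ in the Bruhat order (since $s_{\alpha_1}\cdots s_{\alpha_k}$ is reduced, so is each suffix, and appending $s_{\alpha_j}$ on the left increases length), hence $\langle v_j\cdot\Lambda + \rho_{\oa}, \alpha_j^\vee\rangle > 0$ and $T_{\alpha_j}^{\oa} M_{\oa}(v_j\cdot\Lambda) = M_{\oa}(s_{\alpha_j}\circ v_j\cdot\Lambda) = M_{\oa}(v_{j-1}\cdot\Lambda)$ by the classical Lemma \ref{TVerma} (the case $\lambda$ typical / reductive algebra). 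This closes the induction.

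The genuine obstacle is verifying the $\alpha_j$-freeness hypotheses needed to invoke Lemma \ref{TG1} at each stage, and here the $\Gamma^+$-genericness of $\Lambda$ is essential. The point is that $\Res\, M_{w\cdot\Lambda}$ has a standard filtration whose subquotients are $M_{\oa}(w\cdot\Lambda - \gamma)$ for $\gamma\in\Gamma^+$, and $\Gamma^+$-genericness forces all the weights $w\cdot\Lambda - \gamma = w\circ(\Lambda - \gamma')$ (using \eqref{MusGS}) to lie in the interior of a single Weyl chamber, so none of them is fixed by or sent below itself inappropriately by the relevant reflections along the chain $v_j$. One then tracks, using the braid-relation decomposition and the fact that $G_\alpha$ (being the adjoint of $T_\alpha$) behaves on Verma-filtered modules in $\cO_{\oa}$ exactly as in the Lie algebra case, that each intermediate module $\Res\, G_{w_i} M_{w\cdot\Lambda}$ is again Verma-filtered with all highest weights strictly on the positive side of the wall $\alpha_{i+1}^\vee$; this is the ``$\alpha_{i+1}$-free'' condition. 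I expect this bookkeeping — propagating the chamber condition along the reduced word and matching it with the $\alpha$-freeness/$\alpha$-finiteness dichotomy for Verma modules — to be the technical heart of the argument, while the homological manipulations via Lemma \ref{TG1}, Lemma \ref{TVerma} and the intertwining relations \eqref{eqlemma51} are formal once that is in place. An alternative, perhaps cleaner, route is to bypass the module-by-module induction entirely: apply $\Res$ and reduce at once to the assertion $\Res\, G_{w^{-1}} M_{w\cdot\Lambda}\cong \Res\, M_\Lambda$ together with the general Lie-algebra fact that, for $\Lambda$ dominant regular and any reduced $w = s_{\alpha_1}\cdots s_{\alpha_k}$, one has $G_{s_{\alpha_j}\cdots s_{\alpha_1}} M_{\oa}(w\circ\Lambda)\cong M_{\oa}(s_{\alpha_{j+1}}\cdots s_{\alpha_k}\circ\Lambda)$ applied summand-wise to the standard filtration, which again is controlled by genericness.
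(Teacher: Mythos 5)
Your primary argument — a descending induction on $j$ starting from $j=k$ — is incomplete in a way that makes it circular: the base case $j=k$ is the assertion $\Res\, G_{w^{-1}} M_{w\cdot\Lambda}\cong \Res\, M_\Lambda$, which is essentially the full content of the corollary (it is what feeds into Corollary \ref{extcomple}), and you never establish it. You only say that ``this is where Lemma \ref{TVerma} and $\Gamma^+$-genericness first enter.'' Since you cannot descend from an unproved base case, the induction as stated does not go through; an \emph{ascending} induction from the trivial $j=0$ case (applying $G_{\alpha_{j+1}}$ at each step) would be the sound organization. There is also a computational slip in the inductive step: you write $\Res\, M_{v_j\cdot\Lambda} = M_{\oa}(v_j\cdot\Lambda)$, but the restriction of a $\fg$-Verma module to $\fg_{\oa}$ is not a single $\fg_{\oa}$-Verma module — by PBW it has a filtration with subquotients $M^{\oa}_{v_j\cdot\Lambda-\gamma}$, $\gamma\in\Gamma^+$. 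Finally, you defer the verification of the $\alpha_j$-freeness hypotheses needed to invoke Lemma \ref{TG1}, and you correctly flag this as the technical heart of your approach — but it is precisely the part that is not done.

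The ``alternative, perhaps cleaner, route'' you mention at the end is in fact the paper's actual proof. The paper writes $\Res\, M_{w\cdot\Lambda}\cong\bigoplus_{\gamma\in\Gamma^+}M^{\oa}_{w\circ(\Lambda-\gamma)}$ (equation \eqref{resMM}): by $\Gamma^+$-genericness all $\Lambda-\gamma$ are $\fg_{\oa}$-dominant, so the $\fg_{\oa}$-Verma factors are projective and the filtration splits into a genuine direct sum, a point your sketch does not make. It then moves $G$ through $\Res$ via \eqref{eqlemma51} and applies the known $\fg_{\oa}$-statement (Andersen--Stroppel) summand-by-summand. This bypasses the module-by-module induction entirely, and in particular never needs Lemma \ref{TG1} or any $\alpha$-freeness bookkeeping. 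So your closing remark points to the right proof, but you develop the wrong one and leave its key steps unverified.
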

\begin{proof}
The PBW theorem implies that we have
$$\Res\, M_\Lambda\cong \bigoplus_{\gamma\in \Gamma^+}M^{\oa}_{\Lambda-\gamma}\quad\mbox{and}$$
\begin{equation}\label{resMM}\Res\, M_{w\cdot\Lambda}\cong \bigoplus_{\gamma\in \Gamma^+}M^{\oa}_{w\cdot\Lambda-\gamma}\cong \bigoplus_{\gamma\in \Gamma^+}M^{\oa}_{w\circ(\Lambda-\gamma)}.\end{equation}
Since the twisting functors (and therefore their adjoints) on category $\cO$ and $\cO_{\oa}$ intertwine the restriction operator, see  equation \eqref{eqlemma51}, it suffices to prove that 
$$G_{s_{\alpha_j}s_{\alpha_{j-1}}\cdots s_{\alpha_1}}^{\oa} M^{\oa}_{w\circ(\Lambda-\gamma)}\cong M^{\oa}_{s_{\alpha_{j+1}}\cdots s_{\alpha_k}\circ(\Lambda-\gamma)}.$$
By Definition \ref{defgeneric}(i) all the weights $\Lambda-\gamma$ are $\fg_{\oa}$-integral dominant. The equation above is therefore standard and follows e.g. from the combination of Theorems 4.1 and 2.3 in \cite{MR2032059}.
\end{proof}

\begin{corollary}
\label{extcomple}
Let $\Lambda$ be a $\Gamma^+$-generic integral dominant weight, $w\in W$ with $l(w)=k$ and $V$ a finite dimensional $\fg$-module, then we have
$$\Ext^k_{\cO}(M_{w\cdot\Lambda},V)\cong \Hom_{\cO}(G_{w^{-1}}M_{w\cdot\Lambda}, V).$$
\end{corollary}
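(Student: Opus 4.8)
The plan is to deduce Corollary \ref{extcomple} from Proposition \ref{extVermafin} together with the generic-weight identification of restrictions provided by Corollary \ref{resVerma}. First I would apply Proposition \ref{extVermafin} with $\mu=w\cdot\Lambda$: since $l(w)=k$, this gives $\Ext^k_{\cO}(T_wM(w\cdot\Lambda),V)\cong\Ext^{0}_{\cO}(M(w\cdot\Lambda),V)=\Hom_{\cO}(M(w\cdot\Lambda),V)$, and all higher terms needed in the argument vanish. So the content of the corollary is to identify $\Hom_{\cO}(G_{w^{-1}}M_{w\cdot\Lambda},V)$ with $\Hom_{\cO}(T_wM_{w\cdot\Lambda},V)$, i.e. to show that $T_w G_{w^{-1}}M_{w\cdot\Lambda}\cong M_{w\cdot\Lambda}$, or at least that applying $T_w$ to $G_{w^{-1}}M_{w\cdot\Lambda}$ recovers a module whose $\Hom$ into finite-dimensional $V$ agrees with that of $M_{w\cdot\Lambda}$.

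The key mechanism is Lemma \ref{TG1}: if $N$ and $G_\alpha N$ are both $\alpha$-free then $N\cong T_\alpha G_\alpha N$. I would iterate this along a reduced expression $w^{-1}=s_{\alpha_k}\cdots s_{\alpha_1}$ (so $w=s_{\alpha_1}\cdots s_{\alpha_k}$), setting $N_0=M_{w\cdot\Lambda}$ and $N_j=G_{s_{\alpha_j}\cdots s_{\alpha_1}}M_{w\cdot\Lambda}$. By Corollary \ref{resVerma}, $\Res N_j\cong\Res M_{s_{\alpha_{j+1}}\cdots s_{\alpha_k}\cdot\Lambda}$, which is a direct sum of $\fg_{\oa}$-Verma modules $M^{\oa}_{s_{\alpha_{j+1}}\cdots s_{\alpha_k}\circ(\Lambda-\gamma)}$ with all $\Lambda-\gamma$ being $\fg_{\oa}$-integral dominant (Definition \ref{defgeneric}(i)). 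Each such Verma module for $\fg_{\oa}$ is $\alpha_{j+1}$-free because $s_{\alpha_{j+1}}\cdots s_{\alpha_k}$ is a reduced expression with $s_{\alpha_{j+1}}(s_{\alpha_{j+2}}\cdots s_{\alpha_k}) > s_{\alpha_{j+2}}\cdots s_{\alpha_k}$, so the highest weight is mapped strictly down by $s_{\alpha_{j+1}}$ — hence the module is infinite-dimensional in the $\alpha_{j+1}$-direction and $(\fg_{\oa})_{-\alpha_{j+1}}$ acts injectively. Since $\alpha_{j+1}$-freeness can be checked after $\Res$, this shows both $N_j$ and $N_{j+1}=G_{\alpha_{j+1}}N_j$ are $\alpha_{j+1}$-free, so Lemma \ref{TG1} gives $N_j\cong T_{\alpha_{j+1}}N_{j+1}$. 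Composing, $M_{w\cdot\Lambda}=N_0\cong T_{\alpha_1}\cdots T_{\alpha_k}N_k=T_w G_{w^{-1}}M_{w\cdot\Lambda}$.

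Finally I would combine: $\Hom_{\cO}(G_{w^{-1}}M_{w\cdot\Lambda},V)\cong\Hom_{\cD^+(\cO)}(\cL G_{w^{-1}}M_{w\cdot\Lambda},V)$ — but more directly, by adjunction $\Hom_{\cO}(G_{w^{-1}}M_{w\cdot\Lambda},V)$ is computed via the derived adjunction between $\cL T_w$ and $\cR G_{w^{-1}}$ exactly as in the proof of Lemma \ref{IDMaz}, giving $\Hom_{\cO}(G_{w^{-1}}M_{w\cdot\Lambda},V)\cong\Ext^k_{\cO}(T_w M_{w\cdot\Lambda},V)$ once one knows $\cL_iT_wM_{w\cdot\Lambda}=0$ for $i>0$ (which is the first line of the proof of Proposition \ref{extVermafin}) and $\cR_iG_{w^{-1}}V$ is concentrated in degree $l(w)=k$ when $V$ is finite-dimensional (dual to $\cL_iT_wV=\delta_{l(w),i}V$, Lemma \ref{lemTrepeat}(ii) iterated). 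Then Proposition \ref{extVermafin} rewrites $\Ext^k_{\cO}(T_wM_{w\cdot\Lambda},V)\cong\Hom_{\cO}(M_{w\cdot\Lambda},V)$, and the isomorphism $T_wG_{w^{-1}}M_{w\cdot\Lambda}\cong M_{w\cdot\Lambda}$ makes the two sides of the claimed identity literally equal.

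The main obstacle I anticipate is bookkeeping the freeness/finiteness hypotheses at each stage of the induction — specifically, verifying that $G_{s_{\alpha_j}\cdots s_{\alpha_1}}M_{w\cdot\Lambda}$ remains $\alpha_{j+1}$-free and not merely $\alpha_{j+1}$-finite, which is where $\Gamma^+$-genericness is essential (it forces every $\Lambda-\gamma$ into the dominant chamber so that no partial orbit element $s_{\alpha_{j+1}}\cdots s_{\alpha_k}\circ(\Lambda-\gamma)$ accidentally becomes $\alpha_{j+1}$-dominant). One must also make sure the derived-adjunction computation of $\Hom_{\cO}(G_{w^{-1}}M_{w\cdot\Lambda},V)$ really does collapse to a single $\Ext$ group; this needs the vanishing of $\cL_{>0}T_wM_{w\cdot\Lambda}$, which again uses that $\Res M_{w\cdot\Lambda}$ has a standard filtration and the corresponding Lie-algebra statement (Theorem 2.2 of \cite{MR2032059}) transported via \eqref{eqlemma51}.
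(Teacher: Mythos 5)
Your first two paragraphs are exactly the paper's argument: iterating Lemma \ref{TG1} along a reduced expression, with Corollary \ref{resVerma} supplying the $\alpha_{j+1}$-freeness at each stage (in fact every $\fg_{\oa}$-Verma module is $\alpha$-free for every simple $\alpha$, so the ``mapped strictly down'' reasoning is unnecessary --- what genericness and Corollary \ref{resVerma} really buy you is that $\Res N_j$ is again a direct sum of $\fg_{\oa}$-Verma modules at every step, which is what makes both $N_j$ and $N_{j+1}$ free). So up to and including the isomorphism $M_{w\cdot\Lambda}\cong T_wG_{w^{-1}}M_{w\cdot\Lambda}$ you match the paper.

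The final paragraph, however, does not close the argument as written. You derive the chain
$\Hom_{\cO}(G_{w^{-1}}M_{w\cdot\Lambda},V)\cong\Ext^k_{\cO}(T_w M_{w\cdot\Lambda},V)\cong\Hom_{\cO}(M_{w\cdot\Lambda},V)$,
whose endpoints are $\Hom(G_{w^{-1}}M_{w\cdot\Lambda},V)$ and $\Hom(M_{w\cdot\Lambda},V)$ --- not the $\Ext^k(M_{w\cdot\Lambda},V)$ you need on the left-hand side of the Corollary. Moreover the first isomorphism in this chain is not what the derived adjunction produces: the adjunction between $\cL T_w$ and $\cR G_{w^{-1}}$ gives
$\Hom(N,V)\cong\Hom_{\cD}(\cL T_w N,V[k])\cong\Ext^k(T_w N,V)$
for $N:=G_{w^{-1}}M_{w\cdot\Lambda}$ (once $\cL_{>0}T_wN=0$ and $\cR_iG_{w^{-1}}V=\delta_{i,k}V$ are known), i.e.\ $T_w$ applied to $G_{w^{-1}}M_{w\cdot\Lambda}$, not to $M_{w\cdot\Lambda}$. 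You then write that the required vanishing is $\cL_iT_wM_{w\cdot\Lambda}=0$, quoting the first line of the proof of Proposition \ref{extVermafin}; but that establishes vanishing for the Verma module $M_{w\cdot\Lambda}$, whereas the vanishing you actually need is for $N=G_{w^{-1}}M_{w\cdot\Lambda}$, and it is Corollary \ref{resVerma} (giving $\Res N$ a standard filtration) plus equation \eqref{eqlemma51} that supplies it. Finally, $\cL G_{w^{-1}}$ is not defined, since $G_{w^{-1}}$ is left exact; it should be $\cR G_{w^{-1}}$, and it should sit in the \emph{second} slot of the $\Hom$. Once you replace $T_w M_{w\cdot\Lambda}$ by $T_wN=M_{w\cdot\Lambda}$ throughout and source the vanishing from Corollary \ref{resVerma}, the chain $\Ext^k(M_{w\cdot\Lambda},V)=\Ext^k(T_wN,V)\cong\Hom(N,V)=\Hom(G_{w^{-1}}M_{w\cdot\Lambda},V)$ gives exactly the paper's (terse) proof, which invokes ``Proposition \ref{extVermafin}'' in the sense of re-running its argument with $N$ in place of a Verma module.
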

\begin{proof}
Applying the combination of Lemma \ref{TG1} and Corollary \ref{resVerma} iteratively yields $M_{w\cdot\Lambda}\cong T_wG_{w^{-1}}M_{w\cdot\Lambda}$. Applying Proposition \ref{extVermafin} then yields the result.
\end{proof}

We recall the following immediate consequence of the result of Penkov in Theorem~2.2 of \cite{MR1309652}.
\begin{lemma}
\label{genPen}
Consider $\Lambda\in\cP^+$, $\Gamma^+$-generic. For any $w\in W$, the $\fg_{\oa}$-module $\Res L(w\cdot\Lambda)$ is semisimple and its length only depends on $\fg$ and the degree of atypicality of $\Lambda$.
\end{lemma}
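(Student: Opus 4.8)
The plan is to derive this essentially as a citation of Penkov's Theorem 2.2 in \cite{MR1309652}, after a short reduction. First I would observe that $w\cdot\Lambda$ is again $\Gamma^+$-generic: by equation \eqref{MusGS} and the discussion following Definition \ref{defgeneric}, $\Gamma^+$-genericness is invariant under the $\rho$-shifted action of $W$. Moreover the degree of atypicality is a function of the central character $\chi_{w\cdot\Lambda}=\chi_\Lambda$ alone — it is the maximal number of mutually orthogonal isotropic roots orthogonal to $w\cdot\Lambda+\rho=w(\Lambda+\rho)$, and $W$ preserves the root system and the form — so it equals the atypicality degree of $\Lambda$. Hence it suffices to prove the statement for an arbitrary $\Gamma^+$-generic weight $\nu$ in place of $w\cdot\Lambda$, keeping in mind that $\Res L(\nu)$ lies in $\cO_{\oa}$ since it is a quotient of $\Res M_\nu$, which by the PBW theorem is a finite direct sum of Verma modules of $\fg_{\oa}$.

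Next I would invoke Penkov's Theorem 2.2 of \cite{MR1309652}, which describes the $\fg_{\oa}$-module structure of simple highest weight modules with generic highest weight. For $\nu$ generic it yields an explicit isomorphism of $\fg_{\oa}$-modules expressing $\Res L(\nu)$ as a finite direct sum of simple modules $L^{\oa}_{\nu-\langle S\rangle}$, where $S$ ranges over a fixed finite index set attached to a maximal isotropic subsystem of roots witnessing the atypicality and $\langle S\rangle\in\fh^\ast$ is the associated weight; note that all $\langle S\rangle$ belong to $\widetilde{\Gamma}$, so genericness guarantees the weights $\nu-\langle S\rangle$ all sit in one common Weyl chamber. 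In particular $\Res L(\nu)$ is semisimple. The length of $\Res L(\nu)$ is the cardinality of this index set (counted with the multiplicities with which the $\langle S\rangle$ collide, which are themselves independent of $\nu$), and since genericness precludes any further degeneracy this is a purely combinatorial quantity determined by $\fg$ together with the atypicality type of the central character — equivalently by $\fg$ and the atypicality degree of $\nu$ — and is independent of the position of $\nu$ inside the generic region. Combining this with the reduction of the first paragraph proves the lemma.

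I do not expect a genuine obstacle; the only points requiring care are (i) checking that passing from $\Lambda$ to $w\cdot\Lambda$ stays inside the generic region and preserves the atypicality degree, and (ii) reading off from Penkov's decomposition that the number of $\fg_{\oa}$-constituents depends only on the pair $(\fg,\text{atypicality degree})$ rather than on the specific block or chamber. If one wished to avoid quoting the precise form of Penkov's decomposition, the natural substitute would be to run the localization argument of \cite{MR1309652} (or, for the character count, the generic character formula of \cite{MR1309652, MR1036335}) directly for $L(\nu)$; but invoking Theorem 2.2 verbatim is cleaner and is the route I would take.
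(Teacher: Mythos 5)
Your proposal is correct and is essentially the paper's proof: the paper dispatches this lemma entirely by citing Theorem 2.2 of Penkov's paper \cite{MR1309652} as an "immediate consequence," and your write-up simply spells out the two routine preliminary observations (that $\Gamma^+$-genericness and the atypicality degree are $W$-invariant under the $\rho$-shifted action) that justify applying that theorem to $w\cdot\Lambda$. No discrepancy in method.
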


We will also need the following estimate on the star action.
\begin{lemma}
\label{lemeststar}
Consider $\mu\in\fh^\ast$, $\widetilde{\Gamma}$-generic, then 
$$w\ast\mu\,\in\, w\cdot\mu-\Gamma^+.$$
\end{lemma}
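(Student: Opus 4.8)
The statement $w \ast \mu \in w\cdot\mu - \Gamma^+$ should be proved by induction on the length $l(w)$, reducing to the case of a single simple reflection $s_\alpha$ with $\alpha$ simple in $\Delta_{\oa}^+$. The base case $w = e$ is trivial since $e\ast\mu = e\cdot\mu = \mu$ and $0 \in \Gamma^+$. For the inductive step, write $w = s_\alpha w'$ with $l(w) = l(w') + 1$; one then needs to combine the known estimate for $w'$ with a one-step estimate for the star reflection $s_\alpha$, being careful that the star action only gives a genuine $W$-action in the $\widetilde\Gamma$-generic setting (Theorem 8.10 in \cite{CouMaz}), which is exactly the hypothesis here, so that $w \ast \mu = s_\alpha \ast (w' \ast \mu)$ is well-defined and independent of the reduced expression.

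\textbf{The key one-step estimate.} The heart of the matter is: for $\alpha$ simple in $\Delta_{\oa}^+$ and $\nu$ a $\widetilde\Gamma$-generic weight, show $s_\alpha \ast \nu \in s_\alpha \cdot \nu - \Gamma^+$. Here I would unwind the definition of the star action from Subsection \ref{secprelWeyl}: pass to a Borel subalgebra $\widetilde{\fb}$ (with $\widetilde{\fb}_{\oa} = \fb_{\oa}$) in which $\alpha$ or $\alpha/2$ is simple, let $\widetilde\nu$ be the highest weight of $L^{(\fb)}_\nu$ in that system, so $L^{(\fb)}_\nu \cong L^{(\widetilde{\fb})}_{\widetilde\nu}$, and $s_\alpha \ast^{\fb}\nu$ is the $\Delta^+$-highest weight of $L^{(\widetilde{\fb})}(s_\alpha(\widetilde\nu + \widetilde\rho) - \widetilde\rho)$. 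The odd reflections relating $\fb$ and $\widetilde{\fb}$ only subtract elements of $\Delta_{\ob}^+$ (or add them back), so that both $\widetilde\nu - \nu$ and the difference $(s_\alpha \cdot \nu) - (s_\alpha\ast^{\fb}\nu)$ are controlled by sums of (positive, in the appropriate system) odd roots; the crucial point is that $s_\alpha$ permutes $\Delta_{\oa}^+ \setminus \{\alpha\}$ (or $\{\alpha,\alpha/2\}$), so it sends $\Gamma^+$-type corrections to $\widetilde\Gamma$-type corrections, and since $\nu$ is $\widetilde\Gamma$-generic everything stays in one Weyl chamber, forcing the net correction to land in $\Gamma^+$. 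I expect this is essentially the computation behind Lemma 5.7 in \cite{CouMaz} (our Lemma \ref{TVerma}) combined with the explicit odd-reflection bookkeeping in \cite{MR2906817, MR2743764}; equation \eqref{MusGS} will also be used to translate between the $\circ$- and $\cdot$-actions.

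\textbf{Assembling the induction.} Given the one-step estimate, the inductive step runs: by induction $w' \ast \mu = w'\cdot\mu - \gamma$ for some $\gamma \in \Gamma^+$; since $\mu$ is $\widetilde\Gamma$-generic and the star action is a $W$-action on such weights, $w'\ast\mu$ is again $\widetilde\Gamma$-generic (use that $\widetilde\Gamma$-genericness is $W$-invariant under $\circ$, and that $w'\cdot\mu$ and $w'\circ\mu$ differ by an element of $\Gamma^+ \subset \widetilde\Gamma$, cf. the discussion after Definition \ref{defgeneric}); then $w\ast\mu = s_\alpha \ast(w'\ast\mu) \in s_\alpha\cdot(w'\ast\mu) - \Gamma^+ = s_\alpha\cdot(w'\cdot\mu - \gamma) - \Gamma^+ = s_\alpha \cdot w'\cdot\mu - s_\alpha(\gamma) - \Gamma^+ = w\cdot\mu - (s_\alpha(\gamma) + \Gamma^+)$. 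The last step requires knowing $s_\alpha(\gamma) + \Gamma^+ \subseteq \Gamma^+$ up to elements already absorbed; since $\gamma$ is a sum of positive odd roots and $s_\alpha$ is a reflection in an even simple root, $s_\alpha(\gamma)$ is a sum of elements of $\Delta_{\ob}$ (possibly with signs), i.e. lies in $\widetilde\Gamma$ rather than $\Gamma^+$. Here the $\widetilde\Gamma$-genericness of $w'\ast\mu$ must be invoked once more: the weight $w\ast\mu$ is by definition a highest weight, hence dominant-like, and the only way $w\cdot\mu - \xi$ with $\xi \in \widetilde\Gamma$ can equal such a weight, given the chamber constraint, is if $\xi \in \Gamma^+$. \textbf{The main obstacle} I anticipate is precisely this last bookkeeping — controlling the sign of $s_\alpha(\gamma)$ and showing the genericness hypothesis is strong enough to push the correction term back into $\Gamma^+$ rather than merely $\widetilde\Gamma$; this is where the careful interplay between odd reflections, the even simple reflection, and Penkov's genericness (Theorem 2.2 in \cite{MR1309652}, our Lemma \ref{genPen}) has to be used rather than just formal manipulation.
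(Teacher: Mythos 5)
Your inductive plan runs into a gap at precisely the point you flag, and the attempted fix does not close it. Granting the one-step estimate, the induction gives $w\ast\mu \in s_\alpha\cdot(w'\cdot\mu-\gamma)-\Gamma^+ = w\cdot\mu - s_\alpha(\gamma)-\Gamma^+$ with $\gamma\in\Gamma^+$; and indeed $s_\alpha(\gamma)$ only lies in $\widetilde\Gamma$, not in $\Gamma^+$ (a simple even reflection does not permute $\Delta_{\ob}^+$ in general --- e.g.\ it can send $\alpha/2$ to $-\alpha/2$ when $\alpha/2$ is odd). Your proposed rescue --- that $w\ast\mu$ is ``by definition a highest weight, hence dominant-like'' --- is not a constraint at all: for $w\neq e$ the weight $w\ast\mu$ is simply an element of $\fh^\ast$, the highest weight of an irreducible in a non-dominant chamber, so there is no positivity forcing the net correction back into $\Gamma^+$. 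Moreover the accumulation worsens with length: even if one reformulates the one-step estimate using the $\circ$-action one ends up with a correction in $\Gamma^++\Gamma^+$ or in $w$-translates of $\Gamma^+$, and neither is contained in $\Gamma^+$. A genuine length-by-length induction would need an essentially different invariant to carry along.

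The paper's own proof avoids this entirely by establishing membership in a single fixed reference set all at once, rather than tracking corrections reflection by reflection. It quotes Lemmata 8.3 and 7.2 of \cite{CouMaz} to get $w\ast\mu\in w'\circ(\mu-\Gamma^+)$ for \emph{some} $w'\in W$ (this is the ``global'' version of your one-step estimate, controlling the odd-reflection bookkeeping against the $\circ$-orbit of $\mu-\Gamma^+$), then uses Theorem 8.10 of \cite{CouMaz} --- the fact that $\ast$ gives a genuine $W$-action on $\widetilde\Gamma$-generic weights --- to conclude $w'=w$, and only then applies equation \eqref{MusGS} a single time to convert $w\circ(\mu-\Gamma^+)$ into $w\cdot\mu-\Gamma^+$. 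The point is that \eqref{MusGS} is a set equality applied once, not iterated, so the question of whether $s_\alpha(\gamma)$ stays in $\Gamma^+$ never arises. If you want to salvage the inductive strategy, the correct inductive statement to carry is $w\ast\mu \in w\circ(\mu-\Gamma^+)$, and even then you would need the $\circ$-version of the one-step estimate to hold relative to the set $w'\circ(\mu-\Gamma^+)$ rather than relative to the weight $w'\ast\mu$ --- which is in substance what the quoted lemmas of \cite{CouMaz} provide and what your proposal would still need to prove.
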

\begin{proof}
The combination of Lemmata 8.3 and 7.2 in \cite{CouMaz} shows that $w\ast\mu\in w'\circ (\mu-\Gamma^+)$ for some $w'\in W$. From Theorem 8.10 in \cite{CouMaz} we find $w'=w$. The result therefore follows from equation \eqref{MusGS}.
\end{proof}

The following proposition yields important information on the top of the representation $G_{w^{-1}}M_{w\cdot\Lambda}$ for $\Lambda$ generic, see Definition \ref{defgeneric}(iii).
\begin{proposition}
\label{Berncompl}
For $\Lambda$ a generic integral dominant weight and $w\in W$, we have $G_{w^{-1}}M_{w\cdot\Lambda}\tto L_{w^{-1}\ast w\cdot\Lambda}$.
\end{proposition}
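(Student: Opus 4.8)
The plan is to prove this by induction on $l(w)$, following the inductive structure already used in Corollary \ref{resVerma} and exploiting the description of $L_{w^{-1}\ast w\cdot\Lambda}$ via iterated simple star reflections. Write $w = s_{\alpha_1}\cdots s_{\alpha_k}$ as a reduced expression and set $w_{(j)} = s_{\alpha_j}\cdots s_{\alpha_k}$, so $w_{(1)} = w$ and $w_{(k+1)} = 1$. By Corollary \ref{resVerma}, the module $G_{w^{-1}}M_{w\cdot\Lambda}$ is built up by applying the single adjoint twisting functors $G_{\alpha_j}$ one at a time to $M_{w\cdot\Lambda}$, and at each stage the restriction to $\fg_{\oa}$ is the restriction of a Verma module $M_{w_{(j+1)}\cdot\Lambda}$. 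The strategy is to track the top of the module through each such step.

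\textbf{The single-step claim.} The heart of the argument is the following: if $N \in \cO$ is $\alpha$-free with $\Res N \cong \Res M_\nu$ for some $\fg_{\oa}$-integral dominant $\nu$, and $N \tto L_\mu$ with $\langle \mu, \alpha^\vee\rangle < 0$ so that $L_\mu$ is $\alpha$-free, then $G_\alpha N \tto L_{s_\alpha \ast \mu}$, where the star reflection is taken with respect to the appropriate Borel. To see this, one uses that $G_\alpha$ is right adjoint to $T_\alpha$ together with Lemma \ref{lemTrepeat}: since $N$ and (as one checks from genericness) $G_\alpha N$ are both $\alpha$-free, Lemma \ref{TG1} gives $N \cong T_\alpha G_\alpha N$. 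Applying $\Hom_{\cO}(T_\alpha G_\alpha N, -) = \Hom_{\cO}(G_\alpha N, G_\alpha -)$ and analysing the top, one reduces to understanding $\Hom_{\cO}(G_\alpha N, L_\kappa)$ for simple $\alpha$-finite $L_\kappa$; by adjunction and Lemma \ref{lemTrepeat}(iv) this is controlled by $\Gamma_\alpha$ applied to $N$, which kills the $\alpha$-free quotient $L_\mu$. The upshot is that the top of $G_\alpha N$ is the simple $\alpha$-finite module whose highest weight matches, after passing to the Borel in which $\alpha$ is simple, the weight $s_\alpha(\widetilde\mu + \widetilde\rho) - \widetilde\rho$; by definition this highest weight is exactly $s_\alpha \ast \mu$. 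Here genericness of $\Lambda$ (Definition \ref{defgeneric}(iii)) is what guarantees we stay in the $\widetilde\Gamma$-generic region at every stage, so that the star action is well-defined and $G_\alpha N$ has the expected simple, nonzero top; Lemma \ref{genPen} and Lemma \ref{lemeststar} ensure the relevant modules do not collapse and the weight bookkeeping $s_\alpha \ast \mu \in s_\alpha \cdot \mu - \Gamma^+$ is consistent.

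\textbf{Assembling the induction.} For the base case $l(w) = 0$ we have $G_{w^{-1}}M_{w\cdot\Lambda} = M_\Lambda \tto L_\Lambda = L_{1 \ast 1 \cdot \Lambda}$. For the inductive step, suppose the result holds for $w_{(2)} = s_{\alpha_2}\cdots s_{\alpha_k}$, i.e.\ $G_{w_{(2)}^{-1}} M_{w_{(2)}\cdot\Lambda} \tto L_{w_{(2)}^{-1} \ast w_{(2)}\cdot\Lambda}$. By Corollary \ref{resVerma}, $G_{w^{-1}}M_{w\cdot\Lambda} = G_{\alpha_1} (G_{w_{(2)}^{-1}\text{-part}} M_{w\cdot\Lambda})$ where the inner module has the same $\fg_{\oa}$-restriction as $M_{w_{(2)}\cdot\Lambda}$ and, by an argument parallel to the inductive hypothesis applied to the shifted weight, has top $L_{w_{(2)}^{-1}\ast w\cdot\Lambda}$. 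Since $ws_{\alpha_1} < w$ forces $\langle w_{(2)}^{-1}\ast w\cdot\Lambda, \alpha_1^\vee\rangle < 0$ (using $\ast$-reflection compatibility with the $\cdot$-action in the generic region, from \cite{CouMaz}), the single-step claim applies and yields top $L_{s_{\alpha_1}\ast (w_{(2)}^{-1}\ast w\cdot\Lambda)} = L_{(w_{(2)}^{-1}s_{\alpha_1})\ast w\cdot\Lambda} = L_{w^{-1}\ast w\cdot\Lambda}$, using that $\ast$ gives a genuine Weyl group action in the generic region. The main obstacle I anticipate is the bookkeeping with Borel subalgebras: the simple star reflection $s_{\alpha_1}\ast^{\fb}$ is defined by passing to a possibly different Borel $\widetilde{\fb}$ in which $\alpha_1$ is simple, so the single-step claim and the identification of tops must be phrased carefully with respect to these changing Borels, and one must invoke the independence statements from \cite{CouMaz} (that for $\widetilde\Gamma$-generic weights the star action does not depend on the auxiliary choice) to close the loop cleanly.
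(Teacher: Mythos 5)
The high-level strategy — iterate a single-step claim that tracks the top under each adjoint twisting functor $G_\alpha$, using Corollary \ref{resVerma} to control the $\fg_{\oa}$-restrictions — matches the paper, but your justification of the single-step claim has a genuine gap. The paper establishes the step by a concrete exact-sequence argument: it applies the left exact $G_\alpha$ to $K\hookrightarrow M_{w\cdot\Lambda}\tto L_{w\cdot\Lambda}$, identifies $\cR_1 G_\alpha K$ with $\Gamma_\alpha K$ via Lemma \ref{lemTrepeat}(iv), observes that $\Gamma_\alpha K$ has no subquotient in the target Weyl chamber because $K\subset M_{w\cdot\Lambda}$ has none, invokes Lemma 8.3 of \cite{CouMaz} together with Lemma \ref{genPen} and equation \eqref{eqlemma51} to conclude that $L(s_\alpha\ast w\cdot\Lambda)$ is the \emph{unique} subquotient of $G_\alpha L_{w\cdot\Lambda}$ in that chamber, and only then uses Corollary \ref{resVerma} to pin down the top. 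Your sketch replaces this by an adjunction manipulation, but the adjunction $T_\alpha\dashv G_\alpha$ gives you control of $\Hom(T_\alpha X,-)=\Hom(X,G_\alpha-)$, which does not compute $\Hom(G_\alpha N,L_\kappa)$ for an arbitrary simple $L_\kappa$; the phrase ``this is controlled by $\Gamma_\alpha$ applied to $N$, which kills the $\alpha$-free quotient $L_\mu$'' is not a valid deduction and does not identify $L_{s_\alpha\ast\mu}$ as a quotient of $G_\alpha N$. The essential external input — that $L(s_\alpha\ast\mu)$ occurs in $G_\alpha L_\mu$, which is Lemma 8.3 of \cite{CouMaz} — does not appear in your argument, and nothing you wrote replaces it.

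There is a secondary issue in the assembly of the induction. Your inductive hypothesis is stated for $G_{w_{(2)}^{-1}}M_{w_{(2)}\cdot\Lambda}$, but the module you then need to understand is a composition of $G_{\alpha_j}$'s applied to $M_{w\cdot\Lambda}$, not to $M_{w_{(2)}\cdot\Lambda}$; the phrase ``by an argument parallel to the inductive hypothesis applied to the shifted weight'' papers over this. The paper avoids this by iterating a one-step statement directly on the module $G_{\alpha}M_{w\cdot\Lambda}$, using that each intermediate module restricts to $\Res M_{w'\cdot\Lambda}$ and that all the intermediate highest weights remain $\widetilde{\Gamma}$-generic by Lemma \ref{lemeststar}. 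If you restructure your induction that way and supply the missing input (Lemma 8.3 in \cite{CouMaz} plus the uniqueness argument via Lemma \ref{genPen}), the proof goes through.
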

\begin{proof}
Define the $\fg$-module $K$ as the kernel of the morphism $M_{w\cdot\Lambda}\tto L_{w\cdot\Lambda}$ and assume $w=s_\alpha w'$ with $l(w')=l(w)-1$. The left exact functor $G_\alpha$ therefore yields an exact sequence
$$G_\alpha M_{w\cdot\Lambda}\to G_{\alpha}L_{w\cdot\Lambda}\to \cR_1G_\alpha K.$$

Equation \eqref{resMM} shows that $M_{w\cdot\Lambda}$ does not have a simple subquotient corresponding to the Weyl chamber  $w'$. In particular, Lemma \ref{lemTrepeat}(iv) implies that $\cR_1G_\alpha K$ does not have such a subquotient  either. Lemma 8.3 in \cite{CouMaz} implies that $L(s_\alpha\ast w\cdot\Lambda)$ is a subquotient of $G_{\alpha}L_{w\cdot\Lambda}$. Lemma \ref{genPen} and equation \eqref{eqlemma51} imply that this is in fact the only subquotient of $G_{\alpha}L_{w\cdot\Lambda}$ in this Weyl chamber. We can conclude that the exact sequence above yields an epimorphism from $G_\alpha M_{w\cdot\Lambda}$ to a module which has a unique simple subquotient corresponding to the Weyl chamber of $w'$, namely $L(s_\alpha\ast w\cdot\Lambda)$. Corollary \ref{resVerma} implies that the only simple modules in the top of $G_\alpha M_{w\cdot\Lambda}$ belong to the Weyl chamber of $w'$, so we obtain $$G_\alpha M_{w\cdot\Lambda}\tto L(s_\alpha\ast w\cdot\Lambda).$$

Since $\Lambda$ is generic, every weight of the form $w'\ast w\cdot \Lambda$ is $\widetilde{\Gamma}$-generic, by Lemma~\ref{lemeststar}. Therefore, the procedure described above can be repeated until finally  the result $G_{w^{-1}}M_{w\cdot\Lambda}\tto L_{w^{-1}\ast w\cdot\Lambda}$ follows.
\end{proof}

\subsection*{Acknowledgement}
The author is a Postdoctoral Fellow of the Research Foundation - Flanders (FWO).\\ The author wishes to express his gratitude to Ruibin Zhang, Volodymyr Mazorchuk and Vera Serganova for many interesting discussions and to the School of Mathematics and Statistics -- The University of Sydney for hospitality during part of this research.

\end{document}